\documentclass{article}
%
%\usepackage{graphicx}
% Used for displaying a sample figure. If possible, figure files should
% be included in EPS format.
%
\usepackage[a4paper,bindingoffset=0.2in,%
            left=0.5in,right=0.5in,top=0.5in,bottom=0.75in,%
            footskip=.25in,
            %showframe   % to show margins 
            ]{geometry}  % this is for small border
\usepackage{amsmath,amssymb}
\usepackage{amsthm}%
\usepackage{tikz}
\usepackage{subcaption}
\usepackage{wrapfig}
\usepackage{mathtools}
\usepackage[inline]{enumitem}
\usepackage[vlined]{algorithm2e}
\usepackage{bm}
\usepackage{tabto}
\usepackage{microtype}
\usepackage{float} % loaded to use floats inside minipage (not possible usless [H] option is used)
\usepackage[section]{placeins} % to ensure floats don't go beyond section, also give \FloatBarrier command to explicitely mention that float are not to go beyond this
\usepackage{makecell}
\usepackage{diagbox}
\usepackage{needspace} % to force some contents to be on the same page
\usepackage[colorlinks,citecolor=blue]{hyperref}
\usepackage{doi} % to get the doi's as links

%\usepackage{showframe} % to show page frame; used to check whether content goes beyond boundary

% Keywords command
\providecommand{\keywords}[1]
{
  \small	
  \textbf{\textit{Keywords---}} #1
}

\usetikzlibrary{shapes,positioning,fit,matrix,arrows,arrows.meta}
\usetikzlibrary{decorations.pathmorphing,decorations.pathreplacing}
\usetikzlibrary{calc}

\tikzset{
dot/.style={draw,fill,circle,inner sep = 0pt,minimum size = 3pt},
dotBig/.style args={#1,#2}{draw,fill=white,circle,inner sep=0pt,node contents=\textcolor{#2}{#1},minimum size=4.1mm},
vcolour/.style={draw,inner sep=1.5pt,font=\scriptsize,label distance=2pt},
%subgraph/.style={draw,ellipse,minimum width=1.75cm,minimum height=2cm}, (old values)
subgraph/.style={draw,ellipse,minimum width=1.25cm,minimum height=1.25cm},
subgraphHoriz/.style={draw,ellipse,minimum width=1.5cm,minimum height=1.25cm},%% defining a new theorem environment obeservation (spnewtheorem of llncs is used instead of newtheorem)
}

\SetKwProg{Fn}{Subroutine}{}{}           % shortform used to defined function
                                                                            %\spnewtheorem{observation}{observation}{\bfseries}{\itshape}
\newtheorem{theorem}{Theorem}%
\newtheorem{corollary}{Corollary}%
\newtheorem{lemma}{Lemma}%
\newtheorem{observation}{Observation}%{\bfseries}{\itshape}

% to prefer to keep 'et al.' together
% source: https://tex.stackexchange.com/a/272546/157031
%         (tried comment of moewe; but no idea which package defines \addabbrvspace)
\def\etal.{et\penalty50\ al.}

%% the next line was added here earlier to give a bit more space between paragraphs. The line is commented out
%\setlength{\parskip}{\baselineskip}

% this is to avoid equation getting broken
%\relpenalty=9999
%\binoppenalty=9999

% If you use the hyperref package, please uncomment the following line
% to display URLs in blue roman font according to Springer's eBook style:
 %\renewcommand\UrlFont{\color{blue}\rmfamily}

\title{Complexity of Restricted Star Colouring\thanks{First author is supported by SERB (DST), MATRICS scheme MTR/2018/000086\\
A preliminary version of the paper appeared in CALDAM 2020 \cite{shalu_cyriac}.\\
Please cite this article as: Shalu M.A. and C. Antony, The complexity of restricted star colouring, Discrete Applied Mathematics (2021),
\url{https://doi.org/10.1016/j.dam.2021.05.015}.\\
0166-218X/ © 2021 Elsevier B.V. All rights reserved.}}
%
%\titlerunning{Abbreviated paper title}
% If the paper title is too long for the running head, you can set
% an abbreviated paper title here
%
%\author{Shalu M.\ A.\ \and Cyriac Antony}%\orcidID{0000-0001-7503-801X}}
%\date{}
%
%\authorrunning{Shalu M.\ A.\ \and C.\ Antony}
% First names are abbreviated in the running head.
% If there are more than two authors, 'et al.' is used.
%
%\institute{IIITDM Kancheepuram, India\\
%\email{\{shalu,mat17d001\}@iiitdm.ac.in}}
%

%\title{The Complexity of Restricted Star Colouring}
\author{Shalu M.\ A., and Cyriac Antony\\
Indian Institute of Information Technology, Design \& Manufacturing\\
(IIITDM) Kancheepuram, Chennai, India\\
\texttt{\{shalu,mat17d001\}{\fontfamily{ptm}\selectfont @}iiitdm.ac.in}\\
}
\date{}

\begin{document}
\maketitle              % typeset the header of the contribution
\begin{abstract}
Restricted star colouring is a variant of star colouring introduced to design heuristic algorithms to estimate sparse Hessian matrices. For \( k\in\mathbb{N} \), a \( k \)-restricted star colouring (\( k \)-rs colouring) of a graph \( G \) is a function \mbox{\( f:V(G)\to\{0,1,\dots,k-1\} \)} such that (i)~\( f(x)\neq f(y) \) for every edge \( xy \) of \( G \), and (ii)~there is no bicoloured 3-vertex path (\( P_3 \)) in \( G \) with the higher colour on its middle vertex. 
We show that for \( k\geq 3 \), it is NP-complete to test whether a given planar bipartite graph of maximum degree \( k \) and arbitrarily large girth admits a \( k \)-rs colouring, and thereby answer a problem posed by Shalu and Sandhya (Graphs and Combinatorics, 2016). In addition, it is NP-complete to test whether a 3-star colourable graph admits a 3-rs colouring. 
We also prove that for all \( \epsilon>0 \), the optimization problem of restricted star colouring a 2-degenerate bipartite graph with the minimum number of colours is NP-hard to approximate within \( n^{\frac{1}{3}-\epsilon} \). 
On the positive side, we design (i)~a linear-time algorithm to test 3-rs colourability of trees, and (ii)~an \( O(n^3) \)-time algorithm to test 3-rs colourability of chordal graphs.
\end{abstract}
%Research highlights
%\begin{highlights}
%\item For all \( k\geq 3 \) and \( g\geq 6 \), it is NP-complete to test whether a planar bipartite graph of maximum degree \( k \) and girth at least \( g \) admits a \( k \)-restricted star colouring
%%\item It is NP-complete to test whether a 3-star colourable graph admits a 3-restricted star colouring
%\item For all \( \epsilon>0 \), the optimization problem of restricted star colouring a 2-degenerate bipartite graph with the minimum number of colours is NP-hard to approximate within \( n^{\frac{1}{3}-\epsilon} \)
%\item For the class of co-bipartite graphs, \textsc{RS Colourability} is NP-complete whereas \textsc{\( k \)-RS Colourability} is in P for all \( k\in\mathbb{N} \)
%\item A linear-time algorithm to test whether a tree is 3-restricted star colourable
%\item An \( O(n^3) \)-time algorithm to test whether a chordal graph is 3-restricted star colourable
%\end{highlights}

\keywords{Graph coloring, Star coloring, Restricted star coloring, Unique superior coloring, Vertex ranking, Complexity} 
%\author{Shalu M.\ A.%
%\fnref{fn1}}
%\ead{shalu@iiitdm.ac.in}
%\tnotetext[t1]{A conference version of the paper appeared in CALDAM 2020.}% \cite{shalu_cyriac}.}
%\fntext[fn1]{Supported by SERB(DST), MATRICS scheme MTR/2018/000086.}
%
%\author{Cyriac Antony%
%\corref{cor1}}
%\cortext[cor1]{Corresponding author}
%\ead{mat17d001@iiitdm.ac.in}
%%\author{Shalu M.\ A.\ \and Cyriac Antony}%\orcidID{0000-0001-7503-801X}}
%\address{Indian Institute of Information Technology, Design \& Manufacturing\\ (IIITDM) Kancheepuram, Chennai, India}
%%\date{\today}
%
%
%
\section{Introduction}\label{sec:introduction}
Many large scale optimization problems involve a multi-variable function \( f \). %:\mathbb{R}^n\to \mathbb{R} \). 
The second-order approximation of \( f \) using Taylor series expansion requires an estimation of the Hessian matrix of \( f \). Vertex colouring of graphs and its variants have been found immensely useful as models for estimation of sparse Hessian and Jacobian matrices (see \cite{gebremedhin1} for a survey). 
To compute a compressed form of a given sparse matrix, Curtis \etal.~\cite{curtis} partitioned the set of columns of the matrix in such a way that columns that do not share non-zero entries along the same row are grouped together. By exploiting symmetry, Powell and Toint~\cite{powell_toint} designed a heuristic algorithm for partitioning columns of a sparse Hessian matrix implicitly using restricted star colouring. Restricted star colouring was first studied as a variant of star colouring (see next para.\ for definitions). It was also studied independently in the guise of unique superior colouring \cite{karpas,almeter}, a generalization of ordered colouring (see Section~\ref{sec:related} for the definition of ordered colouring). 
Therefore, restricted star colouring (abbreviated \emph{rs colouring}) is intermediate in strength between star colouring and ordered colouring. Ordered colouring is also known as vertex ranking, and has several theoretical as well as practical applications (see \cite{iyer,leiserson,liu} and \cite[Chapter~6]{kubale}). It is worth mentioning that every distance-two colouring is an rs colouring. 
Note that restricted star colouring appears unnamed in \cite{gebremedhin1} and under the name independent set star partition in \cite{shalu_sandhya}. 

%(Ordered colouring is also known as vertex ranking, and has applications in parallel matrix factorization, product assembly is manufacturing systems, and VLSI layout design\{\textbf{add cite to each appln}\} \cite{dereniowski}).
%Unlike most colouring variants, the order of colour classes matter in restricted colouring as well as ordered colouring. 
%A \( k \)-unique superior colouring is same as a \( k \)-rs colouring except that the order of colour classes is reversed.

This paper is an extension of the work in \cite{shalu_cyriac}. 
In this paper, only vertex colourings of finite simple undirected graphs are considered. 
A \emph{\( k \)-colouring} of a graph \( G \) is a function \( f:V(G)\to \{0,1,\dots,k-1\} \) such that \( f(x)\neq f(y) \) whenever \( xy \) is an edge in \( G \). 
Let us denote the colour class \( f^{-1}(i) \) by \( V_i \). 
A \( k \)-colouring \( f \) of \( G \) is a \emph{\( k \)-star colouring} of \( G \) if there is no \( P_4 \) in \( G \) (not necessarily induced) bicoloured by \( f \). 
In other words, for \( i\neq j \), every component of \( G[V_i\cup V_j] \) is a star. A \( k \)-colouring of \( G \) is a \emph{\( k \)-restricted star colouring} if \( G \) contains no bicoloured \( P_3 \) with the higher colour on its middle vertex (i.e., no path \( x,y,z \) with \( f(y)>f(x)=f(z) \); see Figure~\ref{fig:dart} for an example). 
That is, for \( i<j \), each vertex in \( V_j \) has at most one neighbour in \( V_i \) (see Figure~\ref{fig:dart redrawn}). In other words, every non-trivial component of \( G[V_i\cup V_j] \) is a star with its centre in \( V_i \) for \( i<j \). Hence, every \( k \)-restricted star colouring is a \( k \)-star colouring; but the converse is not true \cite{shalu_sandhya}.

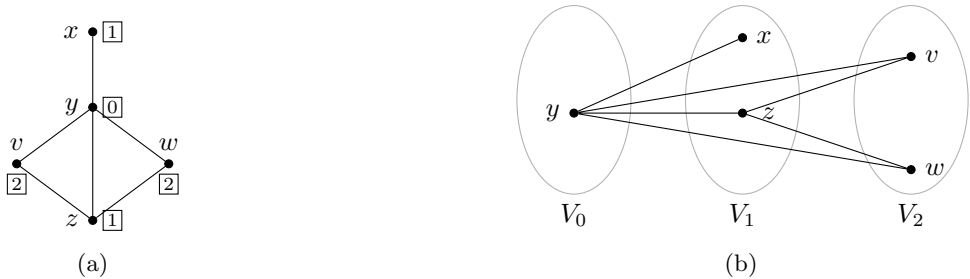
\begin{figure}[hbt]
\centering
\begin{subfigure}[b]{0.3\textwidth}
\centering
\begin{tikzpicture}
\node(x)[dot][label=left:\( x \)][label={[vcolour]right:1}]{};
\draw (x)--++(0,-1) node(y)[dot][label=left:\( y \)][label={[vcolour]right:0}]{}--++(0,-1.5) node(z)[dot][label=left:\( z \)][label={[vcolour]right:1}]{};
\draw (y)--+(-1,-0.75) node(v)[dot][label=\( v \)][label={[vcolour]below:2}]{}--(z);
\draw (y)--+(1,-0.75) node(w)[dot][label=\( w \)][label={[vcolour]below:2}]{}--(z);
\end{tikzpicture}
\caption{}
\label{fig:dart}
\end{subfigure}%
\begin{subfigure}[b]{0.65\textwidth}
\centering
\begin{tikzpicture}
\tikzset{
Vset/.style={draw=black!30,ellipse,minimum width=1.5cm,minimum height=2.5cm},
}
\node(V0) [Vset][label=below:\( V_0 \)]{};
\node(V1) [Vset][right=20pt of V0][label=below:\( V_1 \)]{};
\node(V2) [Vset][right=20pt of V1][label=below:\( V_2 \)]{};
\node(y) at (V0) [dot][label=left:\( y \)][yshift=-5pt]{};
\node(z) at (V1) [dot][label={[xshift=2pt]right:\( z \)}][yshift=-5pt]{};
\path (V1)+(0,1) node(x)[dot][label=right:\( x \)][yshift=-5pt]{};
\path
(V2)+(0,0.75) node(v) [dot][label=right:\( v \)][yshift=-5pt]{}
(V2)+(0,-0.75) node(w) [dot][label=right:\( w \)][yshift=-5pt]{};
\draw (x)--(y)--(z)--(v)--(y)--(w)--(z);
\end{tikzpicture}
\caption{}
\label{fig:dart redrawn}
\end{subfigure}%
\caption{(a)~a 3-rs colouring \( f \) of the dart graph, (b)~colour classes under \( f \)}
\end{figure}

Restricted star colouring is studied in the literature mainly for minor-excluded graph families \cite{karpas}, trees \cite{karpas}, \( q \)-degenerate graphs \cite{karpas}, and degree-bounded graph families \cite{almeter}. The rs chromatic number of a graph \( G \), denoted by \( \chi_{rs}(G) \), is the least integer \( k \) such that \( G \) is \( k \)-rs colourable. 
For some classes such as planar graphs and trees, only asymptotic bound for the rs chromatic number is known \cite{karpas}. 
On the other hand, there are classes for which we can compute the rs chromatic number in polynomial time using a simple formula; for instance, \( \chi_{rs}(Q_d)=d+1 \) for the hypercube \( Q_d \) \cite{almeter}. 
It is also known that \( \chi_{rs}(G)\leq 4\alpha(G) \) for every graph \( G \) of girth at least five \cite{shalu_sandhya}. 
 %where \( \alpha(G) \) is the independence number of \( G \). 
Heuristic algorithms for restricted star colouring are given in ColPack software suite \cite{gebremedhin2} as well as \cite{gebremedhin1,bozdag}.

%The known results on restricted star colouring include the following.
%\begin{enumerate}[label=(\roman*),itemsep=0pt,topsep=2pt]
%\item \( \chi_{rs}(G)=O(\log n) \) for a planar graph \( G \), and the bound holds generally for graphs excluding a fixed minor \cite{karpas}.
%\item \( \chi_{rs}(T)=O(\log n/\log \log n) \) for a tree \( T \), and the bound is tight \cite{karpas}.
%\item \( \chi_{rs}(G)\leq 7 \) for a subcubic graph \( G \) \cite{almeter}.
%\item \( \chi_{rs}(Q_d)=d+1 \) for the hypercube \( Q_d \) \cite{almeter}.
%\item If \( G \) is \( k \)-rs colourable, then \( G \) is \( (k-1) \)-degenerate \cite{}.
%\item ~\( \chi_{rs}(G)\leq 4\alpha(G) \) for a graph \( G \) of girth at least five \cite{shalu_sandhya}.
%\end{enumerate}

The natural problem of interest is of rs colouring an input graph with the minimum number of colours. Let us call this problem as \textsc{Min RS Colouring}. The following are decision versions of this problem (throughout this paper, \( n \) denotes the number of vertices unless otherwise specified).\\
%We focus on the following decision problems in this paper.\\  % please uncomment newline if \setlength{\parskip}{\baselineskip} is commented in preamble

\begin{tabular}{l@{\hskip 0.75cm}l}
\textsc{RS Colourability} & \textsc{\( k \)-RS Colourability}\\
Instance: A graph \( G \), an integer \( k\leq n \). & Instance: A graph \( G \).\\
Question: Is \( \chi_{rs}(G)\leq k \)? & Question: Is \( \chi_{rs}(G)\leq k \)?\\[0.35cm]
\end{tabular}

% In the problem \textsc{\( k \)-RS Colourability}, \( k \) is a placeholder.
\noindent Note that a graph \( G \) is 2-rs colourable if and only if every component of \( G \) is a star. Hence, \textsc{\( 2 \)-RS Colourability} is in P.\\

\noindent Question \cite{shalu_sandhya}: Is \textsc{\( k \)-RS Colourability} NP-complete for \( k\geq 3 \)\,?\\

\noindent We answer the above question in the affirmative. We prove that for \( k\geq 3 \), \textsc{\( k \)-RS Colourability} is NP-complete for planar bipartite graphs of maximum degree \( k \). % (in Sect.~\ref{sec:planar bipartite}).
We also prove that for all \( \epsilon>0 \), \textsc{Min RS Colouring} is NP-hard to approximate within \( n^{\frac{1}{3}-\epsilon} \) for 2-degenerate bipartite graphs. On the positive side, a linear-time algorithm to test 3-rs colourability of trees is designed. Further, we employ this algorithm to design an \( O(n^3) \)-time algorithm to test 3-rs colourability of chordal graphs. Besides, a few results on rs colouring are obtained using simple observations and known results on star/ordered colouring. Most notable of these is the following: for co-bipartite graphs, \textsc{\( k \)-RS Colourability} is in P for all \( k\), whereas \textsc{RS Colourability} is NP-complete. Interestingly, this result holds for star colouring as well.
%Other contributions of this paper are as follows: (i)~\textsc{4-RS Colourability} in NP-complete for planar cubic graphs of girth at least five (Sect.\ref{}), (ii)~for 2-degenerate bipartite graphs, \textsc{Min RS Colouring} is NP-hard to approximate within \( O(n^{1/3-\epsilon}) \) for all \( \epsilon>0 \), (iii)~a linear-time algorithm to test whether a tree is 3-rs colourable (Sect.\ref{sec:trees}), which is lifted to an \( O(n^3) \) algorithm to test whether a chordal graph is 3-rs colourable (Sect.~\ref{sec:chordal}), and (iv)~for co-bipartite graphs, \textsc{\( k \)-RS Colourability} is in P for all \( k\), but \textsc{RS Colourability} is NP-complete (Sect.\ref{}).

The rest of the paper is organized as follows. 
Section~\ref{sec:preliminaries} presents the necessary preliminaries. Section~\ref{sec:bipartite} discusses hardness results on bipartite graphs with more focus on planar bipartite graphs. 
%Section~\ref{sec:cubic} presents a hardness result on cubic graphs. 
This is followed by Section~\ref{sec:properties} which mainly acts as preparatory for Section~\ref{sec:trees and chordal}. Section~\ref{sec:trees and chordal} presents polynomial-time algorithms to test 3-rs colourability of trees and chordal graphs. Section~\ref{sec:related} discusses related colouring variants. Using known results on related colouring variants, complexity results on rs colouring of co-bipartite graphs are obtained.
\section{Preliminaries}\label{sec:preliminaries}
We follow West~\cite{west} for graph theory terminology and notation. 
We write \( v_1,v_2,\dots,v_n \) for an \( n \)-vertex path (\( P_n \)), and call \( v_1 \) and \( v_n \) as the endpoints of the path. We write \( (v_1,v_2,\dots,v_n) \) for an \( n \)-vertex cycle; in particular, a triangle on vertices \( v_1,v_2,v_3 \) is \( (v_1,v_2,v_3) \).
%We denote a path on \( n \) vertices by \( P_n \). 
%Let \( v_1, v_2, \dots, v_n \) denote a path with vertex set \( \{v_1,v_2,\dots,v_n\} \) and edges \( v_ia_{i+1} \), \( 1\leq i<n \); and we call \( v_1 \) and \( v_n \) as the endpoints of this path. 
%Similarly, \( (v_1, v_2, \dots, v_n) \) denotes a cycle with the same vertex set and edges \( v_1v_2 \), \( \dots\,, \) \( a_{n-1}v_n \), \( v_nv_1 \). 
%The maximum degree of a graph \( G \) is denoted \( \Delta(G) \).
A star is a graph isomorphic to \( K_{1,p} \) for some \( p\geq 0 \). 
The independence number of a graph \( G \) is denoted by \( \alpha(G) \). 
If \( G \) is a graph and \( S\subseteq V(G) \), the subgraph of \( G \) induced by \( S \), denoted by \( G[S] \), is \( G-(V(G)\setminus S) \). 
The star chromatic number \( \chi_s(G) \) is the least integer \( k \) such that \( G \) admits a \( k \)-star colouring. 
The length of a shortest cycle in a graph is called its \emph{girth}. 
A graph with maximum degree at most three is said to be \emph{subcubic}. 
%(i.e., the graph obtained from \( G \) by removing every vertex not in \( S \)). 
A graph \( G \) is \emph{2-degenerate} if there exists an ordering of vertices in \( G \) such that every vertex has at most two neighbours to its left. 
%We denote the distance between two vertices \( u \) and \( v \) in \( G \) by \( \text{dist}_G(u,v) \).
%For a graph \( G \), \( \alpha(G) \) denotes the maximum size of an independent set in \( G \). 
In this paper, a vertex of degree three or more is called a \emph{3-plus vertex}. 
%We denote (i)~by \( v_1, v_2, \dots, v_n \)\textbf{,} a path with vertex set \( \{v_1,v_2,\dots,v_n\} \) and edges \( v_1v_2 \), \( v_2v_3 \), \( \dots\,, \) \( a_{n-1}v_n \)\textbf{,} and (ii)~by \( (v_1, v_2, \dots, v_n) \)\textbf{,} a cycle with vertex set \( \{v_1,v_2,\dots,v_n\} \) and edges \( v_1v_2 \), \( \dots\,, \) \( a_{n-1}v_n \), \( v_nv_1 \).

%The independence number of a graph \( G \) is denoted by \( \alpha(G) \). 

The next observation follows from the definition.
\begin{observation}
Let \( G \) be a graph, and \( f:V(G)\to\{0,1,\dots,k-1\} \) be a \( k \)-rs colouring of \( G \). If \( v \) is a vertex of \( G \) with \( f(v)=k-1 \), then \( \deg(v)<k \).
\qed
\label{obs:degree restriction}
\end{observation}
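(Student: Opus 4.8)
The plan is to argue by contradiction using the defining property of restricted star colouring. Suppose $f$ is a $k$-rs colouring of $G$ and $v$ is a vertex with $f(v)=k-1$. The colour $k-1$ is the largest colour available, so every colour appearing on a neighbour of $v$ is strictly smaller than $f(v)$; that is, for each neighbour $u$ of $v$ we have $f(u)<f(v)=k-1$.

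Now I would invoke condition~(ii) in the form stated just after the definition: for $i<j$, each vertex in $V_j$ has at most one neighbour in $V_i$. Applying this with $j=k-1$ (so $v\in V_{k-1}$), we get that $v$ has at most one neighbour in $V_i$ for every $i\in\{0,1,\dots,k-2\}$. Summing over the $k-1$ colour classes that its neighbours can possibly lie in, $v$ has at most $k-1$ neighbours in total, i.e. $\deg(v)\le k-1<k$.

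To make the ``at most one neighbour in $V_i$'' step fully self-contained (in case one prefers to work directly from the $P_3$ formulation rather than the restated version), I would note that if $v$ had two distinct neighbours $x,z$ with $f(x)=f(z)=i$ for some $i<k-1$, then $x,v,z$ would be a bicoloured $P_3$ with the higher colour $f(v)=k-1$ on its middle vertex $v$, contradicting that $f$ is a $k$-rs colouring. Hence each colour class $V_0,\dots,V_{k-2}$ contributes at most one neighbour, and $\deg(v)\le k-1$.

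There is no real obstacle here: the only thing to be careful about is that the neighbours of $v$ genuinely avoid colour $k-1$ (forced by properness, condition~(i)) so that they are distributed among exactly $k-1$ classes, and that the counted neighbours are distinct so the bound adds up correctly. This is why the statement is flagged as following directly from the definition, and a one- or two-line proof suffices.
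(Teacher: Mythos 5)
Your argument is correct and is exactly the reasoning the paper intends: the observation is stated without proof as following directly from the definition, and the counting you give (each of the $k-1$ lower colour classes contributes at most one neighbour of $v$, else a bicoloured $P_3$ with higher middle colour appears) is the standard justification. No gaps.
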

%\begin{proof}
%By definition, a vertex \( v \) coloured \( k-1 \) has at most one neighbour coloured 0, at most one neighbour coloured 1, etc., at most one neighbour coloured \( k-2 \). Hence, the degree of \( v \) is at most \( k-1 \).
%\end{proof}

We give special attention to 3-restricted star colouring in this paper. 
The following observations are pivotal to our results on 3-rs colouring.%\textsc{3-RS Colourability}.
% Proof of Observation~\ref{obs:colour propagation} is easy, and hence omitted.

\begin{observation}
Let \( G \) be a graph. If \( f:V(G)\to\{0,1,2\} \) is a 3-rs colouring of \( G \), then the following properties hold. 
%\( f \) has the following properties.
\begin{enumerate}[topsep=2pt,itemsep=3pt,label={P\arabic*)},leftmargin=\widthof{P1)}+\labelsep]
\item[P1)] For every 3-plus vertex \( u \) of \( G \), \( f(u)= \) 0 or 1 (that is, a binary colour).
\item[P2)] If \( uv \) is an edge joining 3-plus vertices \( u \) and \( v \) in \( G \), then \( f(v)=1-f(u) \).
\item[P3)] Both endpoints of a \( P_3 \) in \( G \) cannot be coloured~0 by \( f \). So, if \( u,v,w \) is a \( P_3 \) in \( G \) such that \( f(u)=0 \) and \( w \) is a 3-plus vertex in \( G \), then \( f(w)=1 \).
\item[P4)] A \( P_4 \) in \( G \) cannot have one endpoint coloured~0 and the other endpoint coloured 1 by \( f \). So, if \( u,v,w,x \) is a path in \( G \) where \( u \) and \( x \) are 3-plus vertices in \( G \), then \( f(x)=f(u) \).
\item[P6)] Both endpoints of a \( P_6 \) in \( G \) cannot be coloured~0 by \( f \).
\end{enumerate}
\label{obs:properties}
\end{observation}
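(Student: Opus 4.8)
The plan is to prove the five listed properties in order, each one drawing on the earlier ones together with Observation~\ref{obs:degree restriction}, so that the whole argument reduces to a handful of case splits over the three available colours.

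P1 is an instance of Observation~\ref{obs:degree restriction} with \( k=3 \): a 3-plus vertex \( u \) has \( \deg(u)\geq 3 \), hence cannot be coloured \( k-1=2 \), so \( f(u)\in\{0,1\} \). P2 then follows at once: by P1 the two endpoints of the edge \( uv \) both get colours in \( \{0,1\} \), and properness of \( f \) forces these to differ, i.e.\ \( f(v)=1-f(u) \).

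For P3, I would assume \( u,v,w \) is a \( P_3 \) with \( f(u)=f(w)=0 \); properness gives \( f(v)\in\{1,2\} \), so \( f(v)>0=f(u)=f(w) \) and \( u,v,w \) is a bicoloured \( P_3 \) with the larger colour on its middle vertex, contradicting the definition of an rs colouring. The stated consequence is P3 combined with P1. For P4, assume \( u,v,w,x \) is a \( P_4 \) with \( f(u)=0 \) and \( f(x)=1 \); properness leaves only the colour pairs \( (f(v),f(w))\in\{(1,0),(1,2),(2,0)\} \), and in each case one of the two sub-\( P_3 \)'s (\( u,v,w \) in the first and third, \( v,w,x \) in the second) is bicoloured with the larger colour in the middle — again impossible. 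The 3-plus-vertex consequence follows using P1, since then \( f(u),f(x)\in\{0,1\} \) and \( \{f(u),f(x)\}\neq\{0,1\} \).

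Finally, for P6 I would chain the previous items. Given a \( P_6 \) \( \,v_1,v_2,\dots,v_6 \) with \( f(v_1)=f(v_6)=0 \), applying P3 to \( v_1,v_2,v_3 \) yields \( f(v_3)\neq 0 \) and applying P4 to \( v_1,v_2,v_3,v_4 \) yields \( f(v_4)\neq 1 \); the symmetric applications starting from \( v_6 \) give \( f(v_4)\neq 0 \) and \( f(v_3)\neq 1 \). Hence \( f(v_3)=f(v_4)=2 \), which is impossible since \( v_3v_4\in E(G) \) and \( f \) is proper. The only delicate point — and it is a very mild one, the closest thing here to an obstacle — is the bookkeeping in P4 and P6: one must use that the named vertices are genuinely distinct (guaranteed by the notation \( P_4 \)~/~\( P_6 \)) and, in each case, apply the rs-colouring definition to the \emph{correct} sub-\( P_3 \); beyond that there is no combinatorial content.
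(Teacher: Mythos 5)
Your proof is correct and follows essentially the same route as the paper: P1--P4 by direct case analysis on the possible 3-colourings of short paths (the paper relegates this to the supplementary material), and P6 by chaining P3 and P4 from both ends of the \( P_6 \) to force colour 2 on the two adjacent middle vertices, exactly as in the paper's argument. Deriving P1 from Observation~\ref{obs:degree restriction} is a sound (and natural) choice.
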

\noindent Note: Properties above are numbered as a mnemonic. That is, Property~P2 is about path \( P_2 \), Property~P3 is about path \( P_3 \), and so on.
\begin{proof}
One can easily prove Properties P1 to P4 by listing all possible 3-colourings of the path (see supplementary material for detailed proof). 
%We prove the contrapositive of Property~P1. Suppose \( f(u)=2 \) for a vertex \( u \) in \( G \).
%Since \( f(u)=2 \), by definition of 3-rs colouring, \( u \) has at most one neighbour coloured~0 and at most one neighbour coloured 1. Hence, \( \deg(u)\leq 2 \). This proves Property~P1.
%Observe that a vertex coloured 2 by \( f \) must have degree less than three by the special case \( k=3 \) of Observation~\ref{obs:degree restriction}. This is precisely the contrapositive of Property~P1. 
%Next, Property~P2 is proved. Observe that whenever two vertices \( u \) and \( v \) have binary colour, either vertex \( v \) has the same colour as \( u \), or \( v \) has the opposite binary colour \big(that is, \( f(v)=1-f(u) \)\big). Therefore, Property~P2 follows directly from Property~P1 since \( f(v)\neq f(u) \). 
%Contrary to Property~P3, assume that there is a path \( u,v,w \) in \( G \) with \( f(u)=f(w)=0 \). Then, \( u,v,w \) is a bicoloured \( P_3 \) with the higher colour on its middle vertex \( v \). This is a contradiction. 
%Next, we prove Property~P4. On the contrary, assume that \( u,v,w,x \) is a path in \( G \) with \( f(u)=0 \) and \( f(x)=1 \). Applying Property~P3 on the path \( u,v,w \) yields \( f(w)\neq 0 \). 
%%Since \( wx \) is an edge, \( f(w)\neq 1 \) and thus \( f(w)=2 \). 
%So, \( f(w)=2 \), and as a result, \( f(v)=1 \).
%But then, \( v,w,x \) is a \( P_3 \) in \( G \) with \( 2=f(w)>f(v)=f(x)=1 \); a contradiction.
To prove Property~P6, assume that \( f \) is a 3-rs colouring of \( G \) and \( u,v,w,x,y,z \) is a path in \( G \) with \( f(u)=f(z)=0 \). Applying Property~P3 on the path \( u,v,w \) reveals that \( f(w)\neq 0 \). Similarly, \( f(x)\neq 0 \). Applying Property~P4 on the path \( u,v,w,x \) reveals that \( f(x)\neq 1 \), and thus \( f(x)=2 \). Similarly, \( f(w)\neq 1 \), and thus \( f(w)=2 \); a contradiction.
%\qed
\end{proof}
%\noindent Note: Complete proofs can be found in supplementary material.

\begin{observation}%[colour propagation]
Let \( u,v,w,x \) be a path in a graph \( G \), and let \( f \) be a 3-rs colouring of \( G \) such that \( f(u)=0 \) and \( f(v)=1 \). Then, \( f(w)=2 \) and \mbox{\( f(x)=0 \)}.
\qed
\label{obs:colour propagation}
\end{observation}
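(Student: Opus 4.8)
The plan is a short direct argument using only the defining condition of a 3-rs colouring: no path $a,b,c$ with $f(b)>f(a)=f(c)$, together with properness. First I would pin down $f(w)$. Since $vw$ is an edge and $f(v)=1$, properness gives $f(w)\in\{0,2\}$. If $f(w)=0$, then $u,v,w$ is a $P_3$ with $f(u)=f(w)=0$ and $f(v)=1>0$ on the middle vertex, contradicting the 3-rs condition (equivalently, this is exactly the situation ruled out by Property~P3, since both endpoints of a $P_3$ cannot be coloured $0$). Hence $f(w)=2$.

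Next I would pin down $f(x)$. Since $wx$ is an edge and $f(w)=2$, properness gives $f(x)\in\{0,1\}$. If $f(x)=1$, then $v,w,x$ is a $P_3$ with $f(v)=f(x)=1$ and $f(w)=2>1$ on the middle vertex, again contradicting the 3-rs condition. Hence $f(x)=0$, which completes the proof.

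There is no real obstacle here; the statement is an immediate consequence of properness plus two applications of the ``higher colour on the middle vertex'' prohibition, and (if desired) the first application can simply cite Property~P3 of Observation~\ref{obs:properties}. The only point to be careful about is orienting each forbidden $P_3$ correctly, i.e.\ checking in each case that the vertex receiving the larger colour is genuinely the middle vertex of the three-vertex path under consideration.
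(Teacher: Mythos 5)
Your proof is correct and is exactly the argument the paper intends: the paper states this observation with no written proof (marking it immediate), and your two applications of properness plus the forbidden bicoloured \( P_3 \) with higher middle colour are the standard way to fill it in.
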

%\begin{proof}
%If \( f(w)=0 \), then \( u,v,w \) is a \( P_3 \) with \( 1=f(v)>f(u)=f(w)=0 \); a contradiction. This proves that \( f(w)=2 \). If \( f(x)=1 \), then \( v,w,x \) is a \( P_3 \) with \( 2=f(w)>f(v)=f(x)=1 \). Hence, by contradiction, \( f(x)=0 \).
%\end{proof}
%
\section{Bipartite Graphs}\label{sec:bipartite}
In this section, NP-completeness results on planar bipartite graphs, and an inapproximation result on 2-degenerate bipartite graphs are presented. In fact, our NP-completeness results hold for a much smaller subclass of planar bipartite graphs. To emphasize more interesting parts, the results for the smaller subclass are deferred to the end of the section. Karpas \etal.~\cite{karpas} proved that \( \chi_{rs}(G)=O(\log n) \) for every planar graph \( G \). They also proved the following results on 2-degenerate graphs: (i)~\( \chi_{rs}(G)=O(\sqrt{n}) \) for every 2-degenerate graph \( G \), and (ii)~for every integer \( n \), there exists a 2-degenerate graph \( G \) on \( n \) vertices such that \( \chi_{rs}(G)>n^{\frac{1}{3}} \).

%First, we prove that for \( k\geq 3 \), \( k \)-\textsc{RS Colourability} is NP-complete for the class of planar bipartite graphs of maximum degree \( k \) and girth at least six.
% and arbitrarily large girth.
First, we show that \textsc{3-RS Colourability} is NP-complete for subcubic planar bipartite graphs of girth six using a reduction from \textsc{Cubic Planar Positive 1-in-3 Sat}. To describe the latter problem, we introduce necessary terminology assuming that the reader is familiar with satisfiability problems. 

A CNF formula \( \mathcal{B}=(X,C) \), where \( X \) is the set of variables and \( C \) is the set of clauses, is called a positive CNF formula if no clause contains a negated literal; in other words, the clauses are subsets of \( X \). Let \( \mathcal{B}=(X,C) \) be a positive CNF formula with \( X=\{x_1,x_2,\dots,x_n\} \) and \( C=\{C_1,C_2,\dots,C_m \} \). The \emph{graph of formula \( \mathcal{B} \)}, denoted by \( G_\mathcal{B} \), is the graph with vertex set \( X\cup C \) and edges \( x_iC_j \) for every variable \( x_i \) in clause \( C_j \) (\( i=1,2,\dots,n; j=1,2,\dots,m \)).
%set \( \{x_iC_j\ :\ x_i\in C_j,\ x_i\in X,\ C_j\in C\} \).
Figure~\ref{fig:eg graph G(B)} shows the graph \( G_\mathcal{B} \) for the formula \( \mathcal{B}=(X,C) \) where \( X=\{x_1,x_2,x_3,x_4\} \), \( C=\{C_1,C_2,C_3,C_4\} \), \( C_1=\{x_1,x_2,x_3\} \), \( C_2=\{x_1,x_2,x_4\} \), \( C_3=\{x_1,x_3,x_4\} \) and \( C_4=\{x_2,x_3,x_4\} \).\\  % please uncomment newline if \setlength{\parskip}{\baselineskip} is commented in preamble

\noindent \textsc{Cubic Planar Positive 1-in-3 Sat (CPP 1-in-3 Sat)}\\
Instance: \tabto*{1.5cm}A positive 3-CNF formula \( \mathcal{B}=(X,C) \) such that\\
          \tabto*{1.5cm}\( G_\mathcal{B} \) is a cubic planar graph.\\
Question: \tabto*{1.6cm}Is there a truth assignment for \( X \) such that\\
	  \tabto*{1.6cm}every clause in \( C \) has exactly one true variable?\\  % please uncomment newline if \setlength{\parskip}{\baselineskip} is commented in preamble

\noindent This problem is proved NP-complete by Moore and Robson~\cite{moore} (Note: in \cite{moore}, the problem is called Cubic Planar Monotone 1-in-3 Sat. We use `positive' rather than `monotone' to be unambiguous; see \cite{tippenhauer}). Observe that the graph \( G_\mathcal{B} \) is cubic if and only if each clause contains three variables and each variable occurs in exactly three clauses. As a result, in a \textsc{CPP 1-in-3 Sat} instance, the number of variables equals the number of clauses, that is \( m=n \).

\begin{theorem}
\textsc{3-RS Colourability} is NP-complete for subcubic planar bipartite graphs of girth at least six.
\label{thm:3-rsc planar bipartite girth 6}
\end{theorem}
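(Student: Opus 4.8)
The plan is to prove hardness by a polynomial-time reduction from \textsc{Cubic Planar Positive 1-in-3 Sat}. Membership in NP is immediate: a 3-rs colouring is a polynomial-size certificate, and verifying that a given $f\colon V(G)\to\{0,1,2\}$ is a proper colouring with no $P_3$ whose middle vertex carries the larger of two colours takes polynomial time (equivalently, one checks that for every $i<j$ each vertex of $V_j$ has at most one neighbour in $V_i$).

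For the reduction, given a positive 3-CNF formula $\mathcal{B}=(X,C)$ with $G_\mathcal{B}$ cubic and planar, I would build a graph $G$ from a fixed planar embedding of $G_\mathcal{B}$ by replacing each variable vertex $x_i$ and each clause vertex $C_j$ by a small planar gadget (a \emph{variable gadget} and a \emph{clause gadget}) and each incidence edge $x_iC_j$ by a long connector path joining the corresponding gadgets, where all gadgets and connectors are chosen bipartite, of maximum degree three, and long enough that the only short cycles of $G$ lie inside clause gadgets and have length exactly six. Then $G$ is subcubic, planar, bipartite, and of girth six by construction, and the substance of the argument is in designing the two gadgets so that: (i) in every 3-rs colouring of $G$, each variable gadget forces one common Boolean value (``true''/``false'') onto its three connectors, and a clause gadget extends to a 3-rs colouring of $G$ if and only if exactly one of its three variables is true; and (ii) conversely, every 1-in-3 satisfying assignment of $\mathcal{B}$ lifts to a 3-rs colouring of $G$. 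The tools for this are Observations~\ref{obs:degree restriction}--\ref{obs:colour propagation}: Observation~\ref{obs:degree restriction} together with Property~P1 pins every $3$-plus vertex to a binary colour (so a gadget can ``store a bit''), Properties~P2 and~P4 propagate that bit --- unchanged or flipped --- along a short connector, and Properties~P3, P4 and~P6 are exactly what lets a clause gadget exclude the forbidden assignments. Declaring ``$x_i$ is true'' to mean ``the connectors of the gadget of $x_i$ carry colour $0$'', assertions (i) and (ii) then give that $G$ is 3-rs colourable if and only if $\mathcal{B}$ is 1-in-3 satisfiable.

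The step I expect to be the main obstacle is the design of the clause gadget. Forcing ``at most one true variable per clause'' is comparatively easy: if the three connector endpoints inside a clause gadget are made pairwise at distance two, then Property~P3 already forbids two of them from receiving colour $0$. The delicate part is to simultaneously (a) rule out the ``all false'' colouring, which needs a more global argument about how colour $2$ is constrained around a degree-two vertex (a degree-two vertex with two neighbours coloured $1$ cannot itself be coloured $2$, and then Property~P3 is triggered), and (b) ensure that the ``exactly one true'' colourings are genuinely realizable and are compatible with the colours that the connector paths and the variable gadgets impose --- it is this compatibility requirement that dictates the exact lengths of the connectors and the fine structure of the gadgets, and it must be met without spoiling bipartiteness, the degree bound, or the girth bound. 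Once the gadgets are fixed, the forward direction of correctness (turning a 1-in-3 assignment into a colouring) is a routine but lengthy gadget-by-gadget check of the rs condition, whereas the backward direction is a short argument that reads the assignment off an arbitrary 3-rs colouring using the propagation properties above.
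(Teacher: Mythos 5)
Your high-level architecture (reduction from \textsc{CPP 1-in-3 Sat}, NP membership by direct verification, gadgets chosen to preserve planarity, bipartiteness, the degree bound and the girth bound) matches the paper's, but the proposal stops exactly where the proof begins: no gadget is actually exhibited, and the existence of a clause gadget with the forcing properties you list is the entire content of the hardness argument --- as written, your assertions (i) and (ii) are a specification, not a proof. For the record, the paper's construction is considerably leaner than what you envision: there is no variable gadget at all. The variable vertex \( x_i \) is kept as a degree-three vertex, so Property~P1 alone pins it to a binary colour; each clause vertex is replaced by a triangle \( (c_{j1},c_{j2},c_{j3}) \) on the three attachment points, and then \emph{every} edge of the resulting graph is subdivided exactly once, which simultaneously makes the graph bipartite, keeps it subcubic and planar, and turns each triangle into a \( 6 \)-cycle (hence girth six). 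The three claims to verify are then: exactly one of \( c_{j1},c_{j2},c_{j3} \) receives colour \( 0 \) (Property~P3 gives ``at most one''; ``at least one'' needs a short argument inside the \( 6 \)-cycle), \( f(c_{jk})=1-f(x_i) \) across the subdivision vertex \( y_{ij} \), and the gadget admits the complementary colouring scheme.

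One element of your plan is not merely unfinished but would fail: the ``long connector paths''. The colour of a \( 3 \)-plus vertex is transmitted along a path of degree-two vertices only when the path is very short: by the paper's own analysis (Theorem~\ref{thm:3-rsc extension paths}), for every \( n\geq 7 \) the path \( P_n \) admits a \( 3 \)-rs colouring with \emph{any} prescribed pair of binary colours at its endpoints, the only forbidden endpoint combinations occurring at \( n\in\{2,3,4,6\} \). So a long connector transmits no information from the variable side to the clause side, and your assertion (i) cannot hold for any choice of gadgets once the connectors are long. The girth requirement does not force long connectors either: a single subdivision of each edge already achieves girth six, since all cycles pass through at least two sides of a clause triangle. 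Moreover, even with the length-two connector \( x_i,y_{ij},c_{jk} \), Property~P3 only yields the implication \( f(x_i)=0\Rightarrow f(c_{jk})=1 \); the converse direction \( f(x_i)=1\Rightarrow f(c_{jk})=0 \) is not a consequence of the path properties alone and must be extracted from the structure of the clause \( 6 \)-cycle via Observation~\ref{obs:colour propagation}, which is another reason the connector and the clause gadget cannot be designed independently, as your plan assumes.
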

\begin{proof}
\textsc{3-RS Colourability} is in NP because given a 3-colouring \( f \) (certificate) of the input graph, we can verify in polynomial time that all bicoloured paths \( x,y,z \) satisfy \( f(y)<f(x) \). 

\begin{figure}[hbt]
\centering
%\hspace{-1.5cm}
  \begin{subfigure}[b]{0.3\textwidth}
  \centering
\scalebox{0.5}{
  \begin{tikzpicture}[node distance=2cm,line width=1pt]
\tikzset{
dot/.style={draw,fill,circle,inner sep = 0pt,minimum size = 3pt},
vcolour/.style={draw,inner sep=1.5pt,font=\scriptsize,label distance=2pt},
subgraph/.style={draw,ellipse,minimum width=1.75cm,minimum height=2cm},
subgraphHoriz/.style={draw,ellipse,minimum width=1.5cm,minimum height=1.25cm},
}
  %\tikzstyle every node=[draw,circle]
  \tikzstyle every label=[font=\Large]
  \tikzstyle bigDot=[dot,minimum size=4pt]

  \node [bigDot] (x1)[label=right:\( x_1 \)]{};
  \node [bigDot] (x2)[right of=x1,node distance=2cm,label=right:\( x_2 \)]{};
  \node [bigDot] (x3)[right of=x2,node distance=2cm,label=right:\( x_3 \)]{};
  \node [bigDot] (x4)[right of=x3,node distance=2cm,label=right:\( x_4 \)]{};

  % rest of the graph for formula B (ie G_B )
  \node [bigDot] (C1) [above = 2.5 of x2][label={above:\( C_1 \)}]{};
  \node [bigDot] (C2) [below = 4.5 of x2,xshift=-2pt][label={below:\( C_2 \)}]{};
  \node [bigDot] (C3) [above = 4.5 of x3,xshift=2pt][label={above:\( C_3 \)}]{};
  \node [bigDot] (C4) [below = 2.5 of x3][label={below:\( C_4 \)}]{};
  \draw [rounded corners=0.5cm]
  (x1)--(C1)
  %(x1)|-(C3)
  (x2)--(C1)
  (x3)--(C1)
  (x3)--(C3)
  (x4)|-(C3)
  (x1)|-(C2)
  (x2)--(C2)
  (x2)--(C4)
  (x3)--(C4)
  (x4)--(C4);
  %(x4)|-(C2);
  \draw
  (x1) .. controls +(0,4.75).. (C3)
  (x4) .. controls +(0,-4.75).. (C2);
  \end{tikzpicture}
}
  %\vspace*{1mm}
  \caption{Graph \( G_\mathcal{B} \) for the formula \( (x_1\vee x_2\vee x_3) \) \( \wedge \) \( (x_1\vee x_2\vee x_4) \) \( \wedge \) \( (x_1\!\vee\!x_3\!\vee\!x_4) \) \( \wedge \) \( (x_2\!\vee\!x_3\!\vee\!x_4) \)}
  \label{fig:eg graph G(B)}
  \end{subfigure}%
%\hspace{1.5cm}%fill
\hfill
  \begin{subfigure}[b]{0.3\textwidth}
  \centering
\scalebox{0.5}{
  \begin{tikzpicture}[node distance=1.5cm,label distance=-0.5mm,line width=1pt]
\tikzset{
dot/.style={draw,fill,circle,inner sep = 0pt,minimum size = 3pt},
vcolour/.style={draw,inner sep=1.5pt,font=\scriptsize,label distance=2pt},
subgraph/.style={draw,ellipse,minimum width=1.75cm,minimum height=2cm},
subgraphHoriz/.style={draw,ellipse,minimum width=1.5cm,minimum height=1.25cm},
}
  \tikzstyle every label=[font=\Large]
  \tikzstyle bigDot=[dot,minimum size=4pt]
  %\tikzstyle varNode=[draw,circle,inner sep=0pt]
  \node [bigDot] (x1)[label=right:\( x_1 \)]{};
  \node [bigDot] (x2)[right of=x1,node distance=2cm,label=right:\( x_2 \)]{};
  \node [bigDot] (x3)[right of=x2,node distance=2cm,label=right:\( x_3 \)]{};
  \node [bigDot] (x4)[right of=x3,node distance=2cm,label=right:\( x_4 \)]{};
  
  \node [bigDot] (c12)[above of=x2,label=right:\( c_{12} \),node distance=2cm]{};
  \node [bigDot] (c11)[above left of=c12,label=above:\( c_{11} \)]{};
  \node [bigDot] (c13)[above right of=c12,label=above:\( c_{13} \)]{};
  
  \node [bigDot] (c32)[above of=x3,xshift=2pt,label=right:\( c_{32} \),node distance=4cm]{};
  \node [bigDot] (c31)[above left of=c32,label=above:\( c_{31} \)]{};
  \node [bigDot] (c33)[above right of=c32,label=above:\( c_{33} \)]{};
  
  \node [bigDot] (c42)[below of=x3,label=right:\( c_{42} \),node distance=2cm]{};
  \node [bigDot] (c41)[below left of=c42,label=below:\( c_{41} \)]{};
  \node [bigDot] (c43)[below right of=c42,label=below:\( c_{43} \)]{};
  
  \node [bigDot] (c22)[below of=x2,xshift=-2pt,label=right:\( c_{22} \),node distance=4cm]{};
  \node [bigDot] (c21)[below left of=c22,label=below:\( c_{21} \)]{};
  \node [bigDot] (c23)[below right of=c22,label=below:\( c_{23} \)]{};
  
  \draw [rounded corners=0.5cm]
  (x1)--(c11)
  %(x1)|-(c31)
  (x2)--(c12)
  (x3)--(c13)
  (x3)--(c32)
  (x4)|-(c33)
  (x1)|-(c21)
  (x2)--(c22)
  (x2)--(c41)
  (x3)--(c42)
  (x4)--(c43)
  %(x4)|-(c23)
  (c11)--(c12)--(c13)--(c11)
  (c21)--(c22)--(c23)--(c21)
  (c31)--(c32)--(c33)--(c31)
  (c41)--(c42)--(c43)--(c41);
  \draw 
  (x1) .. controls +(0,5).. (c31)
  (x4) .. controls +(0,-5).. (c23);
  
  \end{tikzpicture}
}
  \captionsetup{width=0.9\linewidth}
  %\captionsetup{justification=raggedright}
  \caption{The intermediate graph obtained from graph \( G_\mathcal{B} \) in~(a)}% by replacing each vertex \( C_j \) by a triangle}%Figure~(\ref{fig:eg Graph G(B)})}
  \label{fig:eg intermediate graph}
  \end{subfigure}%\hspace{0.5cm}%
\hfill
%\hspace{1.25cm}%fill
  %\centering
  \begin{subfigure}[b]{0.3\textwidth}%----------------------------------------------------->subfigure 2: graph G
  \centering
\scalebox{0.5}{
  \begin{tikzpicture}[node distance=1.5cm,label distance=-0.5mm,line width=1pt]
\tikzset{
dot/.style={draw,fill,circle,inner sep = 0pt,minimum size = 3pt},
vcolour/.style={draw,inner sep=1.5pt,font=\scriptsize,label distance=2pt},
subgraph/.style={draw,ellipse,minimum width=1.75cm,minimum height=2cm},
subgraphHoriz/.style={draw,ellipse,minimum width=1.5cm,minimum height=1.25cm},
}
  \tikzstyle every label=[font=\Large]
  \tikzstyle bigDot=[dot,minimum size=4pt]
  %\tikzstyle varNode=[draw,circle,inner sep=0pt]
  
  \node [bigDot] (x1)[label=right:\( x_1 \)]{};
  \node [bigDot] (x2)[right of=x1,node distance=2cm,label=right:\( x_2 \)]{};
  \node [bigDot] (x3)[right of=x2,node distance=2cm,label=right:\( x_3 \)]{};
  \node [bigDot] (x4)[right of=x3,node distance=2cm,label=right:\( x_4 \)]{};
  
  \node [bigDot] (c12)[above of=x2,label=right:\( c_{12} \),node distance=2cm]{};
  \node [bigDot] (c11)[above left of=c12,label=above:\( c_{11} \)]{};
  \node [bigDot] (c13)[above right of=c12,label=above:\( c_{13} \)]{};
  
  \node [bigDot] (c32)[above of=x3,xshift=2pt,label=right:\( c_{32} \),node distance=4cm]{};
  \node [bigDot] (c31)[above left of=c32,label=above:\( c_{31} \)]{};
  \node [bigDot] (c33)[above right of=c32,label=above:\( c_{33} \)]{};
  
  \node [bigDot] (c42)[below of=x3,label=right:\( c_{42} \),node distance=2cm]{};
  \node [bigDot] (c41)[below left of=c42,label=below:\( c_{41} \)]{};
  \node [bigDot] (c43)[below right of=c42,label=below:\( c_{43} \)]{};
  
  \node [bigDot] (c22)[below of=x2,xshift=-2pt,label=right:\( c_{22} \),node distance=4cm]{};
  \node [bigDot] (c21)[below left of=c22,label=below:\( c_{21} \)]{};
  \node [bigDot] (c23)[below right of=c22,label=below:\( c_{23} \)]{};
  
  \draw [rounded corners=0.5cm]
  (x1) -- node[bigDot](y11)[label={right:\( y_{11} \)}]{} (c11)
  (x1) .. controls +(0,5).. node[bigDot](y13)[label=left:\( y_{13} \)]{} (c31)
  (x2)--node[bigDot](y21)[label=right:\( y_{21} \)]{} (c12)
  (x3.105)--node[bigDot](y31)[label={left:\( y_{31} \)}]{} (c13)
  (x3.75)--node[bigDot](y33)[label=right:\( y_{33} \)]{} (c32)
  (x4) .. controls +(0,5).. node[bigDot](y43)[label=right:\( y_{43} \)]{} (c33)
  (x1) .. controls +(0,-5).. node[bigDot](y12)[label=left:\( y_{12} \)]{} (c21)
  (x2.-105)--node[bigDot](y22)[label=left:\( y_{22} \)]{} (c22)
  (x2.-75)--node[bigDot](y24)[label={right:\( y_{24} \)}]{} (c41)
  (x3)--node[bigDot](y34)[label=right:\( y_{34} \)]{} (c42)
  (x4.-105)--node[bigDot](y44)[label={left:\( y_{44} \)}]{} (c43)
  (x4.-75) .. controls +(0,-5).. node[bigDot](y42)[label=right:\( y_{42} \)]{} (c23)
  
  (c11)--node[bigDot](b11)[midway,label={[xshift=2pt,yshift=-3pt]left:\( b_{11} \)}]{} (c12)--node[bigDot](b12)[midway,label={[xshift=-1pt,yshift=-2pt]right:\( b_{12} \)}]{} (c13)--node[bigDot](b13)[midway,label=above:\( b_{13} \)]{} (c11)
  (c21)--node[bigDot](b21)[midway,label=left:\( b_{21} \)]{} (c22)--node[bigDot](b22)[midway,label=right:\( b_{22} \)]{} (c23)--node[bigDot](b23)[midway,label={[yshift=2pt]below:\( b_{23} \)}]{} (c21)
  (c31)--node[bigDot](b31)[midway,label=left:\( b_{31} \)]{} (c32)--node[bigDot](b32)[midway,label=right:\( b_{32} \)]{} (c33)--node[bigDot](b33)[midway,label=above:\( b_{33} \)]{} (c31)
  (c41)--node[bigDot](b41)[midway,label={[xshift=2pt,yshift=6pt]left:\( b_{41} \)}]{} (c42)--node[bigDot](b42)[midway,label={[xshift=-2pt,yshift=4pt]right:\( b_{42} \)}]{} (c43)--node[bigDot](b43)[midway,label=below:\( b_{43} \)]{} (c41);
  
  \end{tikzpicture}
}
  %\captionsetup{justification=raggedright}
  \captionsetup{width=0.95\linewidth}
  \caption{The graph \( G \) \mbox{obtained} from the intermediate graph in~(b)}%, the\\ 1-subdivision of graph in (b)}
  \label{fig:eg graph G}
  \end{subfigure}
%\caption{Example for construction of graph \( G \) from \( G_\mathcal{B} \)}
\caption{Construction of graph \( G \) from \( G_\mathcal{B} \) in Theorem~\ref{thm:3-rsc planar bipartite girth 6}}
\label{fig:G from G(B)}
\end{figure}
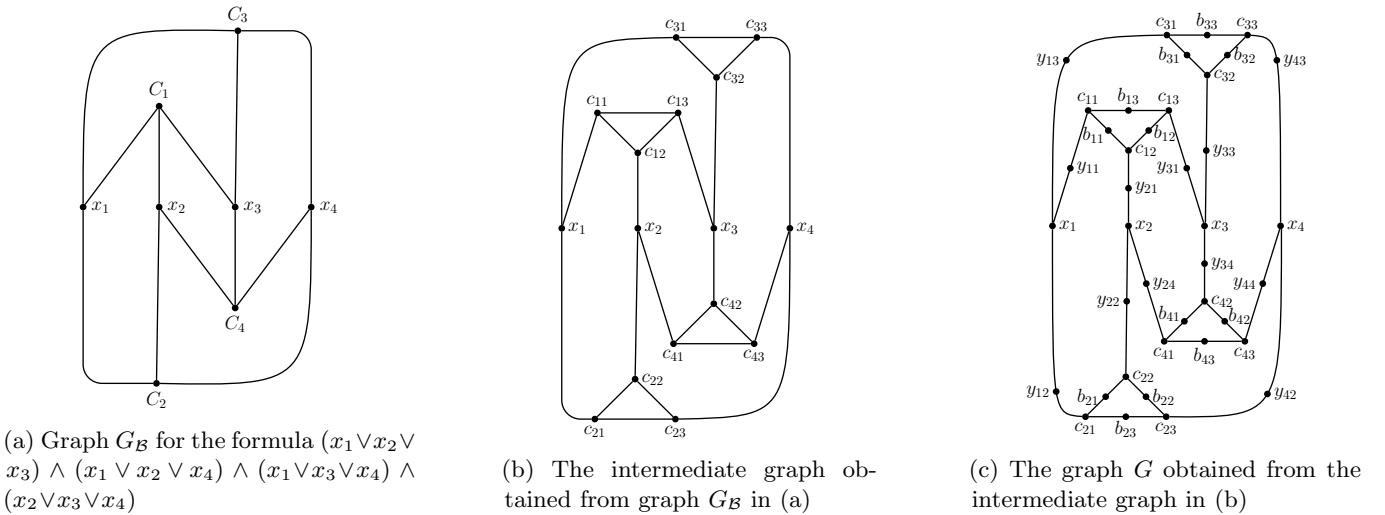
To prove NP-hardness, we transform \textsc{CPP 1-in-3 Sat} problem to \textsc{3-RS Colourability} problem. Let \( \mathcal{B}=(X,C) \) be an instance of \textsc{CPP 1-in-3 Sat} where \( X=\{x_1,x_2,\dots,x_m\} \) and \( C=\{C_1,C_2,\dots,C_m\} \). Recall that \( \mathcal{B} \) is a positive CNF formula and \( G_\mathcal{B} \) is a cubic planar graph. We construct a graph \( G \) from \( G_\mathcal{B} \) as follows. First, an intermediate graph is constructed. For each clause \( C_j=\{x_{j_1},x_{j_2},x_{j_3}\} \), replace vertex \( C_j \) in \( G_\mathcal{B} \) by a triangle \( (c_{j1},c_{j2},c_{j3}) \) and replace edges \( x_{j_1}C_j \), \( x_{j_2}C_j \), \( x_{j_3}C_j \) in \( G_\mathcal{B} \) by edges \( x_{j_1}c_{j1} \), \( x_{j_2}c_{j2} \), \( x_{j_3}c_{j3} \) (see Figure~\ref{fig:G from G(B)}).

The graph \( G \) is obtained by subdividing each edge of this intermediate graph exactly once. Let us call the new vertex introduced upon subdividing the edge \( x_ic_{jk} \) as \( y_{ij} \), and the new vertex introduced upon subdividing the edge \( c_{jk}c_{j\,k+1} \) as \( b_{jk} \) (where index \( k+1 \) is modulo 3). Each 6-vertex cycle \( (c_{j1},b_{j1},c_{j2},b_{j2},c_{j3},b_{j3}) \) serves as the gadget for clause \( C_j \).
% We call (i) each vertex \( x_i \) in the constructed graph \( G \) as the gadget for variable \( x_i \), (ii) each 6-vertex cycle \( (c_{j1},b_{j1},c_{j2},b_{j2},c_{j3},b_{j3}) \) as the gadget for clause \( C_j \), and (iii) each path \( x_i,y_{ij},c_{jk} \) as the gadget that represents the occurrence of variable \( x_i \) in clause \( C_j \).
Since the intermediate graph is a cubic planar graph of girth three (see Figure~\ref{fig:eg intermediate graph}), \( G \) is a subcubic planar bipartite graph of girth six (see Figure~\ref{fig:eg graph G}).

The graph \( G_\mathcal{B} \) can be constructed in \( O(m) \) time, and it has \( 2m \) vertices and \( 3m \) edges. Also, \( G \) can be constructed from \( G_\mathcal{B} \) in \( O(m) \) time since there are only \( 10m \) vertices and \( 12m \) edges in \( G \). 
All that remains is to prove that \( G \) is 3-rs colourable if and only if \( \mathcal{B} \) is a yes instance of \textsc{CPP 1-in-3 Sat}. 
The following claims help to establish this.\\ 
%This is established with the help of the following claims.\\

\begin{itemize}[topsep=0pt,label={CL\arabic*)},leftmargin=\widthof{[CL2]}+\labelsep,itemsep=3pt]
\item[CL1)] If \( f \) is a 3-rs colouring of \( G \),
then for each \( j \), exactly one vertex among \( c_{j1},c_{j2},c_{j3} \) is coloured~0 by \( f \).
\item[CL2)] If \( f \) is a 3-rs colouring of \( G \),
then \( f(c_{jk})=1-f(x_i) \) whenever \( x_i,y_{ij},c_{jk} \) is a path in \( G \).%\( (i\in[n],k=1,2,3) \)
\item[CL3)] The clause gadget admits a 3-rs colouring scheme that assigns colour 0 on one of the vertices \( c_{j1},c_{j2},c_{j3} \) and colour 1 on the other two vertices.\\
\end{itemize}
%~\\  % please uncomment newline if \setlength{\parskip}{\baselineskip} is commented in preamble

Since \( x_i \)'s and \( c_{jk} \)'s are 3-plus vertices in \( G \), they must receive binary colours by Property~P1. % (i.e., \( f(x_i),f(c_{jk})\in\{0,1\} \) for \( 1\leq i,j\leq n \), \( 1\leq k\leq 3 \) ).
Observe that every pair of vertices from \( c_{j1},c_{j2},c_{j3} \) is at a distance two from each other. Since Property~P3 forbids assigning colour 0 on both endpoints of a \( P_3 \), at most one vertex among \( c_{j1},c_{j2},c_{j3} \) is coloured~0 by \( f \). Thus, to prove claim~CL1, it suffices to show that at least one of them is coloured~0 by \( f \). On the contrary, assume that \( f(c_{j1})=f(c_{j2})=f(c_{j3})=1 \). Since \( c_{j1},b_{j1},c_{j2} \) is a bicoloured \( P_3 \) with colour 1 at the endpoints, its middle vertex must be coloured~0. That is, \( f(b_{j1})=0 \). Similarly, \( f(b_{j2})=0 \). This means that \( b_{j1},c_{j2},b_{j2} \) is a \( P_3 \) with \( 1=f(c_{j2})>f(b_{j1})=f(b_{j2})=0 \)\,; a contradiction. This proves claim~CL1.

Next, we prove claim~CL2 for \( k=1 \). The proof is similar for other values of \( k \). Let \( x_i,y_{ij},c_{j1} \) be a path in \( G \) where \( i,j\in\{1,2,\dots,m\} \). Recall that \( f(x_i),f(c_{j1})\in\{0,1\} \). If \( f(x_i)=0 \), then \( f(c_{j1})=1 \) due to Property~P3. So, it suffices to prove that \( f(x_i)=1 \) implies \( f(c_{j1})=0 \). On the contrary, assume that \( f(x_i)=f(c_{j1})=1 \). Since \( x_i,y_{ij},c_{j1} \) is a bicoloured \( P_3 \) with colour 1 at its endpoints, its middle vertex \( y_{ij} \) must be coloured 0 (by \( f \)). Thus, we have \( f(y_{ij})=0 \) and \( f(c_{j1})=1 \). Therefore, applying Observation~\ref{obs:colour propagation} on paths \( y_{ij},c_{j1},b_{j1},c_{j2} \) and \( y_{ij},c_{j1},b_{j3},c_{j3} \) gives \( f(b_{j1})=f(b_{j3})=2 \) and \( f(c_{j2})=f(c_{j3})=0 \) (see Figure~\ref{fig:proof of CL2}). The equation \( f(c_{j2})=f(c_{j3})=0 \) contradicts Property~P3. This completes the proof of claim~CL2.

Figure~\ref{fig:3-rsc extension old clause gadget} exhibits the colouring scheme guaranteed by claim~CL3.
\begin{figure}[hbt]
\centering
\begin{minipage}[b]{0.48\textwidth}% proof for claim CL2 (part)
\centering
\begin{tikzpicture}
\tikzset{
dot/.style={draw,fill,circle,inner sep = 0pt,minimum size = 3pt},
vcolour/.style={draw,inner sep=1.5pt,font=\scriptsize,label distance=2pt},
subgraph/.style={draw,ellipse,minimum width=1.75cm,minimum height=2cm},
subgraphHoriz/.style={draw,ellipse,minimum width=1.5cm,minimum height=1.25cm},
}
\draw (0,0) node[dot](xi)[label=below:\( x_i \)][label={[vcolour]above:1}]{} --++(1,0) node[dot](yij)[label=below:\( y_{ij} \)][label={[vcolour]above:0}]{} --++(1,0) node[dot](cj1)[label=below:\( c_{j1} \)][label={[vcolour,xshift=-5pt]above:1}]{};

\draw (cj1) --++(1,1.5) node[dot](cj2)[label=above:\( c_{j2} \)][label={[vcolour,yshift=-3pt]below:0}]{} --++(1,-1.5) node[dot](cj3)[label=below:\( c_{j3} \)][label={[vcolour,xshift=-5pt]above left:0}]{} --(cj1);
\path (cj1) --node[dot](bj1)[label=left:\( b_{j1} \)][label={[vcolour]right:2}]{} (cj2)
      (cj2) --node[dot](bj2)[label=right:\( b_{j2} \)]{} (cj3)
      (cj3) --node[dot](bj3)[label=below:\( b_{j3} \)][label={[vcolour]above:2}]{} (cj1);

\draw[ultra thick] (cj2)--(bj2)--(cj3);
\end{tikzpicture}
\caption{\( f(x_i)=f(c_{j1})=1 \) leads to a contradiction}
\label{fig:proof of CL2}
\end{minipage}
\hfill
\begin{minipage}[b]{0.48\textwidth}%colour propagation
\centering
\begin{tikzpicture}
\tikzset{
dot/.style={draw,fill,circle,inner sep = 0pt,minimum size = 3pt},
vcolour/.style={draw,inner sep=1.5pt,font=\scriptsize,label distance=2pt},
subgraph/.style={draw,ellipse,minimum width=1.75cm,minimum height=2cm},
subgraphHoriz/.style={draw,ellipse,minimum width=1.5cm,minimum height=1.25cm},
}
\draw (0,0) node[dot](cj1)[label=below:\( c_{j1} \)][label={[vcolour,xshift=5pt]above right:0}]{} --++(1,1.5) node[dot](cj2)[label=above:\( c_{j2} \)][label={[vcolour,yshift=-3pt]below:1}]{} --++(1,-1.5) node[dot](cj3)[label=below:\( c_{j3} \)][label={[vcolour,xshift=-5pt]above left:1}]{} --(cj1);
\path (cj1) --node[dot](bj1)[label=left:\( b_{j1} \)][label={[vcolour]right:2}]{} (cj2)
      (cj2) --node[dot](bj2)[label=right:\( b_{j2} \)][label={[vcolour]left:0}]{} (cj3)
      (cj3) --node[dot](bj3)[label=below:\( b_{j3} \)][label={[vcolour]above:2}]{} (cj1);
\end{tikzpicture}
\caption{3-rs colouring scheme for the clause gadget guaranteed by claim~CL3 (`rotate' colours if \( c_{j2} \) or \( c_{j3} \) gets colour 0)}
\label{fig:3-rsc extension old clause gadget}
\end{minipage}
\end{figure}

Now, we are ready to prove that \( G \) is 3-rs colourable if and only if \( \mathcal{B} \) is a yes instance of \textsc{CPP 1-in-3 Sat}. Suppose that \( G \) has a 3-rs colouring \( f \). Since \( x_i \)'s are 3-plus vertices in \( G \), \( f(x_i)\in\{0,1\} \) for \( 1\leq i\leq m \). We define a truth assignment \( \mathcal{A} \) for \( X \) by setting variable \( x_i\gets \text{true} \) if \( f(x_i)=1 \), and \( x_i\gets \text{false} \) if \( f(x_i)=0 \). We claim that each clause \( C_j \) has exactly one true variable under \( \mathcal{A} \) (\( 1\leq j\leq m \)). Let \( C_j=\{x_p,x_q,x_r\} \). By claim~CL1, exactly one vertex among \( c_{j1},c_{j2},c_{j3} \) is coloured 0 under \( f \). Without loss of generality, assume that \( f(c_{j1})=0 \) and \( f(c_{j2})=f(c_{j3})=1 \). As \( x_p,y_{pj},c_{j1} \) is a path in \( G \), \( f(x_p)=1-f(c_{j1})=1 \) by claim~CL2. Similarly, \( f(x_q)=1-f(c_{j2})=0 \) and \( f(x_r)=1-f(c_{j3})=0 \) (consider the path \( x_q,y_{qj},c_{j2} \) and the path \( x_r,y_{rj},c_{j3} \)\,). Hence, by definition of \( \mathcal{A} \), \( x_p \) is true whereas \( x_q \) and \( x_r \) are false. Therefore, \( \mathcal{A} \) is a truth assignment such that each clause \( C_j \) has exactly one true variable (\( 1\leq j\leq m \)). So, \( \mathcal{B} \) is a yes instance of \textsc{CPP 1-in-3 Sat}.

Conversely, suppose that \( X \) has a truth assignment \( \mathcal{A} \) such that each clause has exactly one true variable. We produce a 3-rs colouring \( f \) of \( G \) as follows. 
First, colour vertices \( x_i \) by the rule: \( f(x_i)=1 \) if \( x_i \) is true; otherwise, \( f(x_i)=0 \). 
%First, colour vertices \( x_i \) by the rule \( f(x_i)=1 \) if \( x_i \) is true, and \( f(x_i)=0 \) otherwise. 
Next, vertices \( y_{ij} \) and \( c_{jk} \) are coloured. Assign colour~2 to all \( y_{ij} \)'s. Whenever \( x_{i},y_{ij},c_{jk} \) is a path in \( G \), assign \( f(c_{jk})=1-f(x_i) \). This ensures that the path \( x_i,y_{ij},c_{jk} \) is not bicoloured. 
Finally, extend the partial colouring to clause gadgets using the scheme guaranteed by claim~CL3. 
%Finally, for each \( j=1,2,\dots,n \), colour vertices \( b_{j1},b_{j2},b_{j3} \) using the scheme shown in Figure~\ref{fig:3-rsc extension old clause gadget} (`rotate' the colours when \( f(c_{j2})=0 \) or \( f(c_{j3})=0 \); by claim CL1, exactly one vertex among \( c_{j1},c_{j2},c_{j3} \) is coloured 0). Note that existence such a scheme proves claim CL3.
We claim that \( f \) is a 3-rs colouring of \( G \). Obviously, \( f \) is a 3-colouring. The bicoloured \( P_3 \)'s in \( G \) are either entirely within a clause gadget or has a vertex \( y_{ij} \) as an endpoint. Since clause gadgets are coloured by a 3-rs colouring scheme, every bicoloured \( P_3 \) of the first kind has a lower colour on its middle vertex. Besides, every bicoloured \( P_3 \) of the second kind has colour 2 at its endpoints because \( y_{ij} \)'s are coloured 2, and thus its middle vertex has a lower colour. Therefore, \( f \) is a 3-rs colouring of \( G \).

%Since no two adjacent vertices in \( G \) have the same colour, \( f \) is indeed a 3-colouring of \( G \). The bicoloured \( P_3 \)'s in \( G \) are either entirely inside a clause gadget, or of the form (i)~\( y_{ij},x_i,y_{ij\bm{'}} \), or (ii)~\( y_{ij},c_{jk},b_{jk\bm{'}} \). Every bicoloured \( P_3 \) within a clause gadget has colour 0 on its middle vertex (see Figure~\ref{fig:3-rsc extension old clause gadget}). Every bicoloured \( P_3 \) of the form \( y_{ij},x_i,y_{ij\bm{'}} \) or \( y_{ij},c_{jk},b_{jk\bm{'}} \) has colour 2 at its endpoints because all \( y_{ij} \)'s are coloured 2. So, no bicoloured \( P_3 \) in \( G \) has the higher colour on its middle vertex, and thus \( f \) is a 3-rs colouring. Therefore, \( G \) is 3-rs colourable.

%This completes the proof for NP-hardness of \textsc{3-RS Colourability} problem when restricted to the class of subcubic planar bipartite graphs of girth at least six.
%\qed
So, \( \mathcal{B} \) is a yes instance of \textsc{CPP 1-in-3 Sat} if and only if \( G \) is 3-rs colourable. 
\end{proof}

The reduction in Theorem~\ref{thm:3-rsc planar bipartite girth 6} can be modified to give \( G\bm{'} \) arbitrarily large girth. The modification required is to replace the clause gadget by the gadget displayed in Figure~\ref{fig:new clause gadget}.

%The girth of the graph constructed in the previous reduction (Theorem~\ref{thm:3-rsc planar bipartite girth 6}) is six. The construction of the graph can be altered to make girth as large as we please. To have girth at least \( g \), it suffices to choose \( s \) as the least even number greater than or equal to \( \lceil g/6 \rceil \), and replace the clause gadget by the gadget shown in Figure~\ref{fig:new clause gadget}. 
%then use the gadget shown in Figure~\ref{fig:new clause gadget} in place of the current clause gadget \( (c_{j1},b_{j1},c_{j2},b_{j2},c_{j3},b_{j3}) \).

\begin{figure}[hbtp]
\centering
\begin{subfigure}[b]{0.52\textwidth}
\centering
\begin{tikzpicture}[scale=0.45]
\draw (0,0) node[dot](cj1)[label={[label distance=-2pt]below left:\( c_{j1} \)}]{} --++(60:1) node[dot](pj1 1){} --++(60:1) node[dot](qj1 1){} --++(60:1) node[dot](rj1 1)[label=right:\( a_{j1}^{(1)} \)]{} --++(60:1) node[dot](pj1 2){} --++(60:1) node[dot](qj1 2){} --++(60:1) node[dot](rj1 2)[label=right:\( a_{j1}^{(2)} \)]{} --++(60:0.25) node{}
++(60:1.25) node{} --++(60:0.25) node[dot](pj1 s){} --++(60:1) node[dot](qj1 s){} --++(60:1) node[dot](rj1 s)[label={[label distance=-2pt]right:\( a_{j1}^{(s)} \)}]{} --++(60:1) node[dot](bj1)[label={[label distance=-2pt]above left:\( b_{j1} \)}]{} --++(60:1) node[dot](cj2)[label=above:\( c_{j2} \)]{};
\path (rj1 2)--node[sloped]{\( \cdots \)} (pj1 s);
\draw (rj1 1)--+(150:1) node[dot]{}
      (rj1 2)--+(150:1) node[dot]{}
      (rj1 s)--+(150:1) node[dot]{};

\draw (cj2) --++(-60:1) node[dot](pj2 1){} --++(-60:1) node[dot](qj2 1){} --++(-60:1) node[dot](rj2 1)[label={[label distance=-2pt]left:\( a_{j2}^{(1)} \)}]{} --++(-60:1) node[dot](pj2 2){} --++(-60:1) node[dot](qj2 2){} --++(-60:1) node[dot](rj2 2)[label=left:\( a_{j2}^{(2)} \)]{} --++(-60:0.25) node{}
++(-60:1.25) node{} --++(-60:0.25) node[dot](pj2 s){} --++(-60:1) node[dot](qj2 s){} --++(-60:1) node[dot](rj2 s)[label=left:\( a_{j2}^{(s)} \)]{} --++(-60:1) node[dot](bj2)[label=right:\( b_{j2} \)]{} --++(-60:1) node[dot](cj3)[label={[label distance=-2pt]below:\( c_{j3} \)}]{};
\path (rj2 2)--node[sloped]{\( \cdots \)} (pj2 s);
\draw (rj2 1)--+(30:1) node[dot]{}
      (rj2 2)--+(30:1) node[dot]{}
      (rj2 s)--+(30:1) node[dot]{};

\draw (cj3) --++(-1,0) node[dot](pj3 1){} --++(-1,0) node[dot](qj3 1){} --++(-1,0) node[dot](rj3 1)[label=above:\( a_{j3}^{(1)} \)]{} --++(-1,0) node[dot](pj3 2){} --++(-1,0) node[dot](qj3 2){} --++(-1,0) node[dot](rj3 2)[label=above:\( a_{j3}^{(2)} \)]{} --++(-0.25,0) node{}
++(-1.25,0) node{} --++(-0.25,0) node[dot](pj3 s){} --++(-1,0) node[dot](qj3 s){} --++(-1,0) node[dot](rj3 s)[label=above:\( a_{j3}^{(s)} \)]{} --++(-1,0) node[dot](bj3)[label={[label distance=-3pt]below:\( b_{j3} \)}]{} -- (cj1);
\path (rj3 2)--node[sloped]{\( \cdots \)} (pj3 s);
\draw (rj3 1)--+(-90:1) node[dot]{}
      (rj3 2)--+(-90:1) node[dot]{}
      (rj3 s)--+(-90:1) node[dot]{};
\end{tikzpicture}
\caption{The new clause gadget. This replaces the old clause gadget \( (c_{j1},b_{j1},c_{j2},b_{j2},c_{j3},b_{j3}) \)}
\label{fig:new clause gadget}
\end{subfigure}%
\hfill
\begin{subfigure}[b]{0.44\textwidth}
\centering
\begin{tikzpicture}[scale=0.57]
\draw (0,0) node[dot](cj1)[label=below:\( c_{j1} \)][label={[vcolour]left:0}]{} --++(60:1) node[dot](pj1 1)[label={[vcolour]left:2}]{} --++(60:1) node[dot](qj1 1)[label={[vcolour]left:1}]{} --++(60:1) node[dot](rj1 1)[label={[vcolour,yshift=-2pt]left:0}][label={[label distance=-8pt]below right:\( a_{j1}^{(1)} \)}]{} --++(60:1) node[dot](pj1 2)[label={[vcolour]left:2}]{} --++(60:1) node[dot](qj1 2)[label={[vcolour]left:1}]{} --++(60:1) node[dot](rj1 2)[label={[vcolour,yshift=-2pt]left:0}][label={[label distance=-8pt]below right:\( a_{j1}^{(2)} \)}]{} --++(60:1) node[dot](bj1)[label={[label distance=-5pt]below right:\( b_{j1} \)}][label={[vcolour]left:2}]{} --++(60:1) node[dot](cj2)[label=above:\( c_{j2} \)][label={[vcolour]right:1}]{};
\draw (rj1 1)--+(150:1) node[dot](lj1 1)[label={[vcolour]left:2}]{}
      (rj1 2)--+(150:1) node[dot](lj1 2)[label={[vcolour]left:2}]{};

\draw (cj2) --++(-60:1) node[dot](pj2 1)[label={[vcolour]right:0}]{} --++(-60:1) node[dot](qj2 1)[label={[vcolour]right:2}]{} --++(-60:1) node[dot](rj2 1)[label={[vcolour,yshift=-2pt]right:1}][label={[font=\small]left:\( a_{j2}^{(1)} \)}]{} --++(-60:1) node[dot](pj2 2)[label={[vcolour]right:0}]{} --++(-60:1) node[dot](qj2 2)[label={[vcolour]right:2}]{} --++(-60:1) node[dot](rj2 2)[label={[vcolour,yshift=-2pt]right:1}][label={[font=\small]left:\( a_{j2}^{(2)} \)}]{} --++(-60:1) node[dot](bj2)[label={[yshift=3pt]left:\( b_{j2} \)}][label={[vcolour]right:0}]{} --++(-60:1) node[dot](cj3)[label=below:\( c_{j3} \)][label={[vcolour]right:1}]{};
\draw (rj2 1)--+(30:1) node[dot](lj2 1)[label={[vcolour]right:2}]{}
      (rj2 2)--+(30:1) node[dot](lj2 2)[label={[vcolour]right:2}]{};

\draw (cj3) --++(-1,0) node[dot](pj3 1)[label={[vcolour]above:2}]{} --++(-1,0) node[dot](qj3 1)[label={[vcolour]above:0}]{} --++(-1,0) node[dot](rj3 1)[label={[vcolour]above:1}][label={[font=\small,label distance=-6pt]below right:\( a_{j3}^{(1)} \)}]{} --++(-1,0) node[dot](pj3 2)[label={[vcolour]above:2}]{} --++(-1,0) node[dot](qj3 2)[label={[vcolour]above:0}]{} --++(-1,0) node[dot](rj3 2)[label={[vcolour]above:1}][label={[font=\small,label distance=-6pt]below right:\( a_{j3}^{(2)} \)}]{} --++(-1,0) node[dot](bj3)[label=below:\( b_{j3} \)][label={[vcolour]above:2}]{} -- (cj1);
\draw (rj3 1)--+(-90:1) node[dot](lj3 1)[label={[vcolour]below:2}]{}
      (rj3 2)--+(-90:1) node[dot](lj3 2)[label={[vcolour]below:2}]{};
\end{tikzpicture}
\caption{a 3-rs colouring scheme for the new clause gadget if \( s=2 \) (`rotate' colours if \( c_{j2} \) or \( c_{j3} \) gets colour 0)}
\label{fig:3-rs colouring scheme for new clause gadget}
\end{subfigure}%
\caption{(a)The new clause gadget, (b) the new clause gadget for \( s=2 \) with a 3-rs colouring}
\end{figure}
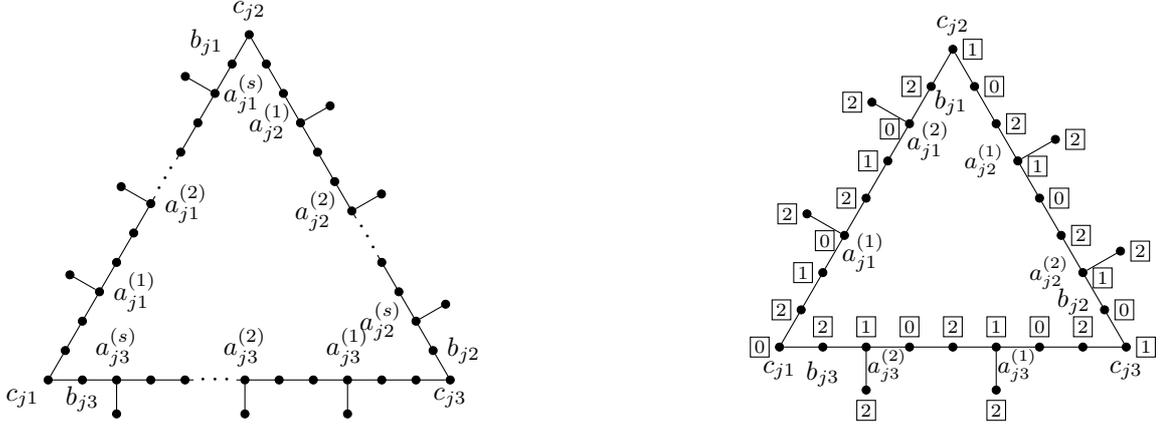

\begin{theorem}
For \( g\geq 6 \), \textsc{3-RS Colourability} is NP-complete for subcubic planar bipartite graphs of girth at least \( g \).
\label{thm:3-rsc planar bipartite girth g}
\end{theorem}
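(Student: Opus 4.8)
The plan is to reuse the reduction from \textsc{CPP 1-in-3 Sat} of Theorem~\ref{thm:3-rsc planar bipartite girth 6} almost verbatim, changing only the clause gadget: keep the variable vertices $x_i$ and the subdivision vertices $y_{ij}$, and replace each six-cycle clause gadget $(c_{j1},b_{j1},c_{j2},b_{j2},c_{j3},b_{j3})$ by the longer gadget of Figure~\ref{fig:new clause gadget}. That gadget consists of three arms, joining $c_{j1}$ to $c_{j2}$, $c_{j2}$ to $c_{j3}$, and $c_{j3}$ to $c_{j1}$; each arm carries $s$ internal vertices $a^{(1)}_{j\ell},\dots,a^{(s)}_{j\ell}$, each bearing a pendant neighbour, and the parameter $s=s(g)$ is chosen to be a sufficiently large even integer. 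The unique cycle inside a gadget has length $9s+6$, and every cycle of $G'$ that meets a variable vertex must pass through at least two distinct gadgets, each between two of its $c$-vertices, and so has length at least $6s+4$; hence for $s$ even and linear in $g$ every cycle of $G'$ has length at least $g$, while the original bipartition survives because each arm has even length $3s+2$. The graph $G'$ is subcubic (the vertices $x_i$, $c_{jk}$ and $a^{(t)}_{j\ell}$ have degree three, all others at most two), planar (the gadget is planar with $c_{j1},c_{j2},c_{j3}$ on its outer face in the cyclic order in which the clause-edges leave $C_j$ in a planar embedding of $G_\mathcal{B}$), and of size $O(sm)=O(m)$ for fixed $g$, so the transformation is polynomial; membership in NP is as in Theorem~\ref{thm:3-rsc planar bipartite girth 6}.

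The core of the proof is to re-establish, for the new gadget, the analogues of claims CL1--CL3. The pendant at each $a^{(t)}_{j\ell}$ makes it a $3$-plus vertex, so it receives a binary colour by Property~P1; and applying Property~P4 in turn to the paths $c_{j\ell},p^{(1)}_{j\ell},q^{(1)}_{j\ell},a^{(1)}_{j\ell}$ and $a^{(t)}_{j\ell},p^{(t+1)}_{j\ell},q^{(t+1)}_{j\ell},a^{(t+1)}_{j\ell}$ shows that every $a^{(t)}_{j\ell}$ inherits the colour of the starting vertex $c_{j\ell}$ of its arm. For the analogue of CL1, I would argue that at most one of $c_{j1},c_{j2},c_{j3}$ is coloured~$0$ (if two consecutive ones $c_{j\ell},c_{j,\ell+1}$ were, then $f(a^{(s)}_{j\ell})=0$ and the $P_3$ $a^{(s)}_{j\ell},b_{j\ell},c_{j,\ell+1}$ would have both endpoints coloured~$0$, against Property~P3), and that at least one is coloured~$0$ (if all three were coloured~$1$, then $f(b_{j\ell})=0$ on every arm, since otherwise $a^{(s)}_{j\ell},b_{j\ell},c_{j,\ell+1}$ is bicoloured with the higher colour on its middle; then the path $b_{j,\ell-1},c_{j\ell},p^{(1)}_{j\ell}$ forces $f(p^{(1)}_{j\ell})=2$, repeated use of Observation~\ref{obs:colour propagation} along the arm gives $f(p^{(t)}_{j\ell})=2$ and $f(q^{(t)}_{j\ell})=0$ for all $t$, and then $q^{(s)}_{j\ell},a^{(s)}_{j\ell},b_{j\ell}$ is a path coloured $0,1,0$, again bicoloured with the higher colour on its middle --- a contradiction). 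Hence exactly one of $c_{j1},c_{j2},c_{j3}$ is coloured~$0$, which is the analogue of CL1.

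The analogue of CL2, namely $f(c_{jk})=1-f(x_i)$ whenever $x_i,y_{ij},c_{jk}$ is a path, goes as in Theorem~\ref{thm:3-rsc planar bipartite girth 6}: $f(x_i)=0$ forces $f(c_{jk})=1$ by Properties~P3 and~P1, and if $f(x_i)=f(c_{jk})=1$ then $f(y_{ij})=0$, so propagating colours with Observation~\ref{obs:colour propagation} along the two arms incident with $c_{jk}$ gives $f(c_{j,k+1})=0=f(c_{j,k-1})$, two consecutive $c$-vertices coloured~$0$, contradicting the part of CL1 just proved. The analogue of CL3 --- that the new gadget has a $3$-rs colouring putting colour~$0$ on one of $c_{j1},c_{j2},c_{j3}$ and colour~$1$ on the other two --- is exhibited in Figure~\ref{fig:3-rs colouring scheme for new clause gadget} for $s=2$, and the period-three pattern there on each arm extends to every even $s$. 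With these three claims, the equivalence ``$G'$ is $3$-rs colourable iff $\mathcal{B}$ is a yes-instance of \textsc{CPP 1-in-3 Sat}'' follows exactly as in Theorem~\ref{thm:3-rsc planar bipartite girth 6}: from a $3$-rs colouring one reads off the truth assignment defined by $x_i\gets\text{true}$ if $f(x_i)=1$ and $x_i\gets\text{false}$ otherwise, which is $1$-in-$3$ by CL1 and CL2; conversely, from a satisfying assignment one colours each $x_i$ by its truth value, every $y_{ij}$ with colour~$2$, each $c_{jk}$ with $1-f(x_i)$, and extends inside each gadget by the CL3-scheme, and then every bicoloured $P_3$ is either inside one gadget (hence well coloured) or has a vertex $y_{ij}$ of colour~$2$ as an endpoint (so its higher colour is not on the middle vertex).

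The one step that really needs care is the colour-propagation analysis inside the new gadget: showing that every $a^{(t)}_{j\ell}$ is forced to take the colour of $c_{j\ell}$, and that the configuration in which all of $c_{j1},c_{j2},c_{j3}$ are coloured~$1$ is infeasible. The pendant vertices are put in for exactly this purpose --- they turn the $a^{(t)}_{j\ell}$ into $3$-plus vertices so that Properties~P1, P4 and Observation~\ref{obs:colour propagation} can be chained down an arm. Once this is in place, bipartiteness, planarity and girth (through the choice of $s$) and the two directions of the equivalence are routine adaptations of the proof of Theorem~\ref{thm:3-rsc planar bipartite girth 6}.
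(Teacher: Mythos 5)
Your proposal is correct and follows essentially the same route as the paper: the same new clause gadget with an even parameter $s$ on the order of $g/6$, the same use of the pendant vertices to force $f(a^{(t)}_{j\ell})=f(c_{j\ell})$ via Property~P4, the same re-derivation of CL1--CL3 by colour propagation, and the same girth/bipartiteness count via the arm length $3s+2$. The only difference is that you spell out the propagation arguments that the paper defers to its supplementary material.
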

\begin{proof}
We employ a modified form of the reduction in Theorem~\ref{thm:3-rsc planar bipartite girth 6}. Let \( s \) be the smallest even number satisfying \( s\geq \lceil\frac{g}{6}\rceil \) (we need \( s \) to be even to ensure that the graph to be constructed is bipartite). Replace the old clause gadget by the new one displayed in Figure~\ref{fig:new clause gadget}. For convenience, let us call the graph constructed in Theorem~\ref{thm:3-rsc planar bipartite girth 6} as \( G \) and the graph produced by the modified construction as \( G_{new} \). For the reduction to work, it suffices to show that claims CL1, CL2 and CL3 still hold in \( G_{new} \). 
\begin{itemize}[topsep=2pt,itemsep=2pt]
\item[CL1)] If \( f \) is a 3-rs colouring of \( G_{new} \), 
then for each \( j \), exactly one vertex among \( c_{j1},c_{j2},c_{j3} \) is coloured~0 by \( f \).
\item[CL2)] If \( f \) is a 3-rs colouring of \( G_{new} \), 
then \( f(c_{jk})=1-f(x_i) \) whenever \( x_i,y_{ij},c_{jk} \) is a path in \( G_{new} \).%\( (i\in[n],k=1,2,3) \)\\
\item[CL3)] The new clause gadget admits a 3-rs colouring scheme that assigns colour~0 on one of the vertices \( c_{j1},c_{j2},c_{j3} \) and colour~1 on the other two vertices.\\
\end{itemize}

Thanks to Claim~1 below, we can prove claims~CL1 and CL2 without difficulty (see supplementary material for complete proof).\\
%The following claim comes to our aid here.\\[2pt]%To prove this, we show 

\noindent Claim 1: \emph{For every 3-rs colouring \( f \) of the new clause gadget, \( f(a_{jk}^{(t)})=f(c_{jk}) \) for \( 1\leq j\leq m \), \( 1\leq k\leq 3 \), and \( 1\leq t\leq s \).}\\

\noindent Claim~1 follows from applying Property~P4 to \( c_{jk},a_{jk}^{(1)} \)-path and \( a_{jk}^{(t)},a_{jk}^{(t+1)} \)-paths for \( 1\leq t<s \). 

Figure~\ref{fig:3-rs colouring scheme for new clause gadget} exhibits the colouring scheme guaranteed by claim~CL3 for \( s=2 \). For higher values of \( s \), a colouring scheme can be produced by following the same pattern.
%: that is, assign colour~2 on all pendant vertices of the clause gadget\( \bm{,} \) colour \( c_{j1},c_{j2} \)-path in the order 0,2,1,\dots,0,2,1\( \bm{,} \) colour \( c_{j2},c_{j3} \)-path in the order 1,0,2,\dots,1,0,2,\,1,0,1\( \bm{,} \) and colour \( c_{j3},c_{j1} \)-path in the order 1,2,0,\dots,1,2,0.

All that remains is to show that \( G_{new} \) is a subcubic planar bipartite graph of girth at least \( g \). Obviously, it is a subcubic planar graph. Observe that the modified construction can be viewed as replacing each path \( c_{jk},b_{jk},c_{j\,k+1} \) of \( G \) by an even length path and then attaching some pendant vertices (the length of \( c_{jk},c_{j\,k+1} \)-path in \( G_{new} \) is \( 3s+2 \) which is even because \( s \) is even). Therefore, \( G_{new} \) is bipartite. Observe that every cycle in \( G \) contains at least two paths of the form \( c_{jk},b_{jk},c_{j\,k+1} \). Similarly, every cycle in \( G_{new} \) contains at least two \( c_{jk},c_{j\,k+1} \)-paths. Since such paths have length \( 3s+2 \), the girth of \( G_{new} \) is at least \( 2(3s+2)>6s\geq g \). This completes the proof of the theorem.
\end{proof}

Theorem~\ref{thm:3-rsc planar bipartite girth g} can be generalized using a simple operation. For a graph \( G \) with \( \Delta(G)=k \), the graph \( G^+ \) is obtained from \( G \) by adding enough pendant vertices at every vertex \( v \) of \( G \) so that \( \deg_{G^+}(v)=k+1 \). Hence, each vertex in \( G^+ \) has degree 1 or \( k+1 \). As we are only adding pendant vertices, \( G^+ \) preserves the planarity, bipartiteness and girth of \( G \). Moreover, we have \( \Delta(G^+)=k+1 \). Further, this operation is useful to construct graphs of desired rs chromatic number.
\begin{observation}
Let \( G \) be a graph with \( \Delta(G)=k \) where \( k\in\mathbb{N} \). Then, \( G \) is \( k \)-rs colourable if and only if \( G^+ \) is \( (k+1) \)-rs colourable.
\label{obs:rsc increment}
\end{observation}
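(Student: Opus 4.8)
The plan is to prove the two implications separately, in both cases transporting the colouring across the pendant-addition operation with essentially no modification, and using Observation~\ref{obs:degree restriction} to control which colours can appear where.

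For the forward direction, suppose \( f\colon V(G)\to\{0,1,\dots,k-1\} \) is a \( k \)-rs colouring of \( G \). I would extend it to \( f^+\colon V(G^+)\to\{0,1,\dots,k\} \) by keeping \( f \) on \( V(G) \) and assigning the new top colour \( k \) to every pendant vertex added in forming \( G^+ \). Properness is immediate: each pendant's only neighbour lies in \( V(G) \) and hence has colour at most \( k-1\neq k \), and the edges inside \( G \) are unchanged. To verify the rs condition I would classify each \( P_3 \) of \( G^+ \) according to how many of its two endpoints are new pendant vertices (a pendant, having degree \( 1 \), can never be the middle vertex of a \( P_3 \)). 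If neither endpoint is a pendant, the \( P_3 \) lies inside \( G \) and is handled by \( f \). If exactly one endpoint is a pendant, that endpoint has colour \( k \) while the middle vertex and the other endpoint lie in \( V(G) \) and have colour at most \( k-1 \), so the \( P_3 \) is not even bicoloured. If both endpoints are pendants, they are attached to a common middle vertex \( v\in V(G) \) and the colour sequence along the path is \( k,\,f(v),\,k \) with \( f(v)<k \), so the larger colour sits on the endpoints rather than on the middle vertex, which is allowed. Hence \( f^+ \) is a \( (k+1) \)-rs colouring of \( G^+ \).

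For the converse, suppose \( g \) is a \( (k+1) \)-rs colouring of \( G^+ \). The key point is that \( \Delta(G^+)=k+1 \) and every vertex of \( G \) has degree exactly \( k+1 \) in \( G^+ \) (we added pendants precisely until this held), so Observation~\ref{obs:degree restriction} forbids the colour \( (k+1)-1=k \) on any vertex of \( V(G) \). Thus \( g|_{V(G)} \) maps into \( \{0,1,\dots,k-1\} \). Since \( G \) is a subgraph of \( G^+ \), this restriction is still a proper colouring, and every \( P_3 \) of \( G \) is a \( P_3 \) of \( G^+ \), so \( g|_{V(G)} \) inherits the absence of a bicoloured \( P_3 \) with the larger colour on the middle vertex; that is, \( g|_{V(G)} \) is a \( k \)-rs colouring of \( G \).

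There is no serious obstacle here; the entire content is the two bookkeeping checks above. The only places that demand a little care are the \( P_3 \) case analysis in the forward direction — specifically ensuring that the freshly introduced top colour \( k \) on the pendants can appear only on endpoints, never on the centre, of a \( P_3 \), so that it never produces the forbidden ``higher colour on the middle vertex'' pattern — and the invocation of Observation~\ref{obs:degree restriction} in the converse, which is exactly what confines the restricted colouring to the palette \( \{0,\dots,k-1\} \).
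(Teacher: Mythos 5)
Your proof is correct and follows essentially the same route as the paper: colour the new pendants with the fresh colour \( k \) in the forward direction, and use Observation~\ref{obs:degree restriction} to exclude colour \( k \) from the degree-\((k+1)\) vertices (i.e.\ all of \( V(G) \)) in the converse. The only difference is that you spell out the \( P_3 \) case analysis for the forward direction, which the paper leaves implicit.
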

\begin{proof}
If \( G \) is \( k \)-rs colourable, then we can colour the new pendant vertices added to \( G \) with a new colour \( k \) so that \( G^+ \) is \( (k+1) \)-rs colourable. Conversely, suppose that \( G^+ \) admits a \( (k+1) \)-rs colouring \( f \). Recall that for every vertex \( v \) of \( G^+ \), \( \deg_{G^+}(v)= \) 1 or \( k+1 \). By Observation~\ref{obs:degree restriction}, a vertex of degree \( k+1 \) cannot receive colour \( k \) under a \( (k+1) \)-rs colouring. 
%This is a generalization of Property~P1, and can be proved similarly. 
%This is true because a vertex coloured \( k \) can have at most one neigbour coloured 0, \( \dots \), at most one neighbour coloured \( k-1 \).
Hence, no non-pendant vertex in \( G^+ \) is coloured \( k \) by \( f \). Observe that the set of non-pendant vertices in \( G^+ \) is precisely \( V(G) \). Since only colours 0 to \( k-1 \) are used on non-pendant vertices of \( G^+ \) (under \( f \)), the restriction of \( f \) to \( V(G) \) is a \( k \)-rs colouring of \( G \).
%\qed
\end{proof}
By mathematical induction, Theorem~\ref{thm:3-rsc planar bipartite girth g} can be generalized as follows using Observation~\ref{obs:rsc increment}.
\begin{theorem}
For \( k\geq 3 \) and \( g\geq 6 \), \( k \)-\textsc{RS Colourability} is NP-complete for planar bipartite graphs of maximum degree \( k \) and girth at least \( g \).
\qed
\label{thm:k-rsc planar bipartite girth g}
\end{theorem}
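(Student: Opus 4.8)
The plan is to prove Theorem~\ref{thm:k-rsc planar bipartite girth g} by induction on $k\geq 3$, using Theorem~\ref{thm:3-rsc planar bipartite girth g} as the base case and Observation~\ref{obs:rsc increment} together with the $(\cdot)^+$ operation to carry out the inductive step. Membership in NP is immediate and uniform in $k$: given a candidate $k$-colouring $f$ of the input graph, one checks in polynomial time that $f$ is a proper colouring and that every bicoloured $P_3$ has the lower colour on its middle vertex (equivalently, that for $i<j$ each vertex of $V_j$ has at most one neighbour in $V_i$); this is exactly the verification already described in the proof of Theorem~\ref{thm:3-rsc planar bipartite girth 6}. So the whole content is NP-hardness.

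For the base case $k=3$, Theorem~\ref{thm:3-rsc planar bipartite girth g} already gives that \textsc{3-RS Colourability} is NP-complete for subcubic planar bipartite graphs of girth at least $g$; such graphs in particular have maximum degree $3$, so the statement holds for $k=3$. For the inductive step, assume the result holds for some $k\geq 3$, i.e.\ $k$-\textsc{RS Colourability} is NP-hard on planar bipartite graphs of maximum degree $k$ and girth at least $g$. Given such an instance $G$ (we may assume $\Delta(G)=k$ exactly, adding a pendant edge at some vertex if necessary, which affects neither planarity nor bipartiteness nor girth), form $G^+$ as in the paragraph preceding Observation~\ref{obs:rsc increment} by appending pendant vertices so that every original vertex has degree $k+1$ while the new vertices have degree $1$. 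Then $G^+$ is planar, bipartite, of girth at least $g$, and $\Delta(G^+)=k+1$, and it can be constructed from $G$ in polynomial time (at most $|V(G)|\cdot(k+1)$ new vertices). By Observation~\ref{obs:rsc increment}, $G$ is $k$-rs colourable if and only if $G^+$ is $(k+1)$-rs colourable, so this is a valid polynomial-time many-one reduction from the maximum-degree-$k$ problem to the maximum-degree-$(k+1)$ problem. Combined with membership in NP, this shows $(k+1)$-\textsc{RS Colourability} is NP-complete on planar bipartite graphs of maximum degree $k+1$ and girth at least $g$, completing the induction.

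I do not expect a genuine obstacle here: the reduction is the single operation $G\mapsto G^+$, its correctness is exactly Observation~\ref{obs:rsc increment}, and the structural properties (planarity, bipartiteness, girth $\geq g$, new maximum degree $k+1$) are preserved for the trivial reason that only pendant vertices are added. The one point worth stating carefully is that the reduction keeps $k$ fixed, so for each fixed $k\geq 3$ we get a separate NP-completeness statement rather than a single reduction parameterised by $k$; that is why the induction is the natural framing. A minor bookkeeping remark is that to apply the $(\cdot)^+$ construction one wants $\Delta(G)$ to equal $k$ and not merely be at most $k$, which is handled by the harmless pendant-edge adjustment noted above (or one can simply observe that the NP-hard instances produced in Theorem~\ref{thm:3-rsc planar bipartite girth g}, and inductively thereafter, already attain their maximum degree). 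Everything else is routine.
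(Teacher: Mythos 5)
Your proof is correct and follows exactly the route the paper intends: induction on $k$ with Theorem~\ref{thm:3-rsc planar bipartite girth g} as the base case and the $G\mapsto G^+$ operation justified by Observation~\ref{obs:rsc increment} as the inductive step (the paper states this in one line and leaves the details implicit). Your additional remark about ensuring $\Delta(G)=k$ exactly before applying the $(\cdot)^+$ construction is a sensible piece of bookkeeping that the paper glosses over.
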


Next, we show that the NP-completeness result presented in Theorem~\ref{thm:3-rsc planar bipartite girth 6} holds for a much smaller subclass. 
\begin{theorem}
\textsc{3-RS Colourability} is NP-complete for subcubic planar bipartite graphs \( G \) even when \( G \) is 2-degenerate, girth\( (G)\geq 6 \), \( \chi_s(G)=3 \), and \( \chi_{rs}(G)\leq 4 \).
\label{thm:3-rsc planar bipartite etc}
\end{theorem}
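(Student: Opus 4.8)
The plan is to show that the graph \( G \) constructed in the proof of Theorem~\ref{thm:3-rsc planar bipartite girth 6} already belongs to this restricted subclass, so that no new reduction is required. Recall that \( G \) is obtained from \( G_\mathcal{B} \) by replacing each clause vertex by a triangle and then subdividing every edge of the resulting intermediate graph exactly once; thus \( G \) is a subcubic planar bipartite graph of girth~\( 6 \), and by Theorem~\ref{thm:3-rsc planar bipartite girth 6} it is \( 3 \)-rs colourable if and only if \( \mathcal{B} \) is a yes-instance of \textsc{CPP 1-in-3 Sat}. What remains is to verify the three additional conditions: \( G \) is \( 2 \)-degenerate, \( \chi_{rs}(G)\le 4 \), and \( \chi_s(G)=3 \).

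For \( 2 \)-degeneracy, observe that every subdivision vertex of \( G \) has degree \( 2 \), while the remaining vertices (the \( x_i \)'s and the \( c_{jk} \)'s) form an independent set in \( G \); hence listing all non-subdivision vertices first and all subdivision vertices afterwards gives an ordering in which every vertex has at most two earlier neighbours. For \( \chi_{rs}(G)\le 4 \), I would exhibit an explicit \( 4 \)-rs colouring, using that (directly from the definition) a proper colouring \( f \) is an rs colouring precisely when no vertex has two neighbours that share a colour strictly smaller than the vertex's own colour. Concretely: colour every \( x_i \) with \( 0 \), every subdivision vertex \( y_{ij} \) with \( 3 \), and in each clause gadget \( (c_{j1},b_{j1},c_{j2},b_{j2},c_{j3},b_{j3}) \) put \( f(c_{j1})=f(c_{j3})=1 \), \( f(c_{j2})=2 \), \( f(b_{j1})=f(b_{j2})=3 \), and \( f(b_{j3})=0 \); checking properness and the local rs condition at each of the four vertex types is a finite calculation (and by Observation~\ref{obs:degree restriction} colour \( 3 \) is admissible since \( G \) is subcubic).

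The substantial part is \( \chi_s(G)=3 \). The lower bound is immediate: \( G \) is bipartite and contains a \( P_4 \) (for instance \( x_i,y_{ij},c_{jk},b_{jk} \)), and in any proper \( 2 \)-colouring every \( P_4 \) is bicoloured. For the upper bound I would construct an explicit \( 3 \)-star colouring, the crucial coordinating device being a \emph{perfect matching \( M \) of \( G_\mathcal{B} \)}, which exists because \( G_\mathcal{B} \) is \( 3 \)-regular bipartite and every regular bipartite graph has a perfect matching. Colour every \( x_i \) with \( 2 \); for each variable \( x_i \), colour the \( y \)-vertex on the path from \( x_i \) into its matched clause \( M(i) \) with \( 0 \) and the two other \( y \)-vertices at \( x_i \) with \( 1 \); then in each clause gadget colour the unique \( c \)-vertex whose \( y \)-neighbour received \( 0 \) with \( 2 \), the two other \( c \)-vertices of that gadget with \( 0 \), the two \( b \)-vertices adjacent to the `\( 2 \)'-coloured \( c \)-vertex with \( 1 \), and the remaining \( b \)-vertex with \( 2 \). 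Since \( M \) is a perfect matching, every clause gadget has exactly one \( c \)-vertex reached through a \( 0 \)-coloured \( y \), which is precisely what makes this assignment consistent. It remains to check that no \( P_4 \) is bicoloured: every \( P_4 \) of \( G \) alternates original and subdivision vertices, hence has one of the four shapes \( y\,x\,y\,c \), \( x\,y\,c\,b \), \( y\,c\,b\,c \), \( b\,c\,b\,c \), and a short case analysis shows each uses at least three colours.

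I expect the \( \chi_s(G)=3 \) upper bound to be the main obstacle. The difficulty is that a uniform choice --- all \( x_i \)'s one colour and all \( y \)-vertices one colour --- is forced by properness to make the three \( c \)-vertices of some clause gadget monochromatic, after which the gadget's three degree-\( 2 \) vertices \( b_{j1},b_{j2},b_{j3} \) would have to receive three distinct colours from the two remaining, producing an unavoidable bicoloured \( P_4 \); routing the ``odd'' colour \( 0 \) through the matched clause of each variable is exactly what breaks this obstruction, and the rest is the lengthy but routine verification over the four \( P_4 \)-shapes. It is worth noting that the \( 3 \)-star colouring above is deliberately \emph{not} an rs colouring --- each \( x_i \) has two neighbours coloured \( 1<2 \) --- in keeping with the fact that \( 3 \)-rs colourability of \( G \) remains NP-hard.
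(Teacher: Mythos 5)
Your proposal is correct, and it follows the paper's overall strategy (show that the graph \( G \) built in the proof of Theorem~\ref{thm:3-rsc planar bipartite girth 6} already lies in the restricted subclass), but the two non-trivial verifications are done by genuinely different means. For \( \chi_s(G)=3 \), the paper builds an auxiliary planar graph \( H_\mathcal{B} \) on the variables (two variables adjacent iff they share a clause), invokes the Four Colour Theorem, and merges the four colours into two groups so that each clause-triangle of \( H_\mathcal{B} \) is split \( 1{+}2 \); this classifies clauses into two types, each handled by its own gadget scheme. You instead take a perfect matching of the cubic bipartite graph \( G_\mathcal{B} \) (K\H{o}nig/Hall) to distinguish exactly one variable-occurrence per clause and route the ``odd'' colour through it. Your route is more elementary -- it avoids the Four Colour Theorem entirely and does not even use planarity of \( G_\mathcal{B} \) for this step -- at the price of a colouring that is less symmetric between the two sides of the gadget; I checked your four \( P_4 \)-shapes (\,\( y\,x\,y\,c \), \( x\,y\,c\,b \), \( y\,c\,b\,c \), \( b\,c\,b\,c \)\,) against the assignment \( x_i\mapsto 2 \), matched \( y\mapsto 0 \), unmatched \( y\mapsto 1 \), matched \( c\mapsto 2 \), other \( c\mapsto 0 \), the two \( b \)'s at the matched \( c\mapsto 1 \), remaining \( b\mapsto 2 \), and none is bicoloured, using exactly the fact that each gadget sees precisely one \( 0 \)-coloured \( y \). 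For \( \chi_{rs}(G)\leq 4 \), the paper increments its \( 3 \)-star colouring \( \phi \) by one and patches the type-II gadgets, whereas you give a single formula-independent colouring (\,\( x_i\mapsto 0 \), \( y_{ij}\mapsto 3 \), \( c_{j1},c_{j3}\mapsto 1 \), \( c_{j2}\mapsto 2 \), \( b_{j1},b_{j2}\mapsto 3 \), \( b_{j3}\mapsto 0 \)\,); checking the condition ``no vertex has two equal-coloured neighbours of strictly smaller colour'' at each vertex type confirms it is a valid \( 4 \)-rs colouring, and this is arguably cleaner than the paper's shift-and-recolour argument. The \( 2 \)-degeneracy argument and the trivial lower bound \( \chi_s(G)\geq 3 \) coincide with the paper's.
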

\begin{proof}
To prove the theorem, it is enough to show that the graph \( G \) constructed in the proof of Theorem~\ref{thm:3-rsc planar bipartite girth 6} is 2-degenerate and satisfies \( \text{girth}(G)\geq 6 \), \( \chi_s(G)=3 \), and \( \chi_{rs}(G)\leq 4 \). It is already shown that \( G \) has girth at least six. To obtain a 2-degenerate ordering of \( V(G) \), list vertices of degree three first, followed by vertices of degree two (note that vertices of degree three form an independent set). 
%To prove the theorem, it suffices to verify Claims~1 and 2.
Next, we show that \( G \) is 3-star colourable and 4-rs colourable.\\
%% With respect to this ordering, every vertex has at most two neighobours on its left. Hence, \( G \) is 2-degenerate. 

\noindent Claim~1: \( G \) admits a 3-star colouring \( \phi \).\\[5pt]
%Next, we prove that \( G \) is 3-star colourable (we borrow an idea used in \cite{pilz}). % namely, the idea used to prove that all \textsc{Planar Monotone 3-Sat} forumlas are satisfiable. 
To produce \( \phi \), an auxiliary graph \( H_\mathcal{B} \) is constructed first from the formula \( \mathcal{B}=(X,C) \) of Theorem~\ref{thm:3-rsc planar bipartite girth 6}. The vertex set of \( H_\mathcal{B} \) is the set of variables \( X \), and two vertices \( x_i \) and \( x_j \) are joined by an edge in \( H_\mathcal{B} \) if and only if there is a clause in \( \mathcal{B} \) containing both of them (see Figure~\ref{fig:H_B} for an example). %(compare it with \( G_\mathcal{B} \), the graph \( G_\mathcal{B} \) displayed in Figure~\ref{fig:eg graph G(B)}). 

\begin{figure}[hbt]
\centering
\begin{subfigure}[b]{0.33\textwidth}
\centering
\scalebox{0.55}{
\begin{tikzpicture}[every label/.style={font=\Large},largeVcolour/.style={vcolour,font=\small}]
%\draw (-1,-5.5) grid (5.75,5.5);
\clip (-0.75,-5.5) rectangle (5.75,5.5);
  \node [dot] (x1)[label=left:\( x_1 \)][label={[largeVcolour]right:0}]{};
  \node [dot] (x2)[right of=x1,node distance=1.75cm][label=left:\( x_2 \)][label={[largeVcolour]right:1}]{};
  \node [dot] (x3)[right of=x2,node distance=1.75cm][label=left:\( x_3 \)][label={[largeVcolour]right:2}]{};
  \node [dot] (x4)[right of=x3,node distance=1.75cm][label=left:\( x_4 \)][label={[largeVcolour]right:3}]{};
  
  \node[coordinate](c12)[above of=x2,node distance=2cm]{};
  \node[coordinate](c11)[above left of=c12]{};
  \node[coordinate](c13)[above right of=c12]{};
  
  \node[coordinate](c32)[above of=x3,node distance=4cm]{};
  \node[coordinate](c31)[above left of=c32]{};
  \node[coordinate](c33)[above right of=c32]{};
  
  \node[coordinate](c42)[below of=x3,node distance=1cm]{};
  \node[coordinate](c41)[below left of=c42]{};
  \node[coordinate](c43)[below right of=c42]{};
  
  \node[coordinate](c22)[below of=x2,node distance=2.5cm]{};
  \node[coordinate](c21)[below left of=c22]{};
  \node[coordinate](c23)[below right of=c22]{};

  \path (x1)|- coordinate(12) (c21);
  \path (x1)|- coordinate(13) (c31);
  \path (x4)|- coordinate(42) (c23);
  \path (x4)|- coordinate(43) (c33);
  
\path
  (c13)--node[midway](C1){} (c11)
  (c23)--node[midway](C2){} (c21)
  (c33)--node[midway](C3){} (c31)
  (c43)--node[midway](C4){} (c41);

% drawing edges from H_B now
\path
(x1)--node(x1-2){} (x2) 
(x1-2)--node(x1tox2)[pos=0.75]{} (C1)
(x2)--node(x2-3){} (x3) 
(x2-3)--node(x2tox3)[pos=0.75]{} (C1);
\draw
(x1) ..controls (x1tox2).. (x2)
(x2) ..controls (x2tox3).. (x3)
(x1) ..controls (13) and (C3).. (x3)
(x3) ..controls (C3) and (43).. (x4)
(x2) ..controls (C2) and (42).. (x4)
(x1) to[out=-105,in=-75,looseness=3.5] (x4);
\end{tikzpicture}
}
\caption{}
\label{fig:H_B}
\end{subfigure}%
\begin{subfigure}[b]{0.33\textwidth}
\centering
\scalebox{0.55}{
\begin{tikzpicture}[every label/.style={font=\Large}]%[scale=0.005]
%\draw (-1,-5.5) grid (5.75,5.5);
\clip (-0.75,-5.5) rectangle (5.75,5.5);
  \node [dot] (x1)[label=left:\( x_1 \)]{};
  \node [dot] (x2)[right of=x1,node distance=1.75cm][label=left:\( x_2 \)]{};
  \node [dot] (x3)[right of=x2,node distance=1.75cm][label=left:\( x_3 \)]{};
  \node [dot] (x4)[right of=x3,node distance=1.75cm][label=left:\( x_4 \)]{};
  
  \node[coordinate](c12)[above of=x2,node distance=2cm]{};
  \node[coordinate](c11)[above left of=c12]{};
  \node[coordinate](c13)[above right of=c12]{};
  
  \node[coordinate](c32)[above of=x3,node distance=4cm]{};
  \node[coordinate](c31)[above left of=c32]{};
  \node[coordinate](c33)[above right of=c32]{};
  
  \node[coordinate](c42)[below of=x3,node distance=1cm]{};
  \node[coordinate](c41)[below left of=c42]{};
  \node[coordinate](c43)[below right of=c42]{};
  
  \node[coordinate](c22)[below of=x2,node distance=2.5cm]{};
  \node[coordinate](c21)[below left of=c22]{};
  \node[coordinate](c23)[below right of=c22]{};

  \path (x1)|- coordinate(12) (c21);
  \path (x1)|- coordinate(13) (c31);
  \path (x4)|- coordinate(42) (c23);
  \path (x4)|- coordinate(43) (c33);
  
\path
  (c13)--node[dot][midway][label={[yshift=-2pt]above:\( C_1 \)}](C1){} (c11)
  (c23)--node[dot][midway][label=below:\( C_2 \)](C2){} (c21)
  (c33)--node[dot][midway][label=above:\( C_3 \)](C3){} (c31)
  (c43)--node[dot][midway][label=below:\( C_4 \)](C4){} (c41);

\draw
 (x1)--(C1)   (x2)--(C1)  (x3)--(C1)
 (x1) .. controls (12).. (C2)
 (x2)--(C2)
 (x4) .. controls (42).. (C2)
 (x1) .. controls (13).. (C3)
 (x3)--(C3)
 (x4) .. controls (43).. (C3)
 (x2)--(C4)   (x3)--(C4)  (x4)--(C4);

% drawing edges from H_B now
\path
(x1)--node(x1-2){} (x2) 
(x1-2)--node(x1tox2)[pos=0.75]{} (C1)
(x2)--node(x2-3){} (x3) 
(x2-3)--node(x2tox3)[pos=0.75]{} (C1);
\draw
(x1) ..controls (x1tox2).. (x2)
(x2) ..controls (x2tox3).. (x3)
(x1) ..controls (13) and (C3).. (x3)
(x3) ..controls (C3) and (43).. (x4)
(x2) ..controls (C2) and (42).. (x4)
(x1) to[out=-105,in=-75,looseness=3.5] (x4);
\end{tikzpicture}
}
\caption{}
\label{fig:H_B on G_B}
\end{subfigure}%
\begin{subfigure}[b]{0.33\textwidth}
\centering
\scalebox{0.6}{
\begin{tikzpicture}[every label/.style={font=\Large},largeVcolour/.style={vcolour,font=\small}]
  \node [dot] (x1)[label=left:\( x_1 \)][label={[largeVcolour]right:0}]{};
  \node [dot] (x2)[right of=x1,node distance=1.75cm][label=left:\( x_2 \)][label={[largeVcolour]right:0}]{};
  \node [dot] (x3)[right of=x2,node distance=1.75cm][label=left:\( x_3 \)][label={[largeVcolour]right:1}]{};
  \node [dot] (x4)[right of=x3,node distance=1.75cm][label=left:\( x_4 \)][label={[largeVcolour]right:1}]{};
  
  \node [dot] (c12)[above of=x2,label={[largeVcolour]right:1},node distance=2cm][label={[label distance=-5pt]below left:\( c_{12} \)}]{};
  \node [dot] (c11)[above left of=c12,label={[largeVcolour]above:1}][label=left:\( c_{11} \)]{};
  \node [dot] (c13)[above right of=c12,label={[largeVcolour]above:0}][label={right:\( c_{13} \)}]{};
  
  \node [dot] (c32)[above of=x3,label={[largeVcolour]right:0},node distance=4cm][label={[label distance=-5pt]below left:\( c_{32} \)}]{};
  \node [dot] (c31)[above left of=c32,label={[largeVcolour]above:1}][label=above left:\( c_{31} \)]{};
  \node [dot] (c33)[above right of=c32,label={[largeVcolour]above:0}][label={above right:\( c_{33} \)}]{};
  
  \node [dot] (c42)[below of=x3,label={[largeVcolour]right:0},node distance=2cm][label={[label distance=-5pt]above left:\( c_{42} \)}]{};
  \node [dot] (c41)[below left of=c42,label={[largeVcolour]below:1}][label=left:\( c_{41} \)]{};
  \node [dot] (c43)[below right of=c42,label={[largeVcolour]below:0}][label={right:\( c_{43} \)}]{};
  
  \node [dot] (c22)[below of=x2,label={[largeVcolour]right:1},node distance=4cm][label={[label distance=-5pt]above left:\( c_{22} \)}]{};
  \node [dot] (c21)[below left of=c22,label={[largeVcolour]below:1}][label=below left:\( c_{21} \)]{};
  \node [dot] (c23)[below right of=c22,label={[largeVcolour]below:0}][label={below right:\( c_{23} \)}]{};

  \path (x1)|- coordinate(12) (c21);
  \path (x1)|- coordinate(13) (c31);
  \path (x4)|- coordinate(42) (c23);
  \path (x4)|- coordinate(43) (c33);
  
  \draw 
  (x1) -- node[dot](y11)[label={[largeVcolour]right:2}][label={[label distance=-9pt,yshift=2pt]above left:\( y_{11} \)}]{} (c11)
  (x1) .. controls (13).. node[dot](y13)[label={[largeVcolour]right:2}][label=left:\( y_{13} \)]{} (c31)
  (x2)--node[dot](y21)[label={[largeVcolour]right:2}][label=left:\( y_{21} \)]{} (c12)
  (x3)--node[dot](y31)[label={[largeVcolour]left:2}][label={[label distance=-5pt,yshift=-4pt]below left:\( y_{31} \)}]{} (c13)
  (x3)--node[dot](y33)[label={[largeVcolour]right:2}][label={[label distance=-3pt]left:\( y_{33} \)}]{} (c32)
  (x4) .. controls (43).. node[dot](y43)[label={[largeVcolour]right:2}][label={[label distance=-2pt]left:\( y_{43} \)}]{} (c33)
  (x1) .. controls (12).. node[dot](y12)[label={[largeVcolour]right:2}][label=left:\( y_{12} \)]{} (c21)
  (x2)--node[dot](y22)[label={[largeVcolour]right:2}][label=left:\( y_{22} \)]{} (c22)
  (x2)--node[dot](y24)[label={[largeVcolour]right:2}][label={[label distance=-10pt,yshift=-4pt]below left:\( y_{24} \)}]{} (c41)
  (x3)--node[dot](y34)[label={[largeVcolour]right:2}][label=left:\( y_{34} \)]{} (c42)
  (x4)--node[dot](y44)[label={[largeVcolour]left:2}][label={[label distance=-10pt,yshift=-5pt]below right:\( y_{44} \)}]{} (c43)
  (x4) .. controls (42).. node[dot](y42)[label={[largeVcolour]right:2}][label=left:\( y_{42} \)]{} (c23);
  
\draw
  (c11)--node[dot](b11)[midway,label={[largeVcolour]left:0}][label={[font=\scriptsize,xshift=2pt,yshift=2pt]below:\( b_{\tiny 1\!1} \)}]{} (c12)--node[dot](b12)[midway,label={[largeVcolour]right:2}][label={[font=\scriptsize,xshift=2pt]left:\( b_{\tiny 12} \)}]{} (c13)--node[dot](b13)[midway,label={[largeVcolour]above:2}][label={[font=\scriptsize,yshift=4pt]below:\( b_{\tiny 13} \)}]{} (c11)
  (c21)--node[dot](b21)[midway,label={[largeVcolour]left:0}][label={[font=\scriptsize,xshift=0pt,yshift=-2pt]above:\( b_{\tiny 21} \)}]{} (c22)--node[dot](b22)[midway,label={[largeVcolour]right:2}][label={[font=\scriptsize,xshift=2pt]left:\( b_{\tiny 22} \)}]{} (c23)--node[dot](b23)[midway,label={[largeVcolour]below:2}][label={[font=\scriptsize,yshift=-3pt]above:\( b_{\tiny 23} \)}]{} (c21)
  (c31)--node[dot](b31)[midway,label={[largeVcolour]left:2}][label={[font=\scriptsize,xshift=2pt,yshift=2pt]below:\( b_{\tiny 31} \)}]{} (c32)--node[dot](b32)[midway,label={[largeVcolour]right:1}][label={[font=\scriptsize,xshift=2pt]left:\( b_{\tiny 32} \)}]{} (c33)--node[dot](b33)[midway,label={[largeVcolour]above:2}][label={[font=\scriptsize,yshift=4pt]below:\( b_{\tiny 33} \)}]{} (c31)
  (c41)--node[dot](b41)[midway,label={[largeVcolour]left:2}][label={[font=\scriptsize,xshift=0pt,yshift=-2pt]above:\( b_{\tiny 41} \)}]{} (c42)--node[dot](b42)[midway,label={[largeVcolour]right:1}][label={[font=\scriptsize,xshift=2pt]left:\( b_{\tiny 42} \)}]{} (c43)--node[dot](b43)[midway,label={[largeVcolour]below:2}][label={[font=\scriptsize,yshift=-3pt]above:\( b_{\tiny 43} \)}]{} (c41);

\draw [ultra thick]
(c32)--(c33)
(c42)--(c43);
\end{tikzpicture}
}
\caption{}
\label{fig:3-star colouring of G}
\end{subfigure}
\caption{(a)~graph \( H_\mathcal{B} \) for the formula \( (x_1\vee x_2\vee x_3) \) \( \wedge \) \( (x_1\vee x_2\vee x_4) \) \( \wedge \) \( (x_1\!\vee\!x_3\!\vee\!x_4) \) \( \wedge \) \( (x_2\!\vee\!x_3\!\vee\!x_4) \) with a 4-colouring \( h \), (b)~graph obtained by adding edges in \( H_\mathcal{B} \) into \( G_\mathcal{B} \), (c)~graph \( G \) with a 3-star colouring \( \phi \)}%Producing a 3-star colouring of \( G \) from a 4-colouring of \( H \)}
\label{fig:3-star colouring from 4-colouring}
\end{figure}
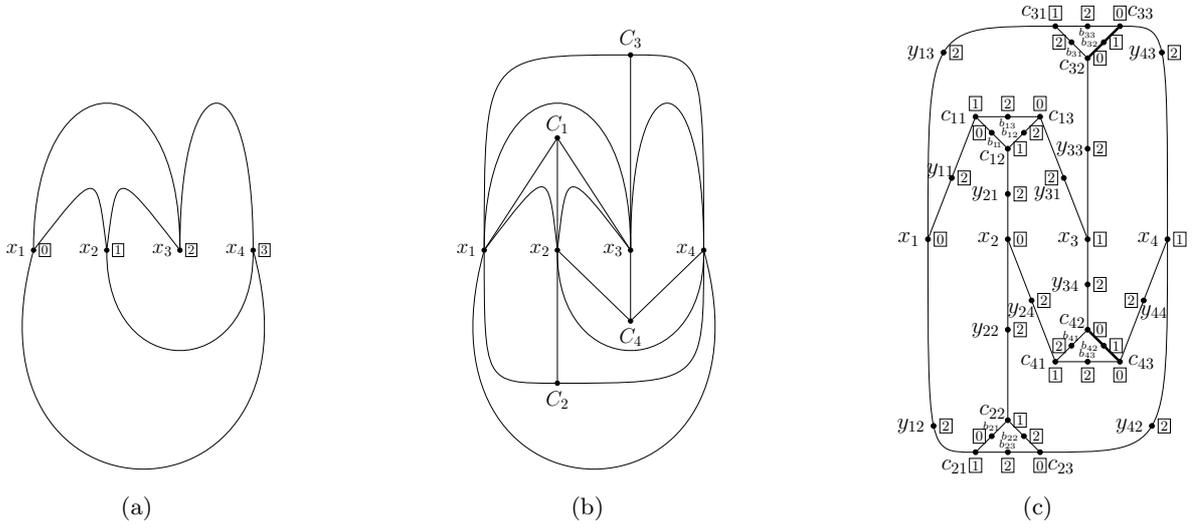

Recall that the graph \( G_\mathcal{B} \) of Theorem~\ref{thm:3-rsc planar bipartite girth 6} is planar. The graph \( H_\mathcal{B} \) is planar because adding edges in \( H_\mathcal{B} \) into \( G_\mathcal{B} \) preserves planarity of \( G_\mathcal{B} \) (see Figure~\ref{fig:H_B on G_B}; for instance, we can add edge \( x_1x_2 \) to \( G_\mathcal{B} \) without affecting planarity by drawing it close to the path \( x_1,C_1,x_2 \)). 
%Note that whenever two variables \( x_i, x_j \) occur in a common clause in formula \( \mathcal{B} \), adding edge \( x_ix_j \) to \( G_\mathcal{B} \) preserves planarity of \( G_\mathcal{B} \). Therefore, \( H \) is planar. 
Hence, by 4-colour theorem, \( H_\mathcal{B} \) admits a 4-colouring \( h \) that uses colours 0,1,2,3 (not necessarily an optimal colouring). Let us partition the set of colours \( \{0,1,2,3\} \) into two sets \( L=\{0,1\} \) and \( M=\{2,3\} \). Observe that for each clause \( C_j=\{x_p,x_q,x_r\} \) in the formula \( \mathcal{B} \), \( (x_p,x_q,x_r) \) is a triangle in \( H_\mathcal{B} \). Hence, for each clause \( C_j=\{x_p,x_q,x_r\} \), either \( |M\cap\{h(x_p),h(x_q),h(x_r)\}|=1 \) or \( |L\cap\{h(x_p),h(x_q),h(x_r)\}|=1 \). In the former case, we call \( C_j \) as a type-I clause, and in the latter case, a type-II clause.
%  Also, on each triangle in \( H_\mathcal{B} \), at least one colour from \( L \) and at least one colour from \( M \) must be used.

Construction of \( \phi \) is as follows. First, colour vertices \( x_i \) by assigning \( \phi(x_i)=0 \) if \( h(x_i)\in L\), and \( \phi(x_i)=1 \) otherwise. Then, all vertices \( y_{ij} \) are coloured 2. Next, for every path \( x_i,y_{ij},c_{jk} \) in \( G \), vertex \( c_{jk} \) is assigned the opposite binary colour of \( x_i \) \big(i.e., \( f(c_{jk})=1-f(x_i) \)\big). Note that for each type-I clause \( C_j \) in \( \mathcal{B} \), exactly one of \( c_{j1},c_{j2},c_{j3} \) is coloured~0 by \( \phi \) (and the other two are coloured~1); similarly, for each type-II clause \( C_j \), exactly one of \( c_{j1},c_{j2},c_{j3} \) is coloured~1 by \( \phi \). For every type-I clause \( C_j \), extend the colouring to the gadget for \( C_j \) using the scheme in Figure~\ref{fig:type-I phi}. For every type-II clause \( C_j \), extend the colouring to the gadget for \( C_j \) using the scheme in Figure~\ref{fig:type-II phi}. An example is exhibited in Figure~\ref{fig:3-star colouring of G}. Obviously, \( \phi \) is a 3-colouring (\( y_{ij} \)'s get colour~2 and their neighbours get binary colours, and clause gadgets are coloured by 3-colouring schemes).\\
%Comparing \( \phi \) to the colouring \( f \) in Theorem~\ref{thm:3-rsc planar bipartite girth 6} gives the following.\\

\begin{figure}[hbt]
\centering
\begin{subfigure}[b]{0.33\textwidth}
\centering
\begin{tikzpicture}
\tikzset{
dot/.style={draw,fill,circle,inner sep = 0pt,minimum size = 3pt},
vcolour/.style={draw,inner sep=1.5pt,font=\scriptsize,label distance=2pt},
subgraph/.style={draw,ellipse,minimum width=1.75cm,minimum height=2cm},
subgraphHoriz/.style={draw,ellipse,minimum width=1.5cm,minimum height=1.25cm},
}
\draw (0,0) node[dot](cj1)[label=below:\( c_{j1} \)][label={[vcolour,xshift=5pt]above right:0}]{} --++(1,1.5) node[dot](cj2)[label=above:\( c_{j2} \)][label={[vcolour,yshift=-3pt]below:1}]{} --++(1,-1.5) node[dot](cj3)[label=below:\( c_{j3} \)][label={[vcolour,xshift=-5pt]above left:1}]{} --(cj1);
\path (cj1) --node[dot](bj1)[label=left:\( b_{j1} \)][label={[vcolour]right:2}]{} (cj2)
      (cj2) --node[dot](bj2)[label=right:\( b_{j2} \)][label={[vcolour]left:0}]{} (cj3)
      (cj3) --node[dot](bj3)[label=below:\( b_{j3} \)][label={[vcolour]above:2}]{} (cj1);
\end{tikzpicture}
%\caption{colouring scheme used by \( \phi \) for the gadegt of a type-I clause \( C_j \)}
\caption{}
\label{fig:type-I phi}
\end{subfigure}%
\hfill
\begin{subfigure}[b]{0.33\textwidth}
\centering
\begin{tikzpicture}
\tikzset{
dot/.style={draw,fill,circle,inner sep = 0pt,minimum size = 3pt},
vcolour/.style={draw,inner sep=1.5pt,font=\scriptsize,label distance=2pt},
subgraph/.style={draw,ellipse,minimum width=1.75cm,minimum height=2cm},
subgraphHoriz/.style={draw,ellipse,minimum width=1.5cm,minimum height=1.25cm},
}
\draw (0,0) node[dot](cj1)[label=below:\( c_{j1} \)][label={[vcolour,xshift=5pt]above right:1}]{} --++(1,1.5) node[dot](cj2)[label=above:\( c_{j2} \)][label={[vcolour,yshift=-3pt]below:0}]{} --++(1,-1.5) node[dot](cj3)[label=below:\( c_{j3} \)][label={[vcolour,xshift=-5pt]above left:0}]{} --(cj1);
\path (cj1) --node[dot](bj1)[label=left:\( b_{j1} \)][label={[vcolour]right:2}]{} (cj2)
      (cj2) --node[dot](bj2)[label=right:\( b_{j2} \)][label={[vcolour]left:1}]{} (cj3)
      (cj3) --node[dot](bj3)[label=below:\( b_{j3} \)][label={[vcolour]above:2}]{} (cj1);
\end{tikzpicture}
%\caption{colouring scheme used by \( \phi \) for the gadegt of a type-II clause \( C_j \)}
\caption{}
\label{fig:type-II phi}
\end{subfigure}%
\hfill
\begin{subfigure}[b]{0.33\textwidth}
\centering
\begin{tikzpicture}
\tikzset{
dot/.style={draw,fill,circle,inner sep = 0pt,minimum size = 3pt},
vcolour/.style={draw,inner sep=1.5pt,font=\scriptsize,label distance=2pt},
subgraph/.style={draw,ellipse,minimum width=1.75cm,minimum height=2cm},
subgraphHoriz/.style={draw,ellipse,minimum width=1.5cm,minimum height=1.25cm},
}
\draw (0,0) node[dot](cj1)[label=below:\( c_{j1} \)][label={[vcolour,xshift=5pt]above right:2}]{} --++(1,1.5) node[dot](cj2)[label=above:\( c_{j2} \)][label={[vcolour,yshift=-3pt]below:1}]{} --++(1,-1.5) node[dot](cj3)[label=below:\( c_{j3} \)][label={[vcolour,xshift=-5pt]above left:1}]{} --(cj1);
\path (cj1) --node[dot](bj1)[label=left:\( b_{j1} \)][label={[vcolour]right:3}]{} (cj2)
      (cj2) --node[dot](bj2)[label=right:\( b_{j2} \)][label={[vcolour]left:0}]{} (cj3)
      (cj3) --node[dot](bj3)[label=below:\( b_{j3} \)][label={[vcolour]above:3}]{} (cj1);
\end{tikzpicture}
%\caption{colouring of the gadget of a type-II clause \( C_j \) under \( f \) after recolouring}
\caption{}
\label{fig:type-II f after recolouring}
\end{subfigure}%
\caption{(a)~the colouring scheme used by \( \phi \) for the gadget of a type-I clause \( C_j \) (`rotate' the scheme if needed), (b)~the colouring scheme used by \( \phi \) for the gadget of a type-II clause \( C_j \) (`rotate' the scheme if needed), (c)~colouring of the gadget for a type-II clause \( C_j \) under \( f \) (after recolouring \( b_{j2} \))}
%\caption{colouring scheme used by \( \phi \) for the gadegt of a type-I clause \( C_j \)}
\end{figure}
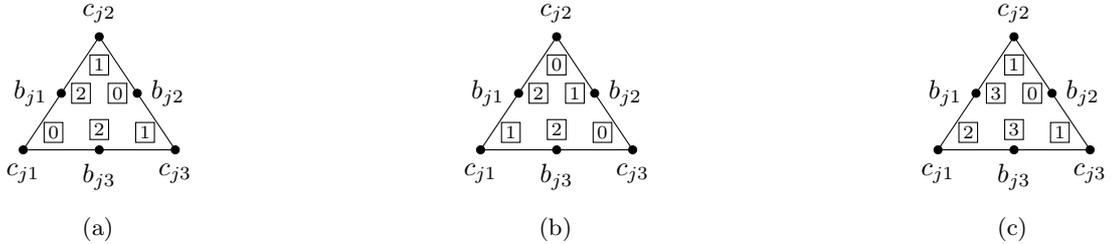

\noindent Claim~1.1: Under \( \phi \), if \( Q \) is a bicoloured \( P_3 \) in \( G \) with a higher colour on its middle vertex, then the following hold.\\
(1)~\( Q \) must be a \( c_{jk},b_{jk},c_{j\,k+1} \) path coloured 0,1,0 by \( \phi \); and\\
(2)~neighbours of \( c_{jk} \) (resp.\ \( c_{j\,k+1} \)) other than \( b_{jk} \) are coloured~2 by \( \phi \)\\
where \( C_j \) is a type-II clause (and index \( k+1 \) is modulo 3).\\[5pt]
\noindent If \( Q \) is a bicoloured \( P_3 \) in \( G \), then either (i)~\( Q \) must be within the gadget for a clause \( C_j \), or (ii)~\( Q \) has a vertex \( y_{ij} \) as an endpoint. In Case (i), either \( Q \) has colour~0 on its middle vertex (when \( C_j \) is a type-I clause), or \( Q \) has colour~1 on its middle vertex (when \( C_j \) is a type-II clause). When \( C_j \) is a type-II clause, either \( Q \) is a \( b_{jk},c_{j\,k+1},b_{j\,k+1} \) path coloured 2,1,2 (eg.: path \( b_{j3},c_{j1},b_{j1} \) in Figure~\ref{fig:type-II phi}) or it is a \( c_{jk},b_{jk},c_{j\,k+1} \) path coloured 0,1,0 (eg.: path \( c_{j2},b_{j2},c_{j3} \) in Figure~\ref{fig:type-II phi}); the middle vertex has a higher colour only in the latter case. In Case~(ii), \( Q \) has a lower colour on its middle vertex because \( y_{ij} \)'s are coloured~2. This proves Point~(1) of Claim~1.1. Point~(2) of Claim~1.1 is true due to the construction of~\( \phi \) (see highlighted \( P_3 \)'s in Figure~\ref{fig:3-star colouring of G}).

%\noindent Claim~1.2: For each path \( Q=c_{jk},b_{jk},c_{j\,k+1} \) in Claim~1.1, neighbours of \( c_{jk} \) (resp.\ \( c_{j\,k+1} \)) other than \( b_{jk} \) are coloured~2 by \( \phi \) due to the construction of \( \phi \) (see highlighted \( P_3 \)'s in Figure~\ref{fig:3-star colouring of G}).

We are ready to prove that \( \phi \) is a 3-star colouring. To produce a contradiction, assume that \( w,x,y,z \) is a \( P_4 \) in \( G \) bicoloured by \( \phi \), say \( \phi(w)=\phi(y)=q \) and \( \phi(x)=\phi(z)=p \) where \( 0\leq p<q\leq 2 \). Note that \( x,y,z \) is a bicoloured \( P_3 \) with a higher colour on its middle vertex. Hence, by Point~(1) of Claim~1.1, the path \( x,y,z \) is a \( c_{jk},b_{jk},c_{j\,k+1} \) path coloured 0,1,0 by \( \phi \) where \( 1\leq j\leq m \) and \( 1\leq k\leq 3 \). In particular, \( p=0 \) and \( q=1 \). Note that \( x \) has a neighbour \( w \) coloured \( q=1 \). That is, \( c_{jk} \) or \( c_{j\,k+1} \) has a neighbour \( w\neq b_{jk} \) coloured~1. This is a contradiction to Point~(2) of Claim~1.1. This proves Claim~1. Hence, \( G \) is 3-star colourable. Since \( G \) contains cycles, \( G \) is not 2-star colourable. Therefore, \( \chi_s(G)=3 \).\\

\noindent Claim~2: \( G \) admits a 4-rs colouring \( f \).\\[5pt]
We produce \( f \) by a simple modification to \( \phi \). First, define \( f \) as \( f(v)=\phi(v)+1 \) for each vertex \( v \) of \( G \). 
%The next claim follow from Claim~1.1 and the definition of \( f \).\\[5pt]
%\noindent Claim~2.1: Under \( f \), if \( Q \) is a bicoloured \( P_3 \) in \( G \) with a higher colour on its middle vertex, then \( Q \) must be a \( c_{jk},b_{jk},c_{j\,k+1} \) path coloured 1,2,1 by \( f \)
%\\and (2)~neighbours of \( c_{jk} \) (resp.\ \( c_{j\,k+1} \)) other than \( b_{jk} \) are coloured~3 by \( f \)\\
%where \( C_j \) is a type-II clause (and index \( k+1 \) is modulo 3).\\[5pt] 
%Moreover, gadget for each clause \( C_j \) contains at most one such bicoloured \( P_3 \) with a higher colour on its middle vertex.
%
%\noindent Claim~2.1: Under \( f \), if \( Q \) is a bicoloured \( P_3 \) in \( G \) with a higher colour on its middle vertex, then \( Q \) must be a \( c_{jk},b_{jk},c_{j\,k+1} \) path coloured 1,2,1 where \( C_j \) is a type-II clause (and index \( k+1 \) is modulo 3).\\
%\noindent Claim~2.2: For each path \( Q=c_{jk},b_{jk},c_{j\,k+1} \) in Claim~2.1, neighbours of \( c_{jk} \) (resp.\ \( c_{j\,k+1} \)) other than \( b_{jk} \) are coloured~3 by \( f \).
Next, for each path \( Q=c_{jk},b_{jk},c_{j\,k+1} \) coloured 1,2,1\( \bm{,} \) recolour the middle vertex \( b_{jk} \) with colour~0 (see Figure~\ref{fig:type-II f after recolouring}). We claim that this recolouring makes \( f \) a 4-rs colouring of \( G \). Note that if \( Q \) is a  bicoloured \( P_3 \) in \( G \), then either (i)~\( Q \) is within a clause gadget, or (ii)~\( Q \) has a vertex \( y_{ij} \) as an endpoint. In Case~(i), \( Q \) has a lower colour on its middle vertex because \( f \) uses an rs colouring scheme on the gadget for each type-I clause (namely the scheme in Figure~\ref{fig:type-I phi} with colours incremented by one) and each type-II clause (see Figure~\ref{fig:type-II f after recolouring}). In Case~(ii), \( Q \) has a lower colour on its middle vertex because \( y_{ij} \)'s are coloured~3 by \( f \). Therefore, \( f \) is indeed a 4-rs colouring of \( G \).
%
%On the contrary, assume that \( x,y,z \) is a \( P_3 \) in \( G \) with \( j=f(y)>f(x)=f(z)=i \) where \( 0\leq i<j\leq 3 \). By Claim~2.1 and the recolouring, we have \( i=0 \) \big(because under \( f \), there is no bicoloured \( P_3 \) in \( G[V_1\cup V_2\cup V_3] \) with a higher colour on its middle vertex where \( V_i=f^{-1}(i) \)\big). 
%%If \( i>0 \), then we have a contradiction to point~(1) of Claim~2.1 due to the recolouring on the middle vertex of paths \( c_{jk},b_{jk},c_{j\,k+1} \) of Claim~2.1. 
%Since \( f(x)=f(z)=i=0 \), we have \( x=b_{jk} \) and \( z=b_{j'k'} \) where \( j,j'\in\{1,2,\dots,m\} \) and \( k,k'\in\{1,2,3\} \). Note that \( j\neq j' \) because at most one vertex from each type-II clause is recoloured with colour~0. By our assumption, \( x=b_{jk} \) and \( z=b_{j'k'} \) has a common neighbour \( y \). This is a contradiction because \( \text{dist}_G(b_{jk},b_{j'k'})>2 \) for \( j\neq j' \). 
This proves Claim~2. That is, \( \chi_{rs}(G)\leq 4 \). This completes the proof of the theorem.
%Owing to Claim~2.1 and the recolouring, \( f \) restricted to \( S=\big\{v\in V(G)\ :\ f(v)\in\left\{1,2,3\right\}\big\} \) is a 4-rs colouring of \( G[S] \). Due to Claim~2.2, each bicoloured \( P_3 \) in \( G \) containing a vertex coloured~0 by \( f \) is a \( c_{jk},b_{jk},c_{j\,k+1} \) path coloured 1,0,1 by \( f \). So, \( f\) is indeed a 4-rs colouring of \( G \). This proves Claim~2. %Now, \( f \) uses only colours \( 1,2,3 \); and due to Claim~3, each \( P_3 \) in \( G \) bicoloured by \( f \) and having a higher colour on its middle vertex must be a \( c_{jk},b_{jk},c_{j\,k+1} \) path coloured 1,2,1 by \( f \). For each path of this form, recolour the middle vertex \( b_{jk} \) with a new colour namely colour~0. This recolouring makes \( f \) a 4-rs colouring of \( G \). This proves Claim~2. This completes the proof of the theorem.
\end{proof}

%\noindent\textbf{Remark:} To prove that \( G \) is 3-star colourable, we borrowed the idea used in \cite{pilz} to prove that all \textsc{Planar Monotone 3-Sat} forumlas are satisfiable.

\begin{corollary}
\textsc{Min RS Colouring} cannot be approximated by a factor less than 4/3 for subcubic planar bipartite graphs \( G \) even when \( \chi_s(G)=3 \) and \( \text{girth}(G)\geq 6 \).
\end{corollary}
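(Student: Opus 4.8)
The plan is a standard gap argument built directly on Theorem~\ref{thm:3-rsc planar bipartite etc}. That theorem produces, from a \textsc{CPP 1-in-3 Sat} instance, a subcubic planar bipartite graph \( G \) with \( \mathrm{girth}(G)\geq 6 \), \( \chi_s(G)=3 \), and \( \chi_{rs}(G)\leq 4 \), such that \( G \) is \( 3 \)-rs colourable if and only if the formula is a yes instance. First I would pin down the rs chromatic number of these hard instances to two consecutive integers: every such \( G \) contains a cycle (a clause gadget), so by the characterisation of \( 2 \)-rs colourable graphs (every component a star) \( G \) is not \( 2 \)-rs colourable, whence \( \chi_{rs}(G)\geq 3 \); combined with \( \chi_{rs}(G)\leq 4 \) from the theorem, we get \( \chi_{rs}(G)\in\{3,4\} \). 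Thus deciding whether \( \chi_{rs}(G)=3 \) or \( \chi_{rs}(G)=4 \) on this graph class is NP-hard.

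Next I would run the supposed approximation algorithm against this dichotomy. Suppose, for contradiction, that \textsc{Min RS Colouring} admits a polynomial-time algorithm \( \mathcal{A} \) with approximation ratio \( \rho<4/3 \) on subcubic planar bipartite graphs of girth at least \( 6 \) with star chromatic number \( 3 \). Given an instance \( G \) from the reduction of Theorem~\ref{thm:3-rsc planar bipartite etc}, run \( \mathcal{A} \) on \( G \) and let \( t \) be the number of colours used by the returned rs colouring, so \( \chi_{rs}(G)\leq t\leq \rho\cdot\chi_{rs}(G) \). If \( \chi_{rs}(G)=3 \) then \( t\leq 3\rho<4 \), and since \( t \) is an integer, \( t\leq 3 \); if \( \chi_{rs}(G)=4 \) then \( t\geq 4 \). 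Hence \( t\leq 3 \) if and only if \( \chi_{rs}(G)=3 \), so by inspecting \( t \) we decide in polynomial time whether \( G \) (equivalently, the formula) is \( 3 \)-rs colourable, contradicting Theorem~\ref{thm:3-rsc planar bipartite etc} unless \( \mathrm{P}=\mathrm{NP} \).

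The only point requiring care is the localisation of the hard instances to the window \( \{3,4\} \): the lower bound \( \chi_{rs}(G)\geq 3 \) comes from the presence of a cycle together with the fact that a graph is \( 2 \)-rs colourable exactly when each component is a star, while the upper bound \( \chi_{rs}(G)\leq 4 \) is precisely Claim~2 in the proof of Theorem~\ref{thm:3-rsc planar bipartite etc}. Since those bounds force \( \chi_{rs}(G) \) into two consecutive integers whose ratio is \( 4/3 \), no gap amplification is needed, and the corollary follows. The side conditions \( \chi_s(G)=3 \) and \( \mathrm{girth}(G)\geq 6 \) carry over for free because they are part of the conclusion of Theorem~\ref{thm:3-rsc planar bipartite etc}; I do not expect any genuine obstacle here beyond verifying that the approximation-ratio inequality interacts correctly with the integrality of the colour count.
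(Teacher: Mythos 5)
Your proposal is correct and follows essentially the same route as the paper: invoke Theorem~\ref{thm:3-rsc planar bipartite etc} to obtain hard instances with \( \chi_{rs}(G)\in\{3,4\} \) and observe that a \( (\frac{4}{3}-\epsilon) \)-approximation algorithm would distinguish the two cases. Your explicit justification of the lower bound \( \chi_{rs}(G)\geq 3 \) via the characterisation of 2-rs colourable graphs, and of the integrality argument, merely spells out details the paper leaves implicit.
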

\begin{proof}
By Theorem~\ref{thm:3-rsc planar bipartite etc}, given a subcubic planar bipartite graph \( G \) with \( \chi_s(G)=3 \) and \( \text{girth}(G)\geq 6 \), it is NP-hard to distinguish between the cases \( \chi_{rs}(G)=3 \) and \( \chi_{rs}(G)=4 \). A \( (\frac{4}{3}-\epsilon) \)-approximation algorithm for \textsc{Min RS Colouring} for some \( \epsilon>0 \) can be used to distinguish between the cases \( \chi_{rs}(G)=3 \) and \( \chi_{rs}(G)=4 \). Therefore, \textsc{Min RS Colouring} cannot be approximated within a factor less than \( 4/3 \).
\end{proof}

%Similar to Theorem~\ref{thm:3-rsc planar bipartite girth 6}, the NP-completeness result presented in Theorem~\ref{thm:3-rsc planar bipartite girth g} hold for a much smaller subclass (proof is similar to that of Theorem~\ref{thm:3-rsc planar bipartite etc}). 
%Similarly, the NP-completeness result presented in Theorem~\ref{thm:k-rsc planar bipartite girth g} hold for a much smaller subclass.
Similarly, we can strengthen Theorem~\ref{thm:k-rsc planar bipartite girth g} as follows (proof is omitted; see supplementary material).
\begin{theorem}
For \( k\geq 3 \) and \( g\geq 6 \), \textsc{\( k\)-RS Colourability} is NP-complete for planar bipartite graphs \( G \) even when \( G \) is 2-degenerate, \( \Delta(G)=k \), girth\( (G) \geq g \), \( \chi_s(G)\leq k \) and \( \chi_{rs}(G)\leq k+1 \). \qed
\label{thm:k-rsc planar bip etc}
\end{theorem}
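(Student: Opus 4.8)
My plan is to run the proof in two stages, mirroring how Theorem~\ref{thm:k-rsc planar bipartite girth g} was obtained from Theorem~\ref{thm:3-rsc planar bipartite girth g}. Stage one settles the case $k=3$: I would take the graph produced by the large-girth reduction of Theorem~\ref{thm:3-rsc planar bipartite girth g} (old clause gadgets replaced by the gadget of Figure~\ref{fig:new clause gadget}, with the even parameter $s$ chosen so that the girth is at least $g$) and check that, besides being subcubic, planar, bipartite and of girth at least $g$, it is $2$-degenerate with $\chi_s\le 3$ and $\chi_{rs}\le 4$, by importing the auxiliary-colouring argument of Theorem~\ref{thm:3-rsc planar bipartite etc}. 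Stage two lifts the statement from $k=3$ to every $k\ge 3$ using the pendant-padding operation $G\mapsto G^{+}$ together with Observation~\ref{obs:rsc increment}.

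\noindent\textbf{Stage one ($k=3$).} Call the reduction graph $G_{new}$. NP-hardness of testing $3$-rs colourability of $G_{new}$, along with planarity, bipartiteness, subcubicity and girth at least $g$, is already established in Theorem~\ref{thm:3-rsc planar bipartite girth g}. For $2$-degeneracy, note that the $3$-plus vertices of $G_{new}$ are exactly the $x_i$'s, the $c_{jk}$'s and the $a_{jk}^{(t)}$'s, and any two of these are separated by degree-$2$ vertices, so they form an independent set; listing the degree-$3$ vertices first, then the degree-$2$ vertices, then the pendant vertices yields a valid $2$-degeneracy order. For $\chi_s(G_{new})=3$ and $\chi_{rs}(G_{new})\le 4$ I would rerun the proof of Theorem~\ref{thm:3-rsc planar bipartite etc}: build the variable-conflict graph $H_\mathcal{B}$, $4$-colour it via the Four Colour Theorem, split the palette as $L=\{0,1\}$ and $M=\{2,3\}$ to classify each clause as type-I or type-II, colour the $x_i$'s with binary colours and the $y_{ij}$'s with colour $2$, and extend to each new clause gadget using the appropriate generalisation of Figures~\ref{fig:type-I phi} and~\ref{fig:type-II phi} (the period-$3$ pattern propagated down each side is exactly that of Figure~\ref{fig:3-rs colouring scheme for new clause gadget}, with all attached pendants coloured~$2$). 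One then proves the analogue of Claim~1.1 for the long gadget --- the only bicoloured $P_3$ with a higher colour on its middle vertex lies inside a type-II gadget and is of the form $c_{jk},b_{jk}$-side path coloured $0,1,0$, with all other neighbours of its two $c$-vertices coloured~$2$ --- which rules out a bicoloured $P_4$ and hence gives a $3$-star colouring; incrementing every colour by one and recolouring the middle vertex of each offending length-$3$ path as in Figure~\ref{fig:type-II f after recolouring} converts this into a $4$-rs colouring.

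\noindent\textbf{Stage two (induction on $k$).} Assume the statement for some $k\ge 3$, witnessed by graphs $G_k$ with $\Delta(G_k)=k$, $\chi_s(G_k)\le k$, $\chi_{rs}(G_k)\le k+1$, each $2$-degenerate, planar, bipartite, of girth at least $g$, and for which $k$-rs colourability is NP-hard. Put $G_{k+1}=G_k^{+}$. Adding only pendant vertices preserves planarity, bipartiteness and girth, keeps the graph $2$-degenerate (append the new pendants to the degeneracy order), and makes $\Delta(G_{k+1})=k+1$. Colouring every new pendant with the fresh maximal colour gives $\chi_s(G_{k+1})\le\chi_s(G_k)+1\le k+1$ and $\chi_{rs}(G_{k+1})\le\chi_{rs}(G_k)+1\le k+2$: a pendant has degree~$1$, so it is never the centre of a $P_3$ nor an internal vertex of a $P_4$, hence it cannot be the higher-coloured middle of a forbidden path, and any bicoloured forbidden subpath using only the original colours would already live in $G_k$. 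By Observation~\ref{obs:rsc increment}, $G_k$ is $k$-rs colourable if and only if $G_{k+1}$ is $(k+1)$-rs colourable, so testing $(k+1)$-rs colourability of $G_{k+1}$ is NP-hard; membership in NP is the standard certificate check (verify that every bicoloured $P_3$ has the lower colour on its middle vertex). This closes the induction and proves the theorem.

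\noindent\textbf{Expected obstacle.} The $G\mapsto G^{+}$ stage is entirely routine. The one part that needs genuine care is, within stage one, re-establishing the analogue of Claim~1.1 for the long clause gadget of Figure~\ref{fig:new clause gadget}: one must produce, for each of the two boundary patterns of the $c_{jk}$'s, a $3$-colouring of the subdivided, pendant-decorated gadget, and then verify that propagating the period-$3$ colour pattern along the three sides and colouring every attached pendant~$2$ (resp.~$3$, after the increment) really does avoid every bicoloured $P_4$ --- and every bicoloured $P_3$ with the higher colour on its middle vertex --- except the harmless ones interior to a type-II gadget. I expect this to go through by the same local case analysis as in the $g=6$ case, but it is the step that must be written out in full.
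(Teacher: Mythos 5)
Your proposal is correct and follows essentially the same route the paper intends for this theorem: establish the $k=3$, girth-$\geq g$ case by verifying that the graph $G_{new}$ of Theorem~\ref{thm:3-rsc planar bipartite girth g} also satisfies the extra properties via the auxiliary-graph/four-colour argument of Theorem~\ref{thm:3-rsc planar bipartite etc} (adapted to the long clause gadget), and then lift to all $k\geq 3$ by induction using the pendant-padding operation $G\mapsto G^{+}$ and Observation~\ref{obs:rsc increment}. The one step you flag as needing full write-up — the star-colouring analysis of the long gadget and the analogue of Claim~1.1 — is exactly the detail the paper relegates to its supplementary material, so your identification of where the work lies is accurate.
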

%\begin{proof}
%To prove the theorem, it suffices to show that the graphs
%\end{proof}

%The proof of Theorem~\ref{thm:k-rsc planar bip etc} is easy (observe that for the graphs \( G \) and \( G^+ \) of Observation~\ref{obs:rsc increment}, (i)~\( \chi_s(G^+)\leq \chi_s(G)+1 \), (ii)~\( \chi_{rs}(G^+)\leq \chi_{rs}(G)+1 \), (iii) girth\( (G^+)= \) girth\( (G) \), and (iv)~\( G^+ \) is 2-degenerate).

%\subsection*{Inapproximability of \textsc{Min RS Colouring}}
\noindent \textbf{Inapproximability of \textsc{Min RS Colouring}}\\
%Let \( \Pi_1 \) and \( \Pi_2 \) be two minimization problems. An \emph{approximation factor preserving reduction} \cite{vazirani} from \( \Pi_1 \) to \( \Pi_2 \) consists of two polynomial time algorithms \( g \) and \( h \) such that (i)~for every instance \( I_1 \) of \( \Pi_1 \), \( I_2=g(I_1) \) is an instance of \( \Pi_2 \) such that \( \text{OPT}_{\Pi_2}(I_2)\leq \text{OPT}_{\Pi_1}(I_1) \), and (ii)~for any solution \( s_2 \) of \( I_2 \), \( s_1=h(I_1,s_2) \) is a solution of \( I_1 \) such that \( \text{obj}_{\Pi_1}(I_1,s_1)\leq \text{obj}_{\Pi_2}(I_2,s_2) \) (where \( \text{obj}_{\Pi_j}(I_j,s_j) \) is the objective function value of solution \( s_j \) of \( I_j \), and \( \text{OPT}_{\Pi_j}(I_j) \) is the objective function value of an optimal solution to instance \( I_j \) of \( \Pi_j \), \( j=1,2 \)).
By an approximation factor preserving reduction from \textsc{Min Colouring} to \textsc{Min RS Colouring}, we show that for all \( \epsilon>0 \), \textsc{Min RS Colouring} is inapproximable within \( n^{\frac{1}{3}-\epsilon} \) for 2-degenerate bipartite graphs. The construction employed is a slightly modified form of that used by Gebremedhin \etal.~\cite{gebremedhin3} to prove that \textsc{Min Star Colouring} is inapproximable.
\begin{theorem}
For all \( \epsilon>0 \), it is NP-hard to approximate \textsc{Min RS Colouring} within \( n^{\frac{1}{3}-\epsilon} \) for 2-degenerate bipartite graphs.
\label{thm:rs col inapproximable}
\end{theorem}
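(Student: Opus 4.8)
The plan is to give an approximation-factor-preserving reduction from \textsc{Min Colouring}, which by a theorem of Zuckerman (derandomising earlier work) is NP-hard to approximate within \( N^{1-\delta} \) for every \( \delta>0 \), where \( N \) denotes the number of vertices of the input graph. Given an arbitrary graph \( H \) on \( N \) vertices, I would construct in polynomial time a bipartite 2-degenerate graph \( G=G(H) \) on \( n=\Theta(N^{3}) \) vertices with two properties: \emph{completeness}, if \( \chi(H)\le k \) then \( \chi_{rs}(G)\le c\,k \) for an absolute constant \( c \); and \emph{soundness in effective form}, from any rs colouring of \( G \) with \( t \) colours one can compute in polynomial time a proper colouring of \( H \) with \( O(t) \) colours (so in particular \( \chi(H)=O(\chi_{rs}(G)) \)). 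The construction is a mild modification of the one Gebremedhin \etal.~\cite{gebremedhin3} use for star colouring: first subdivide every edge of \( H \) — this alone makes the graph bipartite (original vertices versus subdivision vertices) and 2-degenerate (list the original vertices first, they induce no edges, then the degree-two subdivision vertices) — and then attach, at each original vertex and along each subdivided edge, small auxiliary gadgets whose role is to (a)~defeat the ``high-degree'' obstruction (a single subdivided edge only forbids both of its endpoints from receiving colour \( 0 \), and a high-degree original vertex whose many incident subdivision vertices all want low colours would itself be the middle of a forbidden \( P_3 \)), and (b)~genuinely force the two endpoints of an edge of \( H \) to receive distinct ``representative'' colours. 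Expanding each of the \( \Theta(N^{2}) \) edges by a \( \Theta(N) \)-vertex gadget is what makes \( n=\Theta(N^{3}) \); this particular exponent is exactly what converts an \( N^{1-\delta} \) gap for \( \chi \) into an \( n^{1/3-\epsilon} \) gap for \( \chi_{rs} \).

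For completeness I would take an optimal proper colouring \( g:V(H)\to\{1,\dots,k\} \), put it on the vertex gadgets, reserve colour \( 0 \) and a constant number of further colours for the subdivision and auxiliary vertices, and verify that no bicoloured \( P_3 \) carries the higher colour on its middle vertex: the crucial point is that along a subdivided edge \( v_iv_j \) the two endpoints carry the distinct colours \( g(v_i)\ne g(v_j) \), so the internal vertices of the edge gadget can be coloured consistently using only \( O(1) \) colours, and the vertex gadget at \( v_i \) spreads out the incident subdivision vertices enough to avoid the high-degree obstruction. For soundness I would show that in every rs colouring of \( G \) each vertex gadget determines a well-defined representative colour from a palette of size \( O(t) \), that the edge gadget forces the representatives of adjacent vertices of \( H \) to differ, and that reading off these representatives yields a proper colouring of \( H \); this reading-off step is what makes the soundness direction effective.

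Combining the two parts: if \textsc{Min RS Colouring} had a polynomial-time \( n^{1/3-\epsilon} \)-approximation algorithm, then on input \( H \) I would build \( G(H) \), run the algorithm to obtain an rs colouring of \( G \) with at most \( n^{1/3-\epsilon}\cdot\chi_{rs}(G)=\Theta(N^{1-3\epsilon})\cdot O(\chi(H)) \) colours (using completeness for \( \chi_{rs}(G)=O(\chi(H)) \)), and then apply the effective soundness step to get a proper colouring of \( H \) with \( O(N^{1-3\epsilon}\,\chi(H)) \) colours, i.e.\ a polynomial-time \( O(N^{1-3\epsilon}) \)-approximation for \textsc{Min Colouring}. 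Since \( N^{1-3\epsilon}=o(N^{1-2\epsilon}) \), this contradicts Zuckerman's theorem with \( \delta=2\epsilon \) (we may assume \( \epsilon<1/3 \), as otherwise the statement is trivial). Hence \textsc{Min RS Colouring} is NP-hard to approximate within \( n^{1/3-\epsilon} \), and by construction the hard instances are bipartite and 2-degenerate; this is essentially best possible, since Karpas \etal.~\cite{karpas} show every 2-degenerate graph is rs colourable with \( O(\sqrt n) \) colours.

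The main obstacle is the gadget design behind completeness and soundness: one must build a \emph{bipartite}, \emph{2-degenerate} graph in which the rs-colouring constraints — which are directional and rather weak on sparse graphs — nevertheless \emph{force} a genuine conflict across each edge of \( H \), while simultaneously (1)~spending only \( O(1) \) ``overhead'' colours beyond the \( \chi(H) \) colours used to encode \( g \) (so the gap is not diluted) and (2)~keeping the size at \( \Theta(N^{3}) \) (so the exponent is exactly \( 1/3 \)). A naive subdivision fails the forcing requirement outright; adding pendant vertices or crude paths tends either to blow the overhead up to \( \Theta(\chi(H)) \) colours or to make \( \chi_{rs}(G) \) governed by vertex degrees rather than by \( \chi(H) \). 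Threading all three requirements at once is the delicate core of the reduction, and is precisely what the adaptation of the Gebremedhin \etal.\ construction must accomplish.
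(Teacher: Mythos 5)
Your high-level architecture is exactly the paper's: an approximation-factor-preserving reduction from \textsc{Min Colouring}, Zuckerman's \( N^{1-\delta} \) inapproximability, a cubic size blow-up so that an \( N^{1-3\epsilon} \) gap for \( \chi \) becomes an \( n^{1/3-\epsilon} \) gap for \( \chi_{rs} \), and the remark that the construction is a variant of the Gebremedhin et al.\ star-colouring reduction; your closing arithmetic is essentially the computation in the paper. But you explicitly leave the central gadget unspecified (you call it ``the delicate core''), and the partial description you do give --- subdivide each edge once, attach auxiliary gadgets at the original vertices and along the subdivided edges, and extract ``representative colours'' at an \( O(1) \) multiplicative overhead --- does not match the actual construction and substantially overestimates its difficulty. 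As it stands this is a correct plan with the key idea missing, not a proof.

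The missing gadget is simply: replace each edge \( e=uv \) of the input graph \( H \) by a copy of \( K_{2,\Delta+1} \) with parts \( \{u,v\} \) and \( \{e_1,\dots,e_{\Delta+1}\} \), where \( \Delta=\Delta(H) \); there are no vertex gadgets at all. For completeness, give every new vertex the single fresh colour \( k \) on top of a proper \( k \)-colouring of \( H \): each \( P_3 \) of the form \( u,e_i,v \) is tricoloured, and each \( P_3 \) of the form \( e_i,v,e'_j \) has the top colour at both ends, so its middle vertex is automatically lower --- this is why the ``high-degree obstruction'' you worry about never arises, and why the overhead is one additive colour rather than a constant factor \( c \). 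For soundness, if \( k\le\Delta \) then by pigeonhole two of \( e_1,\dots,e_{\Delta+1} \) get the same colour under any \( (k+1) \)-colouring, say \( e_1 \) and \( e_2 \); if moreover \( f'(u)=f'(v) \) then \( u,e_1,v,e_2 \) is a bicoloured \( P_4 \), already forbidden for star colouring and hence for rs colouring. A degree count (every original vertex has degree at least \( \Delta+1\ge k+1 \) in the new graph, so by Observation~\ref{obs:degree restriction} it cannot receive the top colour) then shows that the restriction of any \( (k+1) \)-rs colouring to \( V(H) \) is literally a proper \( k \)-colouring of \( H \) --- no representative-colour extraction is needed, and the losses are additive rather than \( O(\cdot) \). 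Until you exhibit a gadget meeting your three requirements, the argument is incomplete; the point of the above is that the gadget you are searching for is far simpler than your requirements list suggests.
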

\begin{proof}

%Inapproximability of \textsc{Min RS Colouring} is proved via an approximation factor preserving reduction from \textsc{Min Colouring}. It is known that for all \( \delta>0 \), \textsc{Min Colouring} is NP-hard to approximate within \( n^{1-\delta} \) \cite{zuckerman}. 

We show that an approximation algorithm for \textsc{Min RS Colouring} leads to an approximation algorithm for \textsc{Min Colouring}. We assume that for a given \( \epsilon>0 \), there is an algorithm that takes a graph \( G\bm{'} \) on \( N \) vertices as input and approximates \( \chi_{rs}(G\bm{'}) \) within a factor of \( N^{\frac{1}{3}-\epsilon} \). Then, we show that for a given \( \epsilon>0 \), there is an algorithm that takes a graph \( G \) on \( n \) vertices as input and approximates \( \chi(G) \) within a factor of \( n^{1-\epsilon} \). Given a graph \( G \) with \( n \) vertices, \( m \) edges and maximum degree \( \Delta \), construct a graph \( G\bm{'} \) from \( G \) by replacing each edge \( e=uv \) of \( G \) by a copy of the complete bipartite graph \( K_{2,\Delta+1} \) with parts \( \{u,v\} \) and \( \{e_1,e_2,\dots,e_{\Delta+1}\} \) (see Figure~\ref{fig:making G'}).

\begin{figure}[hbt]
\centering
\begin{tikzpicture}
\path (0,0) coordinate(lhs);
\path (lhs) node[dot](v)[label=left:\( u \)]{} --++(1,0) node[dot](u)[label=right:\( v \)]{} --++(1,0) coordinate(from) --++(0.35,0) coordinate (to) --++(1,0) coordinate(rhs);
\draw (v)-- node[below]{e}  (u);

\path [-stealth,draw=black,ultra thick] (from)--(to);

\draw (rhs) node[dot](v)[label=left:\( u \)]{};
\path (v)--+(3,0) node[dot](u)[label=right:\( v \)]{};
\draw (v)--+(1.5,1) node[dot](e1)[label=above:\( e_1 \)]{}
      (v)--+(1.5,0.25) node[dot](e2)[label=above:\( e_2 \)]{}
      (v)--+(1.5,-1) node[dot](eD+1)[label=below:\( e_{\Delta+1} \)]{};
\draw (u)--(e1)
      (u)--(e2)
      (u)--(eD+1);
\path (e2)-- node[sloped]{\( \dots \)} (eD+1);
\end{tikzpicture}
\caption{Edge replacement operation to construct \( G\bm{'} \) from \( G \)}
\label{fig:making G'}
\end{figure}
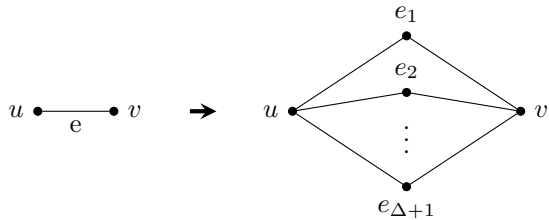

Note that \( G\bm{'} \) has \( N=n+(\Delta+1)m<n+(n+1)\binom{n}{2}\leq n^3 \) vertices. Clearly, \( G\bm{'} \) is bipartite, and \( G\bm{'} \) can be constructed in time polynomial in \( m+n \). 
%Clearly, the graph \( G\bm{'} \) has \( N=n+(\Delta+1)m<n^3 \) vertices and \( M=2(\Delta+1)m \) edges, and \( G\bm{'} \) can be constructed in time polynomial in \( m+n \). Obviously, \( G\bm{'} \) is bipartite.
To obtain a 2-degenerate ordering of \( V(G\bm{'}) \), list members of \( V(G) \) first, followed by the newly introduced vertices (i.e., \( e_i \)'s).

First, we prove that \( G\bm{'} \) is \( (k+1) \)-rs colourable whenever \( G \) is \( k \)-colourable. Clearly, each \( k \)-colouring \( f \) of \( G \) can be extended into a \( (k+1) \)-colouring \( f\bm{'} \) of \( G\bm{'} \) by assigning the new colour \( k \) to all new vertices. A \( P_3 \) in \( G\bm{'} \) is either of the form \( u,e_i,v \) where \( uv \) is an edge in \( G \), or of the form \( e_i,v,e'_j \). In the former case, the \( P_3 \) is tricoloured by \( f\bm{'} \) because \( f(u)\neq f(v) \). In the latter case, the \( P_3 \) is bicoloured by \( f\bm{'} \) (since \( f\bm{'}(e_i)=f\bm{'}(e'_j)=k \)), but the middle vertex has a lower colour. Hence, \( f\bm{'} \) is a \( (k+1) \)-rs colouring of \( G\bm{'} \). This proves that \( \chi_{rs}(G\bm{'})\leq \chi(G)+1 \). 

Next, we show that given a \( (k+1) \)-rs colouring \( f\bm{'} \) of \( G\bm{'} \), a \( k \)-colouring \( f \) of \( G \) can be obtained in polynomial time. For \( k\geq \Delta+1 \), a greedy colouring of \( G \) can be used as \( f \). Suppose \( k\leq \Delta \). For each edge \( e=uv \) of \( G \), at least two from associated vertices \( e_1,e_2,\dots,e_{\Delta+1} \) in \( G\bm{'} \) must get the same colour under \( f\bm{'} \) (if not, \( \Delta+1 \) colours are needed for \( e_i \)'s and another colour is needed for \( u \), a contradiction because \( f\bm{'} \) is a \( (k+1) \)-colouring and \( k\leq \Delta \)). W.l.o.g., assume that \( f\bm{'}(e_1)=f\bm{'}(e_2) \). Then, \( f\bm{'} \) must use distinct colours on vertices \( u \) and \( v \) (if not, \( u,e_1,v,e_2 \) is a bicoloured \( P_4 \), and hence \( f\bm{'} \) is not even a star colouring let alone an rs colouring). Moreover, for every vertex \( v\in V(G) \), \( deg_{G\bm{'}}(v)\geq k+1 \) and thus \( f\bm{'}(v)<k \) (if \( v \) is coloured \( k \), then \( \deg_{G\bm{'}}(v)\leq k \) because \( v \) has at most one neighbour coloured \( i \) for \( 0\leq i\leq k-1 \)). Hence, the restriction of \( f\bm{'} \) to \( V(G) \) is indeed a \( k \)-colouring of \( G \). 

%Finally, we show that \( \chi(G) \) can be approximated within a factor of \( n^{1-\epsilon} \) if \( \chi_{rs}(G\bm{'}) \) can be approximated within a factor of \( N^{\frac{1}{3}-\epsilon} \). Suppose that \( \chi_{rs}(G\bm{'}) \) can be approximated within a factor of \( N^{\frac{1}{3}-\epsilon} \). Let \( n_0 \) be a fixed positive integer such that \( n_0^{2\epsilon}\geq 2 \). If \( n<\max\{3,n_0\} \), then \( \chi(G) \) can be computed exactly using an exact algorithm. So, suppose that \( n\geq \max\{3,n_0\} \). Now, the assumed approximation algorithm for \( \chi_{rs}(G\bm{'}) \) gives a \( (k+1) \)-rs colouring of \( G\bm{'} \) where \( k+1\leq N^{\frac{1}{3}-\epsilon}\chi_{rs}(G\bm{'}) \). Using the method described above, a \( k \)-colouring of \( G \) can be produced. Recall that \( \chi_{rs}(G\bm{'})\leq \chi(G)+1 \), \( N<n^3 \) and \( 2\leq n_0^{2\epsilon} \). Thus, we have the following. 
Finally, for a given \( \epsilon>0 \), we produce an algorithm that approximates \( \chi(G) \) within \( n^{1-\epsilon} \). Let \( n_0 \) be a fixed positive integer such that \( n_0^{2\epsilon}\geq 2 \). If \( n<n_0 \), then \( \chi(G) \) can be computed exactly using an exact algorithm. Suppose \( n\geq n_0 \). Now, the assumed approximation algorithm (see the beginning of the proof) gives a \( (k+1) \)-rs colouring of \( G\bm{'} \) where \( k+1\leq N^{\frac{1}{3}-\epsilon}\chi_{rs}(G\bm{'}) \). Using the method described above, a \( k \)-colouring of \( G \) can be produced. Since \( \chi_{rs}(G\bm{'})\leq \chi(G)+1 \), \( N<n^3 \) and \( 2\leq n_0^{2\epsilon} \), we have \( k<n^{1-\epsilon}\chi(G) \) by the following equation.
\[
\begin{multlined}
k<k+1\leq N^{\frac{1}{3}-\epsilon}\chi_{rs}(G\bm{'})< (n^3)^{\frac{1}{3}-\epsilon}(\chi(G)+1)\\[3pt]
\leq n^{1-3\epsilon}\,2\chi(G) \leq n^{1-3\epsilon}n_0^{2\epsilon}\chi(G)\leq n^{1-3\epsilon}n^{2\epsilon}\chi(G)=n^{1-\epsilon}\chi(G)
\end{multlined}
\]
This gives an \( (n^{1-\epsilon}) \)-approximation algorithm for \textsc{Min Colouring}. So, for a given \( \epsilon>0 \), if there is an \( (n^{\frac{1}{3}-\epsilon}) \)-approximation algorithm for \textsc{Min RS Colouring}, then there is an \( (n^{1-\epsilon}) \)-approximation algorithm for \textsc{Min Colouring}. 
Since it is NP-hard to approximate \textsc{Min Colouring} within \( n^{1-\epsilon} \) for all \( \epsilon>0 \) \cite{zuckerman}, it is NP-hard to approximate \textsc{Min RS Colouring} within \( n^{\frac{1}{3}-\epsilon} \) for all \( \epsilon>0 \).
\end{proof}

Karpas \etal.\ provided a method to produce an rs colouring of a given 2-degenerate graph \( G \) in polynomial time using  \( O(n^\frac{1}{2}) \) colours (special case \( d=2 \) of Theorem~6.2 in \cite{karpas}). 
%So, an \( O(n^\frac{1}{2}) \)-rs colouring of each 2-degenerate graph \( G \) can be obtained in polynomial time (recall that every distance-two colouring is an rs colouring).
Hence, the rs chromatic number is approximable within \( O(n^\frac{1}{2}) \) in 2-degenerate graphs. Along with Theorem~\ref{thm:rs col inapproximable}, this means that in 2-degenerate graphs, \textsc{Min RS Colouring} can be approximated within \( O(n^\frac{1}{2}) \), yet inapproximable within \( n^{\frac{1}{3}-\epsilon} \) for all \( \epsilon>0 \). The following open problem is of interest in this regard.\\

\noindent\textbf{Question:} Is \textsc{Min RS Colouring} inapproximable within \( n^p \) for some \( p \) in the range \( \frac{1}{3}\leq p<\frac{1}{2} \) ?

\section{Properties of RS Colouring}\label{sec:properties}
In this section, some basic observations on rs colouring are presented.
% Special attention is given to 3-rs colourings.
From these observations, an easy formula for the rs chromatic number of split graphs is obtained. Also, the material in this section is used in the forthcoming section on trees and chordal graphs.

%It is known that every rs colouring is an ordered colouring (see Section~\ref{sec:related} for details). Also, every graph \( G \) has an ordered colouring with \( n-\alpha(G)+1 \) colours (choose a maximum independent set \( I \) of \( G \), assign colour~0 to all vertices in \( I \), and for each of the remaining \( n-\alpha(G) \) vertices, assign a new colour). 
%Therefore, \( n-\alpha(G)+1 \) is an upper bound for the rs chromatic number.
\begin{observation}
For every graph \( G \), \( \chi_{rs}(G)\leq n-\alpha(G)+1 \).
\label{obs:n-alpha+1}
\end{observation}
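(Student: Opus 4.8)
The plan is to exhibit an explicit \( (n-\alpha(G)+1) \)-rs colouring of \( G \). Let \( S \) be a maximum independent set of \( G \), so that \( |S|=\alpha(G) \) and \( |V(G)\setminus S|=n-\alpha(G) \). Put \( k=n-\alpha(G)+1 \) and define \( f:V(G)\to\{0,1,\dots,k-1\} \) by assigning the top colour \( k-1=n-\alpha(G) \) to every vertex of \( S \), and assigning the colours \( 0,1,\dots,n-\alpha(G)-1 \) bijectively to the vertices of \( V(G)\setminus S \). Since the definition of a \( k \)-colouring does not require surjectivity, this is a well-formed candidate.

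First I would check that \( f \) is a proper colouring: no edge lies inside \( S \) because \( S \) is independent; no monochromatic edge lies inside \( V(G)\setminus S \) because those vertices receive pairwise distinct colours; and any edge between \( S \) and \( V(G)\setminus S \) joins a vertex of colour \( n-\alpha(G) \) to a vertex of colour at most \( n-\alpha(G)-1 \).

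Next I would verify the rs condition. Suppose for contradiction that \( x,y,z \) is a path in \( G \) with \( f(y)>f(x)=f(z) \). Since \( f \) is injective on \( V(G)\setminus S \), the equality \( f(x)=f(z) \) forces \( x \) and \( z \) to lie in the only colour class of size greater than one, namely \( S \); hence \( f(x)=f(z)=n-\alpha(G) \). But then \( f(y)>n-\alpha(G)=k-1 \), which is impossible. Thus \( G \) contains no bicoloured \( P_3 \) with the higher colour on its middle vertex, so \( f \) is a \( k \)-rs colouring and \( \chi_{rs}(G)\leq k=n-\alpha(G)+1 \).

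There is no genuine obstacle here, but the point worth flagging is the choice of colour for the independent set. The seemingly natural move of giving \( S \) the colour \( 0 \) fails: a vertex \( y\notin S \) with two neighbours \( x,z\in S \) would yield a path \( x,y,z \) with \( f(y)>0=f(x)=f(z) \), violating the rs condition. Placing \( S \) on the \emph{largest} colour instead rules this out automatically, since the middle vertex of any \( P_3 \) with monochromatic endpoints would then have to exceed the maximum available colour.
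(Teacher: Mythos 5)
Your proof is correct and uses essentially the same construction as the paper: give each vertex outside a maximum independent set its own colour and give the independent set the single largest colour, so the only non-singleton colour class is the top one and no bicoloured \( P_3 \) can have a higher colour on its middle vertex. Your closing remark about why the independent set must receive the \emph{largest} rather than the smallest colour is a correct and worthwhile observation that the paper leaves implicit.
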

\begin{proof}
Let \( v_0,v_1,\dots,v_{n-1} \) be the vertices in \( G \). Let \( \alpha=\alpha(G) \), and let \( I \) be a maximum independent set in \( G \). W.l.o.g., assume that \( I=\{v_{n-\alpha},\dots,v_{n-1}\} \). Define \( f:V(G)\to\{0,1,\dots,n-\alpha\} \) as \( f(v_i)=i \) for \( 0\leq i\leq n-\alpha-1 \), and \( f(v_i)=n-\alpha \) for \( n-\alpha\leq i\leq n-1 \) (i.e., assign colour \( n-\alpha \) on members of \( I \)). Since \( f^{-1}(i) \) is a singleton for \( 0\leq i\leq n-\alpha-1 \) and \( f^{-1}(n-\alpha)=I \), \( f \) is an \( (n-\alpha+1) \)-rs colouring of \( G \). Therefore, \( \chi_{rs}(G)\leq n-\alpha(G)+1 \).
\end{proof}

The next observation is a direct consequence of Observation~\ref{obs:degree restriction} (proof is available in supplementary material).
\begin{observation}
Let \( G \) be a graph, and \( C=\{v_1,v_2,\dots,v_k\} \) be a clique in \( G \). If \( \deg(v_i)\geq k \) for all \( i \), then \( G \) is not \( k \)-rs colourable.
\qed
\label{obs:clique degree restriction}
\end{observation}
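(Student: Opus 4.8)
The plan is to argue by contradiction, reducing the statement to Observation~\ref{obs:degree restriction} via a pigeonhole argument on the clique. First I would assume that \( G \) admits a \( k \)-rs colouring \( f:V(G)\to\{0,1,\dots,k-1\} \). Since \( C=\{v_1,v_2,\dots,v_k\} \) is a clique on \( k \) vertices and the restriction of \( f \) to \( C \) is a proper colouring, the vertices \( v_1,\dots,v_k \) receive \( k \) pairwise distinct colours; as only the \( k \) colours \( 0,1,\dots,k-1 \) are available, \( f|_C \) is a bijection onto \( \{0,1,\dots,k-1\} \). In particular, some vertex \( v_i\in C \) satisfies \( f(v_i)=k-1 \).

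Now I would invoke Observation~\ref{obs:degree restriction}: under any \( k \)-rs colouring, a vertex coloured \( k-1 \) has degree strictly less than \( k \). Applying this to \( v_i \) gives \( \deg(v_i)<k \), which contradicts the hypothesis that \( \deg(v_j)\geq k \) for every \( j \). Hence no \( k \)-rs colouring of \( G \) can exist, which is the claim. There is no real obstacle in this argument; the only point that needs a word of justification is the (elementary) counting observation that a proper \( k \)-colouring must use all \( k \) colours on a \( k \)-clique, and everything else is an immediate appeal to the earlier observation. \qed
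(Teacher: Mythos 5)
Your proof is correct and matches the paper's intended argument: the paper defers the details to supplementary material but explicitly states that this observation is a direct consequence of Observation~\ref{obs:degree restriction}, which is exactly the pigeonhole-on-the-clique plus degree-restriction argument you give. No issues.
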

%\begin{proof}
%On the contrary, assume that \( \deg(v_i)\geq k \) for \( 1\leq i\leq k \), but \( G \) admits a \( k \)-rs colouring \( f \). By Observation~\ref{obs:degree restriction}, none of the vertices \( v_1,v_2,\dots,v_k \) can be assigned colour \( k-1 \). But, since \( C \) is a \( k \)-clique and \( f \) is a \( k \)-rs colouring of \( G \), colour \( k-1 \) must be used on some vertex \( v_i \) \((1\leq i\leq k) \). This is a contradiction.
%\end{proof}

\begin{theorem}
If \( G \) is a split graph, then \( \chi_{rs}(G)=n-\alpha(G)+1 \).
\end{theorem}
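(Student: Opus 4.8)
The plan is to get one inequality for free from Observation~\ref{obs:n-alpha+1}, which already gives \( \chi_{rs}(G)\leq n-\alpha(G)+1 \) for every graph, and then to prove the matching lower bound \( \chi_{rs}(G)\geq n-\alpha(G)+1 \) by exploiting the split structure together with Observation~\ref{obs:clique degree restriction}. I would begin by recording two elementary facts. For every graph, a maximum independent set meets a maximum clique in at most one vertex, so \( \alpha(G)\leq n-\omega(G)+1 \), i.e.\ \( n-\alpha(G)\geq \omega(G)-1 \). For a split graph, fixing any split partition \( (K,S) \) with \( K \) a clique and \( S \) independent gives \( |K|\leq\omega(G) \) and \( |S|\leq\alpha(G) \), hence \( n=|K|+|S|\leq\omega(G)+\alpha(G) \), i.e.\ \( n-\alpha(G)\leq\omega(G) \). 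Together, \( n-\alpha(G)\in\{\omega(G)-1,\ \omega(G)\} \).

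Next I would split into these two cases. If \( n-\alpha(G)=\omega(G)-1 \), then since every rs colouring is in particular a proper colouring we have \( \chi_{rs}(G)\geq\chi(G)\geq\omega(G)=(n-\alpha(G))+1 \), and we are done. If \( n-\alpha(G)=\omega(G) \), then \( \omega(G)+\alpha(G)=n \), which forces every split partition \( (K,S) \) to be ``balanced'': the inequalities \( |K|\leq\omega(G) \), \( |S|\leq\alpha(G) \), \( |K|+|S|=n \) become equalities, so \( K \) is a maximum clique, \( S \) is a maximum independent set, and \( S=V(G)\setminus K \). Now every vertex \( v\in K \) has \( \omega(G)-1 \) neighbours inside \( K \), and it must also have a neighbour in \( S \) (otherwise \( S\cup\{v\} \) would be an independent set of size \( \alpha(G)+1 \)); hence \( \deg(v)\geq\omega(G) \) for every \( v\in K \). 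Applying Observation~\ref{obs:clique degree restriction} to the clique \( K \) with \( k=\omega(G) \) shows \( G \) is not \( \omega(G) \)-rs colourable, so \( \chi_{rs}(G)\geq\omega(G)+1=(n-\alpha(G))+1 \). Combining the two cases with Observation~\ref{obs:n-alpha+1} yields \( \chi_{rs}(G)=n-\alpha(G)+1 \).

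The step I expect to be the main obstacle is the case \( n-\alpha(G)=\omega(G) \): the point is to notice that the numerical equality \( \omega(G)+\alpha(G)=n \) pins down the split partition enough that the degree hypothesis of Observation~\ref{obs:clique degree restriction} is automatically met. (One has to be slightly careful here, since an arbitrarily chosen maximum independent set of a split graph need not have a clique as its complement; it is the balanced split partition, forced by \( \omega+\alpha=n \), that one should work with.) Everything else is bookkeeping with the two inequalities \( \omega(G)-1\leq n-\alpha(G)\leq\omega(G) \) and the trivial bound \( \chi_{rs}(G)\geq\chi(G)\geq\omega(G) \).
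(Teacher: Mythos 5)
Your proof is correct. The paper's argument rests on the same two ingredients — Observation~\ref{obs:n-alpha+1} for the upper bound and Observation~\ref{obs:clique degree restriction} for the lower bound — but it obtains the lower bound in one uniform step: it takes a split partition \( (C,I) \) in which \( I \) is a \emph{maximum} independent set (such a partition always exists: take a split partition maximizing the independent side and note that any larger independent set could exceed it only by a single clique vertex, which could then be moved across), observes that every vertex of \( C \) must have a neighbour in \( I \) and hence degree at least \( |C|=n-\alpha(G) \), and applies Observation~\ref{obs:clique degree restriction} directly to \( C \). Your route replaces the existence of this special partition by the case split \( n-\alpha(G)\in\{\omega(G)-1,\omega(G)\} \): in the first case you fall back on the trivial bound \( \chi_{rs}\geq\chi\geq\omega \), and only in the tight case do you invoke the degree argument, where the equality \( \omega+\alpha=n \) forces every split partition to be the balanced one. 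Both arguments are sound and of comparable length; the paper's is more uniform and never mentions \( \omega(G) \), while yours has the merit of making explicit — and then circumventing — the point the paper leaves unjustified, namely that the complement of an arbitrarily chosen maximum independent set of a split graph need not be a clique, so the choice of partition genuinely requires an argument.
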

\begin{proof}
Let \( G \) be a split graph whose vertex set is partitioned into a clique \( C \) and a maximum independent set \( I \) \big(i.e., \( |I|=\alpha(G) \)\big). Since \( I \) is a maximum independent set, every vertex in \( C \) has a neighbour in \( I \). Hence, each vertex in \( C \) has degree at least \( |C| \). Therefore, by Observation~\ref{obs:clique degree restriction}, \( G \) is not \( |C| \)-rs colourable. That is, \( \chi_{rs}(G)\geq |C|+1=n-\alpha(G)+1 \). On the other hand, by Observation~\ref{obs:n-alpha+1}, \( \chi_{rs}(G)\leq n-\alpha(G)+1 \). Therefore, \( \chi_{rs}(G)=n-\alpha(G)+1 \).
\end{proof}

Next, we present a lemma on 3-rs colouring of paths which is used in the design of 3-rs colourability testing algorithm for trees. The aforementioned algorithm for a tree \( T \) deals with paths in \( T \) whose internal vertices have degree two in \( T \) and endpoints are 3-plus vertices in \( T \). Since every 3-rs colouring of a tree must use binary colours on 3-plus vertices, the following question becomes the centre of attention. For \( i,j\in\{0,1\} \) and \( n\geq 3 \), what are the values of \( n,i,j \) such that the path \( P_n \) admits a 3-rs colouring that uses colours \( i \) and \( j \) at its endpoints? By Properties~P3, P4 and P6, such a 3-rs colouring does not exist in the following cases: (i)~\( n=3 \), \( i=j=0 \), (ii)~\( n=4 \), \( i\neq j \), and (iii)~\( n=6 \), \( i=j=0 \). Soon, we show that such a 3-rs colouring is possible in all other cases. Figure~\ref{fig:3-rsc path small n} shows that this is the case when \( 2<n<7 \). 
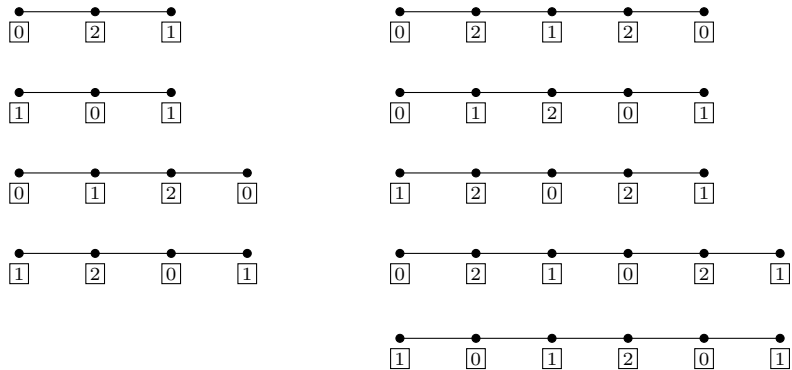
\begin{figure}[hbt]
\centering
\begin{tikzpicture}
\matrix[matrix of nodes,row sep=0.6cm]{%,column 1/.style={align=right}]{
%%(P_3,0,1)
\draw (0,0) node[dot](a1)[label={[vcolour]below:0}]{} --++(1,0) node[dot](a2)[label={[vcolour]below:2}]{} --++(1,0) node[dot](a3)[label={[vcolour]below:1}]{};
%%       (P_5,0,0)
         &[1.75cm]
	 \draw (0,0) node[dot](a1)[label={[vcolour]below:0}]{} --++(1,0) node[dot](a2)[label={[vcolour]below:2}]{} --++(1,0) node[dot](a3)[label={[vcolour]below:1}]{} --++(1,0) node[dot](a4)[label={[vcolour]below:2}]{} --++(1,0) node[dot](a5)[label={[vcolour]below:0}]{};\\
%%(P_3,1,1)
\draw (0,0) node[dot](a1)[label={[vcolour]below:1}]{} --++(1,0) node[dot](a2)[label={[vcolour]below:0}]{} --++(1,0) node[dot](a3)[label={[vcolour]below:1}]{};
%%       (P_5,0,0)
         &
	 \draw (0,0) node[dot](a1)[label={[vcolour]below:0}]{} --++(1,0) node[dot](a2)[label={[vcolour]below:1}]{} --++(1,0) node[dot](a3)[label={[vcolour]below:2}]{} --++(1,0) node[dot](a4)[label={[vcolour]below:0}]{} --++(1,0) node[dot](a5)[label={[vcolour]below:1}]{};\\
%%(P_4,0,0)
\draw (0,0) node[dot](a1)[label={[vcolour]below:0}]{} --++(1,0) node[dot](a2)[label={[vcolour]below:1}]{} --++(1,0) node[dot](a3)[label={[vcolour]below:2}]{} --++(1,0) node[dot](a4)[label={[vcolour]below:0}]{};
%%       (P_5,1,1)
         &
	 \draw (0,0) node[dot](a1)[label={[vcolour]below:1}]{} --++(1,0) node[dot](a2)[label={[vcolour]below:2}]{} --++(1,0) node[dot](a3)[label={[vcolour]below:0}]{} --++(1,0) node[dot](a4)[label={[vcolour]below:2}]{} --++(1,0) node[dot](a5)[label={[vcolour]below:1}]{};\\
%%(P_4,1,1)
\draw (0,0) node[dot](a1)[label={[vcolour]below:1}]{} --++(1,0) node[dot](a2)[label={[vcolour]below:2}]{} --++(1,0) node[dot](a3)[label={[vcolour]below:0}]{} --++(1,0) node[dot](a4)[label={[vcolour]below:1}]{};
%%       (P_6,0,1)
         &
	 \draw (0,0) node[dot](a1)[label={[vcolour]below:0}]{} --++(1,0) node[dot](a2)[label={[vcolour]below:2}]{} --++(1,0) node[dot](a3)[label={[vcolour]below:1}]{} --++(1,0) node[dot](a4)[label={[vcolour]below:0}]{} --++(1,0) node[dot](a5)[label={[vcolour]below:2}]{} --++(1,0) node[dot](a6)[label={[vcolour]below:1}]{};\\
%%empty

%%       (P_6,1,1)
         &
	 \draw (0,0) node[dot](a1)[label={[vcolour]below:1}]{} --++(1,0) node[dot](a2)[label={[vcolour]below:0}]{} --++(1,0) node[dot](a3)[label={[vcolour]below:1}]{} --++(1,0) node[dot](a4)[label={[vcolour]below:2}]{} --++(1,0) node[dot](a5)[label={[vcolour]below:0}]{} --++(1,0) node[dot](a6)[label={[vcolour]below:1}]{};\\
               %(a6) +(0,-0.5) node;\\[0.5cm]
};
\end{tikzpicture}
\caption{For \( 2<n<7 \) and \( i,j\in\{0,1\} \), \( P_n \) can be 3-rs coloured with colours \( i \) and \( j \) at its endpoints except for the cases (i)~\( n=3 \), \( i=j=0 \), (ii)~\( n=4 \), \( i\neq j \), and (iii)~\( n=6 \), \( i=j=0 \)}%Instances where 3-rs colouring of \( P_n \) with endpoints coloured from \( \{0,1\} \) is possible for \( 2<n<7 \). In other cases, such an extension is not possible according to properties~\( P3,P4 \) and \( P_6 \).}
\label{fig:3-rsc path small n}
\end{figure}

Lemma~\ref{lem:extension to path} below guarantees that such a 3-rs colouring exists whenever \( n\geq 7 \).

\begin{lemma}
If \( n\geq 7 \), for every \( i,j\in\{0,1\} \), the path \( P_n \) has a 3-rs colouring with one endpoint coloured \( i \) and the other, coloured \( j \).
\label{lem:extension to path}
\end{lemma}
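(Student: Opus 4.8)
The plan is to combine a handful of explicitly constructed short colourings with a simple ``append three vertices'' operation that increases the length of a path by $3$ while preserving both of its (binary) endpoint colours. By reversing the path it suffices to treat the endpoint pairs $(i,j)\in\{(0,0),(0,1),(1,1)\}$. Since every integer $n\ge 7$ can be written as $n_0+3t$ with $n_0\in\{7,8,9\}$ and $t\ge 0$, it is enough to (a)~exhibit, for each $n_0\in\{7,8,9\}$ and each such pair, a 3-rs colouring of $P_{n_0}$ with those endpoint colours, and (b)~show how to obtain a 3-rs colouring of $P_{n+3}$ with prescribed binary endpoint colours from one of $P_n$.

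For step (b), let $P_n=v_1,\dots,v_n$ carry a 3-rs colouring $c$ with $c(v_n)=j\in\{0,1\}$. Attach new vertices $v_{n+1},v_{n+2},v_{n+3}$ at the $v_n$-end, retain the colours of $v_1,\dots,v_n$, and colour the new vertices $2,1,0$ when $j=0$ and $2,0,1$ when $j=1$; in either case $v_{n+3}$ receives colour $j$, so the endpoint colours of $P_{n+3}$ are $c(v_1)$ and $j$. The only $P_3$'s of $P_{n+3}$ not already lying in $P_n$ are $v_{n-1}v_nv_{n+1}$, $v_nv_{n+1}v_{n+2}$ and $v_{n+1}v_{n+2}v_{n+3}$; the last two are tricoloured, and in $v_{n-1}v_nv_{n+1}$ the colour of $v_{n-1}$ lies in $\{1,2\}$ when $j=0$ and in $\{0,2\}$ when $j=1$ (since $c$ is proper), so that path is either tricoloured or bicoloured with the smaller of its two colours --- namely $j=c(v_n)$ --- on its middle vertex $v_n$. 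Hence no bicoloured $P_3$ of $P_{n+3}$ carries the higher colour on its middle vertex, so $c$ indeed extends to a 3-rs colouring of $P_{n+3}$.

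For step (a), seven of the nine required colourings are obtained by applying step (b) to the colourings of $P_4,P_5,P_6$ already shown in Figure~\ref{fig:3-rsc path small n}; for instance the $P_8$ colourings with endpoint pairs $(0,0)$, $(0,1)$ and $(1,1)$ come from the three $P_5$ colourings there. The two cases that Figure~\ref{fig:3-rsc path small n} cannot reach in this way are $P_7$ with endpoints $0,1$ and $P_9$ with endpoints $0,0$; I would supply these directly, e.g.\ by $0,2,1,2,0,2,1$ and $0,2,1,2,0,2,1,2,0$, and verify by inspection that in each of them the only bicoloured $P_3$'s have the form $2,1,2$ or $2,0,2$, both of which carry the smaller colour on the middle vertex and so are admissible.

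The bulk of the work is the short triple-by-triple checking in steps (a) and (b). The one case calling for a little care is $(0,0)$: colour $0$ is forbidden on the middle of a bicoloured $P_3$ only when flanked by a larger colour, so a colouring of $P_n$ with both endpoints $0$ must never return to colour $0$ exactly two vertices after leaving it; in particular the naive periodic colouring $0,1,2,0,1,2,\dots$ yields endpoints $(0,0)$ only when $n\equiv 1\pmod 3$, and for $n\equiv 0\pmod 3$ one needs instead the zig-zag pattern $0,2,1,2,0,2,1,2,0,\dots$. Getting this seed colouring right is the only real obstacle; everything else follows from step (b) and the cases already established in Figure~\ref{fig:3-rsc path small n}.
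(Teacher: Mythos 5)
Your proposal is correct and follows essentially the same route as the paper: explicit base colourings for the smallest lengths plus an inductive extension of a shorter path's colouring by a constant-length block whose endpoint colours are controlled. The only cosmetic difference is that the paper prepends two vertices (coloured \( i,2 \) onto a path whose near endpoint is coloured \( 1-i \)) from base cases \( n\in\{7,8\} \), whereas you append three vertices preserving the endpoint colour from base cases \( n_0\in\{7,8,9\} \); both verifications go through.
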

\begin{proof}
(by mathematical induction on \( n \))\\% is used to prove this.\\
Base cases: \( n\in\{7,8\} \)\\
For \( n\in\{7,8\} \) and \( i,j\in\{0,1\} \), there exists a 3-rs colouring of \( P_n \) with endpoints coloured \( i,j \) as shown in Figure~\ref{fig:3-rsc medium n}.
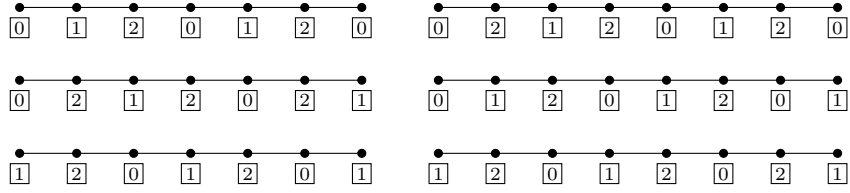
\begin{figure}[hbt]
\centering
\begin{tikzpicture}
\matrix[matrix of nodes,row sep=0.5cm]{%,column 1/.style={align=right}]{
%%(P_7,0,0)
\draw (0,0) node[dot](a1)[label={[vcolour]below:0}]{} --++(0.75,0) node[dot](a2)[label={[vcolour]below:1}]{} --++(0.75,0) node[dot](a3)[label={[vcolour]below:2}]{} --++(0.75,0) node[dot](a4)[label={[vcolour]below:0}]{} --++(0.75,0) node[dot](a5)[label={[vcolour]below:1}]{} --++(0.75,0) node[dot](a6)[label={[vcolour]below:2}]{} --++(0.75,0) node[dot](a7)[label={[vcolour]below:0}]{};
%%       (P_8,0,0)
         &[0.75cm]
	 \draw (0,0) node[dot](a1)[label={[vcolour]below:0}]{} --++(0.75,0) node[dot](a2)[label={[vcolour]below:2}]{} --++(0.75,0) node[dot](a3)[label={[vcolour]below:1}]{} --++(0.75,0) node[dot](a4)[label={[vcolour]below:2}]{} --++(0.75,0) node[dot](a5)[label={[vcolour]below:0}]{} --++(0.75,0) node[dot](a6)[label={[vcolour]below:1}]{} --++(0.75,0) node[dot](a7)[label={[vcolour]below:2}]{} --++(0.75,0) node[dot](a8)[label={[vcolour]below:0}]{};\\
%%(P_7,0,1)
\draw (0,0) node[dot](a1)[label={[vcolour]below:0}]{} --++(0.75,0) node[dot](a2)[label={[vcolour]below:2}]{} --++(0.75,0) node[dot](a3)[label={[vcolour]below:1}]{} --++(0.75,0) node[dot](a4)[label={[vcolour]below:2}]{} --++(0.75,0) node[dot](a5)[label={[vcolour]below:0}]{} --++(0.75,0) node[dot](a6)[label={[vcolour]below:2}]{} --++(0.75,0) node[dot](a7)[label={[vcolour]below:1}]{};
%%       (P_8,0,1)
         &
	 \draw (0,0) node[dot](a1)[label={[vcolour]below:0}]{} --++(0.75,0) node[dot](a2)[label={[vcolour]below:1}]{} --++(0.75,0) node[dot](a3)[label={[vcolour]below:2}]{} --++(0.75,0) node[dot](a4)[label={[vcolour]below:0}]{} --++(0.75,0) node[dot](a5)[label={[vcolour]below:1}]{} --++(0.75,0) node[dot](a6)[label={[vcolour]below:2}]{} --++(0.75,0) node[dot](a7)[label={[vcolour]below:0}]{} --++(0.75,0) node[dot](a8)[label={[vcolour]below:1}]{};\\
%%(P_7,1,1)
\draw (0,0) node[dot](a1)[label={[vcolour]below:1}]{} --++(0.75,0) node[dot](a2)[label={[vcolour]below:2}]{} --++(0.75,0) node[dot](a3)[label={[vcolour]below:0}]{} --++(0.75,0) node[dot](a4)[label={[vcolour]below:1}]{} --++(0.75,0) node[dot](a5)[label={[vcolour]below:2}]{} --++(0.75,0) node[dot](a6)[label={[vcolour]below:0}]{} --++(0.75,0) node[dot](a7)[label={[vcolour]below:1}]{};
%%       (P_8,1,1)
         &
	 \draw (0,0) node[dot](a1)[label={[vcolour]below:1}]{} --++(0.75,0) node[dot](a2)[label={[vcolour]below:2}]{} --++(0.75,0) node[dot](a3)[label={[vcolour]below:0}]{} --++(0.75,0) node[dot](a4)[label={[vcolour]below:1}]{} --++(0.75,0) node[dot](a5)[label={[vcolour]below:2}]{} --++(0.75,0) node[dot](a6)[label={[vcolour]below:0}]{} --++(0.75,0) node[dot](a7)[label={[vcolour]below:2}]{} --++(0.75,0) node[dot](a8)[label={[vcolour]below:1}]{};\\
};
\end{tikzpicture}
\caption{For \( n\in\{7,8\} \), \( P_n \) can be 3-rs coloured with colours \( i,j\in \{0,1\} \) at the endpoints}
\label{fig:3-rsc medium n}
\end{figure}

\noindent Induction step: \( (n\geq 9) \)\\
Let the path be \( v_1,v_2,\dots,v_n \). We need to show that the path has a 3-rs colouring \( f \) such that \( f(v_1)=i \) and \( f(v_n)=j \). By the induction hypothesis, the path \( v_3,v_4,\dots,v_n \) has a 3-rs colouring \( f\bm{'} \) such that \( f\bm{'}(v_3)=1-i \) and \( f\bm{'}(v_n)=j \). By assigning \( f\bm{'}(v_2)=2 \) and \( f\bm{'}(v_1)=i \), \( f\bm{'} \) can be extended into a 3-rs colouring of the whole path that uses colours \( i \) and \( j \) at its endpoints.
%We can easily obtain a 3-rs colouring \( f_1 \) of \( G_1 \) by setting \( f_1(v_1)=i \), \( f_1(v_2)=2 \) and \( f_1(v_3)=1-i \). Recall that \( f_1(v_1)\in\{0,1\} \) and hence \( f_1(v_3)\in\{0,1,\} \) as well. Also, the graph \( G_2^* \) is a path on \( n-2 \) vertices and thus has a 3-rs colouring \( f_2 \) satisfying \( f_2(v_3)=1-i \) and \( f_2(v_n)=j \) by induction hypothesis. The colourings \( f_1 \) and \( f_2 \) meet the conditions of Lemma~\ref{lem:joining at colour 2}. Therefore, by Lemma~\ref{lem:joining at colour 2}, the function \( f:\{v_1,\dots,v_n\}\to\{0,1,2\} \) defined as \( f(w_1)=f_1(w_1) \) for \( w_1\in V(G_1) \) and \( f(w_2)=f_2(w_2) \) for \( w_2\in G_2^* \) is a 3-rs colouring of the whole path \( P_n \). Moreover, \( f(v_1)=f_1(v_1)=i \) and \( f(v_n)=f_2(v_n)=j \). This completes the proof of the lemma.
%\qed
\end{proof}

%For the sake of future references,
The following theorem sums up the picture.
%Theorem~\ref{thm:3-rsc extension paths} sums up the answer to 3-rs colouring extension problem on paths. 
\begin{theorem}
Let \( v_1,v_2,\dots,v_n \) be a path where \( n\in\mathbb{N} \), and let \( i,j\in\{0,1\} \). The path does not admit a 3-rs colouring \( f \) with \( f(v_1)=i \) and \( f(v_n)=j \) in the following cases: (i)~\( n=2 \), \( i=j \); (ii)~\( n=3 \), \( i=j=0 \); (iii)~\( n=4 \), \( i\neq j \); (iv)~\( n=6 \), \( i=j=0 \). In all other cases, the path admits a 3-rs colouring \( f \) such that \( f(v_1)=i \) and \( f(v_n)=j \).
\label{thm:3-rsc extension paths}
\end{theorem}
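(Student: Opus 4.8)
The plan is to assemble pieces that are already in place, organising by the value of $n$. For the negative direction, the four listed non-colourable cases are immediate. If $n=2$ and $i=j$, then no proper colouring can give the adjacent vertices $v_1,v_2$ the same colour, so certainly no 3-rs colouring can. The case $n=3$, $i=j=0$ is ruled out by Property~P3 of Observation~\ref{obs:properties}, the case $n=4$, $i\neq j$ by Property~P4, and the case $n=6$, $i=j=0$ by Property~P6; in each instance one takes $G=P_n$ in Observation~\ref{obs:properties} and lets the endpoints $v_1,v_n$ of the path play the role of the endpoints of the $P_3$, $P_4$, or $P_6$ there.

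For the positive direction I would split into ranges. For $n=1$ the statement is vacuous unless $i=j$ (since $v_1=v_n$), in which case colouring the single vertex $i$ works. For $n=2$ with $i\neq j$, colour $v_1$ with $i$ and $v_2$ with $j$; this is trivially a 3-rs colouring. For $3\le n\le 6$, every endpoint pair $(i,j)\in\{0,1\}^2$ that is \emph{not} on the forbidden list is realised by one of the explicit colourings in Figure~\ref{fig:3-rsc path small n}, up to reversing the path (which turns a colouring with endpoint pair $(i,j)$ into one with pair $(j,i)$); so for $n=3$ the pairs $(0,1),(1,0),(1,1)$ are covered, for $n=4$ the pairs $(0,0),(1,1)$, for $n=5$ all four pairs, and for $n=6$ the pairs $(0,1),(1,0),(1,1)$. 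It remains a routine inspection to confirm that each displayed colouring is indeed a 3-rs colouring and that these colourings, with reversals, hit exactly the required pairs. Finally, for $n\ge 7$, Lemma~\ref{lem:extension to path} supplies the desired 3-rs colouring for every $i,j\in\{0,1\}$.

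Since every triple $(n,i,j)$ with $n\in\mathbb{N}$ and $i,j\in\{0,1\}$ falls in one of these ranges, the two directions together prove the theorem. There is no real obstacle here — the result is a summary statement whose substance was distributed over Observation~\ref{obs:properties}, Figure~\ref{fig:3-rsc path small n}, and Lemma~\ref{lem:extension to path}. The only mild bookkeeping is checking that the small-$n$ figure, together with path reversal, genuinely exhibits a colouring for each non-forbidden endpoint pair, and separately disposing of the degenerate cases $n=1$ and $n=2$, which the figure does not include.
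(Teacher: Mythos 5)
Your proposal is correct and follows essentially the same route as the paper, which presents this theorem explicitly as a summary whose negative cases come from Properties P3, P4, P6, whose small positive cases ($2<n<7$) come from Figure~\ref{fig:3-rsc path small n}, and whose large cases come from Lemma~\ref{lem:extension to path}. Your additional bookkeeping (the degenerate cases $n=1,2$ and the use of path reversal to obtain the pair $(j,i)$ from $(i,j)$ in the figure) is accurate and fills in the only details the paper leaves implicit.
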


\section{Trees and Chordal Graphs}\label{sec:trees and chordal}
For every tree \( T \), \( \chi_{rs}(T)=O(\log n/\log \log n) \) and the bound is tight \cite{karpas}. But, the claim \cite{karpas} that \( \chi_{rs}(T)=\Omega(\log n/\log \log n) \) is false; for instance, all paths are 3-rs colourable no matter how long. Since every tree \( T \) admits a distance-two colouring with \( \Delta(T)+1 \) colours \cite{lih}, \( \chi_{rs}(T)\leq \Delta(T)+1 \) (recall that every distance-two colouring is an rs colouring). This section presents a linear-time algorithm to test 3-rs colourability of trees as well as an \( O(n^3) \)-time algorithm to test 3-rs colourability of chordal graphs. 

Note that the decision problem \textsc{\( k \)-RS Colourability} can be expressed in MSO (see supplementary material), and hence can be solved in linear time in graphs of bounded treewidth by Courcelle's theorem \cite{borie,courcelle} (in fact, the problem can be expressed in MSO\( _1 \), MSO without edge set quantification). Unfortunately, algorithms obtained from the MSO expression of problems suffer from extremely large constants hidden in the big-O notation \cite{langer}. In contrast, the algorithm for trees provided in this section has a hidden constant of reasonable size (less than 35) making it practically useful.

Note that our algorithm for trees only tests 3-rs colourability of an input tree \( T \) with a 3-plus vertex; it does not produce a 3-rs colouring of \( T \) in case \( T \) is 3-rs colourable (it is possible to extend our algorithm into a 3-rs colouring algorithm for trees; but the extension requires a considerable amount of work).\\

\noindent\textbf{Overview of the algorithm}\\
%Let a tree \( T \) be the input to the algorithm. 
Consider a rooted tree representation of the input tree \( T \) so that we can process it in a bottom-up fashion. The processing part of the algorithm hinges on the following question: \emph{What is the `role' of each connected subgraph of \( T \) in deciding 3-rs colourability of \( T \)?} We categorize relevant connected subgraphs of trees into classes so that members of the same class play the same `role' (in deciding 3-rs colourability of the tree). As a result, the 3-rs colourability status of a tree is preserved when a connected subgraph is locally replaced by another member of the same class. This notion of local replacement allows us to formalize the classification of connected subgraphs. To this end, it suffices to consider two kinds of connected subgraphs called \emph{branches} and \emph{rooted subtrees} (defined below). For convenience, we deal with 3-rs colouring extension (motivation is explained below in detail). So, branches and rooted subtrees may contain coloured vertices, and we are concerned of 3-rs colouring extension status of trees containing them (i.e., whether we can assign colours on the rest of the vertices to produce a 3-rs colouring). We introduce an equivalence relation among branches (resp.\ rooted subtrees) as \( B_1\sim B_2 \) if replacing \( B_1 \) by \( B_2 \) in every tree preserves 3-rs colouring extension status. 
%An equivalence relation between rooted subtrees is defined similarly. 
Our algorithm recursively computes the equivalence classes of branches and rooted subtrees of the input tree. The input tree \( T \) itself is technically a rooted subtree, and we can determine 3-rs colourability of \( T \) based on the equivalence class of that rooted subtree.\\

\noindent\textbf{Ingredients of the algorithm and motivation}\\
There are two basic ideas behind the algorithm. The first idea is that some subgraphs force colours. For example, suppose \( u,v,w \) is a path in a tree \( T \) where \( u,v \) and \( w \) are 3-plus vertices in \( T \). If \( f \) is a 3-rs colouring of \( T \), then \( f \) must use binary colours on \( u,v \) and \( w \), and due to Property~P3, the only possibility is \( f(v)=0 \) and \( f(u)=f(w)=1 \). In short, the subgraph \( H_1 \) displayed in Figure~\ref{fig:H1} forces colours.

\begin{figure}[h]
\centering
\begin{tikzpicture}
\draw (0,0) node[dot]{}--++(1,0) node[dot](v)[label=above:\( u \)][label={[vcolour]below left:1}]{}--++(1,0) node[dot](u)[label=above:\( v \)][label={[vcolour]below left:0}]{}--++(1,0) node[dot](w)[label=above:\( w \)][label={[vcolour]below left:1}]{}--++(1,0) node[dot]{};
\draw (v)--+(0,-1) node[dot]{}
      (u)--+(0,-1) node[dot]{}
      (w)--+(0,-1) node[dot]{};
\end{tikzpicture}
\caption{Tree \( H_1 \) (with colours forced)}
\label{fig:H1}
\end{figure}
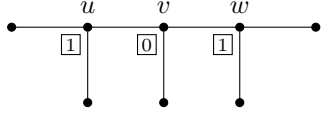

The second idea is more ubiquitous in colouring: we can combine 3-rs colourings of subgraphs to get a 3-rs colouring of the whole graph provided some simple conditions are met. 
In this context, it takes the following form: we can decompose a tree into two subgraphs with an edge (or a vertex) in common, and combine 3-rs colourings of the subgraphs to produce a 3-rs colouring of the whole tree provided the subgraph colourings agree on common vertices (or the common vertex has colour~0). 
%We provide two such instances as lemmas. 
%In this context, it takes the following form: a tree can be decomposed into two subgraphs with a vertex or an edge in common, and 3-rs colourings of the subgraphs can be combined to give a 3-rs colouring of the whole tree provided some simple conditions are met. 

\begin{lemma}
Let \( T \) be a tree, and \( e=u_1u_2 \) be an edge in \( T \). Let \( U_1 \) and \( U_2 \) be the vertex sets of components in \( T-e \) such that \( u_1\in U_1 \) and \( u_2\in U_2 \). Suppose \( G_1=T[U_1\cup \{u_2\}] \) admits a 3-rs colouring \( f_1 \), and \( G_2=T[U_2\cup \{u_1\}] \) admits  a 3-rs colouring \( f_2 \). If \( f_1 \) and \( f_2 \) agree on the common vertices, then they can be combined to give a 3-rs colouring \( f \) of \( T \). That is, if \( f_1(u_1)=f_2(u_1) \) and \( f_1(u_2)=f_2(u_2) \), then the function \( f \) defined on \( V(T) \) as \( f(w)=f_i(w) \) for \( w\in U_i \), \( i=1,2 \) is a 3-rs colouring of \( T \).
\label{lem:joining at edge}
\end{lemma}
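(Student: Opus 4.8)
The plan is to verify that the combined function $f$ is a legitimate 3-rs colouring by checking the two defining conditions: (i)~$f$ is a proper colouring, and (ii)~$T$ has no bicoloured $P_3$ with the higher colour on its middle vertex. The key observation enabling the whole argument is that because $T$ is a tree and $e=u_1u_2$ is the unique edge between $U_1$ and $U_2$, every edge and every $P_3$ of $T$ is ``local'': it lies entirely inside $G_1$ or entirely inside $G_2$. Indeed, an edge with one endpoint in $U_1\setminus\{u_1\}$ and the other in $U_2\setminus\{u_2\}$ cannot exist (it would be a second edge across the cut), so each edge is within $T[U_1]$, within $T[U_2]$, or is $e$ itself; in all three cases it lies in $G_1$ or $G_2$ (note $e\in G_1$, say, since $u_2\in V(G_1)$, and also $e\in G_2$).

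First I would check properness. Let $xy\in E(T)$. By the locality observation, $xy\in E(G_i)$ for some $i\in\{1,2\}$. Then $f(x)=f_i(x)$ and $f(y)=f_i(y)$ (this uses that $f$ restricted to $V(G_i)$ equals $f_i$, which holds because $f_1$ and $f_2$ agree on the common vertices $u_1,u_2$, the only vertices lying in both $G_1$ and $G_2$). Since $f_i$ is a proper colouring of $G_i$, $f_i(x)\neq f_i(y)$, hence $f(x)\neq f(y)$. So $f$ is a proper $3$-colouring of $T$.

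Next I would rule out a bad bicoloured $P_3$. Suppose $x,y,z$ is a $P_3$ in $T$ with $f(y)>f(x)=f(z)$. The two edges $xy$ and $yz$ of this path are edges of $T$; I claim they lie in the same $G_i$. If $xy\in E(G_1)$ and $yz\in E(G_2)$ (or vice versa), then the shared vertex $y$ lies in both $G_1$ and $G_2$, so $y\in\{u_1,u_2\}$; say $y=u_1$ (the case $y=u_2$ is symmetric). The neighbours of $u_1$ in $T$ are $u_2$ together with its neighbours inside $U_1$; but then $x$ and $z$ are both in $V(G_1)$ (a neighbour of $u_1$ in $U_1$ lies in $U_1$, and $u_2\in V(G_1)$ too), and the edges $u_1x$, $u_1z$ both lie in $G_1$, contradicting that one of them was in $G_2$ only. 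Hence $xy,yz\in E(G_i)$ for a single $i$, so $x,y,z$ is a $P_3$ in $G_i$ with $f_i(y)>f_i(x)=f_i(z)$, contradicting that $f_i$ is a 3-rs colouring of $G_i$. Therefore no such $P_3$ exists, and $f$ is a 3-rs colouring of $T$.

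The only subtle point --- the ``main obstacle,'' though it is mild --- is making the locality claims for $P_3$'s airtight: one must be careful that a $P_3$ whose centre is $u_1$ or $u_2$ is still entirely contained in one of $G_1,G_2$, which is exactly where the tree structure (uniqueness of the edge $e$ across the cut) is used, together with the hypothesis $f_1(u_1)=f_2(u_1)$, $f_1(u_2)=f_2(u_2)$ so that $f$ is well defined on the overlap. Everything else is a routine case check.
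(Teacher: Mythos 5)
Your proof is correct and follows essentially the same route as the paper's: the paper's argument also rests on the single observation that every $P_3$ of $T$ lies entirely in $G_1$ or entirely in $G_2$, so that $f$ inherits the 3-rs property from $f_1$ and $f_2$. You simply spell out the verification of that locality claim (and of properness) in more detail than the paper does.
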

\begin{proof}
Note that every 3-vertex path in \( T \) is either entirely in \( G_1 \), or entirely in \( G_2 \). Since \( f \) restricted to \( V(G_i) \) is a 3-rs colouring of \( G_i \) for \( i=1,2 \) (namely \( f_i \)), \( f \) is indeed a 3-rs colouring of \( T \).
\end{proof}

\begin{lemma}
Let \( T \) be a tree, and \( v \) be a vertex of \( T \). Let \( U_1 \) be the vertex set of a component in \( T-v \), and let \( U_2=V(T-v)\setminus U_1 \). Suppose that \( G_i=T[U_i\cup \{v\}] \) admits a 3-rs colouring \( f_i \) such that \( f_i(v)=0 \) for \( i=1,2 \). Then, \( f_1 \) and \( f_2 \) can be combined to produce a 3-rs colouring \( f \) of \( T \) \( ( \)i.e., \( f(w)=f_i(w) \) \( \forall w\in V(G_i) \), \( i=1,2 \)\( ) \).
\label{lem:joining at vertex}
\end{lemma}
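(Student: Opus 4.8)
The plan is to define $f\colon V(T)\to\{0,1,2\}$ by $f(w)=f_i(w)$ for $w\in V(G_i)$, $i=1,2$, which is well defined since $V(G_1)\cap V(G_2)=\{v\}$ and $f_1(v)=f_2(v)=0$. I would then verify the two defining conditions of a 3-rs colouring by the same kind of structural case analysis used in Lemma~\ref{lem:joining at edge}, now exploiting that $v$ is a cut vertex of $T$ separating $U_1$ from $U_2$.

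First, $f$ is a proper colouring: every edge of $T$ lies inside $G_1$ or inside $G_2$. Indeed, an edge incident with $v$ joins $v$ to a vertex of exactly one of $U_1,U_2$, and every edge not incident with $v$ lies inside a single component of $T-v$; in either case the edge is an edge of $G_1$ or of $G_2$. Since $f$ restricted to $V(G_i)$ equals $f_i$, which is a proper colouring of $G_i$, $f$ properly colours every edge of $T$.

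Second, $T$ has no bicoloured $P_3$ with the higher colour on its middle vertex. I would classify an arbitrary 3-vertex path $x,y,z$ of $T$. If the path avoids $v$, it lies in one component of $T-v$, hence entirely in $G_1$ or $G_2$; if $v$ is an endpoint of the path, its other two vertices lie in one component of $T-v$, so again the whole path lies in $G_1$ or $G_2$; the only remaining case is $v=y$, with $x$ and $z$ in different components, say $x\in U_1$ and $z\in U_2$. In the first two cases the required condition holds because $f\!\restriction_{V(G_i)}=f_i$ is a 3-rs colouring of $G_i$. In the last case $f(y)=f(v)=0$, the smallest colour, so $f(y)>f(x)$ is impossible; hence such a $P_3$ is never a forbidden one. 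Therefore $f$ is a 3-rs colouring of $T$.

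I do not expect a genuine obstacle: this is the vertex-cut analogue of Lemma~\ref{lem:joining at edge}. The only point deserving care is the enumeration of the paths through $v$. Because $f_1$ and $f_2$ individually never compare a vertex of $U_1$ with a vertex of $U_2$, the combined colouring could in principle create a new forbidden $P_3$ centred at $v$; the hypothesis $f_i(v)=0$ is precisely what rules this out, which is why here (unlike in the edge version) mere agreement of $f_1$ and $f_2$ at the common vertex would not be enough — we need that common colour to be $0$.
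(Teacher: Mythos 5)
Your proposal is correct and follows essentially the same argument as the paper: classify each $P_3$ of $T$ as lying entirely in some $G_i$ or having $v$ as its middle vertex, and in the latter case use $f(v)=0$ to rule out a forbidden bicoloured path. Your closing remark on why agreement at $v$ alone would not suffice (unlike in the edge version) is a correct and worthwhile observation.
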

\begin{proof}
%It suffices to prove that the function \( f \) defined as \( f(w)=f_i(w) \) for all \( w\in U_i \) \( (i=1,2) \) is a 3-rs colouring of \( T \). 
Clearly, for each 3-vertex path \( Q \) in \( T \), either (i)~\( Q \) is entirely in \( G_i \) for some \( i\in\{1,2\} \), or (ii)~\( Q \) has \( v \) as its middle vertex. Since \( f_1 \) and \( f_2 \) are 3-rs colourings, and \( v \) is coloured~0, \( Q \) has a lower colour on its middle vertex. Since \( Q \) is arbitrary, under \( f \), there is no bicoloured \( P_3 \) in \( T \) with a higher colour on its middle vertex.
\end{proof}

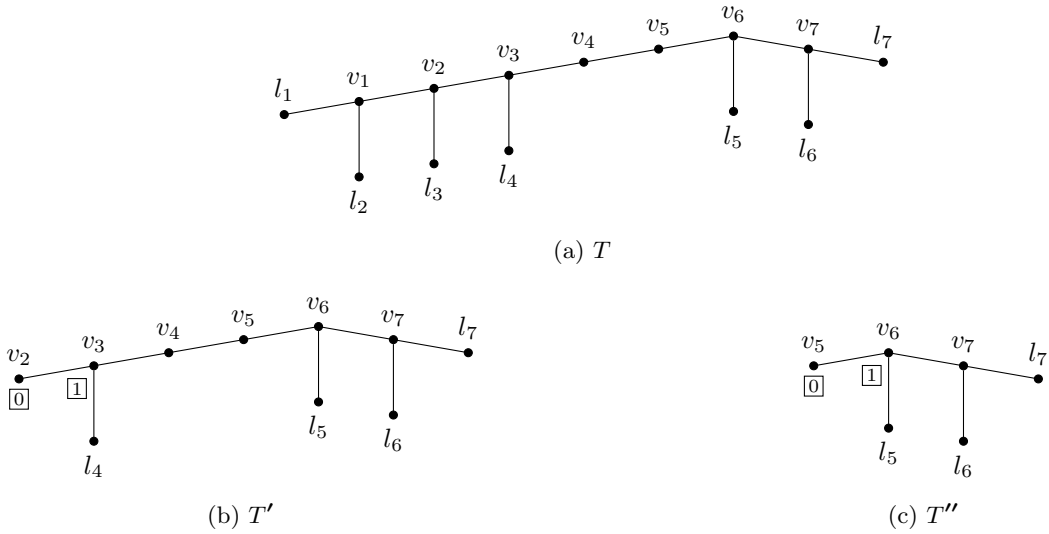
\begin{figure}[h]
\centering
\begin{subfigure}[t]{1\textwidth}
\centering
\begin{tikzpicture}
\draw (0,0) node[dot][label=above:\( l_1 \)]{}--++(10:1) node[dot](a1)[label=above:\( v_1 \)]{}--++(10:1) node[dot](a2)[label=above:\( v_2 \)]{}--++(10:1) node[dot](a3)[label=above:\( v_3 \)]{}--++(10:1) node[dot](a4)[label=above:\( v_4 \)]{}--++(10:1) node[dot](a5)[label=above:\( v_5 \)]{}--++(10:1) node[dot](a6)[label=above:\( v_6 \)]{}--++(-10:1) node[dot](a7)[label=above:\( v_7 \)]{}--++(-10:1) node[dot][label=above:\( l_7 \)]{};
\draw (a1)--+(0,-1) node[dot][label=below:\( l_2 \)]{}
      (a2)--+(0,-1) node[dot][label=below:\( l_3 \)]{}
      (a3)--+(0,-1) node[dot][label=below:\( l_4 \)]{}
      (a6)--+(0,-1) node[dot][label=below:\( l_5 \)]{}
      (a7)--+(0,-1) node[dot][label=below:\( l_6 \)]{};
\end{tikzpicture}
%\begin{tikzpicture}
%\draw (0,0) node[dot][label=above:\( l_1 \)]{}--++(1,0) node[dot](a1)[label=above:\( v_1 \)]{}--++(1,0) node[dot](a2)[label=above:\( v_2 \)]{}--++(1,0) node[dot](a3)[label=above:\( v_3 \)]{}--++(1,0) node[dot](a4)[label=above:\( v_4 \)]{}--++(1,0) node[dot](a5)[label=above:\( v_5 \)]{}--++(1,0) node[dot](a6)[label=above:\( v_6 \)]{}--++(1,0) node[dot](a7)[label=above:\( v_7 \)]{}--++(1,0) node[dot][label=above:\( l_7 \)]{};
%\draw (a1)--+(0,-1) node[dot][label=below:\( l_2 \)]{}
%      (a2)--+(0,-1) node[dot][label=below:\( l_3 \)]{}
%      (a3)--+(0,-1) node[dot][label=below:\( l_4 \)]{}
%      (a6)--+(0,-1) node[dot][label=below:\( l_5 \)]{}
%      (a7)--+(0,-1) node[dot][label=below:\( l_6 \)]{};
%\end{tikzpicture}
\caption{\( T \)}
\label{fig:example T}
\vspace*{0.25cm}
\end{subfigure}%

\begin{subfigure}[t]{0.5\textwidth}
\centering
\begin{tikzpicture}
\draw (0,0) node[dot](a2)[label=above:\( v_2 \)][label={[vcolour]below:0}]{}--++(10:1) node[dot](a3)[label=above:\( v_3 \)][label={[vcolour,yshift=-2pt]below left:1}]{}--++(10:1) node[dot](a4)[label=above:\( v_4 \)]{}--++(10:1) node[dot](a5)[label=above:\( v_5 \)]{}--++(10:1) node[dot](a6)[label=above:\( v_6 \)]{}--++(-10:1) node[dot](a7)[label=above:\( v_7 \)]{}--++(-10:1) node[dot][label=above:\( l_7 \)]{};
\draw (a3)--+(0,-1) node[dot][label=below:\( l_4 \)]{}
      (a6)--+(0,-1) node[dot][label=below:\( l_5 \)]{}
      (a7)--+(0,-1) node[dot][label=below:\( l_6 \)]{};
\end{tikzpicture}
%\begin{tikzpicture}
%\draw (0,0) node[dot](a2)[label=above:\( v_2 \)][label={[vcolour]below:0}]{}--++(1,0) node[dot](a3)[label=above:\( v_3 \)][label={[vcolour]below left:1}]{}--++(1,0) node[dot](a4)[label=above:\( v_4 \)]{}--++(1,0) node[dot](a5)[label=above:\( v_5 \)]{}--++(1,0) node[dot](a6)[label=above:\( v_6 \)]{}--++(1,0) node[dot](a7)[label=above:\( v_7 \)]{}--++(1,0) node[dot][label=above:\( l_7 \)]{};
%\draw (a3)--+(0,-1) node[dot][label=below:\( l_4 \)]{}
%      (a6)--+(0,-1) node[dot][label=below:\( l_5 \)]{}
%      (a7)--+(0,-1) node[dot][label=below:\( l_6 \)]{};
%\end{tikzpicture}
\caption{\( T\bm{'} \)}
\label{fig:example T'}
\end{subfigure}%
\begin{subfigure}[t]{0.5\textwidth}
\centering
\begin{tikzpicture}
\draw (0,0) node[dot](a5)[label=above:\( v_5 \)][label={[vcolour]below:0}]{}--++(10:1) node[dot](a6)[label=above:\( v_6 \)][label={[vcolour,yshift=-2pt]below left:1}]{}--++(-10:1) node[dot](a7)[label=above:\( v_7 \)]{}--++(-10:1) node[dot][label=above:\( l_7 \)]{};
\draw (a6)--+(0,-1) node[dot][label=below:\( l_5 \)]{}
      (a7)--+(0,-1) node[dot][label=below:\( l_6 \)]{};
\end{tikzpicture}
%\begin{tikzpicture}
%\draw (0,0) node[dot](a5)[label=above:\( v_5 \)][label={[vcolour]below:0}]{}--++(1,0) node[dot](a6)[label=above:\( v_6 \)][label={[vcolour]below left:1}]{}--++(1,0) node[dot](a7)[label=above:\( v_7 \)]{}--++(1,0) node[dot][label=above:\( l_7 \)]{};
%\draw (a6)--+(0,-1) node[dot][label=below:\( l_5 \)]{}
%      (a7)--+(0,-1) node[dot][label=below:\( l_6 \)]{};
%\end{tikzpicture}
\caption{\( T\bm{''} \)}
\label{fig:example T''}
\end{subfigure}%\caption{(T, T\bm{'} and T\bm{''})}
\caption{An example of testing 3-rs colourability of a tree}%. \( T \) admits a 3-rs colouring if and only if \( T\bm{''} \) admits a 3-rs colouring extension.}
\label{fig:3-rsc tree example}
\end{figure}

To give a flavour of the problem, we present an example for testing 3-rs colourability of trees (without using the algorithm). If the rooted tree \( T \) displayed in Figure~\ref{fig:3-rsc tree example}a admits a 3-rs colouring \( f \), then \( f(v_2)=0 \) and \( f(v_1)=f(v_3)=1 \) due to the presence of the subgraph \( H_1 \) (see Figure~\ref{fig:H1}). 
%Consider the partially coloured tree \( T\bm{'} \) displayed in Figure~\ref{fig:3-rsc tree example}b. 
Therefore, if \( T \) admits a 3-rs colouring \( f \), then \( T\bm{'} \) displayed in Figure~\ref{fig:3-rsc tree example}b admits a 3-rs colouring extension \big(by restricting \( f \) to \( V(T\bm{'}) \)\big). Also, any 3-rs colouring extension \( f\bm{'} \) of \( T\bm{'} \) can be extended into a 3-rs colouring of \( T \) by assigning colour~1 at \( v_1 \) and colour~2 at \( l_1, l_2 \) and \( l_3 \). Therefore, \( T \) admits a 3-rs colouring if and only if \( T\bm{'} \) admits a 3-rs colouring extension. Note that a 3-rs colouring extension of \( T\bm{'} \) must assign colour 0 at \( v_5 \) by Observation~\ref{obs:colour propagation} (applied on path \( v_2,v_3,v_4,v_5 \)), and colour 1 at \( v_6 \) (because \( v_6 \) is a 3-plus vertex). Therefore, \( T\bm{'} \) admits a 3-rs colouring extension if and only if \( T\bm{''} \) displayed in Figure~\ref{fig:example T''} admits a 3-rs colouring extension. So, \( T \) admits a 3-rs colouring if and only if \( T\bm{''} \) admits a 3-rs colouring extension. 
But, \( T\bm{''} \) does not admit a 3-rs colouring extension (\( v_7 \) must be coloured 0, and thus \( v_5,v_6,v_7 \) is a bicoloured \( P_3 \) with a higher colour on its middle vertex). Therefore, \( T \) doesn't admit a 3-rs colouring.

\begin{figure}[hbt]
\centering
\begin{subfigure}[b]{0.3\textwidth}
\centering
\begin{tikzpicture}[scale=0.5]
\node at (0,0) [dot](root){};
\draw (root) --+(-1,-1) node[dot]{}
      (root) --+( 0,-1) node[dot]{}
      (root) --+( 1,-1) node[dot] (v)[label=above:\( v \),label={[vcolour]right:1}]{};

\draw (v) --++(-1,-1) node[dot] (L11){} --++(-1,-1) node[dot] (u)[label=left:\( u \)]{} --++(-1,-1) node[dot] (L21){} --++(-1,-1) node[dot] (L31){};
\draw (u) --+(1,-1) node[dot] (L22){};
\draw (L21) --+(1,-1) node[dot] (L32) [label={[vcolour]right:0}]{};
\draw (v) --++(1,-1) node[dot] (L12){} --++(1,-1) node[dot] (u')[label=above:\( u' \)]{};
\end{tikzpicture}
\caption{\( T \)}
\end{subfigure}%
%  Oh, I just to found out the simle way to give subfigures with caption as Figure..:
% (appears as seperate figure instead of subfigure); but it did not give side by side
\begin{subfigure}[b]{0.3\textwidth}
\centering
\begin{tikzpicture}[scale=0.5]
\node at (0,0) [dot](v)[label=above:\( v \),label={[vcolour]right:1}]{};
%L for Level in tree
\draw (v) --++(-1,-1) node[dot] (L11){} --++(-1,-1) node[dot] (u)[label=left:\( u \)]{} --++(-1,-1) node[dot] (L21){} --++(-1,-1) node[dot] (L31){};
\draw (u) --+(1,-1) node[dot] (L22){};
\draw (L21) --+(1,-1) node[dot] (L32) [label={[vcolour]right:0}]{};
\draw (v) --++(1,-1) node[dot] (L12){} --++(1,-1) node[dot] (u')[label=above:\( u' \)]{};
\end{tikzpicture}
\caption{\( T_v \)}
\label{fig:eg rooted subtree}
\end{subfigure}%
\begin{subfigure}[b]{0.37\textwidth}
\centering
\begin{tikzpicture}[scale=0.5]
\node at (0,0) [dot](v1)[label=above:\( v \),label={[vcolour]right:1}]{};
\draw (v1) --++(-1,-1) node[dot] (L11){} --++(-1,-1) node[dot] (u)[label=left:\( u \)]{} --++(-1,-1) node[dot] (L21){} --++(-1,-1) node[dot] (L31){};
\draw (u) --+(1,-1) node[dot] (L22){};
\draw (L21) --+(1,-1) node[dot] (L32) [label={[vcolour]right:0}]{};

\draw (v1) ++(1.75,0) node[dot](v2)[label=above:\( v \),label={[vcolour]right:1}]{};
\draw (v2) --++(1,-1) node[dot] (L12){} --++(1,-1) node[dot] (u')[label=above:\( u' \)]{};
\end{tikzpicture}
\caption{branches of \( T \) at \( v \)}
\label{fig:eg branches}
\end{subfigure}
\caption{(a)~a partially coloured rooted tree \( T \), (b)~the rooted subtree of \( T \) at \( v \) (denoted by \( T_v \)), (c)~branches of \( T \) at \( v \)}
\label{fig:eg rooted subtree, branch}
\end{figure}
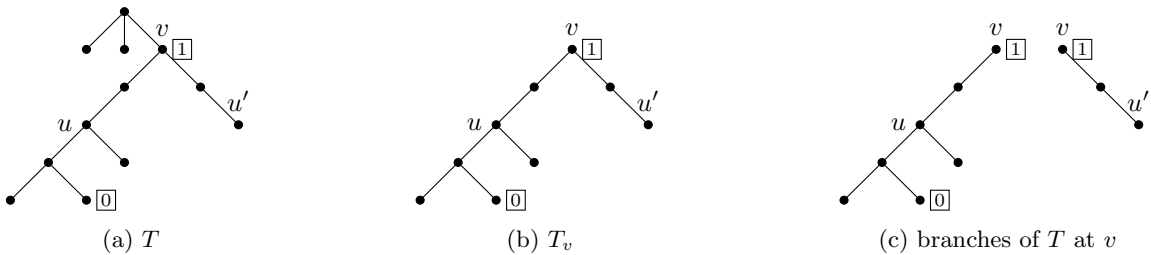

As seen in the above example, dealing with 3-rs colouring extension is helpful. For simplicity, the input tree \( T \) (not a path) is viewed as a rooted tree with a 3-plus vertex as the root. The following definitions help to present the algorithm. It is assumed that the reader is familiar with rooted tree terminology such as parent, child, descendant, and so on. Let \( T \) be a partially coloured rooted tree. If \( v \) is a vertex of \( T \) with \( \deg_T(v)\neq 2 \), the \emph{rooted subtree of \( T \) at \( v \)}, denoted by \( T_v \), is the subgraph of \( T \) induced by \( v \) and its descendants in \( T \) along with parent-child relations and colours inherited from \( T \) (see Figure~\ref{fig:eg rooted subtree}). If we `split' \( T_v \) at \( v \), each resulting piece is called a \emph{branch of \( T \) at \( v \)} (see Figure~\ref{fig:eg rooted subtree, branch}; formally, each branch of \( T \) at \( v \) is \( T_v[U_i\cup \{v\}] \) with inherited parent-child relations and colours where \( U_i \) is the vertex set of some component of \( T_v-v \)). By our definition, the root of a branch in \( T \) is always a 3-plus vertex in \( T \) (if \( \deg_T(v)=1 \), then \( T_v\cong K_1 \) and hence there is no branch at \( v \)). 
\begin{figure}[hbt]
\centering
\begin{subfigure}[c]{0.4\textwidth}
\centering
\begin{tikzpicture}
\draw (0,0) node[dot][label=above:\( l_1 \)]{}--++(10:1) node[dot](a1)[label=above:\( v_1 \)]{}--++(10:1) node[dot](a2)[label=above:\( v_2 \)]{}--++(10:1) node[dot](a3)[label=above:\( v_3 \)]{};
\draw (a1)--+(-80:1) node[dot][label=below:\( l_2 \)]{}
      (a2)--+(-80:1) node[dot][label=below:\( l_3 \)]{};
\end{tikzpicture}
\end{subfigure}%
\begin{subfigure}[c]{0.1\textwidth}
\begin{tikzpicture}
\draw[-stealth,ultra thick] (0,0)--+(0.35,0);
\end{tikzpicture}
\end{subfigure}%
\begin{subfigure}[c]{0.3\textwidth}
\begin{tikzpicture}
\draw (0,0)++(0,1.5) node[dot](a2)[label=above:\( v_2 \)][label={[vcolour]below:0}]{}--++(10:1) node[dot](a3)[label=above:\( v_3 \)][label={[vcolour]below:1}]{};
\end{tikzpicture}
\end{subfigure}%
\caption{A branch replacement operation. Applying this operation on the left branch at \( v_3 \) transforms \( T \) into \( T\bm{'} \) (\( T \) and \( T\bm{'} \) are displayed in Figure~\ref{fig:3-rsc tree example})}
\label{fig:sample branch replacement}
\end{figure}
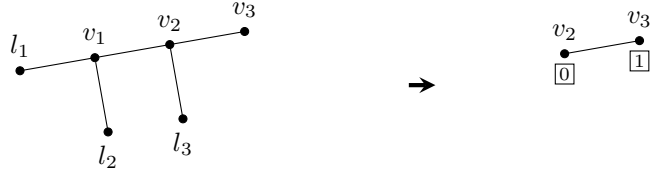

%\FloatBarrier

%\needspace{10\baselineskip}
As far as 3-rs colouring extension is concerned, some branches can be replaced by other branches. For instance, the transformation from \( T \) to \( T\bm{'} \) in the first example (see Figure~\ref{fig:3-rsc tree example}) can be viewed as a local replacement operation, namely replacement of the left branch of \( T \) at \( v_3 \) by another branch (the replacement operation is displayed in Figure~\ref{fig:sample branch replacement}). 
%In this case, the replacement involves only vertex deletion and colour assignment; in general, the branch used for replacement can be entirely different. 
%For example, in the above example, the `left' branch of \( T \) at \( v_3 \) can be replaced by a smaller branch (view \( T \) as a rooted tree with \( v_3 \) as root). The result of this replacement operation is \( T\bm{'} \). 
A replacement operation that produces a partially coloured tree \( T\bm{'} \) from a partially coloured tree \( T \) is said to \emph{preserve 3-rs colouring extension status} if both \( T \) and \( T\bm{'} \) admit a 3-rs colouring extension, or neither does. The replacement operations considered in this paper are either (i)~replacement of branches by branches, or (ii)~replacement of rooted subtrees by rooted subtrees. 

We define an equivalence relation on the set of all branches (of partially coloured rooted trees) as follows: \( B_1\sim B_2 \) if the operation of replacing branch \( B_1 \) by branch \( B_2 \) in every partially coloured rooted tree preserves 3-rs colouring extension status. 
It is left as an exercise to the reader to verify that this is indeed an equivalence relation (see supplementary material). 

\begin{figure}[h] %branch classes
\centering
%\begin{minipage}[b]{0.49\textwidth}%
%\centering
\begin{subfigure}[t]{0.3\textwidth}
\centering
\begin{tikzpicture}
\draw (0,0) node(y)[dot][label={[vcolour]below:0}]{} --++(10:0.75) node (x)[dot][label={[vcolour]below:1}]{} --++(10:0.75) node(v)[label=above:\( v \)][dot]{}; %[label={[vcolour]below:\textbf{?}}]
\end{tikzpicture}
\caption*{Class~A}
\end{subfigure}%
\begin{subfigure}[t]{0.3\textwidth}
\centering
\begin{tikzpicture}
\draw (0,0) node (x)[dot]{} --++(10:0.75) node(v)[label=above:\( v \)][dot][label={[vcolour]below:0}]{};
\end{tikzpicture}
\caption*{Class~B}
\end{subfigure}%
\begin{subfigure}[t]{0.3\textwidth}
\centering
\begin{tikzpicture}
\draw (0,0) node (x)[dot][label={[vcolour]below:0}]{} --++(10:0.75) node(v)[label=above:\( v \)][dot][label={[vcolour]below:1}]{};
\end{tikzpicture}
\caption*{Class~C}
\end{subfigure}

\begin{subfigure}[t]{0.3\textwidth}
\centering
\begin{tikzpicture}
\draw (0,0) node (x)[dot]{} --++(10:0.75) node(v)[label=above:\( v \)][dot][label={[vcolour]below:1}]{};
\end{tikzpicture}
\caption*{Class~D}
\end{subfigure}%
\begin{subfigure}[t]{0.3\textwidth}
\centering
\begin{tikzpicture}
\draw (0,0) node(y)[dot][label={[vcolour]below:1}]{} --++(10:0.75) node (x)[dot]{} --++(10:0.75) node(v)[label=above:\( v \)][dot]{};
%\draw (x)--++(-80:0.75) node[dot]{};
\end{tikzpicture}
\caption*{Class~E}
\end{subfigure}%
\begin{subfigure}[t]{0.3\textwidth}
\centering
\begin{tikzpicture}
\draw (0,0) node (x)[dot]{} --++(10:0.75) node(v)[label=above:\( v \)][dot]{};
\end{tikzpicture}
\caption*{Class~F}
\end{subfigure}%
\caption{The representatives of Classes A to F}
\label{fig:branch classes}
\end{figure}
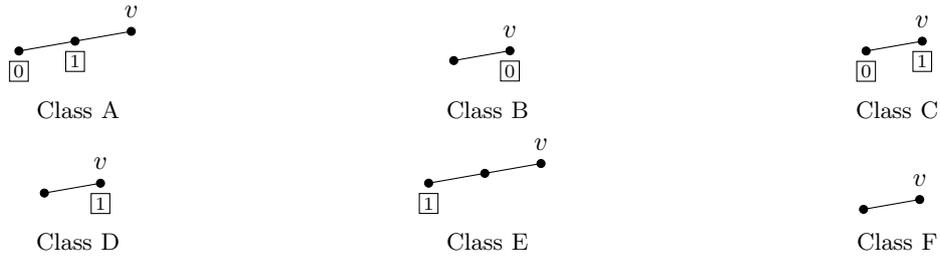

Every uncoloured branch (i.e., branch without any colour) belongs to one of six equivalence classes under this equivalence relation (proved later in Theorem~\ref{thm:uncoloured branch/rooted subtree}). We call these equivalence classes as Class~A, Class~B, \( \dots \), Class~F; and a representative of each class is shown in Figure~\ref{fig:branch classes} (representatives may contain colours). They are referred to as the representatives of respective classes for the rest of the paper. It is left as an exercise to the reader to verify that branches in Figure~\ref{fig:branch classes} belong to different equivalence classes (see supplementary material).

%any partially coloured rooted tree containing \( B \) as a branch doesn't admit a 3-rs colouring extension.

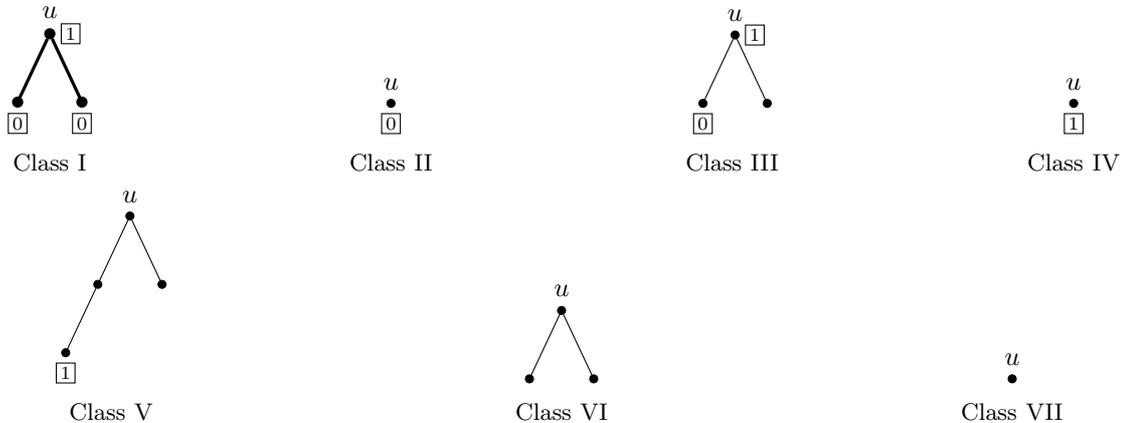
\begin{figure}[hbt] %rooted subtree classes
  \centering
  %\begin{minipage}[b]{0.65\textwidth}%Elementary subtrees 
  %\centering
  \begin{subfigure}[t]{0.25\textwidth}
  \centering
  \begin{tikzpicture}
  \draw[very thick] (0,0) node (u)[label=above:\( u \)][dot][label={[thin,vcolour]right:1}]{};%[label=right:\large \textbf{?}]
  \draw[very thick] (u) --+(-65:1)  node(ls)[dot][label={[thin,vcolour]below:0}]{}
        (u) --+(-115:1) node(l1)[dot][label={[thin,vcolour]below:0}]{};
  %\path (l1) --node{\( \cdots \)} (ls);
  \end{tikzpicture}
  \caption*{Class~I}
  \end{subfigure}%
\begin{subfigure}[t]{0.25\textwidth}
  \centering
  \begin{tikzpicture}
  \draw (0,0) node (u)[label=above:\( u \)][dot][label={[vcolour]below:0}]{};
  \end{tikzpicture}
  \caption*{Class~II}
  \end{subfigure}%
  \begin{subfigure}[t]{0.25\textwidth}
  \centering
  \begin{tikzpicture}
  \node (u)[label=above:\( u \)][dot][label={[vcolour]right:1}]{};
  \draw (u) --+(-65:1)  node[dot]{}
        (u) --+(-115:1) node[dot][label={[vcolour]below:0}]{};
  \end{tikzpicture}
  \caption*{Class~III}
  \end{subfigure}%
\begin{subfigure}[t]{0.25\textwidth}
  \centering
  \begin{tikzpicture}
  \draw (0,0) node (u)[label=above:\( u \)][dot][label={[vcolour]below:1}]{};
  \end{tikzpicture}
  \caption*{Class~IV}
  \end{subfigure}%

  \begin{subfigure}[t]{0.33\textwidth}
  \centering
  \begin{tikzpicture}
  \draw (0,0) node (u)[label=above:\( u \)][dot]{} --+(-115:1) node (x)[dot]{};
  %\draw (u) --+(-20:1) node (ls)[label=below right:\( l_s \)][dot]{}
  \draw (u) --+(-65:1) node (l1)[dot]{};
  %\path (l1) --node[sloped]{\( \cdots \)} (ls);
  \draw (x) --+(-115:1) node(y)[dot][label={[vcolour]below:1}]{};
  %\draw (x) --+(-65:1) node (l11)[dot]{};
  \end{tikzpicture}
  \caption*{Class~V}
  \end{subfigure}%
\begin{subfigure}[t]{0.33\textwidth}
  \centering
  \begin{tikzpicture}
  \draw (0,0) node (u)[label=above:\( u \)][dot]{} --+(-115:1) node (x)[dot]{};
  %\draw (u) --+(-20:1) node (ls)[label=below right:\( l_s \)][dot]{}
  \draw (u) --+(-65:1) node (l1)[dot]{};
  %\path (l1) --node[sloped]{\( \cdots \)} (ls);
  \end{tikzpicture}
  \caption*{Class~VI}
  \end{subfigure}%
\begin{subfigure}[t]{0.33\textwidth}
  \centering
  \begin{tikzpicture}
  \draw (0,0) node (u)[label=above:\( u \)][dot]{};
  \end{tikzpicture}
  \caption*{Class~VII}
  \end{subfigure}%
  \caption{The representatives of Classes I to VII}
  \label{fig:rooted subtree classes}
  \end{figure}

Similarly, we define an equivalence relation on the set of all rooted subtrees (of partially coloured rooted trees) as follows: \( R_1\sim R_2 \) if replacing rooted subtree \( R_1 \) by rooted subtree \( R_2 \) in every partially coloured rooted tree preserves 3-rs colouring extension status. An uncoloured rooted subtree belongs to one of seven equivalence classes under this equivalence relation (proved later in Theorem~\ref{thm:uncoloured branch/rooted subtree}). They are named Class~I, Class~II, \( \dots \), Class~VII; and a representative of each class is shown in Figure~\ref{fig:rooted subtree classes}. They are referred to as the representatives of respective classes for the rest of the paper. It is left as an exercise to the reader to verify that rooted subtrees in Figure~\ref{fig:rooted subtree classes} belong to different equivalence classes (see supplementary material).

Observe that this division of branches (resp.\ rooted subtrees) into equivalence classes reflect the conditions branches (resp.\ rooted subtrees) impose on trees containing them to admit a 3-rs colouring extension. Let \( T \) be a partially coloured rooted tree with a 3-plus vertex as its root. If \( T \) contains a branch \( B^* \) isomorphic to the representative of Class~A, then \( T \) does not admit a 3-rs colouring extension because (i)~the root of branch \( B^* \) must be coloured 2 by Observation~\ref{obs:colour propagation}, and (ii)~the root of a branch is always a 3-plus vertex and hence cannot be coloured 2. Thus, by definition of the equivalence relation, \( T \) does not admit a 3-rs colouring extension if \( T \) contains a Class~A branch. 
%Since \( T \) is arbitrary, a branch \( B \) is in Class~A if and only if no partially coloured rooted tree (with a 3-plus vertex as root) containing \( B \) admits a 3-rs colouring extension. 
Similarly, if \( T \) contains a rooted subtree \( R^* \) isomorphic to the representative of Class~I, then \( T \) does not admit a 3-rs colouring extension because \( R^* \) itself is a bicoloured \( P_3 \) with a higher colour on its middle vertex. Hence, \( T \) does not admit a 3-rs colouring extension if \( T \) contains a Class~I rooted subtree. Since \( T \) is arbitrary, we have the following lemma. 
%Moreover, a rooted subtree \( R \) is in Class~I if and only if no partially coloured rooted tree (with a 3-plus vertex as root) containing \( R \) admit a 3-rs colouring extension.
\begin{lemma}
A branch \( B \) is in Class~A if and only if no partially coloured rooted tree (with a 3-plus vertex as root) containing \( B \) admits a 3-rs colouring extension. Similarly, a rooted subtree \( R \) is in Class~I if and only if no partially coloured rooted tree (with a 3-plus vertex as root) containing \( R \) admits a 3-rs colouring extension.\qed
\label{lem:classes 1 and I}
\end{lemma}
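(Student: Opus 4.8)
The plan is to prove both biconditionals; the forward implications are essentially the remarks made just before the lemma, and the reverse implications follow from a soft manipulation of the definition of $\sim$, once we record that the chosen representatives themselves poison every tree that contains them. Here is how I would organise it.

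\textbf{Branch case.} First I would verify that the representative $B^{*}$ of Class~A satisfies the right-hand side. Suppose a partially coloured rooted tree $T$ (with a 3-plus vertex as its root) contains $B^{*}$ as a branch at a vertex $v$; write $B^{*}=y,x,v$ with $f(y)=0$ and $f(x)=1$. Since the root of a branch is always a 3-plus vertex, $\deg_T(v)\geq 3$, so $v$ has a neighbour $z$ outside $B^{*}$, and $y,x,v,z$ is a path in $T$. In any 3-rs colouring extension, Observation~\ref{obs:colour propagation} applied to this path forces $f(v)=2$, contradicting Property~P1 (a 3-plus vertex gets a binary colour). Hence no tree containing $B^{*}$ admits a 3-rs colouring extension. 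Now, Class~A being by definition the $\sim$-class of $B^{*}$, any $B\in$ Class~A satisfies $B\sim B^{*}$; since replacing $B$ by $B^{*}$ preserves 3-rs colouring extension status, and the substituted tree never admits an extension, neither does the original. This is the ``only if'' direction. For ``if'', suppose $B$ is a branch such that no partially coloured rooted tree containing $B$ admits a 3-rs colouring extension. Then in every tree the substitution of $B$ by $B^{*}$ (and of $B^{*}$ by $B$) turns a tree with no extension into a tree with no extension, so it vacuously preserves 3-rs colouring extension status; hence $B\sim B^{*}$, i.e., $B\in$ Class~A.

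\textbf{Rooted-subtree case.} The argument is identical with the Class~I representative $R^{*}$ in place of $B^{*}$; the only input needed is that $R^{*}$ itself is (contains) a bicoloured $P_3$ with the higher colour on its middle vertex, namely the two leaves coloured $0$ together with the vertex $u$ coloured $1$. So no 3-rs colouring extension of any tree containing $R^{*}$ can exist, and the rest is the same soft manipulation of the definition of $\sim$: any $R\in$ Class~I has $R\sim R^{*}$ and hence poisons every containing tree, while any $R$ that poisons every containing tree is $\sim$-equivalent to $R^{*}$ and so lies in Class~I.

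\textbf{Main point to be careful about.} The only subtlety is bookkeeping about what ``containing $B$'' and ``substituting $B$'' mean, so that the forward argument above applies verbatim to the substituted tree. Each branch of a tree at a vertex $v$ meets $v$ in exactly one edge (the edge from $v$ to the child spawning that component of $T_v-v$), so replacing one branch at $v$ by another does not change $\deg_T(v)$; in particular $v$ stays a 3-plus vertex, and the substituted tree genuinely contains $B^{*}$ as a branch rooted at a 3-plus vertex. An analogous remark handles rooted subtrees. There is no genuinely hard step here: this lemma is merely the base case that isolates the unique ``always failing'' class, and it will be reused when proving the full classification in Theorem~\ref{thm:uncoloured branch/rooted subtree}.
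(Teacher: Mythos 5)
Your proof is correct and follows essentially the same route as the paper: show that the Class~A and Class~I representatives force a contradiction (colour~2 on a 3-plus branch root via Observation~\ref{obs:colour propagation} and Property~P1, resp.\ an explicit bad bicoloured \( P_3 \)), then transfer to the whole equivalence class via the definition of \( \sim \). You additionally spell out the converse direction (vacuous preservation of extension status implies \( B\sim B^* \)) and the degree-preservation bookkeeping, both of which the paper leaves implicit, so your write-up is if anything slightly more complete.
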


%For example, a branch \( B \) is in Class~A if and only if no partially coloured rooted tree containing branch \( B \) admit a 3-rs colouring extension (this is true for the representative of Class~A because (i)~the root of the branch must be coloured 2 by Observation~\ref{obs:colour propagation}, and (ii)~the root of a branch is always a 3-plus vertex and hence cannot be coloured 2).
%A rooted subtree \( R \) belongs to Class~I if and only if no partially coloured rooted tree containing rooted subtree \( R \) admit a 3-rs colouring extension (this is true for the representative of Class~I because it is a bicoloured \( P_3 \) with a higher colour on its middle vertex).

The first branch in Figure~\ref{fig:eg branches} is composed of the rooted subtree \( T_u \) and a \( u,v \)-path. 
In general, a branch \( B \) at a vertex \( v \) is composed of a rooted subtree \( T_u \) and a \( u,v \)-path (where \( u \) is the first descendant of \( v \) in \( B \) which is not of degree two). %Suppose a branch \( B \) is composed of a rooted subtree \( T_u \) and a \( u,v \)-path. 
The length of the \( u,v \)-path is called the \emph{up-distance} of the branch \( B \). For instance, the first branch in Figure~\ref{fig:eg branches} has up-distance two because the length of the \( u,v \)-path is two. %[2pt]

\begin{figure}[hbt]
\begin{minipage}[c]{0.43\textwidth}
\setlength\tabcolsep{2 pt}
\renewcommand{\arraystretch}{1.25}
\begin{table}[H]%[htb]
\centering
\caption{Equivalence class of branch \( B \) in terms of the up-distance of \( B \) and the equivalence class of \( T_u \).}
\begin{tabular}{|c|cccccc|}
\hline
%\( d \) Class(\( T_u \))&2   &3   &4   &5    &6  &7\\
\diagbox[rightsep=3pt,height=0.75cm,width=1.5cm]{\scriptsize up-dist.}{\( T_u \)}
              &II   &III   &IV   &V    &VI  &VII\\
\hline
1             &C  &A  &B  &C  &E  &F  \\
2             &D  &B  &E  &F  &F  &''  \\
3             &B  &C  &D  &E  &''  &''  \\
4             &E  &D  &F  &F  &''  &''  \\
5             &D  &B  &E  &''  &''  &''  \\
6             &F  &E  &F  &''  &''  &''  \\
7             &E  &D  &''  &''  &''  &''  \\
8             &F  &F  &''  &''  &''  &''  \\
9             &''  &E  &''  &''  &''  &''  \\
\( \geq 10 \) &''  &F  &''  &''  &''  &'' \\
\hline
\end{tabular}
\label{tbl:class of branch}
\end{table}
\vspace{0.25cm}
\setlength\tabcolsep{6 pt}
%\end{center}
\end{minipage}
\hfill
\begin{minipage}[c]{0.5\textwidth}
\begin{table}[H]%[htb]
\renewcommand\cellalign{lc}  % for makecell (horizontal and vertical alignment of cell contents)
\centering
\caption{Equivalence class of rooted subtree \( T_v \) in terms of the number of branches at \( v \) belonging to each class.}
\begin{tabular}{|c|l|}
\hline
\( b>0 \) &\makecell{If \( c=d=0 \),\\
                     \quad\( T_v\in \) Class~II.\\
                     If \( c+d>0 \),\\
                     \quad\( T_v\in \) Class~I.}\\
\hline
\( b=0,\, c+d>0 \) & \makecell{If \( c+e=0 \),\\
                             \quad\( T_v\in \) Class~IV.\\
                             If \( c+e=1 \),\\
                             \quad\( T_v\in \) Class~III.\\
                             If \( c+e\geq 2 \),\\
                             \quad\( T_v\in \) Class~I.}\\
\hline
\( b=c=d=0 \) & \makecell{If \( e=f=0 \),\\
                          \quad\( T_v\in \) Class~VII.\\
                          If \( e=0 \) and \( f>0 \),\\
                          \quad\( T_v\in \) Class~VI.\\
                          If \( e=1 \),\\
                          \quad\( T_v\in \) Class~V.\\
                          If \( e\geq 2 \),\\
                          \quad\( T_v\in \) Class~II.}\\
\hline
\end{tabular}
\label{tbl:class of rooted subtree}
\end{table}
\end{minipage}
\end{figure}

%\pagebreak
\needspace{10\baselineskip}
\noindent\emph{Computation of equivalence classes}\\
Let \( B \) be a branch composed of a rooted subtree \( T_u \) and a \( u,v \)-path. If \( T_u \) is in Class~I, then \( B \) is in Class~A due to Lemma~\ref{lem:classes 1 and I} (note that a tree containing \( B \), in turn, contains \( T_u \)). Otherwise, we can determine the equivalence class of \( B \) from Table~\ref{tbl:class of branch} based on the equivalence class of \( T_u \) and the up-distance of \( B \). 
%Proof of Table~\ref{tbl:class of branch} is given in Subsection~\ref{sec:class of branch}. 
%\,\footnote{\label{fnt:complete proof}Complete proof can be found in supplementary material}. 
Similarly, a rooted subtree \( T_v \) is composed of branches of \( T \) at \( v \). If at least one of those branches is in Class~A, then \( T_v \) is in Class~I by Lemma~\ref{lem:classes 1 and I}. Otherwise, we can determine the equivalence class of \( T_v \) from Table~\ref{tbl:class of rooted subtree} where \( b,c,d,e,f \) denote the number of Class~B branches, Class~C branches, \( \dots \) , Class~F branches of \( T \) at \( v \) respectively (\( b,c,d,e,f\geq 0 \)). 
%Proof of Table~\ref{tbl:class of rooted subtree} can be found in Subsection~\ref{sec:class of rooted subtree}. 
Subsections~\ref{sec:class of branch} and \ref{sec:class of rooted subtree} provide an overview of proofs of Tables~\ref{tbl:class of branch} and \ref{tbl:class of rooted subtree}, respectively. Complete proofs are available in supplementary material. 
%Subsection~\ref{sec:class of branch} (resp.\ Subsection~\ref{sec:class of rooted subtree}) provides an overview of proof of Table~\ref{tbl:class of branch} (resp.\ Table~\ref{tbl:class of rooted subtree}). The complete proof can be found in supplementary material.

%\newpage %%%%%%%%%%%%
%~\\
One can read the look-up tables as follows. 
%The look-up tables can be read as follows. 
If a branch \( B \) is composed of a Class~III rooted subtree and a path, and up-distance of \( B \) is 5, then \( B \) is in Class~B (see Table~\ref{tbl:class of branch}; up-distance=5 is the sixth row and \( T_u\in \) Class~III is the third column of the table). If a rooted subtree \( T_v \) is made of one Class~C branch and three Class~D branches (i.e., \( b=0 \), \( c=1 \), \( d=3 \), and \( e=f=0 \)), then \( T_v \) is in Class~III (see the case \( c+e=1 \) of the second row of Table~\ref{tbl:class of rooted subtree}).\\

\needspace{5\baselineskip}
\noindent \textbf{Algorithm for testing 3-rs colourability of trees}\\
%Finally, we are ready to present the algorithm. 
%Let the input to the algorithm be a tree \( T \) which is not a path. 
%Obtain a rooted tree representation of \( T \) with a 3-plus vertex \( a \) as its root. 
Assume that the input to the algorithm is a rooted tree \( T \) with a 3-plus vertex \( a \) as its root (recall that such a representation can be obtained in linear time by BFS). 
The algorithm determines the equivalence class of the rooted subtree \( T_a \) recursively. To be explicit, the algorithm visits vertices of \( T \) in bottom-up order (via post-order traversal) and determines the equivalence classes of all branches and rooted subtrees of \( T \) until (i)~it comes across a Class~A branch or a Class~I rooted subtree, or (ii)~all vertices of \( T \) are visited. In the former case, \( T \) is not 3-rs colourable. In the latter case, \( T=T_a \) is in one of the classes Class~II, Class~III, \( \dots \), Class~VII, and thus, \( T \) is 3-rs colourable.\\

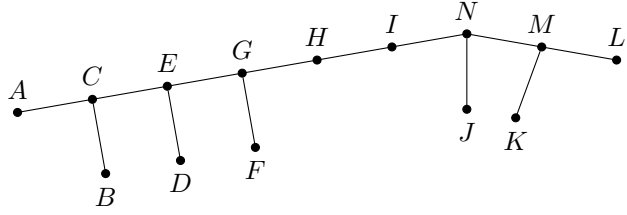
\begin{figure}[hbt]
\centering
\begin{tikzpicture}
\draw (0,0) node[dot][label=above:\( A \)]{}--++(10:1) node[dot](a1)[label=above:\( C \)]{}--++(10:1) node[dot](a2)[label=above:\( E \)]{}--++(10:1) node[dot](a3)[label=above:\( G \)]{}--++(10:1) node[dot](a4)[label=above:\( H \)]{}--++(10:1) node[dot](a5)[label=above:\( I \)]{}--++(10:1) node[dot](a6)[label=above:\( N \)]{}--++(-10:1) node[dot](a7)[label=above:\( M \)]{}--++(-10:1) node[dot][label=above:\( L \)]{};
\draw (a1)--+(-80:1) node[dot][label=below:\( B \)]{}
      (a2)--+(-80:1) node[dot][label=below:\( D \)]{}
      (a3)--+(-80:1) node[dot][label=below:\( F \)]{}
      (a6)--+(-90:1) node[dot][label=below:\( J \)]{}
      (a7)--+(-110:1) node[dot][label=below:\( K \)]{};
\end{tikzpicture}
\caption{A rooted tree \( T \) with root \( N \). A post-order traversal of the tree visits vertices in alphabetical order}
\label{fig:T redrawn}
\end{figure}

To illustrate the algorithm execution, let us revisit the first example. The tree \( T \) of Figure~\ref{fig:3-rsc tree example} is redrawn in Figure~\ref{fig:T redrawn} (vertex labels are changed for convenience).  The equivalence classes of branches and rooted subtrees of \( T \) are computed in the following order using Table~\ref{tbl:class of branch} as the look-up table for branches, and Table~\ref{tbl:class of rooted subtree}, for rooted subtrees.\\
\noindent\( T_A\in \) Class~VII (\( \because \) \( b=c=d=e=f=0 \); see Table~\ref{tbl:class of rooted subtree}),\\
Left branch of \( T \) at \( C \) \( \in \) Class~F (\( \because \) \( T_A\in  \) Class~VII, up-dist.\ \( = 1 \); see Table~\ref{tbl:class of branch}),\\
\( T_B\in \) Class~VII (\( \because \) \( b=c=d=e=f=0 \)),\\
Right branch of \( T \) at \( C \) \( \in \) Class~F (\( \because \) \( T_B\in \) Class~VII, up-dist.\ \( = 1 \)),\\
\( T_C\in \) Class~VI (\( \because \) \( b=c=d=e=0 \) and \( f=2 \)),\\
Left branch of \( T \) at \( E \) \( \in \) Class~E (\( \because \) \( T_C\in \) Class~VI, up-dist.\ \( = 1 \)),\\
\( T_D\in \) Class~VII (\( \because \) \( b=c=d=e=f=0 \)),\\
Right branch of \( T \) at \( E \) \( \in \) Class~F (\( \because \) \( T_D \in \) Class~VII, up-dist.\ \( = 1 \)),\\
\( T_E\in \) Class~V (\( \because \) \( b=c=d=0 \) and \( e=f=1 \)),\\
Left branch of \( T \) at \( G \) \( \in \) Class~C (\( \because \) \( T_E \in \) Class~V, up-dist.\ \( = 1 \)),\\
\( T_F\in \) Class~VII (\( \because \) \( b=c=d=e=f=0 \)),\\
Right branch of \( T \) at \( G \) \( \in \) Class~F (\( \because \) \( T_F \in \) Class~VII, up-dist.\ \( = 1 \)),\\
\( T_G\in \) Class~III (\( \because \) \( b=d=e=0 \) and \( c=f=1 \)),\\
(Next, we consider \( N \), the immediate ancestor of \( G \) which is a 3-plus vertex. Note that vertices \( H \) and \( I \) are of degree two.)\\
Left branch of \( T \) at \( N \) \( \in \) Class~C (\( \because \) \( T_G\in \) Class~III, up-dist.\ \(= 3 \)),\\
\( T_J\in \) Class~VII (\( \because \) \( b=c=d=e=f=0 \)),\\
Middle branch of \( T \) at \( N\in\, \)Class~F\,(\( \because \) \( T_J \in \) Class~VII, up-dist.\ \( = 1 \)),\\
\( T_K\in \) Class~VII (\( \because \) \( b=c=d=e=f=0 \)),\\
Left branch of \( T \) at \( M \) \( \in \) Class~F (\( \because \) \( T_K \in \) Class~VII, up-dist.\ \( = 1 \)),\\
\( T_L\in \) Class~VII (\( \because \) \( b=c=d=e=f=0 \)),\\
Right branch of \( T \) at \( M\in\, \)Class~F\,(\( \because \) \( T_L\in \) Class~VII, up-dist.\ \( = 1 \)),\\
\( T_M\in \) Class~VI (\( \because \) \( b=c=d=e=0 \) and \( f=2 \)),\\
Right branch of \( T \) at \( N \) \( \in \) Class~E (\( \because \) \( T_M \in \) Class~VI, up-dist.\ \( = 1 \)),\\
\( T_N\in \) Class~I (\( \because \) \( b=d=0 \) and \( c=e=f=1 \)).\\
That is, \( T_N \) does not admit a 3-rs colouring extension (by Lemma~\ref{lem:classes 1 and I}). Therefore, \( T=T_N \) is not 3-rs colourable.

As promised earlier, we next prove that every uncoloured branch belongs to one of the classes Class~A, Class~B, \( \dots \), Class~F; and every uncoloured rooted subtree belongs to one of the classes Class~I, Class~II, \( \dots \), Class~VII. Lemmas~\ref{lem:branch replacement corollary} and \ref{lem:subtree replacement corollary} below are corollaries of Table~\ref{tbl:class of branch} and Table~\ref{tbl:class of rooted subtree} respectively. 

\begin{lemma}
Let \( B \) be an uncoloured branch comprised of a rooted subtree \( T_u \) and a path. If \( T_u \) belongs to one of the classes Class~I, Class~II, \( \dots \), Class~VII, then \( B \) belongs to one of the classes Class~A, Class~B, \( \dots \), Class~F.\qed
\label{lem:branch replacement corollary}
\end{lemma}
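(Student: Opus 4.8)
The plan is a short case analysis on the equivalence class of the rooted subtree $T_u$, resting on the observation that an uncoloured branch $B$ at a vertex $v$ is determined, up to the equivalence $\sim$ on branches, by the pair consisting of the equivalence class of $T_u$ and the up-distance $\ell\ge 1$ of $B$. To see this, note that if $T_u\sim T_u'$ as rooted subtrees and $B'$ is obtained from $B$ by substituting $T_u'$ for $T_u$, then for every partially coloured rooted tree $T$ (with a 3-plus vertex as root) containing $B$, replacing the copy of $T_u$ inside $T$ by $T_u'$ produces exactly the tree obtained from $T$ by replacing $B$ with $B'$; since $T_u\sim T_u'$ this preserves 3-rs colouring extension status, so $B\sim B'$. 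Hence it suffices to show that for every admissible class of $T_u$ (one of Classes~I to VII) and every $\ell\ge 1$, the branch $B$ lies in one of Classes~A to F.

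First, suppose $T_u$ is in Class~I. Then every partially coloured rooted tree (with a 3-plus vertex as root) that contains $B$ also contains $T_u$, hence admits no 3-rs colouring extension by Lemma~\ref{lem:classes 1 and I}; invoking the converse direction of Lemma~\ref{lem:classes 1 and I}, $B$ is in Class~A. This settles the Class~I case without any reference to the up-distance.

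Second, suppose $T_u$ is in one of Classes~II to VII. Here the conclusion is read directly off Table~\ref{tbl:class of branch}: its six columns are indexed by exactly these classes, its rows by the up-distance $\ell$, and every entry is one of the symbols A, B, C, D, E, F. For $\ell\le 9$ the entry is listed explicitly; the ``ditto'' entries together with the catch-all row ``$\ell\ge 10$'' record that, along each column, the class of $B$ is eventually constant in $\ell$ (in fact eventually Class~F), so every value of $\ell$ is covered. Combining the two cases, $B$ always belongs to one of Classes~A to F, which is the assertion of the lemma.

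The only substantive content is the correctness of Table~\ref{tbl:class of branch}, established in Subsection~\ref{sec:class of branch}: one decomposes $B$ into $T_u$ together with the $u,v$-path and analyses, via Lemmas~\ref{lem:joining at edge} and \ref{lem:joining at vertex} and Theorem~\ref{thm:3-rsc extension paths}, which colours a host tree may legally place at $v$ given the class of $T_u$. I expect the crux of that analysis — and the reason a finite table suffices — to be the stabilization claim: prepending a sufficiently long path of degree-two vertices to a fixed rooted subtree produces a branch whose class no longer changes under further lengthening, because by Observation~\ref{obs:colour propagation} and Theorem~\ref{thm:3-rsc extension paths} the colours that can be forced or forbidden at the top of a degree-two path depend only on the path length modulo a small period and on the colours available at its bottom. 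For the corollary itself, however, nothing beyond the case split above and a glance at the table is required.
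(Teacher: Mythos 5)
Your proposal is correct and follows essentially the same route as the paper: the paper likewise treats this lemma as an immediate corollary of Table~\ref{tbl:class of branch}, dispatching the Class~I case via Lemma~\ref{lem:classes 1 and I} and reading the remaining cases (Classes~II to VII, all up-distances including the ``ditto'' tail rows) directly off the table, whose entries all lie in Classes~A to F. Your added justification that the class of \( B \) depends only on the class of \( T_u \) and the up-distance is a reasonable elaboration of what the paper leaves implicit, and you correctly locate the substantive work in the verification of the table itself (Subsection~\ref{sec:class of branch} and the supplementary material).
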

\begin{lemma}
Let \( T_u \) be a rooted subtree comprised of branches each of which belongs to one of the classes Class~A, Class~B, \( \dots \), Class~F. Then, \( T_u \) belongs to one of the classes Class~I, Class~II, \( \dots \), Class~VII.\qed
\label{lem:subtree replacement corollary}
\end{lemma}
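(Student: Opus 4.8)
The plan is to obtain this lemma as a direct consequence of Table~\ref{tbl:class of rooted subtree} together with Lemma~\ref{lem:classes 1 and I}. Write $B_1,\dots,B_k$ for the branches of $T_u$ at its root $u$ (in the setting of the paper $k\geq 2$, but the argument does not use this); by hypothesis each $B_i$ lies in one of Classes~A--F. I would first split on whether some $B_i$ is in Class~A. If so, then by Lemma~\ref{lem:classes 1 and I} no partially coloured rooted tree containing $B_i$ admits a 3-rs colouring extension; since every partially coloured rooted tree containing $T_u$ contains $B_i$ as well, the same is true for $T_u$, and hence, again by Lemma~\ref{lem:classes 1 and I}, $T_u$ is in Class~I.

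In the remaining case every $B_i$ lies in one of Classes~B--F. The point here is that $\sim$ is a congruence for the operation of attaching branches at $u$: replacing a single branch $B_i$ by the representative of its class is, by the definition of $\sim$ on branches, an operation that preserves 3-rs colouring extension status in every ambient tree, so performing these replacements for $i=1,\dots,k$ in turn shows that $T_u$ is $\sim$-equivalent to the rooted subtree $T_u'$ obtained by gluing the representatives of the respective classes at $u$. Consequently the equivalence class of $T_u$ is a function of the tuple $(b,c,d,e,f)$ recording how many of the $B_i$ fall in Class~B, C, D, E, F respectively. Table~\ref{tbl:class of rooted subtree} gives this equivalence class for every value of $(b,c,d,e,f)$ --- with the absorptions $b\geq 1$ behaving as $b=1$, $e\geq 2$ as $e=2$, and so on built in --- and every entry of the table is one of Classes~I--VII. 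Combining the two cases proves the lemma.

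The real content sits one level below, in Table~\ref{tbl:class of rooted subtree} itself (established in the supplementary material, and not logically needed to state the present corollary), and I expect its verification to be the main obstacle. For each of the finitely many tuples $(b,c,d,e,f)$ one has to show that $T_u'$ is $\sim$-equivalent to the claimed representative, which splits into a \emph{forcing} direction and an \emph{extension} direction. The forcing direction uses Property~P1, Observation~\ref{obs:colour propagation}, and the colours frozen into the constituent branch representatives to show that any 3-rs colouring extension of a tree containing $T_u'$ is forced to agree with the representative on the relevant vertices near $u$. The extension direction uses Lemmas~\ref{lem:joining at edge} and \ref{lem:joining at vertex} to stitch colourings of the individual branches (and of the rest of the ambient tree) together along $u$ or the edge just above it. Getting the absorption bookkeeping right --- e.g.\ why two Class~E branches at $u$ collapse to a single Class~II rooted subtree, or why one Class~B branch dominates arbitrarily many Class~E and Class~F branches while turning any Class~C or Class~D branch into a forced contradiction (Class~I) --- is where the care lies; but none of this is required for the corollary, which merely observes that every table entry lies in Classes~I--VII.
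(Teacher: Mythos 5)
Your proposal is correct and follows the paper's own treatment: the paper presents this lemma precisely as an immediate corollary of Table~\ref{tbl:class of rooted subtree}, splitting exactly as you do into the case where some branch is in Class~A (handled by Lemma~\ref{lem:classes 1 and I}) and the case where all branches lie in Classes~B--F (handled by the exhaustive case analysis of the table over the tuple \((b,c,d,e,f)\)). One small caution: replacing the branches by their class representatives ``in turn'' is not literally possible when \(b>0\) and \(c+d>0\), because the Class~B representative and the Class~C/D representatives force conflicting colours (0 versus 1) on the shared root, so the glued tree \(T_u'\) is not well defined there --- the paper handles that case by a separate colour-conflict argument placing the subtree in Class~I rather than by gluing representatives, a point you do implicitly acknowledge when you note that a Class~B branch together with a Class~C or~D branch yields a forced contradiction.
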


\begin{theorem}
Every uncoloured branch belongs to one of the classes Class~A, Class~B, \( \dots \), Class~F; and every uncoloured rooted subtree belongs to one of the classes Class~I, Class~II, \( \dots \), Class~VII.
\label{thm:uncoloured branch/rooted subtree}
\end{theorem}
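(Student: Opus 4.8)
The plan is to prove both halves of Theorem~\ref{thm:uncoloured branch/rooted subtree} simultaneously by strong induction on the number of vertices, with Lemmas~\ref{lem:branch replacement corollary} and \ref{lem:subtree replacement corollary} --- the combinatorial content of Tables~\ref{tbl:class of branch} and \ref{tbl:class of rooted subtree} --- driving the two halves of the inductive step. Concretely, I would let $S(n)$ denote the conjunction ``every uncoloured branch on $n$ vertices lies in one of Classes~A--F, and every uncoloured rooted subtree on $n$ vertices lies in one of Classes~I--VII'', and establish $S(n)$ for all $n\ge 1$.

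For the base case, the only uncoloured rooted subtree on a single vertex is $K_1$, which is exactly the representative of Class~VII (see Figure~\ref{fig:rooted subtree classes}), so it lies in Class~VII; and there is no branch on a single vertex, since a branch of $T$ at $v$ contains $v$ together with a nonempty component of $T_v-v$, so the branch half of the base case is vacuous. I would also record the elementary fact that an uncoloured rooted subtree has either $1$ vertex or at least $3$ vertices: its root cannot have exactly one child, for then that root would have degree $2$ in the ambient tree, contrary to the definition of a rooted subtree (and the root of the whole tree is a $3$-plus vertex). For the inductive step, fix $n\ge 2$ and assume $S(k)$ for all $k<n$. If $B$ is an uncoloured branch on $n$ vertices, use the decomposition recalled just before Table~\ref{tbl:class of branch} to write $B$ as a $u,v$-path of up-distance $\ell\ge 1$ glued along $u$ to the uncoloured rooted subtree $T_u$; since the path contributes at least the vertex $v\notin V(T_u)$, we have $|V(T_u)|<n$, so by $S(|V(T_u)|)$ the subtree $T_u$ lies in one of Classes~I--VII, and Lemma~\ref{lem:branch replacement corollary} then places $B$ in one of Classes~A--F. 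If instead $T_v$ is an uncoloured rooted subtree on $n$ vertices, then $n\ge 3$ by the fact above, so its root $v$ has at least two children, and $T_v$ decomposes into the corresponding ($\ge 2$) branches of $T_v$ at $v$; each such branch is uncoloured and, lacking at least one of the other child-subtrees, has fewer than $n$ vertices, so by the induction hypothesis each lies in one of Classes~A--F, whence Lemma~\ref{lem:subtree replacement corollary} places $T_v$ in one of Classes~I--VII. This closes the induction.

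I do not expect this argument itself to be the obstacle: once Lemmas~\ref{lem:branch replacement corollary} and \ref{lem:subtree replacement corollary} are available, the proof is pure bookkeeping, and the only points needing care are that branches and rooted subtrees genuinely alternate in the recursion, that each recursive step strictly decreases the vertex count so the induction is well-founded, and that the base object $K_1$ really is a Class~VII rooted subtree. The substantive difficulty sits upstream, in establishing Tables~\ref{tbl:class of branch} and \ref{tbl:class of rooted subtree} themselves (treated in Subsections~\ref{sec:class of branch}--\ref{sec:class of rooted subtree} and the supplementary material): there one must verify, \emph{for every} ambient partially coloured rooted tree, that replacing a branch (resp.\ rooted subtree) by the indicated representative preserves 3-rs colouring extension status, which is precisely the condition defining the equivalence classes --- and that is the step where an incomplete case analysis would be easy to overlook.
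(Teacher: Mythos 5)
Your proposal is correct and is essentially the paper's own argument: the paper phrases it as a minimal-counterexample argument rather than an explicit strong induction, but both rest on the same decomposition (a branch into a rooted subtree plus a path, a rooted subtree into its branches) and invoke Lemmas~\ref{lem:branch replacement corollary} and \ref{lem:subtree replacement corollary} in exactly the same way. The only difference is that you spell out the base case ($K_1$ in Class~VII) and the well-foundedness of the recursion, which the paper leaves implicit.
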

\begin{proof}
%Let \( \mathcal{X} \) be the set of all uncoloured branches not contained in Classes A to F, and all uncoloured rooted subtrees not contained in Classes I to VII. 
%Let \( X \) be a member of \( \mathcal{X} \) on smallest number of vertices. 
To produce a contradiction, assume that the theorem is not true. Let \( X  \) be a counterexample (a branch or a rooted subtree) with the least number of vertices. 
If \( X \) is a branch comprised of a rooted subtree \( T_u \) and a path, then \( T_u \) must belong to one of the classes Class~I, Class~II, \( \dots \), Class~VII (if not, \( T_u \) is a counterexample with fewer vertices than \( X \)). So, \( X \) must belong to one of the classes Class~A, Class~B, \( \dots \),  Class~F by Lemma~\ref{lem:branch replacement corollary}; a contradiction. If \( X \) is a rooted subtree, then \( X \) is composed of branches that belong to Classes A to F (if one of the branches is not in Classes~A to F, then that branch is a counterexample with fewer vertices than \( X \)). So, \( X \) must belong to one of the classes Class~I, Class~II, \( \dots \), Class~VII by Lemma~\ref{lem:subtree replacement corollary}; a contradiction.
\end{proof}

The pseudocode and the run-time analysis of the algorithm are presented in Subsection~\ref{sec:pseudocode}.

\subsection{Proof of Table~\ref{tbl:class of branch} (Overview)}\label{sec:class of branch} 
\begin{figure}[hbt]
\centering
\begin{subfigure}[b]{0.25\textwidth}
\centering
\begin{tikzpicture}
\draw (0,0) node (u)[dot][label={[vcolour]below:0}][label=\( u \)]{};% --++(10:0.75) node(v)[label=above:\( v \)][dot][label={[vcolour]below:1}]{};
\end{tikzpicture}
\caption{\( T_u \)}
\label{fig:Tu in branch replacement 6}
\end{subfigure}%
\begin{subfigure}[b]{0.25\textwidth}
\centering
\begin{tikzpicture}
\draw (0,0) node (u)[dot][label={[vcolour]below:0}][label=\( u \)]{} --++(10:0.75) node(v)[label=above:\( v \)][dot]{};
\end{tikzpicture}
\caption{\( B \)}
\label{fig:B in branch replacement 6}
\end{subfigure}%
\caption{Rooted subtree \( T_u \) and branch \( B \) for the first entry of Table~\ref{tbl:class of branch}}
\end{figure}
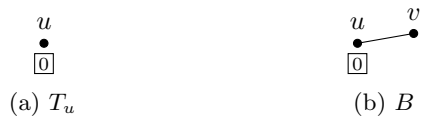
The first entry of the table says that a branch \( B \) composed of a Class~II rooted subtree \( T_u \) and a path of length one (i.e. up-distance\( (B)=1 \)) belongs to Class~C. Since \( T_u\in \) Class~II, we may assume that \( T_u \) is the representative of Class~II by definition of the equivalence relation; i.e., \( T_u \) is the graph in Figure~\ref{fig:Tu in branch replacement 6} and \( B \) is the graph in Figure~\ref{fig:B in branch replacement 6}. Thus, to prove the entry, it suffices to show that the branch replacement operation displayed in Figure~\ref{fig:branch replacement 6} preserves 3-rs colouring extension status (i.e., \( T\bm{'} \) preserves 3-rs colouring extension status of \( T \)). Since \( v \) is a 3-plus vertex, this branch replacement operation indeed preserves 3-rs colouring extension status. This proves the first entry of Table~\ref{tbl:class of branch}. 
%Clearly, the branch replacement operation displayed in Figure~\ref{fig:branch replacement 6} preserves 3-rs colouring extension status (\( v \) must be coloured 1 as it is a 3-plus vertex, and its neighbour \( u \) is coloured 0). This proves the first entry of Table~\ref{tbl:class of branch}. 

%Similarly, each entry of Table~\hyperlink{tbl:Table 1 reproduced}{1} corresponds to a specific branch replacement operation.

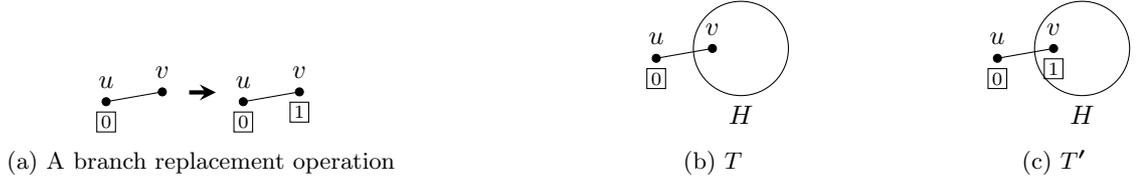
\begin{figure}[hbt]
\centering
\begin{subfigure}[b]{0.5\textwidth}
\centering
\begin{tikzpicture}
\draw (0,0) node (u)[dot][label={[vcolour]below:0}][label=\( u \)]{} --++(10:0.75) node(v)[label=above:\( v \)][dot]{};
\end{tikzpicture}
\tikz \path [-stealth,draw=black,ultra thick] (0,0)++(0,0.5)--+(0.35,0);
\begin{tikzpicture}
\draw (0,0) node (u)[dot][label={[vcolour]below:0}][label=\( u \)]{} --++(10:0.75) node(v)[label=above:\( v \)][dot][label={[vcolour]below:1}]{};
\end{tikzpicture}
\caption{A branch replacement operation}
\end{subfigure}%
\begin{subfigure}[b]{0.25\textwidth}
\centering
\begin{tikzpicture}[scale=0.75]
\draw (0,0) node[dot](u)[label=above:\( u \), label={[vcolour]below:0}]{} --++(10:1) node[dot](v)[label=above:\( v \)]{} ++(0.5,0) node[subgraph](H)[label=below:\( H \)]{};
\end{tikzpicture}
\caption{\( T \)}
\end{subfigure}%
\begin{subfigure}[b]{0.25\textwidth}
\centering
\begin{tikzpicture}[scale=0.75]
\draw (0,0) node[dot](u)[label=above:\( u \)][label={[vcolour]below:0}]{} --++(10:1) node[dot](v)[label=above:\( v \)][label={[vcolour]below:1}]{} ++(0.5,0) node[subgraph](H)[label=below:\( H \)]{};
\end{tikzpicture}
\caption{\( T\bm{'} \)}
\end{subfigure}%
\caption{A branch replacement operation, and partially coloured rooted trees before and after replacement}
\label{fig:branch replacement 6}
\end{figure}

Similarly, to prove entries in Table~\ref{tbl:class of branch}, it suffices to show that branch replacement operations displayed in Figure~\ref{fig:branch replacement 1 to 8} and Figure~\ref{fig:branch replacement 9 to 15} preserve 3-rs colouring extension status; this is easy to prove with the help of Theorem~\ref{thm:3-rsc extension paths} (see supplementary material for details and proof for each operation). We provide a sample lemma below.

\begin{lemma}
Branch replacement operation~(13) preserves 3-rs colouring extension status (see Figure~\ref{fig:branch replacement 13}).
\label{lem:branch replacement 13}
\end{lemma}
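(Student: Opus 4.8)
The plan is to reduce the statement to a short local computation on the two branches in the operation. Write $B_1$ for the branch on the left of operation~(13) and $B_2$ for the one on the right (Figure~\ref{fig:branch replacement 13}); both have the same root $v$, and $B_1$ is built from a rooted subtree $T_u$ of a fixed class together with a $u,v$-path of a fixed up-distance. Since replacing $T_u$ inside $B_1$ by the designated representative of its class is itself a status-preserving rooted-subtree replacement (by definition of the equivalence relation on rooted subtrees), it suffices to prove the claim when $T_u$ is that representative; then $B_1$ is a concrete tree on only a few vertices.

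The engine of the argument is Lemma~\ref{lem:joining at edge} applied at the edge $vu'$, where $u'$ is the unique neighbour of $v$ inside a branch. For a branch $B$ with root $v$, let $S(B)$ be the set of pairs $(a,c)$ with $a\in\{0,1\}$ and $c\in\{0,1,2\}\setminus\{a\}$ for which $B$ has a 3-rs colouring extension assigning colour $a$ to $v$ and colour $c$ to $u'$; here $a$ is restricted to $\{0,1\}$ because the root of a branch is a 3-plus vertex and hence cannot receive colour $2$ in any tree containing $B$ (Property~P1). If a partially coloured rooted tree $T$ contains $B$, then cutting the edge $vu'$ and applying Lemma~\ref{lem:joining at edge} in one direction, together with restricting a global colouring to the two sides in the other direction, shows that $T$ admits a 3-rs colouring extension if and only if there is a pair $(a,c)\in S(B)$ for which the complementary piece $T[(V(T)\setminus U)\cup\{u'\}]$ (with $U$ the vertex set of $B-v$) admits a 3-rs colouring extension coloured $a$ at $v$ and $c$ at $u'$. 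As this criterion depends on $B$ only through $S(B)$, any two branches with the same value of $S$ may be swapped in every partially coloured rooted tree without affecting 3-rs colouring extension status. Hence it is enough to show $S(B_1)=S(B_2)$.

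To evaluate $S(B_1)$ I would split off the up-path as well: a 3-rs colouring extension of $B_1$ restricts to a 3-rs colouring extension of $T_u$ together with a 3-rs colouring of the $u,v$-path agreeing with it at $u$, and conversely such pieces recombine via Lemma~\ref{lem:joining at edge} applied at the path-edge incident to $u$. For each colour that the representative of $T_u$'s class permits at $u$, Theorem~\ref{thm:3-rsc extension paths} says precisely which binary colours are then attainable at $v$ across a path of the given length, and Observation~\ref{obs:colour propagation} fixes the forced colours whenever that path is compelled to begin $0,1$ or $1,0$ from the $u$-end; this pins down the colour of $v$'s neighbour $u'$ on the path as well. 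Enumerating these finitely many cases produces $S(B_1)$, and one checks it equals $S(B_2)$, the $S$-value of $B_2$ being immediate since $B_2$ is one of the six explicit representatives in Figure~\ref{fig:branch classes}. This yields $B_1\sim B_2$ and therefore that operation~(13) preserves 3-rs colouring extension status.

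I expect the delicate point to be the bookkeeping at the vertex $u$ where the up-path meets $T_u$: being a 3-plus vertex, $u$ must be coloured so as to be simultaneously compatible with its children inside $T_u$ and with its parent on the up-path, and this is exactly where the exceptional endpoint patterns in Theorem~\ref{thm:3-rsc extension paths} (the non-realizable cases for $P_3$, $P_4$ and $P_6$) enter. Everything else is a routine finite verification; once $S(B_1)$ has been computed, its equality with $S(B_2)$ — and hence the lemma — is immediate.
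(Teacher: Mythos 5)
Your general framework is sound up to its last step: cutting $T$ at the edge $vu'$ and invoking Lemma~\ref{lem:joining at edge} does show that the 3-rs colouring extension status of $T$ depends on the branch $B$ only through the set $S(B)$ of admissible colour pairs at $(v,u')$, so $S(B_1)=S(B_2)$ would indeed suffice. The problem is that for operation~(13) this equality is false, so the ``routine finite verification'' you defer to cannot succeed. Concretely, in $B_1$ the vertex $u'=u_1$ has degree three (neighbours $v,\ell_1,\ell_2$), so it can never receive colour $2$, and one computes $S(B_1)=\{(0,1),(1,0)\}$ (with $\ell_1=\ell_2=2$ in both cases). In $B_2$ the vertex $u_1$ has degree two and is adjacent to $w$ which is precoloured $1$, so $u_1$ cannot receive colour $1$; here $S(B_2)=\{(0,2),(1,0)\}$. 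Since $S(B_1)\neq S(B_2)$, your invariant distinguishes the two branches and the proposed proof breaks down at exactly the point you describe as immediate.

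The lemma is nevertheless true because $S$ is too fine an invariant: in the complementary piece $T[(V(T)\setminus U)\cup\{u_1\}]$ the vertex $u_1$ is a pendant neighbour of $v$, and when $v$ is coloured $0$ every $P_3$ through $u_1$ has the form $x,v,u_1$ with middle colour $0$, so the complementary piece admits an extension with $(v,u_1)\mapsto(0,1)$ if and only if it admits one with $(v,u_1)\mapsto(0,2)$. Thus the pairs $(0,1)\in S(B_1)$ and $(0,2)\in S(B_2)$ play interchangeable roles, but establishing this requires an extra argument about the complement that your write-up does not supply. (Note also that for $v$ coloured $1$ the analogous collapse fails --- a pendant $u_1$ coloured $0$ forbids any other $0$-coloured neighbour of $v$, while $u_1$ coloured $2$ does not --- so the repair must be done per root colour, not wholesale.) The paper avoids the issue entirely by a direct two-way construction: given an extension of $T$ it recolours $u_1$ with $2$ or $0$ according to whether $f(v)$ is $0$ or $1$ (using Property~P2 in the latter case), and conversely, given an extension of $T\bm{'}$ it assigns $u_1$ the colour $1$ or $0$ and puts colour $2$ on $\ell_1,\ell_2$. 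You should either adopt that direct argument or replace $S$ by the coarser invariant that records, for each root colour $a$, which constraint profiles at $u'$ the branch can realize from the complement's point of view.
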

\begin{proof}
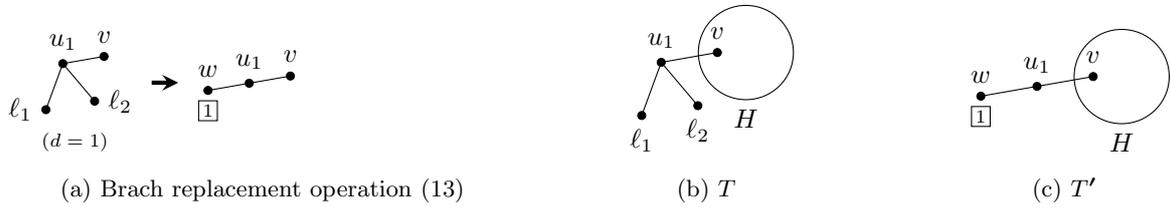
\begin{figure}[h]
\centering
\begin{subfigure}[b]{0.4\textwidth}
\mbox{
\begin{tikzpicture}
\draw (0,0) node[dot](u1)[label=above:\( u_1 \)]{} --++(10:0.55) node[dot](v)[label=above:\( v \)]{};
\draw (u1) --+(-110:0.65) node[dot](l1)[label=left:\( \ell_1 \)]{}
      (u1) --+(-50:0.65) node[dot](l2)[label=right:\( \ell_2 \)]{};
\node [fit={(v) (l1)}][label={[font=\scriptsize]below:(\( d=1 \))}]{};
\end{tikzpicture}
\tikz \path [-stealth,draw=black,ultra thick] (0,0)++(0,1.0)--+(0.35,0);
\begin{tikzpicture}
\draw (0,0) node[dot](w)[label=above:\( w \), label={[vcolour]below:1}]{} --++(10:0.55) node[dot](u1)[label=above:\( u_1 \)]{} --++(10:0.55) node[dot](v)[label=above:\( v \)]{};
\path (v)+(0,-1) node(dummy){};% for space
\end{tikzpicture}
}
\caption{Brach replacement operation (13)}
\end{subfigure}%
\begin{subfigure}[b]{0.25\textwidth}
\centering
\begin{tikzpicture}[scale=0.75]
\draw (0,0) node[dot](u1)[label=above:\( u_1 \)]{} --++(10:1) node[dot](v)[label=above:\( v \)]{} ++(0.5,0) node[subgraph](H)[label=below:\( H \)]{};
\draw (u1) --+(-110:1) node[dot](l1)[label=below:\( \ell_1 \)]{}
      (u1) --+(-50:1) node[dot](l2)[label=below:\( \ell_2 \)]{};
\end{tikzpicture}
\caption{\( T \)}
\end{subfigure}%
\hspace{3pt}
\begin{subfigure}[b]{0.25\textwidth}
\centering
\begin{tikzpicture}[scale=0.75]
\draw (0,0) node[dot](w)[label=above:\( w \), label={[vcolour]below:1}]{} --++(10:1) node[dot](u)[label=above:\( u_1 \)]{} --++(10:1) node[dot](v)[label=above:\( v \)]{} ++(0.5,0) node[subgraph](H)[label=below:\( H \)]{};
\end{tikzpicture}
\caption{\( T\bm{'} \)}
\end{subfigure}
\caption{Branch replacement operation~(13), and partially coloured rooted trees before and after replacement}
\label{fig:branch replacement 13}
\end{figure}

Suppose that \( T \) admits a 3-rs colouring extension \( f \). Let \( H=T-\{u_1,\ell_1,\ell_2\} \). If \( f(v)=0 \), then \( f \) restricted to \( V(H) \) can be extended into a 3-rs colouring extension of \( T\bm{'} \) by assigning colour~2 at \( u_1 \) and colour~1 at \( w \). Otherwise, \( f(v)=1 \) and hence \( f(u_1)=0 \) (by Property~P2). So, the restriction of \( f \) to \( V(H) \) can be extended into a 3-rs colouring extension of \( T\bm{'} \) by assigning colour~0 at \( u_1 \) and colour~1 at \( w \).% (by Lemma~\ref{lem:joining at edge}).

Conversely, suppose that \( T\bm{'} \) admits a 3-rs colouring extension \( f\bm{'} \). If \( f\bm{'}(v)=0 \), then \( f\bm{'} \) restricted to \( V(H) \) can be extended into a 3-rs colouring extension of \( T \) by assigning colour~1 at \( u_1 \) and colour~2 at \( \ell_1,\ell_2 \). Otherwise, \( f\bm{'}(v)=1 \). Since \( w,u_1,v \) is a bicoloured \( P_3 \) with endpoints coloured~1, its middle vertex must be coloured~0. That is, \( f\bm{'}(u_1)=0 \). Hence, \( f\bm{'} \) restricted to \( V(H) \) can be extended into a 3-rs colouring extension of \( T \) by assigning colour~0 at \( u_1 \) and colour~2 at \( \ell_1,\ell_2 \).
\end{proof}

\begin{figure}[hbtp] %[hp]  branch replacement operations (Set 2)
\centering
%\hspace{-0.3cm}
\begin{tikzpicture}
\matrix[matrix of nodes,row sep=0.05cm,nodes={anchor=center}]{
%   2, d=1
\tikz \node {(1)};&[-0.4cm]
\begin{tikzpicture}
\draw (0,0) node[dot](u1)[label=above:\( u_1 \)][label={[vcolour]below:1}]{} --++(10:0.55) node[dot](v)[label=above:\( v \)]{};
\node [fit={(u1) (v)}](left iv)[label={[font=\scriptsize,label distance=4pt]below:(\( d=1 \))}]{};
\end{tikzpicture}
           & \tikz \path [-stealth,draw=black,ultra thick] (0,0)--(0.35,0);
                & \begin{tikzpicture}
                  \draw (0,0) node[dot][label=above:\( u_1 \)]{} --++(10:0.55) node[dot](v)[label=above:\( v \), label={[vcolour]below:0}]{};
                  \end{tikzpicture} & (Class~B)\\
%   4, d=1
\tikz \node {(2)};&
\begin{tikzpicture}
\draw (0,0) node[dot](u1)[label=above:\( u_1 \)][label={[vcolour]below:1}]{} --++(10:0.55) node[dot](u2)[label=above:\( u_2 \)]{} --++(10:0.55) node[dot](u3)[label=above:\( u_3 \)]{} --++(10:0.55) node[dot](v)[label=above:\( v \)]{};
\path (u2) --node[sloped]{\tiny \( \dots \)} (u3);
\node [fit={(u1) (v)}](left iv)[label={[font=\scriptsize]below:(\( d=3 \))}]{};
\end{tikzpicture}
           & \tikz \path [-stealth,draw=black,ultra thick] (0,0)--(0.35,0);
                & \begin{tikzpicture}
                  \draw (0,0) node[dot][label=above:\( u_3 \)]{} --++(10:0.55) node[dot](v)[label=above:\( v \), label={[vcolour]below:1}]{}; 
                  \end{tikzpicture} & (Class~D)\\
%     4, d=3
\tikz \node {(3)};&
\begin{tikzpicture}
\draw (0,0) node[dot](u1)[label=above:\( u_1 \)][label={[vcolour]below:1}]{} --++(10:0.55) node[dot](u2)[label=above:\( u_2 \)]{} --++(10:0.55) node[dot](u3)[label=above:\( u_3 \)]{} --++(10:0.55) node[dot](u4)[label=above:\( u_4 \)]{} --++(10:0.55) node[dot](v)[label=above:\( v \)]{};
\path (u2) --node[sloped]{\tiny \( \dots \)} (u4);
\node [fit={(u1) (v)}](left iv)[label={[font=\scriptsize]below:(\( d=4 \))}]{};
\end{tikzpicture}
           & \tikz \path [-stealth,draw=black,ultra thick] (0,0)--(0.35,0);
                & \begin{tikzpicture}
                  \draw (0,0) node[dot](u4)[label=above:\( u_4 \)]{} --++(10:0.55) node[dot](v)[label=above:\( v \)]{}; 
                  \end{tikzpicture} & (Class~F)\\

%    4, d=4
\tikz \node {(4)};&
\begin{tikzpicture}
\draw (0,0) node[dot](u1)[label=above:\( u_1 \)][label={[vcolour]below:1}]{} --++(10:0.55) node[dot](u2)[label=above:\( u_2 \)]{} --++(10:0.55) node[dot](u3)[label=above:\( u_3 \)]{} --++(10:0.55) node[dot](u4)[label=above:\( u_4 \)]{} --++(10:0.55) node[dot](u5)[label=above:\( u_5 \)]{} --++(10:0.55) node[dot](v)[label=above:\( v \)]{};
\path (u2) --node[sloped]{\tiny \( \dots \)} (u5);
\node [fit={(u1) (v)}](left iv)[label={[font=\scriptsize]below:(\( d=5 \))}]{};
\end{tikzpicture}
           & \tikz \path [-stealth,draw=black,ultra thick] (0,0)--(0.35,0);
                & \begin{tikzpicture}
                  \draw (0,0) node[dot](u4)[label=above:\( u_4 \), label={[vcolour]below:1}]{} --++(10:0.55) node[dot](u5)[label=above:\( u_5 \)]{} --++(10:0.55) node[dot](v)[label=above:\( v \)]{}; 
                  %\node [fit={(u1) (v)}](left iv)[label={[font=\scriptsize]below:(up-distance \( =d \))}]{};
                  \end{tikzpicture} & (Class~E)\\
%    4, d=5
\tikz \node {(5)};&
\begin{tikzpicture}
\draw (0,0) node[dot](u1)[label=above:\( u_1 \), label={[vcolour]below:1}]{} --++(10:0.55) node[dot](u2)[label=above:\( u_2 \)]{} --++(10:0.275) ++(10:0.55) --++(10:0.275) node[dot](ud)[label=above:\( u_d \)]{} --++(10:0.55) node[dot](v)[label=above:\( v \)]{};
\path (u2) --node[sloped]{\tiny \( \dots \)} (ud);
\node [fit={(u1) (v)}](left iv)[label={[font=\scriptsize]below:(\( d\geq 6 \))}]{};
\end{tikzpicture}
           &[-0.2cm] \tikz \path [-stealth,draw=black,ultra thick] (0,0)--(0.35,0);
                &[-0.2cm] \begin{tikzpicture}
                  \draw (0,0) node[dot](ud)[label=above:\( u_d \)]{} --++(10:0.55) node[dot](v)[label=above:\( v \)]{}; 
                  \end{tikzpicture} & (Class~F)\\
\tikz \node {(6)};&
\begin{tikzpicture}
\draw (0,0) node[dot](u1)[label=above:\( u_1 \)][label={[vcolour]below:0}]{} --++(10:0.55) node[dot](v)[label=above:\( v \)]{};
\node [fit={(u1) (v)}](left iv)[label={[font=\scriptsize,label distance=4pt]below:(\( d=1 \))}]{};
\end{tikzpicture}
   &\tikz \path [-stealth,draw=black,ultra thick] (0,0)--(0.35,0);
       &\begin{tikzpicture}
       \draw (0,0) node[dot](u1)[label=above:\( u_1 \)][label={[vcolour]below:0}]{} --++(10:0.55) node[dot](v)[label=above:\( v \), label={[vcolour]below:1}]{};
       \end{tikzpicture} & (Class~C) \\
%  2,d=2
\tikz \node {(7)};&
\begin{tikzpicture}
\draw (0,0) node[dot](u1)[label=above:\( u_1 \), label={[vcolour,name=u1Colour]below:0}]{} --++(10:0.55) node[dot](u2)[label=above:\( u_2 \)]{} --++(10:0.55) node[dot](v)[label={[xshift=-7pt,yshift=-2pt,anchor=south west]above:\( v{=}u_3 \)}]{};
\node [fit={(u1) (v)}](left iv)[label={[font=\scriptsize,label distance=3pt]below:(\( d=2 \))}]{};
\end{tikzpicture}
           & \tikz \path [-stealth,draw=black,ultra thick] (0,0)--(0.35,0);
                & \begin{tikzpicture}
                  \draw (0,0) node[dot]{} --++(10:0.55) node[dot](u3)[label=above:\( v \), label={[vcolour,name=u3Colour]below:1}]{};
                  \end{tikzpicture} & (Class~D) \\
\tikz \node {(8)};&
\begin{tikzpicture}
\draw (0,0) node[dot](u1)[label=above:\( u_1 \), label={[vcolour,name=u1Colour]below:0}]{} --++(10:0.55) node[dot](u2)[label=above:\( u_2 \)]{} --++(10:0.55) node[dot](u3)[label=above:\( u_3 \)]{} --++(10:0.275) ++(10:0.55) --++(10:0.275) node[dot](ud)[label=above:\( u_d \)]{} --++(10:0.55) node[dot](v)[label={[xshift=-7pt,yshift=-2pt,anchor=south west]above:\( v{=}u_{d+1} \)}]{};
\path (u3) --node[sloped]{\tiny \( \dots \)} (ud);
\node [fit={(u1) (v)}][label={[font=\scriptsize,label distance=4pt]below:(up-distance \( =d\geq 3 \))}]{};
\end{tikzpicture}
           & \tikz \path [-stealth,draw=black,ultra thick] (0,0)--(0.35,0);
                & \begin{tikzpicture}
                  \draw (0,0) node[dot](u3)[label=above:\( u_3 \), label={[vcolour,name=u3Colour]below:1}]{} --++(10:0.55) node[dot](u4)[label=above:\( u_4 \)]{} --++(10:0.275) ++(10:0.55) --++(10:0.275) node[dot](ud)[label=above:\( u_d \)]{} --++(10:0.55) node[dot](v)[label=above:\( v \)]{}; 
                  \path (u4) --node[sloped]{\tiny \( \dots \)} (ud);
                  \node [fit={(u1) (v)}][label={[font=\scriptsize,label distance=4pt]below:(up-distance \( =d-2 \))}]{};
                  \end{tikzpicture} &  \\
};
\end{tikzpicture}
\caption{Branch replacement operations (1) to (8)}
\label{fig:branch replacement 1 to 8}
\end{figure}
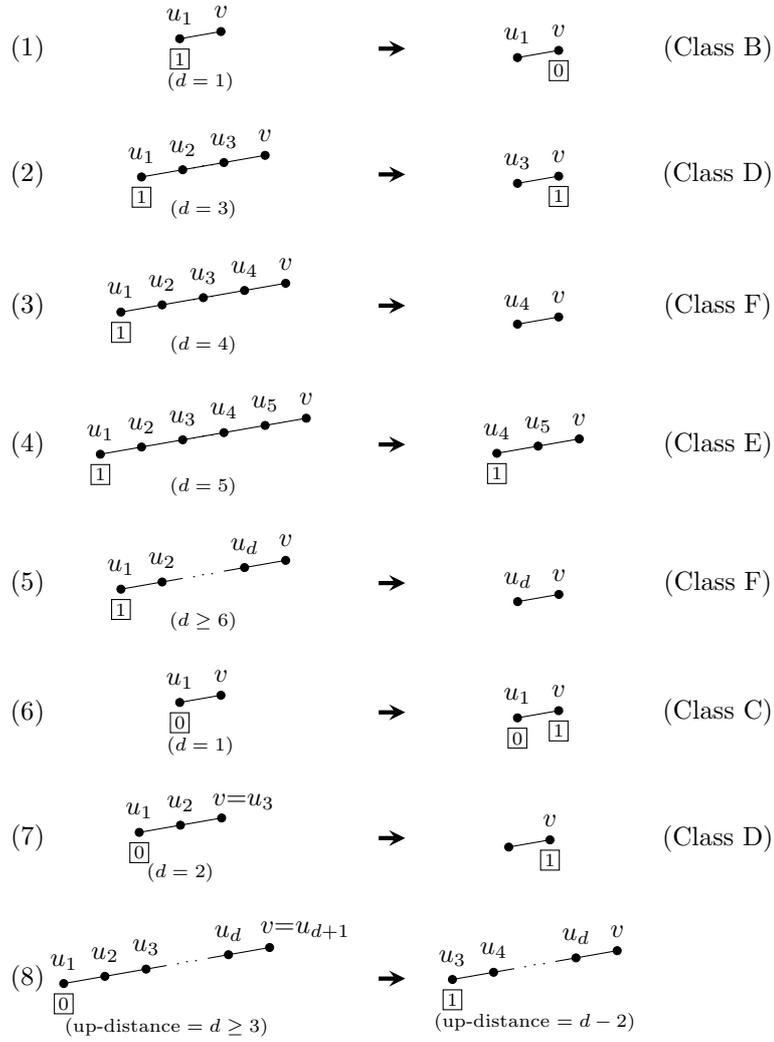

\begin{figure}[hbtp] 
\centering
\begin{tikzpicture}
\matrix[matrix of nodes,row sep=0.05cm,nodes={anchor=center}]{
\tikz \node {(9)};&[-0.4cm]
\begin{tikzpicture}
\draw (0,0) node[dot](w)[label=above:\( w \), label={[vcolour]below:0}]{} --++(10:0.55) node[dot](u1)[label=above:\( u_1 \), label={[vcolour,xshift=1.5pt]below left:1}]{} --++(10:0.55) node[dot](u2)[label=above:\( u_2 \)]{} --++(10:0.55) node[dot](v)[label={[xshift=-7pt,yshift=-2pt,anchor=south west]above:\( v{=}u_3 \)}]{};
\draw (u1) --+(-80:0.65) node[dot](l)[label=right:\( \ell \)]{};
\node [fit={(w) (v) (l)}][label={[font=\scriptsize]below:(\( d=2 \))}]{};
\end{tikzpicture}
           & \tikz \path [-stealth,draw=black,ultra thick] (0,0)--(0.35,0);
                & \begin{tikzpicture}
                  \draw (0,0) node[dot]{} --++(10:0.55) node[dot](u3)[label=above:\( v \), label={[vcolour,name=u3Colour]below:0}]{};
                  \end{tikzpicture} & (Class~B)\\
%  3,d>2
\tikz \node {(10)};&
\begin{tikzpicture}
\draw (0,0) node[dot](w)[label=above:\( w \), label={[vcolour]below:0}]{} --++(10:0.55) node[dot](u1)[label=above:\( u_1 \), label={[vcolour,xshift=1.5pt]below left:1}]{} --++(10:0.55) node[dot](u2)[label=above:\( u_2 \)]{} --++(10:0.55) node[dot](u3)[label=above:\( u_3 \)]{} --++(10:0.275) ++(10:0.55) --++(10:0.275) node[dot](ud)[label=above:\( u_d \)]{} --++(10:0.55) node[dot](v)[label={[xshift=-7pt,yshift=-2pt,anchor=south west]above:\( v{=}u_{d+1} \)}]{};
\path (u3) --node[sloped]{\tiny \( \dots \)} (ud);
\draw (u1) --+(-80:0.65) node[dot](l)[label=right:\( \ell \)]{};
\node [fit={(w) (v) (l)}](left iv)[label={[font=\scriptsize]below:(up-distance \( =d\geq 3 \))}]{};
\end{tikzpicture}
           & \tikz \path [-stealth,draw=black,ultra thick] (0,0)--(0.35,0);
                & \begin{tikzpicture}
                  \draw (0,0) node[dot](u3)[label=above:\( u_3 \), label={[vcolour,name=u3Colour]below:0}]{} --++(10:0.55) node[dot](u4)[label=above:\( u_4 \)]{} --++(10:0.275) ++(10:0.55) --++(10:0.275) node[dot](ud)[label=above:\( u_d \)]{} --++(10:0.55) node[dot](v)[label=above:\( v \)]{}; 
                  \path (u4) --node[sloped]{\tiny \( \dots \)} (ud);
                  \node [fit={(u3) (u3Colour) (v)}](left iv)[label={[font=\scriptsize]below:(up-distance \( =d-2 \))}]{};
                  \end{tikzpicture} & \\
\tikz \node {(11)};&
\begin{tikzpicture}
\draw (0,0) node[dot](y)[label=above:\( y \), label={[vcolour]below:1}]{} --++(10:0.55) node[dot](x)[label=above:\( x \)]{} --++(10:0.55) node[dot](u1)[label=above:\( u_1 \)]{} --++(10:0.55) node[dot](v)[label=above:\( v \)]{};
\draw (u1) --+(-80:0.65) node[dot](l)[label=right:\( \ell \)]{};
\node [fit={(y) (v) (l)}][label={[font=\scriptsize]below:(\( d=1 \))}]{};
\end{tikzpicture}
        & \tikz \path [-stealth,draw=black,ultra thick] (0,0)--(0.35,0);
              & \begin{tikzpicture}
              \draw (0,0) node[dot](u1)[label=above:\( u_1 \)][label={[vcolour]below:0}]{} --++(10:0.55) node[dot](v)[label=above:\( v \), label={[vcolour]below:1}]{};
              \end{tikzpicture} & (Class~C)\\
\tikz \node {(12)};&
\begin{tikzpicture}
\draw (0,0) node[dot](y)[label=above:\( y \), label={[vcolour]below:1}]{} --++(10:0.55) node[dot](x)[label=above:\( x \)]{} --++(10:0.55) node[dot](u1)[label=above:\( u_1 \)]{} --++(10:0.55) node[dot](u2)[label=above:\( u_2 \)]{} --++(10:0.275) ++(10:0.55) --++(10:0.275) node[dot](ud)[label=above:\( u_d \)]{} --++(10:0.55) node[dot](v)[label=above:\( v \)]{};
\path (u2) --node[sloped]{\tiny \( \dots \)} (ud);
\draw (u1) --+(-80:0.65) node[dot](l)[label=right:\( \ell \)]{};
\node [fit={(y) (v) (l)}](left iv)[label={[font=\scriptsize]below:(up-distance \( =d\geq 2 \))}]{};
\end{tikzpicture}
           & \tikz \path [-stealth,draw=black,ultra thick] (0,0)--(0.35,0);
                & \begin{tikzpicture}
                  \draw (0,0) node[dot](y)[label=above:\( y \), label={[vcolour,name=yColour]below:1}]{} --++(10:0.55) node[dot](x)[label=above:\( x \)]{} --++(10:0.55) node[dot](u1)[label=above:\( u_1 \)]{} --++(10:0.55) node[dot](u2)[label=above:\( u_2 \)]{} --++(10:0.275) ++(10:0.55) --++(10:0.275) node[dot](ud)[label=above:\( u_d \)]{} --++(10:0.55) node[dot](v)[label=above:\( v \)]{}; 
                  \path (u2) --node[sloped]{\tiny \( \dots \)} (ud);
                  \node [fit={(y) (yColour) (v)}](left iv)[label={[font=\scriptsize]below:(up-distance \( =d+2 \))}]{};
                  \end{tikzpicture} & \\
\tikz \node {(13)};&
\begin{tikzpicture}
\draw (0,0) node[dot](u1)[label=above:\( u_1 \)]{} --++(10:0.55) node[dot](v)[label=above:\( v \)]{};
\draw (u1) --+(-110:0.65) node[dot](l1)[label=left:\( \ell_1 \)]{}
      (u1) --+(-50:0.65) node[dot](l2)[label=right:\( \ell_2 \)]{};
\node [fit={(v) (l1)}][label={[font=\scriptsize]below:(\( d=1 \))}]{};
\end{tikzpicture}
             & \tikz \path [-stealth,draw=black,ultra thick] (0,0)--(0.35,0);
                  & \begin{tikzpicture}
                    \draw (0,0) node[dot](w)[label=above:\( w \), label={[vcolour]below:1}]{} --++(10:0.55) node[dot](u1)[label=above:\( u_1 \)]{} --++(10:0.55) node[dot](v)[label=above:\( v \)]{};
                    \end{tikzpicture} & (Class~E) \\
\tikz \node {(14)};&
\begin{tikzpicture}
\draw (0,0) node[dot](u1)[label=above:\( u_1 \)]{} --++(10:0.55) node[dot](u2)[label=above:\( u_2 \)]{} --++(10:0.275) ++(10:0.55) --++(10:0.275) node[dot](ud)[label=above:\( u_d \)]{} --++(10:0.55) node[dot](v)[label=above:\( v \)]{};
\path (u2) --node[sloped]{\tiny \( \dots \)} (ud);
\draw (u1) --+(-110:0.65) node[dot](l1)[label=left:\( \ell_1 \)]{}
      (u1) --+(-50:0.65) node[dot](l2)[label=right:\( \ell_2 \)]{};
\node [fit={(v) (l1)}][label={[font=\scriptsize]below:(\( d\geq 2 \))}]{};
\end{tikzpicture}
           & \tikz \path [-stealth,draw=black,ultra thick] (0,0)--(0.35,0);
                & \begin{tikzpicture}
                  \draw (0,0) node[dot](ud)[label=above:\( u_d \)]{} --++(10:0.55) node[dot](v)[label=above:\( v \)]{}; 
                  \end{tikzpicture} & (Class~F) \\
\tikz \node {(15)};&
\begin{tikzpicture}
\draw (0,0) node[dot](u1)[label=above:\( u_1 \)]{} --++(10:0.55) node[dot](u2)[label=above:\( u_2 \)]{} --++(10:0.275) ++(10:0.55) --++(10:0.275) node[dot](ud)[label=above:\( u_d \)]{} --++(10:0.55) node[dot](v)[label=above:\( v \)]{};
\path (u2) --node[sloped]{\tiny \( \dots \)} (ud);
\node [fit={(u1) (v)}][label={[font=\scriptsize]below:(\( d\geq 2 \))}]{};
\end{tikzpicture}
           & \tikz \path [-stealth,draw=black,ultra thick] (0,0)--(0.35,0);
                & \begin{tikzpicture}
                  \draw (0,0) node[dot](ud)[label=above:\( u_d \)]{} --++(10:0.55) node[dot](v)[label=above:\( v \)]{}; 
                  \end{tikzpicture} & (Class~F)\\
};
\end{tikzpicture}
\caption{Branch replacement operations (9) to (15)}
\label{fig:branch replacement 9 to 15}
\end{figure}
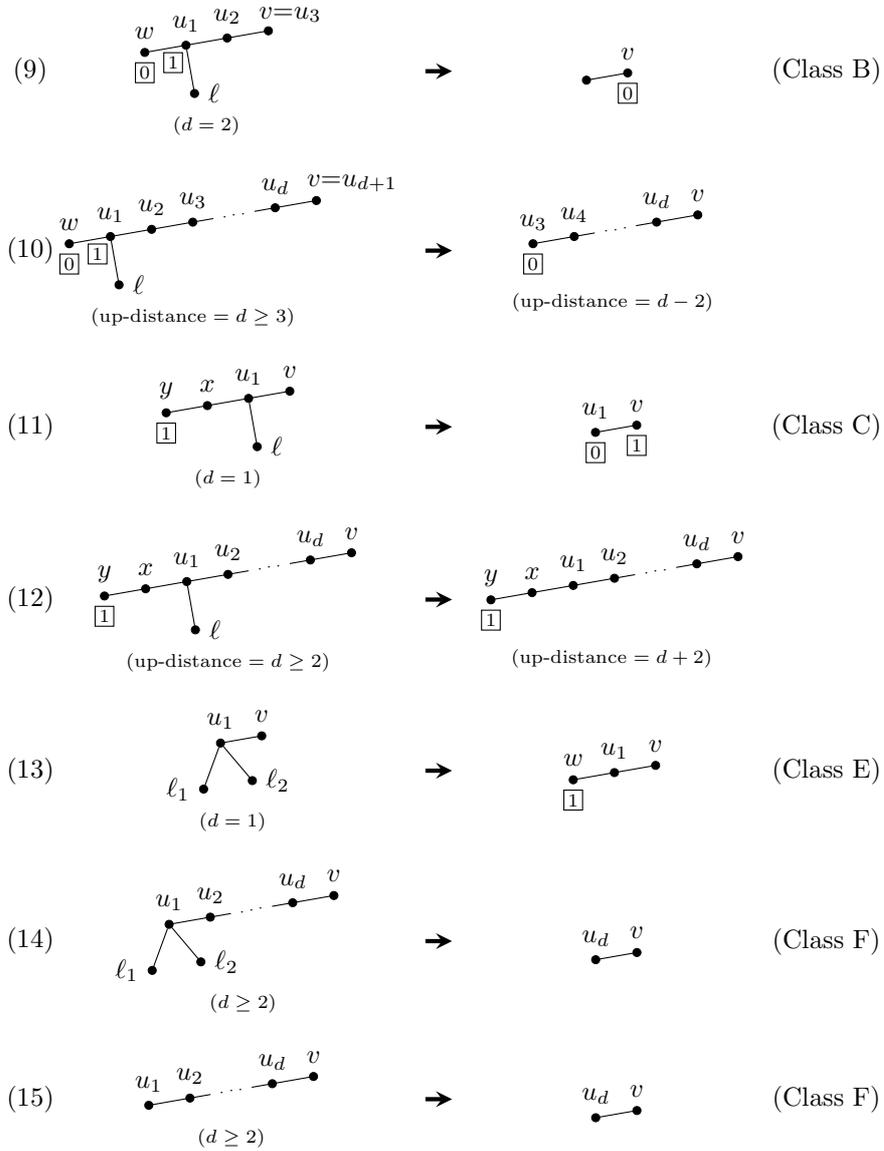

\FloatBarrier

\subsection{Proof of Table~\ref{tbl:class of rooted subtree} (Overview)}\label{sec:class of rooted subtree}
Let \( T \) be a partially coloured rooted tree with a 3-plus vertex as the root. Let \( v \) be a vertex of \( T \) with \( \deg_T(v)\neq 2 \). Suppose that the rooted subtree \( R=T_v \) is composed of branches of \( T \) at \( v \) each of which belong to Classes~\( B \) to \( F \). Let \( b,c,d,e,f \) denote respectively the number of Class~B branches, Class~C branches, \( \dots \), Class~F branches (of \( T \) at \( v \)). Each entry of Table~\ref{tbl:class of rooted subtree} is proved by showing that replacing \( R \) by a suitable rooted subtree preserves 3-rs colouring extension status. 
%Let \( H=T-V(R-v) \).

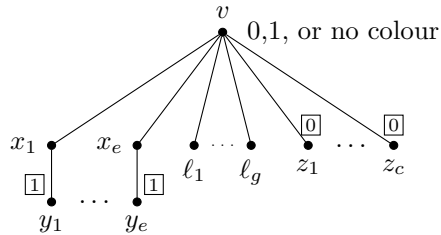
\begin{figure}[hbt]
%\begin{wrapfigure}{r}{8.5cm}%[hbt]
\centering
\begin{tikzpicture}[scale=1.5]
\node [dot](u) [label=above:\( v \),label={[label distance=4pt]right:0,1, or no colour}]{};
\draw (u) --+(-0.25,-1) node[dot](l1) [label=below:\( \ell_1 \)]{}
      (u) --+(0.25,-1) node[dot](lt) [label=below:\( \ell_g \)]{}
      (u) --+(0.75,-1) node[dot](z1) [label=below:\( z_1 \),label={[vcolour,xshift=1pt]above:0}]{}
      (u) --+(1.50,-1) node[dot](zq) [label=below:\( z_c \),label={[vcolour]above:0}]{}
      (u) --+(-1.50,-1) node[dot](x1) [label=left:\( x_1 \)]{}
      (u) --+(-0.75,-1) node[dot](xs) [label=left:\( x_e \)]{};
\draw (x1) --+(0,-0.5) node[dot](y1) [label=below:\( y_1 \), label={[vcolour]above left:1}]{}
      (xs) --+(0,-0.5) node[dot](ys) [label=below:\( y_e \), label={[vcolour]above right:1}]{};
\path (y1) --node{\( \dots \)} (ys);
\path (l1) --node{\tiny\( \dots \)} (lt);
\path (z1) --node{\( \dots \)} (zq);
\end{tikzpicture}
\caption{General form of \( R \) (\( e\geq 0, g\geq 0, c\geq 0 \))}
\label{fig:general form of R}
\end{figure}
%\end{wrapfigure}

By definition of the equivalence relation on branches, each branch \( B^* \) of \( T \) at \( v \) can be replaced by the representative of the equivalence class of \( B^* \) without affecting 3-rs colouring extension status of \( T \). Unless \( b>0 \) and \( c+d>0 \), all these replacements can be carried out simultaneously, and thus we may assume that \( R \) is the graph in Figure~\ref{fig:general form of R} where \( g=b+d+f \) and \( v \) is coloured~1 if \( c>0 \).

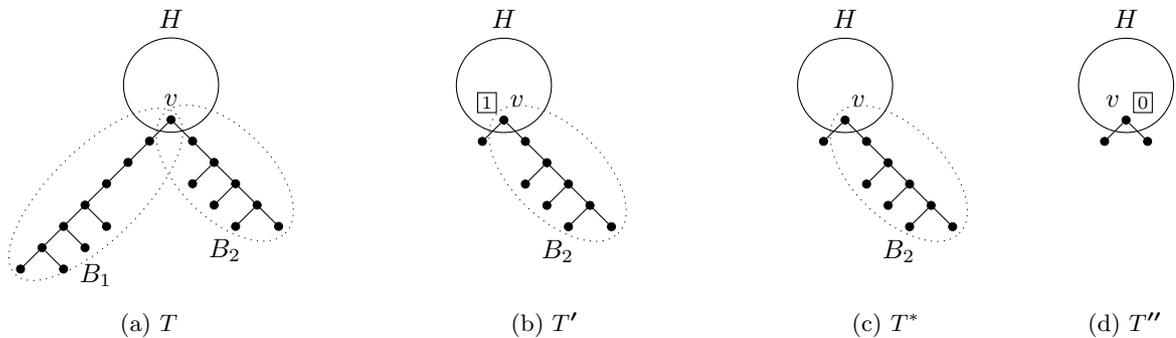
\begin{figure}[hbt]
\centering
\begin{subfigure}[t]{0.33\textwidth}
\centering
\begin{tikzpicture}[scale=0.4]
\centering
\node(v) [dot][label=above:\( v \)]{};
\draw (v)--++(-135:1) node[dot]{} --++(-135:1) node[dot]{}--++(-135:1) node[dot]{}--++(-135:1) node(w)[dot]{}--++(-135:1) node(x)[dot]{}--++(-135:1) node(y)[dot]{}--++(-135:1) node(yleaf)[dot]{};
\draw (w)--+(-45:1) node[dot]{};
\draw (x)--+(-45:1) node[dot]{};
\draw (y)--+(-45:1) node[dot]{};
\draw (v)--++(-45:1) node[dot]{} --++(-45:1) node(w')[dot]{}--++(-45:1) node(x')[dot]{}--++(-45:1) node(y')[dot]{}--++(-45:1) node(y'leaf)[dot]{};
\draw (w')--+(-135:1) node[dot]{};
\draw (x')--+(-135:1) node[dot]{};
\draw (y')--+(-135:1) node[dot]{};

\path (v)--coordinate(midOfB1) (yleaf);
\node at (midOfB1) [draw,ellipse,minimum width=3cm,minimum height=1.2cm,rotate=45,dotted][label=below left:\( B_1 \)]{};
\path (v)--coordinate(midOfB2) (y'leaf);
\node at (midOfB2) [draw,ellipse,minimum width=2.25cm,minimum height=1.2cm,rotate=-45,dotted][label=below right:\( B_2 \)]{};
\draw (v)++(0,1.15) node[subgraph](H)[label=\( H \)]{};
\end{tikzpicture}
\caption{\( T \)}
\end{subfigure}%
\begin{subfigure}[t]{0.25\textwidth}
\centering
\begin{tikzpicture}[scale=0.4]
\centering
\node(v) [dot][label={[xshift=5pt]above:\( v \)}][label={[vcolour]above left:1}]{};
\draw (v)--++(-135:1) node[dot]{};
\draw (v)--++(-45:1) node[dot]{} --++(-45:1) node(w')[dot]{}--++(-45:1) node(x')[dot]{}--++(-45:1) node(y')[dot]{}--++(-45:1) node(y'leaf)[dot]{};
\draw (w')--+(-135:1) node[dot]{};
\draw (x')--+(-135:1) node[dot]{};
\draw (y')--+(-135:1) node[dot]{};

\path (v)--coordinate(midOfB2) (y'leaf);
\node at (midOfB2) [draw,ellipse,minimum width=2.25cm,minimum height=1.2cm,rotate=-45,dotted][label=below right:\( B_2 \)]{};
\draw (v)++(0,1.15) node[subgraph](H)[label=\( H \)]{};

\draw (v)+(0,-5.5) node{}; % only for vertical spacing
\end{tikzpicture}
\caption{\( T\bm{'} \)}
\end{subfigure}%
\begin{subfigure}[t]{0.25\textwidth}
\centering
\begin{tikzpicture}[scale=0.4]
\centering
\node(v) [dot][label={[xshift=5pt]above:\( v \)}]{};
\draw (v)--++(-135:1) node[dot]{};
\draw (v)--++(-45:1) node[dot]{} --++(-45:1) node(w')[dot]{}--++(-45:1) node(x')[dot]{}--++(-45:1) node(y')[dot]{}--++(-45:1) node(y'leaf)[dot]{};
\draw (w')--+(-135:1) node[dot]{};
\draw (x')--+(-135:1) node[dot]{};
\draw (y')--+(-135:1) node[dot]{};

\path (v)--coordinate(midOfB2) (y'leaf);
\node at (midOfB2) [draw,ellipse,minimum width=2.25cm,minimum height=1.2cm,rotate=-45,dotted][label=below right:\( B_2 \)]{};
\draw (v)++(0,1.15) node[subgraph](H)[label=\( H \)]{};

\draw (v)+(0,-5.5) node{}; % only for vertical spacing
\end{tikzpicture}
\caption{\( T^* \)}
\end{subfigure}%
\begin{subfigure}[t]{0.1\textwidth}
\centering
\begin{tikzpicture}[scale=0.4]
\centering
\node(v) [dot][label={[xshift=-5pt]above:\( v \)}][label={[vcolour]above right:0}]{};
\draw (v)--++(-135:1) node[dot]{};
\draw (v)--++(-45:1) node[dot]{};

\draw (v)++(0,1.15) node[subgraph](H)[label=\( H \)]{};

\draw (v)+(0,-5.5) node{}; % only for vertical spacing
\end{tikzpicture}
\caption{\( T\bm{''} \)}
\end{subfigure}%
\caption{The branch \( B_1\in \) Class~D. So, \( T \) admits a 3-rs colouring extension \( \iff \) \( T\bm{'} \) admits a 3-rs colouring extension \( \iff \) \( T^* \) admits a 3-rs colouring extension \( f^* \) such that \( f^*(v)=1 \). The branch \( B_2\in \) Class~B. So, \( T^* \) admits a 3-rs colouring extension \( f^* \) such that \( f^*(v)=1 \) \( \iff \) \( T\bm{''} \) admits a 3-rs colouring extension \( f\bm{''} \) such that \( f\bm{''}(v)=1 \) (impossible obviously). So, \( T \) does not admit a 3-rs colouring extension}
\label{fig:eg colour conflict}
\end{figure}

In the case \( b>0 \) and \( c+d>0 \), \( T \) does not admit a 3-rs colouring extension due to a colour conflict at \( v \) (see Figure~\ref{fig:eg colour conflict} for an example; see supplementary material for the proof of \( B_1\in \) Class~D and \( B_2\in \) Class~B). Thus, \( R\in \) Class~I by Lemma~\ref{lem:classes 1 and I}. This proves the entry \( b>0 \), \( c+d>0 \) of Table~\ref{tbl:class of rooted subtree}. Figure~\ref{fig:subtree replacement} shows an overview of the remaining cases. To prove Table~\ref{tbl:class of rooted subtree}, it suffices to show that each rooted subtree replacement operation of Figure~\ref{fig:subtree replacement} (drawn by a thick arrow) preserves 3-rs colouring extension status (proof is available in supplementary material). 

%\FloatBarrier

\begin{figure}[hbtp] %Subtree Replacement operations 
\centering
%~\vspace{1.25cm}   another attempt to add space between (actual) subfigures
\begin{subfigure}[b]{\textwidth}%Case 2: u is coloured 0
%\vspace*{1.0cm} %added for more veritcal space between subfigures
\centering
\begin{tikzpicture}
\node at (0,0) (u)[dot] [label=above:\( v \),label={[vcolour,label distance=4pt]right:0}]{};
\draw (u) --++(-1,-1) node[dot](x1) [label=left:\( x_1 \)]{} --+(0,-1) node[dot](y1) [label=below:\( y_1 \), label={[vcolour]above left:1}]{}
      (u) --++(-0.25,-1) node[dot](xr) [label={[label distance=-2pt]left:\( x_e \)}]{} --+(0,-1) node[dot](yr) [label=below:\( y_e \), label={[vcolour]above right:1}]{}
      %(u) --+(0,-1) node[dot](yr) [label={[xshift=5.5pt]above left:\( y_e \)}]{}
      (u) --+(0.25,-1) node[dot](l1) [label=below:\( \ell_1 \)]{}
      (u) --+(1.0,-1) node[dot](ls) [label=below:\( \ell_g \)]{};
\path (y1) --node{\tiny \( \dots \)} (yr);
\path (l1) --node{\tiny \( \dots \)} (ls);

% arrow
\path  (ls) ++(0.75,0) node(from)[inner sep=0pt]{} +(0.5,0) node(to)[inner sep=0pt]{};
\draw [-stealth,draw=black,ultra thick] (from) --(to);

% elementary subtree
\path (ls) ++(2.25,0.25) node(Col4-3)[inner sep=0pt]{};
\draw (Col4-3) node(u)[dot] [label=above:\( u\)][label={[vcolour,label distance=4pt]right:0}]{};
\path (u) ++(0,-0.75) node[align=center, font=\scriptsize]{\( R\!\in\! \) Class~II};
 
\end{tikzpicture}
\caption{Case 1: \( v \) is coloured 0 in \( R \)}
\label{fig:case 1}
\end{subfigure}

\begin{subfigure}[b]{0.45\textwidth}
\centering
\begin{tikzpicture}[scale=0.8]

%Col 2 Row 1
\node(Col2Row1)[inner sep=0pt]{};

\node at (Col2Row1) (u)[dot] [label=above:\( v \),label={[vcolour,label distance=4pt]right:1}]{};
\draw (u) --+(-0.25,-1) node[dot](l1) [label=below:\( \ell_1 \)]{}
      (u) --+(0.25,-1) node[dot](ls) [label=below:\( \ell_g \)]{}
      (u) --+(0.75,-1) node[dot](z1) [label=below:\( z_1 \),label={[vcolour,xshift=1pt,font=\tiny]above:0}]{}
      (u) --+(1.70,-1) node[dot](zt) [label=below:\( z_c \),label={[vcolour,font=\tiny]above:0}]{}
      (u) --+(-1.50,-1) node[dot](x1) [label={[label distance=-3pt]below left:\( x_1 \)}]{}
      (u) --+(-0.75,-1) node[dot](xr) [label={[label distance=-3pt]below left:\( x_e \)}]{};

\draw (x1) --+(0,-1) node[dot](y1) [label=below:\( y_1 \), label={[vcolour]above left:1}]{}
      (xr) --+(0,-1) node[dot](yr) [label=below:\( y_e \), label={[vcolour]above right:1}]{};

\path (y1) --node{\tiny \( \dots \)} (yr);
\path (l1) --node{\tiny ...} (ls);
\path (z1) --node{\tiny \( \dots \)} (zt);

%Col 2, Row 1 -> Row 2 transition (ie, thick arrow here)
\path (Col2Row1) ++(0,-2.75) node(from)[inner sep=0pt]{} ++(0,-0.5) node(to)[inner sep=0pt]{};
\draw[-stealth,ultra thick] (from) -- (to);

%Col 2 Row 2
\path (Col2Row1) ++(0,-4) node(Col2Row2)[inner sep=0pt]{};

\node at (Col2Row2) (u)[dot] [label=above:\( v \),label={[vcolour,label distance=4pt]right:1}]{};
\draw (u) --+(-0.25,-1) node[dot](l1) [label=below:\( \ell_1 \)]{}
      (u) --+(0.25,-1) node[dot](ls) [label=below:\( \ell_g \)]{}
      (u) --+(0.75,-1) node[dot](z1) [label=below:\( z_1 \),label={[vcolour,xshift=1pt,font=\tiny]above:0}]{}
      (u) --+(1.70,-1) node[dot](zt) [label=below:\( z_c \),label={[vcolour,font=\tiny]above:0}]{}
      (u) --+(-1.60,-1) node[dot](x1) [label={[label distance=-3pt]below left:\( x_1 \)},label={[vcolour,font=\tiny]above:0}]{}
      (u) --+(-0.75,-1) node[dot](xr) [label={[label distance=-3pt]below left:\( x_e \)},label={[vcolour,font=\tiny]above:0}]{};
\path (x1) --node{\tiny \( \dots \)} (xr);
\path (l1) --node{\tiny ...} (ls);
\path (z1) --node{\tiny \( \dots \)} (zt);

%Col 2 to 2.1,2.2,2.3 transition (here, arrows)
%Col 2,  Row 2 -> Row 3 transition (split arrows)
\path (Col2Row2) ++(-0.25,-2.0) node(from)[inner sep=0pt]{} ++(0,-1.5) node(to){};
\draw [->,thick] (from) --node[pos=0.7,right,font=\scriptsize,xshift=-1mm]{\( c\!+\!e\!=\!1 \)} (to);
%\draw [->,thick] (from) -- (to);
\path (from) +(-1.5,-1.5) node(to1)[inner sep=0pt]{};
%\draw [->,thick,shorten <=-1pt] (from1) --node[sloped,above,font=\scriptsize]{\( c+e=0 \)} (to1);%+(-1.5,-1.0);
%\draw [->,thick,shorten <=-1pt] (from) --node[sloped,above,font=\scriptsize,name=case1Label]{\( c+e=0 \)} node[sloped,above,font=\scriptsize,yshift=-6.750mm]{\( c+e=1 \)} (to1);%+(-1.5,-1.0);
\draw [->,thick,shorten <=-1pt] (from) --node[sloped,above,font=\scriptsize,name=case1Label]{\( c+e=0 \)} (to1);%+(-1.5,-1.0);
\path (from) +(2,-1.5) node(to3)[inner sep=0pt]{};
\draw [->,thick,shorten <=-1pt] (from) --node[sloped,above,font=\scriptsize]{\( c+e\geq 2 \)} (to3);%+(2,-1);

%Col 2.1 Row 3
\path (Col2Row2) ++(-2,-4.25) node(Col2-1Row3)[inner sep=0pt]{};

\node at (Col2-1Row3) (u)[dot] [label=above:\( v \),label={[vcolour,label distance=4pt]right:1}]{};
\draw (u) --+(-0.25,-1) node[dot](l1) [label=below:\( \ell_1 \)]{}
      (u) --+(0.25,-1) node[dot](ls) [label=below:\( \ell_g \)]{};
\path (l1) --node{\tiny ...} (ls);

%Col 2.2 Row 3
\path (Col2Row2) ++(-0.25,-4.25) node(Col2-2Row3)[inner sep=0pt]{};

\node at (Col2-2Row3) (u)[dot] [label=above:\( v \),label={[vcolour,label distance=4pt]right:1}]{};
\draw (u) --+(-0.25,-1) node[dot](l1) [label=below:\( \ell_1 \)]{}
       (u) --+(0.25,-1) node[dot](ls) [label=below:\( \ell_g \)]{}
       (u) --+(0.75,-1) node[dot](x) [label=below:\( w_1 \),label={[vcolour,xshift=1pt]above:0}]{};
\path (l1) --node{\tiny ...} (ls);

%\path (ls) ++(0,-1.1) node[align=right,font=\scriptsize]{\( R\!\in\! \) Class~III}; %comment (or label)

%Col 2.3 Row 3
\path (Col2Row2) ++(2,-4.25) node(Col2-3Row3)[inner sep=0pt]{};

\node at (Col2-3Row3) (u)[dot] [label=above:\( v \),label={[vcolour,label distance=4pt,yshift=1pt]right:1}]{};
\draw (u) --+(-0.75,-1) node[dot](l1) [label=below:\( \ell_1 \)]{}
      (u) --+(-0.25,-1) node[dot](ls) [label=below:\( \ell_g \)]{};
\draw [ultra thick]
      (u) --+(0.15,-1) node[dot](w1) [label={[xshift=2pt]below:\( w_1 \)},label={[vcolour,yshift=7pt,xshift=-4pt,thin,font=\tiny]right:0}]{}
      (u) --+(0.75,-1) node[dot](w2) [label=below:\( w_2 \),label={[vcolour,yshift=-0.5pt,xshift=2pt,thin,font=\tiny]above:0}]{};
\draw (u) --+(1.70,-1) node[dot](wc+e) [label=below:\( w_{c+e} \),label={[vcolour,thin,font=\tiny]above:0}]{};
\path (l1) --node{\tiny ...} (ls);
\path (w2) --node{\tiny \( \dots \)} (wc+e);
\path (w1) ++(0.2,-1.1) node[align=right,font=\scriptsize]{\( R\!\in\! \) Class~I}; %comment (or label)

%Col 2.1, Row 3 -> Row 4 transition (ie, thick arrow here)
\path (Col2-1Row3) ++(0,-2.00) node(from)[inner sep=0pt]{} ++(0,-0.5) node(to)[inner sep=0pt]{};
\draw[-stealth,ultra thick] (from) -- (to);

%Col 2.2, Row 3 -> Row 4 transition (ie, thick arrow here)
\path (Col2-2Row3) ++(0.25,-2.00) node(from)[inner sep=0pt]{} ++(0,-0.5) node(to)[inner sep=0pt]{};
\draw[-stealth,ultra thick] (from) -- (to);

%Col 2.3, Row 3 -> Row 4 transition (thick hollow arrow)
%\path (Col2-3Row3) ++(0,-2.00) node(from)[inner sep=0pt,fill=black,minimum width=3pt]{} ++(0,-0.5) node(to)[inner sep=0pt]{};
%\draw[-Implies,double distance=1.3pt,thick,shorten <=-0.5pt] (from) --node[right,align=center,font=\scriptsize]{(by def.)} (to);
%for drawing hollow arrow, we are drawing a small rectangle at the start of arrow

%Col 2.1 Row 4
\path (Col2-1Row3) ++(0,-3.25) node (Col2-1Row4)[inner sep=0pt]{};

\node at (Col2-1Row4) (u)[dot] [label=above:\( v \),label={[vcolour,label distance=4pt]right:1}]{};
%\draw (u) --+(-0.25,-0.75) node[dot](l1) [label=below:\( \ell_1 \)]{}
%      (u) --+(0.25,-0.75) node[dot](ls) [label=below:\( \ell_g \)]{};
%\path (l1) --node{\tiny \( \dots \)} (ls);

%Col 2.1 Row 4
\path (Col2-2Row3) ++(0,-3.25) node (Col2-2Row4)[inner sep=0pt]{};

\node at (Col2-1Row4) (u)[dot] [label=above:\( v \),label={[vcolour,label distance=4pt]right:1}]{};

\path (u) ++(0,-0.75) node[align=right,font=\scriptsize]{\( R\!\in\! \) Class~IV}; %comment (or label)

%Col 2.3 Row 4
\path (Col2-3Row3) ++(0,-3.25) node(Col2-3Row4)[inner sep=0pt]{};
\node at (Col2-2Row4) (u)[dot] [label=above:\( v \),label={[vcolour,label distance=4pt]right:1}]{};
\draw (u) --+(-0.25,-1) node[dot](l1) [label=below:\( \ell_1 \)]{}
       (u) --+(0.75,-1) node[dot](x) [label=below:\( w_1 \),label={[vcolour,xshift=1pt]above:0}]{};
\path (l1)--coordinate(ls) (x);
\path (ls) ++(0,-1.1) node[align=right,font=\scriptsize]{\( R\!\in\! \) Class~III}; %comment (or label)

%\path (Col2-3Row4) node[align=center,inner sep=0pt,xshift=-2pt]{``\( R \) doesn't admit\\3-rs colouring extension''};

%\path (subFig2Caption) ++(8.5,0) node(subFig3Caption)[align=center,anchor=south,yshift=-10pt]{(c) Case 3: \( v \) is coloured 1 in \( R \)\\\quad (Vertices \( x_1,\dots,x_e \), \( z_1,\dots,z_c \)\\\quad are relabelled \( w_1,\dots,w_{c+e} \))};

\end{tikzpicture}
\captionsetup{width=0.85\linewidth}
\caption{Case 2: \( v \) is coloured 1 in \( R \)\\ (vertices \( x_1,x_2,\dots,x_e \),\,\( z_1,z_2,\dots,z_c \) are relabelled \( w_1,w_2,\dots,w_{c+e} \))}
\label{fig:case 2}
\end{subfigure}%
\hspace{0.25cm}
\begin{subfigure}[b]{0.45\textwidth}
\centering
\begin{tikzpicture}[scale=0.8]
%added for some space between subifugre 1 caption and the other subfigure
%\path (0,0.75) node{}; %not working; compiler is either not doing anything unless we give too much vertical space
%%Col 1

%Col 1 Row 1
\path (0,0) node(Col1Row1)[inner sep=0pt]{};

\draw (Col1Row1) node(u)[dot] [label=above:\( v \)]{} --++(-1,-1) node[dot](x1) [label=left:\( x_1 \)]{} --+(0,-1) node[dot](y1) [label=below:\( y_1 \), label={[vcolour]above left:1}]{}
      (u) --++(-0.25,-1) node[dot](xr) [label={[label distance=-2pt]left:\( x_e \)}]{} --+(0,-1) node[dot](yr) [label=below:\( y_e \), label={[vcolour]above right:1}]{}
      %(u) --+(0,-1) node[dot](yr) [label={[xshift=5.5pt]above left:\( y_e \)}]{}
      (u) --+(0.25,-1) node[dot](l1) [label=below:\( \ell_1 \)]{}
      (u) --+(1.0,-1) node[dot](ls) [label=below:\( \ell_f \)]{};
\path (y1) --node{\tiny \( \dots \)} (yr);
\path (l1) --node{\tiny \( \dots \)} (ls);

%Col 1,  Row 1 -> Row 2 transition (split arrows)
\path (Col1Row1) ++(0,-2.75) node(from)[inner sep=0pt]{} ++(0,-1.5) node(to){};
\draw [->,thick] (from) -- node[pos=0.7,right,font=\scriptsize,xshift=-0.5mm]{\( e=1 \)} (to);
\path (from) +(-1.5,-1.5) node(to1)[inner sep=0pt]{};
\draw [->,thick,shorten <=-1pt] (from) --node[sloped,above,font=\scriptsize]{\( e=0 \), \( f\!>\!0 \)} (to1);%+(-1.5,-1.0);
\path (from) +(2,-1.5) node(to3)[inner sep=0pt]{};
\draw [->,thick,shorten <=-1pt] (from) --node[sloped,above,font=\scriptsize]{\( e\geq 2 \)} (to3);%+(2,-1);

%Col 1.1 Row 2
\path (Col1Row1) ++(-1.75,-5) node(Col1-1Row2)[inner sep=0pt]{};

\node at (Col1-1Row2) (u)[dot] [label=above:\( v \)]{};
\draw (u) --+(-0.25,-1) node[dot](l1) [label=below:\( \ell_1 \)]{}
      (u) --+(0.25,-1) node[dot](ls) [label=below:\( \ell_f \)]{};
\path (l1) --node{\tiny ...} (ls);

%Col 1.2 Row 2
\path (Col1Row1) ++(0,-5) node(Col1-2Row2)[inner sep=0pt]{};

\node at (Col1-2Row2) (u)[dot] [label=above:\( v \)]{};
\draw (u) --+(-0.5,-1) node[dot](x1) [label={[xshift=-2pt,yshift=-2pt]above:\( x_1 \)}]{}
      (u) --+(0,-1) node[dot](l1) [label=below:\( \ell_1 \)]{}
      (u) --+(0.5,-1) node[dot](ls) [label=below:\( \ell_f \)]{};
\draw (x1) --+(0,-1) node[dot](y1) [label=below:\( y_1 \), label={[vcolour]above left:1}]{};

\path (l1) --node{\tiny ...} (ls);

%Col 1.3 Row 2
\path (Col1Row1) ++(2.25,-5) node(Col1-3Row2)[inner sep=0pt]{};

\node at (Col1-3Row2) (u)[dot] [label=above:\( v \)]{};
\draw (u) --+(-1.00,-1) node[dot](x1) [label={[xshift=0.5pt,yshift=-2pt]above:\( x_1 \)}]{}
      (u) --+(-0.25,-1) node[dot](x2) [label={[xshift=-3.5pt,yshift=-2pt]above:\( x_2 \)}]{}
      (u) --+(0.5,-1) node[dot](xr) [label={[xshift=2pt,yshift=-2pt]above left:\( x_e \)}]{}
      (u) --+(1.0,-1) node[dot](l1) [label=below:\( \ell_1 \)]{}
      (u) --+(1.75,-1) node[dot](ls) [label=below:\( \ell_f \)]{};

\draw (x1)--+(0,-1) node[dot](y1) [label=below:\( y_1 \), label={[vcolour]above left:1}]{}
      (x2) --+(0,-1) node[dot](y2) [label=below:\( y_2 \), label={[vcolour]above left:1}]{}
      (xr) --+(0,-1) node[dot](yr) [label=below:\( y_e \), label={[vcolour]above right:1}]{};

\path (y2) --node{\tiny \( \dots \)} (yr);
\path (l1) --node{\tiny \( \dots \)} (ls);

%Col 1.1, Row 2 -> Row 3 transition (ie, thick arrow here)
\path (Col1-1Row2) ++(0,-2) node(from)[inner sep=0pt]{} ++(0,-0.5) node(to)[inner sep=0pt]{};
\draw[-stealth,ultra thick] (from) -- (to);

%Col 1.2, Row 2 -> Row 3 transition (ie, thick arrow here)
\path (Col1-2Row2) ++(0,-2.750) node(from)[inner sep=0pt]{} ++(0,-0.5) node(to)[inner sep=0pt]{};
\draw[-stealth,ultra thick] (from) -- (to);

%Col 1.3, Row 2 -> Row 3 transition (ie, thick arrow here)
\path (Col1-3Row2) ++(0,-2.750) node(from)[inner sep=0pt]{} ++(0,-0.5) node(to)[inner sep=0pt]{};
\draw[-stealth,ultra thick] (from) -- (to);

%Col 1.1 Row 3
\path (Col1-1Row2) ++(0,-3) node(Col1-1Row3)[inner sep=0pt]{};

\node at (Col1-1Row3) (u)[dot] [label=above:\( v \)]{};
\draw (u) --+(-0.25,-1) node[dot](l1) [label=below:\( \ell_1 \)]{}
      (u) --+(0.25,-1) node[dot](ls) [label=below:\( \ell_2 \)]{};

\path (u) ++(0,-2.0) node[align=center,font=\scriptsize]{\( R\!\in\! \) Class~VI};

%Col 1.2 Row 3
\path (Col1-2Row2) ++(0,-4) node(Col1-2Row3)[inner sep=0pt]{};

\node at (Col1-2Row3) (u)[dot][xshift=5pt] [label=above:\( v \)]{};
\draw (u) --+(-0.5,-1) node[dot](x1) [label={[xshift=-2pt,yshift=-2pt]above:\( x_1 \)}]{}
      (u) --+(0,-1) node[dot](l1) [label=below:\( \ell_1 \)]{};
\draw (x1) --+(0,-1) node[dot](y1) [label=below:\( y_1 \), label={[vcolour]above left:1}]{};

\path (u) ++(0,-3) node[align=right,font=\scriptsize]{\( R\!\in\! \) Class~V};

%Col 1.3 Row 3
\path (Col1-3Row2) ++(0,-4) node(Col1-3Row3)[inner sep=0pt]{};

\node at (Col1-3Row3) (u)[dot] [label=above:\( v \),label={[vcolour,yshift=1pt]right:0}]{};
\draw (u) --+(-1.00,-1) node[dot](x1) [label={[xshift=0.5pt,yshift=-2pt]above:\( x_1 \)}]{}
      (u) --+(-0.25,-1) node[dot](x2) [label={[xshift=-3.5pt,yshift=-2pt]above:\( x_2 \)}]{}
      (u) --+(0.5,-1) node[dot](xr) [label={[xshift=2pt,yshift=-2pt]above left:\( x_e \)}]{}
      (u) --+(1.0,-1) node[dot](l1) [label=below:\( \ell_1 \)]{}
      (u) --+(1.75,-1) node[dot](ls) [label=below:\( \ell_f \)]{};

\draw (x1)--+(0,-1) node[dot](y1) [label=below:\( y_1 \), label={[vcolour]above left:1}]{}
      (x2) --+(0,-1) node[dot](y2) [label=below:\( y_2 \), label={[vcolour]above left:1}]{}
      (xr) --+(0,-1) node[dot](yr) [label=below:\( y_e \), label={[vcolour]above right:1}]{};

\path (y2) --node{\tiny \( \dots \)} (yr);
\path (l1) --node{\tiny \( \dots \)} (ls);

%Col 1.3, Row 3 -> Row 4 transition (ie, thick arrow here)
\path (Col1-3Row3) ++(0,-2.75) node(from)[inner sep=0pt]{} ++(0,-0.5) node(to)[inner sep=0pt]{};
\draw[-stealth,ultra thick] (from) -- (to);

%Col 1.3 Row 4
\path (Col1-3Row3) ++(0,-4) node(Col1-3Row4)[inner sep=0pt]{};

\node at (Col1-3Row4) (u)[dot] [label=above:\( v \),label={[vcolour]right:0}]{};

\path (u) ++(0,-0.65) node[align=right,font=\scriptsize]{\( R\!\in\! \) Class~II}; %comment (or label)

%\path (u) --++(-1.5,-1.5) node(subFig2Bot)[inner sep=0pt]{};
%\node at (subFig2Bot) (subFig2Caption){(a) Case 1: \( v \) is not coloured in \( R \)};%caption for subfigure 2 (aka Col 1)

\end{tikzpicture}
\caption{Case 3: \( v \) is uncoloured (in \( R \))\\\hspace*{1cm}(provided \( e+f>0 \))}
\label{fig:case 3}
\end{subfigure}%
%\hfill
%%Col 2        % earlier, the cases u is not coloured and u is coloured 1 were drawn together (and given `fake' labels
%\par\bigskip  %added for more veritcal space between subfigures
\caption{Determining the equivalence class of rooted subtree \( R \) in Table~2}
\label{fig:subtree replacement}
\end{figure}
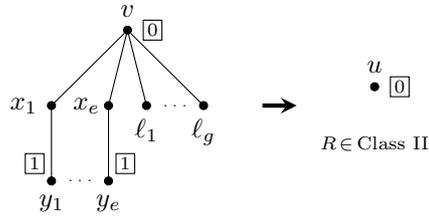
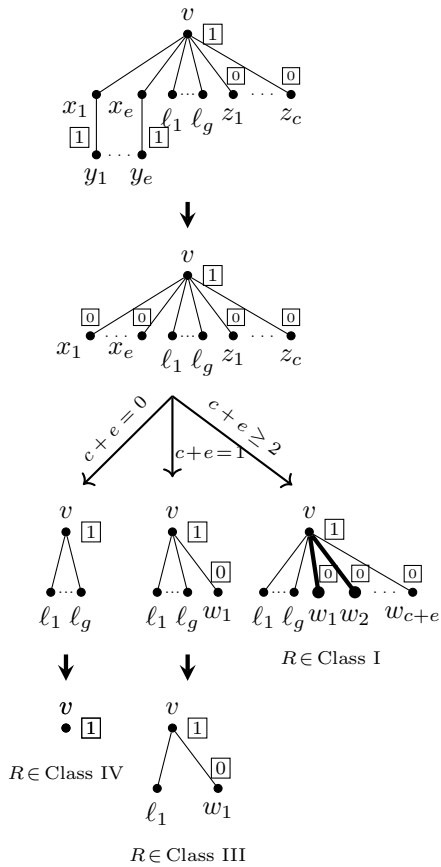
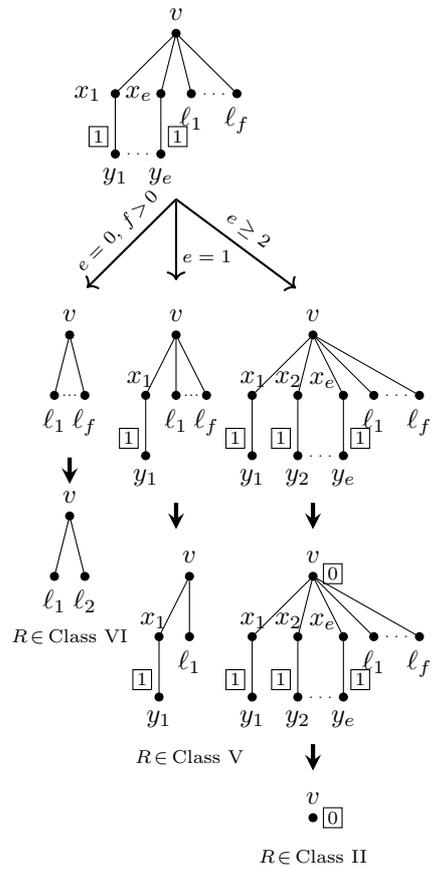

\FloatBarrier

%\newpage
\subsection{Pseudocode}\label{sec:pseudocode}
In this subsection, we provide the pseudocode of 3-rs colourability testing algorithm for trees, and discuss its run-time.

Recall that the input to the algorithm is a rooted tree \( T \) with a 3-plus vertex \( a \) as the root. Assume that \( T \) is stored as an ADT tree. If such a representation is not readily available, it can be obtained by BFS. For simplicity, it is assumed that the number of children of each vertex \( v \) is stored as \texttt{\#children[v]}. The following global variables are used in the pseudocode: \texttt{dist}, \texttt{branch\_class}, \texttt{subtree\_class}, and arrays \texttt{colour[v]}, \texttt{up\_distance[v]}, \texttt{\#ClassC\_branches[v]} and \texttt{\#ClassE\_branches[v]} (where vertex names are used as array indices). The variable \texttt{dist} is used to compute the up-distance of branches. The variables \texttt{branch\_class} and \texttt{subtree\_class} store respectively the equivalence class of the current branch and the current rooted subtree. The variable \texttt{colour[v]} has value 0 or 1 when vertex \( v \) is coloured 0 or 1, and value \( -1 \) when \( v \) is not coloured. Up-distance of a branch of \( T \) at \( v \) composed of a rooted subtree \( T_u \) and the \( u,v \)-path is stored as \texttt{up\_distance[u]} (this is the distance from \( u \) to its immediate ancestor which is a 3-plus vertex). The variables \texttt{\#ClassC\_branches[v]} and \texttt{\#ClassE\_branches[v]} store the number of Class~C branches at \( v \) and the number of Class~E branches at \( v \) respectively. To test 3-rs colourability of \( T \), call \texttt{test3rsColourability(\( T,a \))}.

The subroutine \texttt{subtreeClassify(\( v \))} computes the equivalence class of the rooted subtree \( T_v \), and stores it in the variable \texttt{subtree\_class}. If \( T_v\in\text{Class~I} \), then the subroutine prints ``\( T \) is not 3-rs colourable'' and halts the algorithm. Recall that \( a \) is the root of \( T \). For each vertex \( u\neq a \) of \( T \), the subroutine \texttt{branchClassify(\( u \))} computes the equivalence class of the branch \( B^* \) composed of the rooted subtree \( T_u \) and the \( u,v \)-path where \( v \) is the immediate ancestor of \( u \) which is a 3-plus vertex; the subroutine also stores it in the variable \texttt{branch\_class}. If \( B^*\in \) Class~A, then the subroutine prints ``\( T \) is not 3-rs colourable'' and halts the algorithm. For \( u=a \), the subroutine \texttt{branchClassify(\( u \))} prints ``\( T \) is 3-rs colourable'' and halts the algorithm. For every vertex \( v \) of \( T \) with \( \deg_T(v)\neq 2 \), the subroutine \texttt{traverseSubtree(\( T,v \))} performs a post-order traversal of \( T_v \), and helps to compute the equivalence class of \( T_v \) recursively.\\ %(see the supplement for a sample trace of the pseudocode)
%\vspace{3mm}
%\pagebreak
\vspace{0.25cm}

\begin{algorithm}[H]
  \DontPrintSemicolon
  \Fn{test3rsColourability(T,a)}{
    \tcc{The first argument \( T \) is an ADT tree \( T \), and the second argument \( a \) is the root of \( T \).}
    dist \( \gets -1 \)\tcc*{dist is initialized to -1 to ensure that up\_\,distance[\( a \)] is assigned value 0.}
    traverseSubtree(\( T \),\,\( a \))
    }
\end{algorithm}

\vspace{0.25cm}

\begin{algorithm}[H]
  \DontPrintSemicolon
  \Fn{tryToColour(\( w \),col)}{
    \lIf{\upshape colour[\( w \)]= \( -1 \)}{
      colour[\( w \)] \( \gets \) col 
      }
    \ElseIf{\upshape colour[\( w \)] \( \neq \) col}{
      Print ``\( T \) is not 3-rs colourable'', and halt\;
      \tcc{by colour conflict; see Figure \ref{fig:eg colour conflict}}
      }
    }
\end{algorithm}

\begin{algorithm}[hbt]
  \DontPrintSemicolon
  \Fn{traverseSubtree(T,v)}{
    dist \( \gets \) dist+1 \tcc*{to compute up-distance[\( v \)]}
    \If(\tcc*[f]{ie, v is a 3-plus vertex}){\upshape \#children[\( v \)]\( \geq 2 \)}{
      up\_distance[\( v \)] \( \gets \) dist\;
      colour[\( v \)] \( \gets -1 \) \tcc*{initialization}
      \#ClassC\_branches[\( v \)] \( \gets \) 0\tcc*{initialization}
      \#ClassE\_branches[\( v \)] \( \gets \) 0\tcc*{initialization}
      \ForEach{\upshape child \( w \) of \( v \)}{
        dist \( \gets \) 0\tcc*{initialization}
        traverseSubtree(\( T \),\,\( w \))\;
        \If{\upshape branch\_class=C}{
          \#ClassC\_branches[\( v \)] \( \gets \) \#ClassC\_branches[\( v \)]+1
          }
        \ElseIf{\upshape branch\_class=E}{
          \#ClassE\_branches[\( v \)] \( \gets \) \#ClassE\_branches[\( v \)]+1
          }
        \lIf{\upshape branch\_class=B}{
          tryToColour(\( v \),\,0)
          }
        \lElseIf{\upshape branch\_class=C or D}{
          tryToColour(\( v \),\,1)
          }
        \tcc{(to spot colour conflict; see Figure \ref{fig:eg colour conflict})}
        }
      subtreeClassify(\( v \)) \tcc*{finds class of \( T_v \)}
      branchClassify(\( v \)) \tcc*{(for \( v\neq a \)) finds class of the branch composed of \( T_v \) and the \( v,x \)-path where \( x \) the immediate ancestor of \( v \) which is a 3-plus vertex.}
      }
    \ElseIf{\upshape \#children[\( v \)]=1}{
      \( w \gets \) child of \( v \)\;
      traverseSubtree(\( T \),\,\( w \))\;
      }
    \Else(\tcc*[f]{ie, v is a leaf}){
      up\_distance[\( v \)] \( \gets \) dist\;
      subtree\_class \( \gets \) VII\;
      \tcc{by the case b\,=\,c\,=\,d\,=\,e\,=\,f\,=\,0 of Table \ref{tbl:class of rooted subtree}}
      branchClassify(\( v \))\;
      }
    }
\end{algorithm}

\begin{algorithm}[hbtp]
  \DontPrintSemicolon
  \Fn{subtreeClassify(\( v \))}{
    \tcc{Determines the equivalence class of \( T_v \) assuming \( T_v\not\cong K_1 \). The subroutine traverseSubtree() deals with the case \( T_v\cong K_1 \) (i.e., b\,=\,c\,=\,d\,=\,e\,=\,f\,=\,0). \qquad \qquad \quad %\\
If \( T_v\in\text{Class I} \), the subroutine prints `\!\!`\( T \) is not 3-rs colourable'\!\!', and halts the algorithm.}
    \If(\tcc*[f]{Case 1 of Table~\ref{tbl:class of rooted subtree}: b\,>\,0}){\upshape colour[\( v \)]=0}{
      subtree\_class \( \gets \) II\;
      \tcc{here, c\,=\,d\,=\,0. The subroutine tryToColour() handles the case b\,>\,0, c+d\,>\,0.}
      }
    \ElseIf(\tcc*[f]{Case 2 of Table~\ref{tbl:class of rooted subtree}:\,b\,=\,0,\,c\,+\,d\,>\,0}){\upshape colour[\( v \)]=1}{
      \If{\upshape \#ClassC\_branches[\( v \)] + \#ClassE\_branches[\( v \)] = 0}{
        \tcc{i.e., Subcase 2.1: c\,+\,e\,=\,0}
        subtree\_class \( \gets \) IV\;
        }
      \ElseIf{\upshape \#ClassC\_branches[\( v \)] + \#ClassE\_branches[\( v \)] = 1}{
        \tcc{i.e., Subcase 2.2: c\,+\,e\,=\,1}
        subtree\_class \( \gets \) III\;
        }
      \Else{
        \tcc{i.e., Subcase 2.3: c\,+\,e\,\( \geq \)\,2}
        subtree\_class \( \gets \) I\;
        Print ``\( T \) is not 3-rs colourable'', and halt \tcc*{by Lemma~\ref{lem:classes 1 and I}}
        }
      }
    \Else(\tcc*[f]{Case 3 of Table~\ref{tbl:class of rooted subtree}: b\,=\,c\,=\,d\,=0}){
      \If{\upshape \#ClassE\_branches[\( v \)]=0}{
        \tcc{i.e., Subcase 3.1: e\,=\,0 and f\,>\,0.\\
        (since \( T_v\not\cong K_1 \), b\,=\,c\,=\,d\,=\,e\,=\,0 \( \implies \) f\,>\,0).}
        subtree\_class \( \gets \) VI
        }
      \ElseIf{\upshape \#ClassE\_branches[\( v \)]=1}{
        \tcc{i.e., Subcase 3.2: e\,=\,1}
        subtree\_class \( \gets \) V
        }
      \Else{
        \tcc{i.e., Subcase 3.3: e\,\( \geq \)\,2}
        subtree\_class \( \gets \) II
        }
      }
    }
\end{algorithm}

%\FloatBarrier

\begin{algorithm}[hbt]
  \DontPrintSemicolon
  \Fn{branchClassify(\( u \))}{
  \tcc{For \( u\neq a \), let \( v \) be the immediate ancestor of \( u \) which is a 3-plus vertex, and let \( B^* \) be the branch composed of the rooted subtree \( T_u \) and the \( u,v \)-path. The subroutine determines the equivalence class of \( B^* \) if \( u\neq a \). If \( B^*\in \) Class~A, it prints `\!\!`\( T \) is not 3-rs colourable'\!\!', and halts the algorithm. Since the algorithm halts whenever a Class~I rooted subtree is encountered, \( T_u \) is not in Class~I. For \( u=a \), the subroutine reports that \( T \) is 3-rs colourable, and halts the algorithm.}
  \If{\upshape up\_distance[\( u \)]=0}{
    \tcc{this happens only if u\,=\,a (i.e., the root of T)}
    Print ``\( T \) is 3-rs colourable", and halt
    }
  \Else{
    Using Table~\ref{tbl:class of branch}, find the equivalence class of \( B^* \) based on \texttt{subtree\_class} (i.e., class of \( T_u \)) and \texttt{up\_distance[\( u \)]}\\(i.e., up-distance of \( B^* \)), and store it in \texttt{branch\_class}\;
    \If{\upshape branch\_class=A}{
      Print ``\( T \) is not 3-rs colourable'', and halt\tcc*{by Lemma~\ref{lem:classes 1 and I}}
      }
    }
  }
\end{algorithm}

%\newpage

%\FloatBarrier

\FloatBarrier
%\newpage

%\newpage

It is time to see the run-time analysis of the algorithm. The subroutine \texttt{traverseSubtree()} carries out a post-order traversal of the tree, and performs some processing at each vertex visited; therefore, it is easy to analyze the time complexity of the algorithm by comparing it to post-order traversal. It is easy to verify that the subroutine \texttt{branchClassify()} requires at most 13 steps (see supplementary material for detailed pseudocode of \texttt{BranchClassify()}), and the subroutine \texttt{subtreeClassify()} requires at most 6 steps. In the run-time analysis of the algorithm, the cost of subroutine calls, except for subroutine \texttt{traverseSubtree()}, can be ignored because each call for other subroutines can be replaced by respective code. There is at most one call of subroutines \texttt{branchClassify()} and \texttt{subtreeClassify()} per vertex. Also, other lines of the subroutine \texttt{traverse\allowbreak Subtree()} such as \texttt{dist \( \gets \) dist+1} and \texttt{dist \( \gets 0 \)} are executed at most once per vertex. At most 14 such lines are executed per vertex. Hence, the worst-case run-time of the algorithm is \( (33+C)n \) steps where \( C \) is the cost of a subroutine call.

\FloatBarrier

\subsection{Chordal Graphs}\label{sec:chordal}
In this subsection, we show that \textsc{3-RS Colourability} is polynomial-time decidable for the class of chordal graphs. 
%This is possible because triangles in a graph \( G \) limit the number of 3-rs colourings of \( G \). The following is a special case of Observation~\ref{obs:clique degree restriction}.
Note that a chordal graph has no induced cycle of length greater than three. Hence, a connected chordal graph \( G \) is either a tree, or it contains a triangle. If \( G \) is a tree, then 3-rs colourability of \( G \) can be tested using our 3-rs colourability testing algorithm for trees. So, we assume that \( G \) has a triangle, say \( (u,v,w) \). There are two types of triangles in \( G \) namely (i)~type-I: \( u,v \) and \( w \) are 3-plus vertices in \( G \), and (ii)~type-II: at least one of the vertices \( u,v,w \) is of degree two. Observation~\ref{obs:3-rs 3-plus triangle} deals with a graph (not necessarily chordal) which contains a type-I triangle; and Theorem~\ref{thm:3-rs get rid of triangle} deals with a graph (not necessarily chordal) which contains a type-II triangle.

\begin{observation}
Let \( G \) be a graph, and let \( (u,v,w) \) be a triangle in \( G \). If \( u,v \) and \( w \) are 3-plus vertices in \( G \), then \( G \) is not 3-rs colourable.
\qed
\label{obs:3-rs 3-plus triangle}
\end{observation}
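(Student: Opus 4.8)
The plan is to derive a contradiction from the assumption that $G$ admits a $3$-rs colouring $f$, using the structural properties of $3$-rs colourings already established. Since $u,v,w$ are $3$-plus vertices, Property~P1 forces $f(u),f(v),f(w)\in\{0,1\}$, i.e.\ all three receive binary colours. I would then apply Property~P2 to each of the three edges of the triangle: from edge $uv$ we get $f(v)=1-f(u)$, from edge $vw$ we get $f(w)=1-f(v)=f(u)$, and from edge $uw$ we get $f(w)=1-f(u)$. Combining the last two equalities yields $f(u)=1-f(u)$, which is impossible for $f(u)\in\{0,1\}$. Hence no $3$-rs colouring of $G$ exists.

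An equally short alternative is to invoke Observation~\ref{obs:clique degree restriction} directly: $\{u,v,w\}$ is a clique of size $k=3$ in which every vertex has degree at least $3=k$, so $G$ is not $3$-rs colourable. Either route is essentially one line; I would likely present the first version (via Properties~P1 and P2) since it keeps the argument self-contained and mirrors the ``forced colours'' viewpoint used elsewhere in the paper, but mention that it is also immediate from Observation~\ref{obs:clique degree restriction}.

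There is no real obstacle here; the statement is a direct corollary of the preliminary observations, and the only thing to be careful about is citing the correct property numbers (P1 for binarity on $3$-plus vertices, P2 for the forced alternation of colours along an edge between two $3$-plus vertices). The ``\qed'' placement in the statement already signals that the intended proof is a short one-paragraph argument of exactly this kind.
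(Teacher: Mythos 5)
Your proposal is correct: the paper states this observation without proof (the \qed signals it is considered immediate), and both of your routes are valid --- the P1/P2 argument gives $f(w)=f(u)$ and $f(w)=1-f(u)$ simultaneously, and the appeal to Observation~\ref{obs:clique degree restriction} with $k=3$ is a direct one-line derivation. Nothing is missing; either version would serve as the omitted proof.
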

%\begin{proof}
%Since \( \{u,v,w\} \) is a clique in \( G \), each 3-rs colouring of \( G \) must assign colour~2 on one of the vertices \( u,v,w \)\,; this is a contradiction to Property~P1.
%\end{proof}

%The following theorem shows that for a graph that pass the test in Observation~\ref{obs:3-rs 3-plus triangle}, we can get rid of triangles in the graph without affecting its 3-rs colourability status.
\begin{theorem}
Let \( G \) be a graph, let \( (u,v,w) \) be a triangle in \( G \), and let \( w \) be a vertex of degree two in \( G \). Let \( G\bm{'} \) be the graph obtained from \( G-w \) by attaching two pendant vertices each at \( u \) and \( v \) (see Figure~\ref{fig:3-rs get rid of triangle}). Then, \( G \) is 3-rs colourable if and only if \( G\bm{'} \) is 3-rs colourable.
\label{thm:3-rs get rid of triangle}
\end{theorem}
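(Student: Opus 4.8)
The plan is to prove the two implications separately, in each case keeping the colouring fixed on the common part $G-w$ and adjusting by hand only the removed vertex $w$, its two former neighbours, and the four new pendants.

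First I would record the structure. Since $w$ has degree two and lies in the triangle $(u,v,w)$, its neighbourhood in $G$ is exactly $\{u,v\}$; consequently $G-w$ is an induced subgraph of both $G$ and $G'$ (removing the four pendants of $G'$ recovers $G-w$). In $G'$ we delete the edge $uw$ but attach two pendants at $u$, so $\deg_{G'}(u)=\deg_G(u)+1\ge 3$, and likewise $\deg_{G'}(v)\ge 3$; thus $u$ and $v$ are 3-plus vertices of $G'$ that remain adjacent, so by Properties~P1 and P2 every 3-rs colouring $f\bm{'}$ of $G'$ satisfies $\{f\bm{'}(u),f\bm{'}(v)\}=\{0,1\}$. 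Dually, the triangle forces $\{f(u),f(v),f(w)\}=\{0,1,2\}$ in every 3-rs colouring $f$ of $G$.

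For the ``if'' direction, given a 3-rs colouring $f\bm{'}$ of $G'$, restrict it to $V(G)\setminus\{w\}=V(G-w)$ (still a 3-rs colouring, of $G-w$), extend by setting $f(w)=2$, and verify that the result is a 3-rs colouring of $G$. Properness is immediate, and any bad $P_3$ of $G$ must involve $w$; the only $P_3$ with $w$ in the middle is $u,w,v$, which is not bicoloured, and a $P_3$ with $w$ as an endpoint has $u$ or $v$ in the middle, coloured $0$ or $1$, strictly below $f(w)=2$, hence it cannot have the higher colour on its middle vertex. This part is short.

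For the ``only if'' direction I would start from a 3-rs colouring $f$ of $G$, keep $f$ on $V(G-w)$ as far as possible, colour all four pendants $2$, and split on the location of colour $2$ in the triangle. If $f(w)=2$ then $\{f(u),f(v)\}=\{0,1\}$ and leaving $f$ unchanged on $V(G-w)$ works directly, since the only new $P_3$'s have a pendant as an endpoint and $u$ or $v$ (colour $\le 1$) in the middle. Otherwise exactly one of $u,v$ — say $u$, by the symmetry of the construction in $u$ and $v$ — is coloured $2$; Observation~\ref{obs:degree restriction} then gives $\deg_G(u)<3$, whence $\deg_G(u)=2$ as $u$ lies in a triangle, so $N_G(u)=\{v,w\}$ and $u$ is a leaf of $G-w$. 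I would recolour $u$ with $1-f(v)$ and keep everything else. Because $N_{G'}(u)$ consists only of $v$ and $u$'s two (colour-$2$) pendants, every $P_3$ through $u$ is harmless (it is either tricoloured or has its two endpoints coloured $2$ and $u$ in the middle coloured at most $1$). The delicate point is a $P_3$ of the form $u,v,y$ when $f(v)=1$, so that $f(u)$ is recoloured to $0$: here I would use that $f(w)=0$ in this subcase, so for any neighbour $y\ne w$ of $v$ coloured $0$ the path $w,v,y$ would be a bad $P_3$ of $G$ — impossible — hence no such $y$ survives in $G'$ and $u,v,y$ is never bicoloured. I expect this last case — recolouring the colour-$2$ vertex of the triangle and checking that no bad $P_3$ appears around $v$ — to be the main obstacle; all the remaining verifications are routine from the definition of $3$-rs colouring together with Properties~P1 and P2.
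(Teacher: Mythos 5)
Your proof is correct and follows essentially the same route as the paper: both directions transfer the colouring across the common induced subgraph \( G-w \), colour the new pendants (resp.\ the vertex \( w \)) with colour 2, and use Observation~\ref{obs:degree restriction} to deal with the case where colour 2 falls on \( u \) or \( v \) in the triangle. The only divergence is in that last case: the paper first swaps the colours of \( w \) and the degree-2 triangle vertex inside \( G \) (they are adjacent and have the same closed neighbourhood, so the swap preserves the 3-rs property) and only then transfers, whereas you transfer first and repair inside \( G\bm{'} \) by recolouring that vertex to \( 1-f(v) \) and checking the \( P_3 \)'s through \( v \) directly; both verifications go through.
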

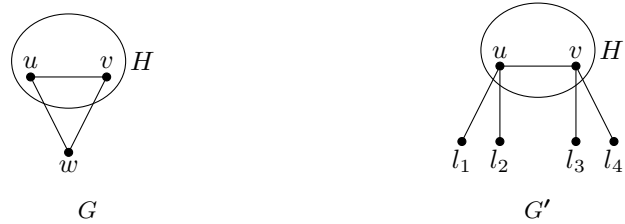
\begin{figure}[hbt]
\centering
\begin{subfigure}[b]{0.33\textwidth}
\centering
\begin{tikzpicture}[label distance=-2pt]
\tikzset{
dot/.style={draw,fill,circle,inner sep = 0pt,minimum size = 3pt},
vcolour/.style={draw,inner sep=1.5pt,font=\scriptsize,label distance=2pt},
subgraph/.style={draw,ellipse,minimum width=1.75cm,minimum height=2cm},
subgraphHoriz/.style={draw,ellipse,minimum width=1.5cm,minimum height=1.25cm},
}
\draw (0,0) node (u)[dot][label=above:\( u \)]{} --++(1,0) node (v)[dot][label=above:\( v \)]{};
\path (u) --node[subgraphHoriz][yshift=6pt,label=right:\( H \)]{} (v);
\draw (u) --++(0.5,-1) node[dot](w)[dot][label=below:\( w \)]{} -- (v);
\end{tikzpicture}
\caption*{\( G \)}
\end{subfigure}%
\begin{subfigure}[b]{0.33\textwidth}
\centering
\begin{tikzpicture}[label distance=-2pt]
\tikzset{
dot/.style={draw,fill,circle,inner sep = 0pt,minimum size = 3pt},
vcolour/.style={draw,inner sep=1.5pt,font=\scriptsize,label distance=2pt},
subgraph/.style={draw,ellipse,minimum width=1.75cm,minimum height=2cm},
subgraphHoriz/.style={draw,ellipse,minimum width=1.5cm,minimum height=1.25cm},
}
\draw (0,0) node (u)[dot][label=above:\( u \)]{} --++(1,0) node (v)[dot][label=above:\( v \)]{};
\path (u) --node[subgraphHoriz][yshift=6pt,label=right:\( H \)]{} (v);
\draw (u) --++(-0.5,-1) node[dot](l1)[dot][label=below:\( l_1 \)]{}
      (u) --++(0,-1) node[dot](l2)[dot][label=below:\( l_2 \)]{};
\draw (v) --++(0,-1) node[dot](l3)[dot][label=below:\( l_3 \)]{}
      (v) --++(0.5,-1) node[dot](l4)[dot][label=below:\( l_4 \)]{};

\end{tikzpicture}
\caption*{\( G\bm{'} \)}
\end{subfigure}%
\caption{Graphs \( G \) and \( G\bm{'} \)}
\label{fig:3-rs get rid of triangle}
\end{figure}

\begin{proof}
Suppose that \( G \) admits a 3-rs colouring. Then, \( G \) admits a 3-rs colouring \( f \) such that \( f(w)=2 \) (for instance, if a 3-rs colouring of \( G \) assigns colour 2 at \( v \), then \( \deg_G(v)=2 \) and hence swapping colours at \( v \) and \( w \) gives a 3-rs colouring of \( G \) that assigns colour 2 at \( w \)). Since \( f(w)=2 \), we have \( f(u),f(v)\in\{0,1\} \). Hence, the restriction of \( f \) to \( V(G-w) \) can be extended into a 3-rs colouring of \( G\bm{'} \) by assigning colour 2 on the newly added pendant vertices. Conversely, suppose that \( G\bm{'} \) admits a 3-rs colouring \( f\bm{'} \). Since \( u \) and \( v \) are 3-plus vertices in \( G\bm{'} \), \( f\bm{'}(u),f\bm{'}(v)\in\{0,1\} \) by Property~P1. Hence, \( f\bm{'} \) restricted to \( V(G-w) \) can be extended into a 3-rs colouring of \( G \) by assigning colour~2 at \( w \).
%\qed
\end{proof}

\begin{theorem}
\textsc{3-RS Colourability} is in P for the class of chordal graphs.
\end{theorem}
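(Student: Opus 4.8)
The plan is to turn the structural facts already established — the linear-time tree algorithm of this section, Observation~\ref{obs:3-rs 3-plus triangle} (a type-I triangle is an obstruction), and Theorem~\ref{thm:3-rs get rid of triangle} (a type-II triangle can be excised while preserving $3$-rs colourability) — into an $O(n^3)$-time decision procedure that repeatedly destroys one triangle of the input graph and finally invokes the tree algorithm. First I would reduce to the connected case: a graph is $3$-rs colourable if and only if each of its components is, so assume $G$ is a connected chordal graph. If $G$ is a tree, $3$-rs colourability is decided by our tree algorithm when $G$ has a $3$-plus vertex, and trivially otherwise (a tree with no $3$-plus vertex is a path, hence $3$-rs colourable).

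\textbf{The reduction loop.} If $G$ is not a tree it has a cycle, and since $G$ is chordal a shortest cycle is a triangle; more generally every triangle of $G$ is either type-I ($u,v,w$ all $3$-plus) or type-II (some vertex has degree $2$). The algorithm finds any triangle $(u,v,w)$ of $G$. If it is type-I, then $G$ is not $3$-rs colourable by Observation~\ref{obs:3-rs 3-plus triangle} and we answer NO. If it is type-II, say $\deg_G(w)=2$, we replace $G$ by the graph $G\bm{'}$ of Theorem~\ref{thm:3-rs get rid of triangle}. Then $G\bm{'}$ is again chordal (deleting a vertex preserves chordality, and attaching pendant vertices — which lie in no cycle — preserves it), it is still connected because $w$ had degree $2$ and $uv\in E(G)$, and by the theorem $G\bm{'}$ is $3$-rs colourable if and only if $G$ is. We recurse on $G\bm{'}$, and when no triangle remains we fall back to the tree algorithm. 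Note that a type-I triangle survives every reduction (its vertices, having degree $\geq 3$, are never the deleted vertex, and its edges are never deleted), so the graph becomes triangle-free only if it never carried a type-I obstruction; hence the final tree computation, chained together with the ``if and only if''s of Theorem~\ref{thm:3-rs get rid of triangle}, yields the correct verdict.

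\textbf{Termination and running time.} The crucial bookkeeping is this. While we are still reducing, \emph{every} triangle of the current graph contains a degree-$2$ vertex, and a degree-$2$ vertex lies in at most one triangle; so the current graph has at most $n$ triangles. A reduction step deletes the unique triangle through $w$ and creates none (neither the deleted vertex nor the new pendant vertices lies in a triangle), so the triangle count strictly decreases at each step. Consequently the number of reduction steps is at most the number of triangles of the original graph, which is at most $n_0$ (its vertex count, since if it had a type-I triangle we would answer NO without reducing). Each step increases the vertex count by exactly $3$, so every graph arising in the process has $O(n_0)$ vertices; locating a triangle (e.g.\ via a perfect elimination ordering) and testing its type costs $O(n_0^2)$ time, giving $O(n_0^3)$ for the whole reduction phase, followed by an $O(n_0)$-time tree computation. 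Hence \textsc{3-RS Colourability} is in P, decidable in $O(n^3)$ time, for chordal graphs.

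\textbf{Main obstacle.} The individual ingredients are already proved, so I expect the only real work to be the termination/complexity argument above: recognising that in the ``reduction regime'' every triangle is witnessed by a degree-$2$ vertex (so there are at most $n$ triangles), that this quantity strictly decreases while the vertex count grows only by an additive constant per step, and combining these observations to certify that the loop runs at most $n$ times on graphs of size $O(n)$ each, keeping the total running time cubic. Once the loop invariant and this potential function are in place, the proof is routine.
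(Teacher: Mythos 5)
Your proposal is correct and follows essentially the same route as the paper: decide trees with the linear-time algorithm, reject on a type-I triangle by Observation~\ref{obs:3-rs 3-plus triangle}, and excise type-II triangles one at a time via Theorem~\ref{thm:3-rs get rid of triangle} until a tree remains. The only differences are bookkeeping --- the paper lists all triangles up front (rejecting immediately if any is type-I) and settles for the crude bound of $3\binom{n}{3}$ added vertices, while you interleave finding and excising with a sharper count; just note that during your reduction phase an undetected type-I triangle may still be present, so the ``at most $n$ triangles'' invariant should be asserted only for \emph{type-II} triangles (one per degree-$2$ vertex), which still caps the number of reduction steps at $n$ since each step destroys exactly one type-II triangle and creates none.
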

\begin{proof}
We can test whether a given connected chordal graph \( G \) is 3-rs colourable as follows.\\

\noindent Case 1: \( G \) is a tree.\\
Then, 3-rs colourability of \( G \) can be tested using our 3-rs colourability testing algorithm for trees.\\[-5pt]

\noindent Case 2: \( G \) contains a triangle.\\
List all triangles in \( G \) in \( O(n^3) \) time. If \( G \) contains a type-I triangle, then \( G \) is not 3-rs colourable by Observation~\ref{obs:3-rs 3-plus triangle}. If \( G \) contains only type-II triangles, we can get rid of all triangles in \( G \) by repeated application of Theorem~\ref{thm:3-rs get rid of triangle} (see Figure~\ref{fig:T from G} for an example). The resultant graph will be a tree \( T \) on at most \( n+3{n\choose 3}=O(n^3) \) vertices. Thus, we can test 3-rs colourability of \( G \) in \( O(n^3) \) time by testing 3-rs colourability of \( T \).
\end{proof}

\begin{figure}[hbt]
\begin{tikzpicture}[label distance=-2pt,scale=0.65]
\tikzset{
dot/.style={draw,fill,circle,inner sep = 0pt,minimum size = 3pt},
vcolour/.style={draw,inner sep=1.5pt,font=\scriptsize,label distance=2pt},
subgraph/.style={draw,ellipse,minimum width=1.75cm,minimum height=2cm},
subgraphHoriz/.style={draw,ellipse,minimum width=1.5cm,minimum height=1.25cm},
}
% Col 1
\node at (0,0) (Col1)[inner sep=0pt]{};

\draw (Col1) node (d)[dot][label=above:\( x \)]{} --++(1,0) node (u1)[dot][label=above:\( u_1 \)]{} --++(1,0) node (v1)[dot][label=above:\( v_1 \)]{};
\draw (u1) --++(0.5,-1) node[dot](w1)[dot][label=below:\( w_1 \)]{} -- (v1);

\draw (v1) --++(1.5,0) node (u2)[dot][label=above:\( u_2 \)]{} --++(1,0) node (v2)[dot][label=above:\( v_2 \)]{};
\draw (u2) --++(0.5,-1) node[dot](w2)[dot][label=below:\( w_2 \)]{} -- (v2);

% Col 1->2 transition
\path (Col1) --+(5,-0.6) node(from){};
\draw [-stealth,black,ultra thick] (from)--++(0.75,0);

% Col 2
\path (Col1) --+(6.5,0) node(Col2)[inner sep=0pt]{};

\draw (Col2) node (d)[dot][label=above:\( x \)]{} --++(1,0) node (u1)[dot][label=above:\( u_1 \)]{} --++(1,0) node (v1)[dot][label=above:\( v_1 \)]{};
\draw (u1) --++(-0.5,-1) node[dot](l1)[dot][label=below:\( l_1 \)]{}
      (u1) --++(0,-1) node[dot](l2)[dot][label=below:\( l_2 \)]{};
\draw (v1) --++(0,-1) node[dot](l3)[dot][label=below:\( l_3 \)]{}
      (v1) --++(0.5,-1) node[dot](l4)[dot][label=below:\( l_4 \)]{};

\draw (v1) --++(1.5,0) node (u2)[dot][label=above:\( u_2 \)]{} --++(1,0) node (v2)[dot][label=above:\( v_2 \)]{};
\draw (u2) --++(0.5,-1) node[dot](w2)[dot][label=below:\( w_2 \)]{} -- (v2);

% Col 2->3 transition
\path (Col2) --+(5,-0.6) node(from){};
\draw [-stealth,black,ultra thick] (from)--++(0.75,0);

% Col 3
\path (Col2) --+(6.5,0) node(Col3)[inner sep=0pt]{};

\draw (Col3) node (d)[dot][label=above:\( x \)]{} --++(1,0) node (u1)[dot][label=above:\( u_1 \)]{} --++(1,0) node (v1)[dot][label=above:\( v_1 \)]{};
\draw (u1) --++(-0.5,-1) node[dot](l1)[dot][label=below:\( l_1 \)]{}
      (u1) --++(0,-1) node[dot](l2)[dot][label=below:\( l_2 \)]{};
\draw (v1) --++(0,-1) node[dot](l3)[dot][label=below:\( l_3 \)]{}
      (v1) --++(0.5,-1) node[dot](l4)[dot][label=below:\( l_4 \)]{};

\draw (v1) --++(1.5,0) node (u2)[dot][label=above:\( u_2 \)]{} --++(1,0) node (v2)[dot][label=above:\( v_2 \)]{};
\draw (u2) --++(-0.5,-1) node[dot](l1)[dot][label=below:\( l_5 \)]{}
      (u2) --++(0,-1) node[dot](l2)[dot][label=below:\( l_6 \)]{};
\draw (v2) --++(0,-1) node[dot](l3)[dot][label=below:\( l_7 \)]{}
      (v2) --++(0.5,-1) node[dot](l4)[dot][label=below:\( l_8 \)]{};

\end{tikzpicture}
\caption{Construction of a tree \( T \) from a chordal graph \( G \) such that \( \chi_{rs}(T)\leq 3 \) if and only if \( \chi_{rs}(G)\leq 3 \)}
\label{fig:T from G}
\end{figure}
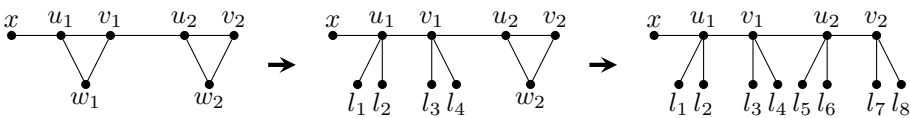 

\section{Related Colouring Variants}\label{sec:related}
Star colouring and ordered colouring are colouring variants closely related to restricted star colouring. A \emph{\( k \)-ordered colouring} of a graph \( G \) is a function \( f:V(G)\to \{0,1,\dots,k-1\} \) such that (i)~\( f(x)\neq f(y) \) for every edge \( xy \) of \( G \), and (ii)~every non-trivial path in \( G \) with the same colour at its endpoints contains a vertex of higher colour (i.e., every path \( P_{q+1} \) with endpoints coloured \( i \) contains a vertex of colour \( j > i \)) \cite{katchalski}. The ordered chromatic number \( \chi_o(G) \) is the least integer \( k \) such that \( G \) admits a \( k \)-ordered colouring. Analogous to the problem \textsc{\( k \)-RS Colourability} (resp.\ \textsc{RS Colourability}), we can define problems \textsc{\( k \)-Star Colourability} and \textsc{\( k \)-Ordered Colourability} (resp.\ \textsc{Star Colourability} and \textsc{Ordered Colourability}). 

For every graph \( G \), \( \chi_s(G)\leq \chi_{rs}(G) \leq \chi_o(G) \) \cite{karpas}. It is known that the three parameters can be arbitrarily apart \cite{karpas}. On the other hand, all three parameters are equal for cographs because \( \chi_s(G)=\text{tw}(G)+1=\chi_o(G) \) \cite{scheffler,lyons} where tw\( (G) \) denotes the treewidth of \( G \). We prove that the star chromatic number equals the ordered chromatic number for co-bipartite graphs as well.
\begin{theorem}  
For a co-bipartite graph \( G \), \( \chi_s(G)=\chi_{rs}(G)=\chi_o(G) \).
\end{theorem}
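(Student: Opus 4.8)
The plan is to rely on the inequality chain $\chi_s(G)\le\chi_{rs}(G)\le\chi_o(G)$, valid for every graph, and to prove the reverse inequality $\chi_o(G)\le\chi_s(G)$ when $G$ is co-bipartite; the three numbers then coincide. Write $G=\overline H$ where $H$ is bipartite with parts $A$ and $B$, so that $A$ and $B$ are cliques of $G$ partitioning $V(G)$. First I would record two easy facts. (i)~In any proper colouring of $G$, each colour class meets each of $A$ and $B$ in at most one vertex, hence has at most $2$ vertices; a class of size $2$ has exactly one vertex in each of $A$, $B$. (ii)~A proper colouring $f$ of a graph is an ordered colouring if and only if, for every colour $i$, each connected component of $G[f^{-1}(\{0,1,\dots,i\})]$ contains at most one vertex of colour $i$ (the standard characterisation of vertex ranking: a path witnessing a failure of the ordered-colouring condition for colour $i$ lies entirely inside $G[f^{-1}(\{0,\dots,i\})]$, and conversely).

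The heart of the argument is the following claim. Fix a $\chi_s(G)$-star colouring $f$ of $G$, say with $k=\chi_s(G)$ colours, and let $P_1,\dots,P_t$ be its colour classes of size $2$, written $P_j=\{a_j,b_j\}$ with $a_j\in A$ and $b_j\in B$. I claim $G$ has no edge between $\{a_1,\dots,a_t\}$ and $\{b_1,\dots,b_t\}$. Indeed, $a_j\not\sim b_j$ because $P_j$ is independent. For distinct $j,j'$, suppose $a_j\sim b_{j'}$ in $G$. Then $b_j,b_{j'},a_j,a_{j'}$ are four distinct vertices and, in this order, form a $P_4$ of $G$: the edge $b_jb_{j'}$ comes from the clique $B$, the edge $a_ja_{j'}$ from the clique $A$, and $b_{j'}a_j$ is the assumed edge. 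Letting $c_j$ and $c_{j'}$ denote the (distinct) colours of $P_j$ and $P_{j'}$, this $P_4$ receives the pattern $c_j,c_{j'},c_j,c_{j'}$, i.e.\ it is bicoloured --- contradicting that $f$ is a star colouring. This proves the claim.

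With the claim established I would finish by reordering the colours. Define $g$ by giving colours $0,1,\dots,t-1$ to $P_1,\dots,P_t$ (in any fixed order) and colours $t,\dots,k-1$ to the singleton classes; $g$ is a proper $k$-colouring. I check that $g$ satisfies the characterisation (ii). If $i\ge t$, then $g^{-1}(i)$ is a singleton and the condition is trivial. If $i\le t-1$, then in $G[g^{-1}(\{0,\dots,i\})]$ the vertices lying in $A$ form a subset of $\{a_1,\dots,a_t\}$ and those lying in $B$ form a subset of $\{b_1,\dots,b_t\}$; by the claim there is no edge between these two sets, while each of them is a clique of $G$. Hence $G[g^{-1}(\{0,\dots,i\})]$ is the disjoint union of an ``$A$-component'' and a ``$B$-component'', and the size-$2$ class $g^{-1}(i)$ has one vertex in each, so at most one vertex of colour $i$ lies in each component. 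Therefore $g$ is an ordered colouring using $k$ colours, which gives $\chi_o(G)\le k=\chi_s(G)$, and combined with $\chi_s(G)\le\chi_{rs}(G)\le\chi_o(G)$ we conclude $\chi_s(G)=\chi_{rs}(G)=\chi_o(G)$.

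I expect the main obstacle to be locating the $P_4$ in the claim: recognising that a ``crossing'' edge between the $A$-side and $B$-side of two size-$2$ colour classes can always be completed, using the two cliques $A$ and $B$, into a bicoloured $P_4$. Once that observation is made, fact (i), the vertex-ranking characterisation (ii), and the reordering step are all routine.
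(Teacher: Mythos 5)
Your proposal is correct and follows essentially the same route as the paper: both fix an optimal star colouring, observe that the size-two colour classes have no ``crossing'' edges between their $A$-vertices and $B$-vertices (via the same bicoloured $P_4$ argument), and then relabel so that the size-two classes receive the lowest colours, yielding an ordered colouring with the same number of colours. The only cosmetic difference is that you verify the ordered-colouring property through the component characterisation of vertex ranking, whereas the paper argues directly that every monochromatic-endpoint path must pass through a singleton (higher-coloured) class.
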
  
\begin{proof}
Let \( G \) be a co-bipartite graph whose vertex set is partitioned into two cliques \( A \) and \( B \). Let \( k=\chi_s(G) \), and let \( f \) be a \( k \)-star colouring of \( G \). To prove the theorem, it suffices to show that \( G \) admits a \( k \)-ordered colouring. Clearly, each colour class under \( f \) contains at most one vertex from \( A \) and at most one vertex from \( B \). 
%In particular, colour classes have cardinality at most two. 
Let \( \{U_0,U_1,\dots,U_{k-1}\} \) be the set of colour classes under \( f \). Let \( t \) be the number of colour classes with cardinality two. We assume that \( t\neq 0 \) (if \( t=0 \), then \( k=n \), and \( f \) itself is a \( k \)-ordered colouring of \( G \)). W.l.o.g., we assume that \( |U_i|=2 \) for \( 0\leq i\leq t-1 \), and \( |U_i|=1 \) for \( t\leq i\leq k-1 \). Observe that if \( a_i\in U_i\cap A \), \( b_i\in U_i\cap B \), \( a_j\in U_j\cap A \) and \( b_j\in U_j\cap B \), then \( a_ib_j\notin E(G) \) and \( a_jb_i\notin E(G) \)
%where \( 0\leq i,j\leq t-1 \)
(if not, the path \( a_j,a_i,b_j,b_i \) or the path \( a_i,a_j,b_i,b_j \) is a \( P_4 \) in \( G \) bicoloured by \( f \)). Therefore, there are no edges in \( G \) between \( A\cap\bigcup_{i=0}^{t-1}U_i \) and \( B\cap\bigcup_{i=0}^{t-1}U_i \). We claim that \( h:V(G)\to \{0,1,\dots,k-1\} \) defined as \( h(v)=i \) for all \( v\!\in\!U_i \) is a \( k \)-ordered colouring of \( G \). Suppose that \( u \) and \( v \) are distinct vertices in \( G \) with \( h(u)=h(v)=j \) (where \( 0\leq j\leq k-1 \)). Then, \( j\leq t-1 \). W.l.o.g., we assume that \( u\in A \) and \( v\in B \). Then, \( u\in A\cap\bigcup_{i=0}^{t-1}U_i \) and \( v\in B\cap\bigcup_{i=0}^{t-1}U_i \). Hence, every \( u,v \)-path \( Q \) in \( G \) must contain a vertex \( w_Q \) from \( U_t\cup\dots\cup U_{k-1} \). Since \( f(w_Q)>f(u) \), and \( u,v \) and \( Q \) are arbitrary, \( h \) is indeed a \( k \)-ordered colouring of \( G \).
%\qed
\end{proof}
Since \textsc{\( k \)-Ordered Colourability} is in P for every \( k \), and \textsc{Ordered Colourability} is NP-complete for co-bipartite graphs \cite{bodlaender}, we have the following corollary.
\begin{corollary}
For the class of co-bipartite graphs, problems \textsc{\( k \)-RS Colourability} and \textsc{\( k \)-Star Colourability} are in P for all \( k \), whereas problems \textsc{RS Colourability} and \textsc{Star Colourability} are NP-complete.
\end{corollary}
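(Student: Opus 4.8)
The plan is to derive the corollary directly from the theorem just established, namely that $\chi_s(G)=\chi_{rs}(G)=\chi_o(G)$ for every co-bipartite graph $G$, together with the two known facts quoted above: \textsc{$k$-Ordered Colourability} is in P for every fixed $k$, and \textsc{Ordered Colourability} is NP-complete for co-bipartite graphs \cite{bodlaender}. Since the three chromatic parameters coincide on this class, every instance question about $\chi_{rs}$ or $\chi_s$ on a co-bipartite graph is literally the same yes/no question about $\chi_o$, so the reductions in both directions can be taken to be the identity map on co-bipartite inputs.

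For the polynomial-time claims, I would argue as follows. Fix $k$, let $G$ be a co-bipartite input graph, and recall $\chi_s(G)=\chi_{rs}(G)=\chi_o(G)$. Hence $\chi_{rs}(G)\le k$ if and only if $\chi_o(G)\le k$, and likewise $\chi_s(G)\le k$ if and only if $\chi_o(G)\le k$. Therefore the polynomial-time algorithm for \textsc{$k$-Ordered Colourability} already decides \textsc{$k$-RS Colourability} and \textsc{$k$-Star Colourability} on co-bipartite graphs, with no extra work beyond recognising that the input is co-bipartite (trivial once the bipartition of the complement is given, and polynomial to check otherwise).

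For the NP-completeness claims, membership in NP is immediate: a colouring $f$ using at most $k$ colours serves as a certificate, and verifying that $f$ is an rs colouring (respectively, a star colouring) amounts to checking all $P_3$'s (respectively, $P_4$'s), which is polynomial. For hardness, I would take the identity transformation $G\mapsto G$ from \textsc{Ordered Colourability} restricted to co-bipartite graphs to \textsc{RS Colourability} (respectively \textsc{Star Colourability}); this is a valid polynomial-time many-one reduction because $\chi_{rs}(G)=\chi_o(G)$ (respectively $\chi_s(G)=\chi_o(G)$) for co-bipartite $G$, so $(G,k)$ is a yes-instance of one problem exactly when it is a yes-instance of the other, and the reduction stays inside the co-bipartite class. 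Since \textsc{Ordered Colourability} is NP-complete for co-bipartite graphs \cite{bodlaender}, both \textsc{RS Colourability} and \textsc{Star Colourability} are NP-complete for co-bipartite graphs.

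There is essentially no genuine obstacle here; the only points requiring a word of care are that the unbounded-$k$ problems lie in NP (handled by the certificate above) and that the identity reduction does not leave the co-bipartite class (it obviously does not). The substantive content has already been spent in proving $\chi_s(G)=\chi_{rs}(G)=\chi_o(G)$ for co-bipartite graphs.
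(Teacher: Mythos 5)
Your proposal is correct and follows essentially the same route as the paper: the authors derive the corollary immediately from the theorem that \( \chi_s(G)=\chi_{rs}(G)=\chi_o(G) \) for co-bipartite \( G \), combined with the facts that \textsc{\( k \)-Ordered Colourability} is in P for every \( k \) and \textsc{Ordered Colourability} is NP-complete for co-bipartite graphs. The extra details you supply (NP membership via a certificate, the identity reduction staying within the class) are exactly the routine checks the paper leaves implicit.
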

\section{Conclusion}\label{sec:conclusion}%Open Problems}
It is known that deciding whether a planar bipartite graph admits a 3-star colouring is NP-complete \cite{albertson}. We prove that deciding whether a subcubic planar bipartite graph of arbitrarily large girth admits a 3-restricted star colouring is NP-complete. In addition, we prove that it is NP-complete to test whether a 3-star colourable graph admits a 3-restricted star colouring (see Theorem~\ref{thm:3-rsc planar bipartite etc}). Karpas \etal.~\cite{karpas} produced an \( O(n^\frac{1}{2}) \) approximation algorithm for the optimization problem of restricted star colouring on a 2-degenerate bipartite graph with the minimum number of colours. We prove that this optimization problem is NP-hard to approximate within \( n^{\frac{1}{3}-\epsilon} \) for all \( \epsilon>0 \). 
%We also show that the optimization problem of restricted star colouring a 2-degenerate bipartite graph with the minimum number of colours is hard to approximate. 
For the class of co-bipartite graphs, \textsc{RS Colourability} is NP-complete, but \textsc{\( k \)-RS Colourabality} is in P for all \( k\in\mathbb{N} \). 
We present (i)~an \( O(n) \)-time algorithm to test 3-rs colourability of trees, and (ii)~an \( O(n^3) \)-time algorithm to test 3-rs colourability of chordal graphs. The complexity of \textsc{RS Colourability} in the class of chordal graphs remains open.\\

\section*{Declaration of competing interest}
The authors declare that they have no known competing financial interests or personal relationships that could have appeared to influence the work reported in this paper.

\section*{Acknowledgements}
The suggestions of two anonymous referees improved the presentation of this paper. The first author is supported by SERB (DST), MATRICS scheme MTR/2018/000086.

\section*{Appendix A. Supplementary data}
Supplementary material related to this article can be found online at \url{https://doi.org/10.1016/j.dam.2021.05.015}.
%
% ---- Bibliography ----
% BibTeX users should specify bibliography style 'splncs04'.
% References will then be sorted and formatted in the correct style.

% \bibliographystyle{elsarticle-num.bst}
% \bibliography{myRefs4}

\end{document}